\newcommand*\circled[2][1.6]{\tikz[baseline=(char.base)]{
    \node[shape=circle, draw, inner sep=1pt,
        minimum height={\f@size*#1},] (char) {\vphantom{WAH1g}#2};}}
\newcommand{\Id}{\textrm{id}}
\def\i{^{-1}}
\def\ge{\geqslant}
\def\le{\leqslant}
\def\<{\langle}
\def\>{\rangle}
\def\ba{\textbf{a}}
\def\Adm{{\rm{Adm}}}
\def\ad{{\rm{ad}}}
\def\a{\alpha}
\def\b{\beta}
\def\g{\gamma}
\def\G{\Gamma}
\def\d{\delta}
\def\o{\omega}
\def\s{\sigma}
\def\t{\tau}
\def\th{\theta}
\def\k{\kappa}
\def\l{\lambda}
\def\z{\zeta}
\def\tPhi{\tilde \Phi}
\def\ZZ{\mathbb Z}
\def\GG{\mathbb G}
\def\NN{\mathbb N}
\def\QQ{\mathbb Q}
\def\JJ{\mathbb J}
\def\FF{\mathbb F}
\def\RR{\mathbb R}
\def\PP{\mathbb P}
\def\SS{\mathbb S}
\def\kk{\bold{k}}
\def\ca{\mathcal A}
\def\co{\mathcal O}
\def\cp{\mathcal P}
\def\car{\mathcal R}
\def\cs{\mathcal S}
\def\car{\mathcal R}
\def\tta{{\tilde \alpha}}
\def\tW{\tilde W}
\def\tw{{\tilde w}}
\def\ba{\bold{a}}
\def\Gal{{\rm Gal}}
\def\ad{{\rm ad}}
\def\brF{{\breve F}}
\def\Res{{\rm Res}}
\def\GL{{\rm GL}}
\def\pr{{\rm pr}}
\def\der{{\rm der}}
\def\distg{{\rm dist}}
\theoremstyle{plain}
\newtheorem{thm}{Theorem}[section]
\newtheorem{conj}{Conjecture}[section]
\newtheorem{hyp}{Hypothesis}[section]
\newtheorem*{thm*}{Theorem}
 \newtheorem{prop}[thm]{Proposition}
 \newtheorem{lem}[thm]{Lemma}
 \newtheorem{cor}[thm]{Corollary}
\theoremstyle{definition}
\theoremstyle{remark}
\newtheorem{rmk}[thm]{Remark}
\newtheorem*{claim*}{Claim}
\begin{document}

\title{Connectedness of affine Deligne-Lusztig varieties for unramified groups}
\author{Sian Nie}
\address{Institute of Mathematics, Academy of Mathematics and Systems Science, Chinese Academy of Sciences, 100190, Beijing, China}
\email{niesian@amss.ac.cn}

\begin{abstract}
For unramified reductive groups, we determine the connected components of affine Deligne-Lusztig varieties in the partial affine flag varieties. Based on the work of Hamacher-Kim and Zhou, this result allows us to verify, in the unramified group case, the He-Rapoport axioms, the ``almost product structure" of Newton strata, and the precise description of mod $p$ isogeny classes predicted by the Langlands-Rapoport conjecture, for the Kisin-Pappas integral models of Shimura varieties of Hodge type with parahoric level structure.
\end{abstract}

\maketitle

\section*{Introduction}
\subsection{} \label{back} Let $F$ be a non-Archimedean local field with valuation ring $\co_F$ and residue field $\FF_q$, where $q$ is a power of some prime $p$. Denote by $\brF$ the completion of a maximal unramified extension of $F$. Let $G$ be a connected reductive group defined over $F$, and let $\s$ be the Frobenius automorphism of $G(\brF)$. Fix an element $b \in G(\brF)$, a geometric cocharacter $\l$ of $G$, and a $\s$-stable parahoric subgroup $K \subseteq G(\brF)$. The attached affine Deligne-Lusztig variety is defined by $$X(\l, b)_K = X^G(\l, b)_K = \{g \in G(\brF) / K; g\i b \s(g) \in K \Adm(\l) K \},$$ where $\Adm(\l)$ is the admissible set associated to the geometric conjugacy class of $\l$. If $F$ is of equal characteristic, $X(\l, b)_K$ is a locally closed and locally finite-type subvariety of the partial affine flag variety $G(\brF) / K$. If $F$ is of mixed characteristic, $X(\l, b)_K$ is a perfect subscheme of the Witt vector partial affine flag variety, in the sense of Bhatt-Scholze \cite{BS} and Zhu \cite{Zhu}.

The variety $X(\l, b)_K$, first introduced by Rapoport \cite{R}, encodes important arithmetic information of Shimura varieties. Let $(\mathbf{G}, X)$ be a Shimura datum with $G = \mathbf{G}_{\QQ_p}$ and $\l$ the inverse of the Hodge cocharacter. Suppose there is a good integral model for the corresponding Shimura variety with parahoric level structure. Langlands \cite{Lan}, and latter refined by Langlands-Rapoport \cite{LR} and Rapoport \cite{R}, conjectured a precise description of $\overline \FF_p$-points of the integral model in terms of the varieties $X(\l, b)_K$. In the case of PEL Shimura varieties, $X(\l, b)_K$ is also the set of $\overline \FF_p$-points of a moduli space of $p$-divisible groups define by Rapoport-Zink \cite{RZ}.

\subsection{}
The main purpose of this paper is to study the set $\pi_0(X(\l, b)_K)$ of connected components of $X(\l, b)_K$. Notice that $X(\l, b)_K$ only depends on $\l$ and the $\s$-conjugacy class $[b]$ of $b$. Thanks to He \cite{He2}, $X(\l, b)_K$ is non-empty if and only if $[b]$ belongs to the set of ``neutral acceptable" $\s$-conjugacy classes of $G(\brF)$ with respect to $\l$.

Let $\pi_1(G)_{\G_0}$ be the set of coinvariants of the fundamental group $\pi_1(G)$ under the Galois group $\G_0 = \Gal(\overline{\brF} / \brF)$. There is a natural map $\eta_G : G(\brF) / K \to \pi_1(G)_{\G_0}$. To compute $\pi_0(X(\l, b)_K)$ we can assume that $G$ is adjoint and hence simple by the following Cartesian diagram (see \cite[Corollary 4.4]{HZ})
\[
\xymatrix{
  \pi_0(X^G(\l, b)_K) \ar[d]_{\eta_G} \ar[r] & \pi_0(X^{G_{\ad}}(\l_{\ad}, b_{\ad})_{K_{\ad}}) \ar[d]^{\eta_{G_{\ad}}} \\
  \pi_1(G)_{\G_0} \ar[r] & \pi_1(G_{\ad})_{\G_0}.  }
\]
The map $\eta_G$ gives a natural obstruction to the connectedness of $X(\l, b)_K$. Another more technical obstruction is given by the following Hodge-Newton decomposition theorem.
\begin{thm} [{\cite[Theorem 4.17]{GHN2}}] \label{HN-dec}
Suppose $G$ is adjoint and simple. If the pair $(\l, b)$ is Hodge-Newton decomposable (with respect to some proper Levi subgroup $M$) in the sense of \cite[\S 2.5.5]{GHN2}, then $X(\l, b)_K$ is a disjoint union of open and closed subsets, which are isomorphic to affine Deligne-Lusztig varieties attached to $M$.
\end{thm}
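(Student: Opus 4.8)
The plan is to follow the classical Hodge--Newton argument of Katz and Kottwitz, in the geometric form worked out for affine Deligne--Lusztig varieties by Mantovan--Viehmann in the hyperspecial case and by G\"ortz--He--Nie for general parahoric level. After conjugating, assume that $M$ is a standard $\s$-stable proper Levi subgroup of $G$, with standard $\s$-stable parabolic $P = MN$ of unipotent radical $N$ and opposite parabolic $\bar P = M\bar N$; put $K_M = K \cap M(\brF)$, a $\s$-stable parahoric subgroup of $M(\brF)$, and write $\Adm_M(\l)$ for the admissible set of $M$ attached to $\l$. The preliminary step is to record the two facts that Hodge--Newton decomposability supplies and that the argument uses: (a) $[b]$ has a representative $b \in M(\brF)$ whose Newton point $\nu_b$ is $G$-dominant, so that the map $x \mapsto b\,\s(x)\,b\i$ is, up to a bounded factor, contracting on $N(\brF)$; and (b) $\l$ and $\nu_b$ have the same image in $\pi_1(M)_{\G_0} \otimes \QQ$, together with the sharper vanishing $\<\l - \nu_b, \o_j\> = 0$ for the fundamental weights $\o_j$ indexed by the simple roots $j \notin M$.

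The heart of the proof is the claim that every $gK \in X(\l, b)_K$ has a representative in $M(\brF)\, x\, K$ for some $x$, and that the resulting Iwasawa-position map $\rho \colon X(\l, b)_K \to W_M \backslash \tilde W / \tilde W_K$, $g \mapsto W_M x \tilde W_K$, is locally constant. I would prove this as follows. Since $K$ and $\Adm(\l)$ are $\s$-stable, $F_X \colon gK \mapsto b\s(g)K$ restricts to a bijection of $X(\l, b)_K$ which permutes the Iwasawa cosets $P(\brF)\, x\, K$ and carries the ``$M$-part'' $M(\brF)\, x\, K/K$ onto $M(\brF)\, \s(x)\, K/K$; hence the union $\Omega$ of all $M$-parts, and its complement, are $F_X$-stable. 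Writing $g = n\, m\, x\, k$ in Iwasawa form with $n \in N(\brF)$, a direct computation gives $b\,\s(n)\,b\i$ for the $N(\brF)$-component of $F_X(gK)$. Iterating, (a) drives the $N$-component of $F_X^r(gK)$ into a fixed compact subgroup of $N(\brF)$ as $r \to \infty$, while condition (b) --- through a Mazur-type inequality which bounds the net displacement of the Iwasawa position under $F_X$ in terms of $\l$ and, more sharply, kills its components along the root directions outside $M$ that are orthogonal to $\nu_b$ --- shows this compact subgroup is contained in $K$; hence $F_X^r(gK) \in \Omega$ for large $r$, and therefore $gK \in \Omega$. Running the same estimates over a neighbourhood of $gK$ yields the local constancy of $\rho$.

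Granting the claim, $X(\l, b)_K = \bigsqcup_{x \in \rho(X(\l,b)_K)} \bigl( X(\l, b)_K \cap (M(\brF)\, x\, K/K) \bigr)$ is a disjoint union of open and closed subsets. Fix one $x$ and set $K_M^{(x)} = (xKx\i) \cap M(\brF)$, a parahoric subgroup of $M(\brF)$ conjugate to $K_M$. Unwinding the definition, $mxK \in X(\l, b)_K$ if and only if $m\i b\s(m) \in x\bigl(K\Adm(\l)K\bigr)\s(x)\i \cap M(\brF)$, and the argument is completed by the combinatorial identity $x\bigl(K\Adm(\l)K\bigr)\s(x)\i \cap M(\brF) = K_M^{(x)}\,\Adm_M(\l)\,K_M^{(x)}$, whose inclusion ``$\supseteq$'' is formal (from $\Adm_M(\l) \subseteq \Adm(\l)$) and whose reverse inclusion is precisely the combinatorial content of (b). Indeed $mxK \mapsto mK_M^{(x)}$ then identifies the $x$-summand with the affine Deligne--Lusztig variety $X^M(\l, b)_{K_M^{(x)}}$ attached to $M$.

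The step I expect to be the main obstacle is establishing the local constancy of $\rho$ and the Mazur-type boundedness underlying it for a \emph{general} parahoric $K$: this must be carried out facet by facet in the Bruhat--Tits building of $G$ over $\brF$, where the estimates controlling $K\Adm(\l)K$ --- in particular the vanishing, which is exactly the content of (b), of the displacement components along the root directions outside $M$ orthogonal to $\nu_b$ --- are delicate. The closely related combinatorial identity invoked in the last step is the other place where the full strength of Hodge--Newton decomposability, rather than just the equality of images in $\pi_1(M)_{\QQ}$, is indispensable.
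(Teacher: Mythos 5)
You are proposing a proof of a statement that the paper itself does not prove: Theorem \ref{HN-dec} is quoted from \cite[Theorem 4.17]{GHN2}, so the relevant comparison is with the proof there. Your route is the classical Katz--Kottwitz contraction argument via the Iwasawa decomposition, which is indeed how the hyperspecial case is handled (Mantovan--Viehmann, \cite{CKV}); but for a general parahoric $K$ the two places where you yourself locate the difficulty are not technical refinements to be ``carried out facet by facet'' --- they are the entire content of the theorem, and the proposal leaves both unproved. Concretely: (i) the ``combinatorial identity'' $x\bigl(K\Adm(\l)K\bigr)\s(x)\i\cap M(\brF)=K_M^{(x)}\Adm_M(\l)K_M^{(x)}$ is not correctly stated and is false for a general Iwasawa position $x$. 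Even the inclusion you call formal is not: it uses $\Adm_M(\l)\subseteq\Adm(\l)$ (a nontrivial fact, and one must take an $M$-dominant representative of the conjugacy class, which varies with the stratum), and the twist by $x$ on the left versus $\s(x)$ on the right means one needs a compatibility between $x$ and $\s$ before the right-hand side even makes sense as the defining condition of an ADLV for $M$; for a double coset $W_M x\tW_K$ that is not $\s$-stable the ``$x$-summand'' is not an affine Deligne--Lusztig variety of $M$ at all. The reverse inclusion holds only for strata in a suitable alcove position relative to a parabolic with Levi $M$, and identifying which strata these are, and that only they meet $X(\l,b)_K$, is the heart of the matter. (ii) Likewise, the local constancy of $\rho$ and the Mazur-type estimate at parahoric level are asserted, not proved.

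This is also where your approach diverges from the actual proof. In \cite{GHN2} the theorem is proved by reducing to Iwahori level via the projection $G(\brF)/I\to G(\brF)/K$, decomposing $X(\l,b)=\sqcup_{w\in\Adm(\l)}X_w(b)$, and invoking the theory of $P$-alcove elements of G\"ortz--Haines--Kottwitz--Reuman and G\"ortz--He: Hodge--Newton decomposability forces every $w\in\Adm(\l)$ with $X_w(b)\neq\emptyset$ to be a $P'$-alcove element for one of finitely many $\s$-stable parabolics $P'=M'N'$ with Levi conjugate to $M$, and for such $w$ one has $X_w(b)\cong X^{M'}_w(b)$; the decomposition is indexed by these $P'$ (not by all of $W_M\backslash\tW/\tW_K$), and the open-and-closedness and the matching of admissible sets fall out of this combinatorics rather than from convergence estimates. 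So as written your proposal reduces the theorem to its two hardest assertions and stops; to complete it along your lines you would in effect have to reprove the $P$-alcove machinery, which is exactly what the cited proof supplies.
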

By Theorem \ref{HN-dec} and induction on the dimension of $G$, it suffices to consider the Hodge-Newton indecomposable case. This means that either $\l$ is a central cocharacter or the pair $(\l, b)$ Hodge-Newton irreducible, see \cite[Lemma 5.3]{Zh}. In the former case, $$X(\l, b)_K \cong \JJ_b / (K \cap \JJ_b)$$ is a discrete subset with $\JJ_b$ the $\s$-centralizer of $b$. In the latter case, we have the following conjecture.
\begin{conj} [{see \cite[Conjecture 5.4]{Zh}}] \label{main}
Assume $G$ is adjoint and simple. If $(\l, b)$ is Hodge-Newton irreducible, then the map $\eta_G$ induces a bijection $$\pi_0(X(\l, b)_K) \cong \pi_1(G)_{\Gamma_0}^\s,$$ where $\pi_1(G)_{\Gamma_0}^\s$ is the set of $\s$-fixed point of $\pi_1(G)_{\Gamma_0}$.
\end{conj}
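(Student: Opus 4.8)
The plan is to reduce to Iwahori level, to establish that $\eta_G$ surjects onto $\pi_1(G)_{\G_0}^\s$ using the action of the $\s$-centralizer $\JJ_b$, and then — the heart of the matter — to prove that each fibre of $\eta_G$ is connected by transporting the problem, through a Deligne--Lusztig type reduction, to classical Deligne--Lusztig varieties over $\FF_q$, whose components are governed by a support condition. For the first step, by the results of He--Zhou \cite{HZ} (the compatibility of $\eta_G$ with the projections $G(\brF)/I \to G(\brF)/K$ between parahoric levels) it suffices to treat $K = I$ an Iwahori subgroup, where $X(\l, b)_I = \bigsqcup_{w \in \Adm(\l)} X_w(b)$ with $X_w(b) = \{gI : g\i b\s(g) \in IwI\}$; the group $\JJ_b$ acts on $X(\l, b)_I$, preserving each stratum, and $\eta_G$ is equivariant for the translation action of $\eta_G(\JJ_b) \subseteq \pi_1(G)_{\G_0}^\s$ on the target.

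For surjectivity, all $w \in \Adm(\l)$ lie in one coset of the affine Weyl group $W_a$ in $\widetilde W$, and $\eta_G(w) = \eta_G(b)$ since $[b]$ is neutral acceptable; plugging this into $\eta_G(g\i b\s(g)) = -\eta_G(g) + \eta_G(b) + \s(\eta_G(g))$ shows that $\eta_G$ sends every point of $X(\l, b)_I$ into $\pi_1(G)_{\G_0}^\s$ and that its image is a union of cosets of $\eta_G(\JJ_b)$. A computation with the Kottwitz homomorphism — this is where Hodge--Newton irreducibility is used — shows $\eta_G(\JJ_b) = \pi_1(G)_{\G_0}^\s$: the group $\JJ_b$ is an inner form of the centralizer $M$ of the Newton cocharacter $\nu_b$, Hodge--Newton irreducibility is exactly the condition making $\pi_1(M)_{\G_0} \to \pi_1(G)_{\G_0}^\s$ surjective, and $\JJ_b \to \pi_1(M)_{\G_0}^\s$ is onto. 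Together with non-emptiness \cite{He2}, this gives surjectivity of $\eta_G$.

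For the connectedness of the fibres it is, after the above, enough to show that a single fibre $\eta_G\i(c)$ is connected. Here I would use the closure relations among the strata: $X(\l, b)_I$ is the union of the closures of its top strata $X_{t^{x\l}}(b)$, $x \in W_0$, so one needs $\pi_0$ of those closures together with the way they overlap. For the individual pieces I would apply the Deligne--Lusztig reduction method of He: for a simple affine reflection $s$ with $\ell(s w \s(s)) \le \ell(w)$, the variety $X_w(b)$ is built from $X_{s w \s(s)}(b)$ and $X_{sw}(b)$ by iterated $\AA^1$- and $\GG_m$-fibrations and disjoint unions, so iterating reduces one to the $X_{w'}(b)$ with $w'$ of minimal length in its $\s$-conjugacy class; for such $w'$, up to the action of $\JJ_b$, $X_{w'}(b)$ is an iterated affine-space and $\GG_m$-bundle over a classical Deligne--Lusztig variety $X^{\mathrm{cl}}_v$ for a reductive quotient of a parahoric of $\JJ_b$, and $\pi_0(X^{\mathrm{cl}}_v)$ is trivial precisely when the $\s$-support of $v$ is the whole Coxeter diagram. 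The main obstacle is then combinatorial: one must use Hodge--Newton irreducibility and the structure of $\Adm(\l)$, including the $P$-alcove non-emptiness criterion of \cite{GHN2}, to ensure that the minimal length elements that occur have full $\s$-support (so the classical pieces are connected), and then check that the connected pieces lying over a fixed $c$ are glued — through the stratum closures, equivalently through common walls of alcoves — into one connected set; transporting connectedness back up the reduction then proves the theorem. It is exactly Hodge--Newton irreducibility that excludes the degenerate configurations in which $X(\l, b)_I$ would have components beyond those detected by $\eta_G$.
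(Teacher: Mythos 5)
Your overall skeleton (reduce to Iwahori level, let $\JJ_b$ act, and identify $\pi_0$ with $\JJ_b$ modulo the stabilizer/kernel of $\eta_G$) matches the paper's, but the step that carries all the weight --- connectedness of the fibres of $\eta_G$ --- is not actually proved by your argument, and the mechanism you propose cannot prove it. The Deligne--Lusztig reduction method and the $\s$-support criterion are statements about a \emph{single} stratum $X_w(b)$: for $w$ of minimal length in its $\s$-conjugacy class (and these are exactly the $\s$-straight elements when $b$ is non-basic, for which the ``classical DL variety'' is a point), He's structure theorem presents $X_w(b)$ as a disjoint union indexed by $\JJ_b/P$ for a parahoric $P\subseteq\JJ_b$, so each such stratum has \emph{infinitely many} components within one fibre of $\eta_G$, no matter how connected the classical pieces are. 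The whole difficulty is to connect these components to one another \emph{through other strata} of $X(\l,b)_I$, and your proposal delegates this to ``closure relations among the strata'' and the claim that $X(\l,b)_I$ is the union of closures of the top strata $X_{t^{x\l}}(b)$. Closure relations between ADLV strata (whether $X_{w'}(b)\subseteq\overline{X_w(b)}$ for $w'\leq w$) are not known in general and are not available as a tool; nor is non-emptiness of the top strata. So the gluing step, which is the actual content of the conjecture, is asserted rather than proved. (A smaller inaccuracy: Hodge--Newton irreducibility is not ``exactly the condition'' making $\pi_1(M)_{\G_0}^\s\to\pi_1(G)_{\G_0}^\s$ surjective --- that surjectivity is automatic because $\ZZ\Phi^\vee/\ZZ\Phi_J^\vee$ is a permutation $\s$-module; HN irreducibility is needed precisely for the connecting arguments you left unspecified.)

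For comparison, the paper never uses closure relations or stratum-by-stratum $\pi_0$ computations. Its starting point is the He--Zhou result that every component meets $\JJ_{b,\tw}I/I$ for some semi-standard $\tw\in\cs_{\l,b}$, and then all work goes into explicitly connecting these discrete $\JJ_b$-homogeneous sets across strata: one constructs one-parameter families $\textsl{g}_{g,\tilde\g,\tw,m}\colon\PP^1\to G(\brF)/I$ landing inside $I\Adm(\l)I$-cosets and computes their limits at $\infty$ (Lemma \ref{limit}), steered by the new combinatorial device of permissible roots $\cp_\tw$ and the $\nu_\tw^\flat$-ordering, to move any $\tw\in\cs_{\l,b,x}$ to the distinguished element $\tw_x$ (Proposition \ref{one-leaf}), then connects the $\tw_x$ for different $x$ by the algorithm of \cite{N2} (Proposition \ref{surj}), and finally computes the stabilizer (Propositions \ref{normal} and \ref{ker-G}). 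If you want to salvage your route, you would need a substitute for exactly these curve constructions; the support criterion for classical Deligne--Lusztig varieties cannot substitute for them.
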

If $G$ is unramified (that is, $G$ extends to a reductive group over $\co_\brF$) and $K$ is hyperspecial, Conjecture \ref{main} is established by Viehmann \cite{Vie}, Chen-Kisin-Viehmann \cite{CKV}, and the author \cite{N1}. If $b$ is basic, it is proved by He-Zhou \cite{HZ}. If $G$ is split or $G = \Res_{E  /F} \GL_n$ with $E / F$ a finite unramified field extension, it is proved by L. Chen and the author in \cite{CN} and \cite{CN1}.

The main result of this paper is the following.
\begin{thm} \label{intro}
Conjecture \ref{main} is true if $G$ is unramified.
\end{thm}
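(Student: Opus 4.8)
The plan is to refine the fine affine Deligne-Lusztig decomposition together with the Deligne-Lusztig reduction method --- the framework used to treat split groups and $\Res_{E/F}\GL_n$ in \cite{CN,CN1} --- so that it applies to an arbitrary unramified $G$. By the reductions recalled above we may assume that $G$ is adjoint and simple and that $(\l,b)$ is Hodge-Newton irreducible. Since $G$ is unramified, $\s$ acts on the Iwahori-Weyl group $\tilde W = W_a \rtimes \Omega$ preserving the set of simple affine reflections determined by $K$ (possibly through a nontrivial diagram automorphism), and $\Omega \cong \pi_1(G)_{\G_0}$; moreover $\pi_0(X(\l,b)_K)$ and the map $\eta_G$ depend only on the combinatorial datum $(\tilde W,\s,\Adm(\l),[b],K)$, so that the mixed characteristic problem reduces to the equal characteristic one, where genuine curves are available. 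Since $\eta_G$ kills parahoric subgroups, computing the Kottwitz homomorphism on $K\Adm(\l)K$ shows that $\eta_G(X(\l,b)_K)$ lies in a single coset of $\pi_1(G)_{\G_0}^\s$; that it is the whole coset (surjectivity) will fall out of the explicit description of the fine pieces below, so the heart of the matter is the injectivity of $\eta_G$ on $\pi_0$, i.e. the connectedness of each of its fibres.

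To prove this I would stratify $X(\l,b)_K$ by its fine pieces $X_{K,w}(b) = \{gK : g\i b\,\s(g) \in KwK\}$, where $w$ runs over the finite image of $\Adm(\l)$ among the minimal-length representatives of $W_K$-double cosets in $\tilde W$, and apply the parahoric-level Deligne-Lusztig reduction (He, G\"ortz-He-Nie): for a simple affine reflection $s$ adapted to $K$ with $sw<w$, either $\ell(sws)=\ell(w)$, and then $X_{K,w}(b)$ and $X_{K,sws}(b)$ differ by a universal homeomorphism, or $\ell(sws)=\ell(w)-2$, and then $X_{K,w}(b)$ is, up to such homeomorphisms, the disjoint union of an $\AA^1$-bundle over $X_{K,sw}(b)$ and an $(\AA^1\setminus\{0\})$-bundle over $X_{K,sws}(b)$; in every case the correspondences involved, read inside $X(\l,b)_K$, match up connected components. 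Iterating, each fine piece is linked, inside $X(\l,b)_K$, to pieces $X_{K,w'}(b)$ with $w'$ of minimal length in its $\s$-conjugacy class, and for such $w'$ the piece $X_{K,w'}(b)$ is, up to a universal homeomorphism, an iterated fibre bundle over a classical Deligne-Lusztig variety for a connected reductive group over $\FF_q$, with fibres an affine space and a torsor under a subgroup of $\JJ_b(F)$; Lang's theorem then computes $\pi_0(X_{K,w'}(b))$ together with the restriction of $\eta_G$ to it, which also settles the surjectivity claimed above.

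The hard part --- the place where the general unramified case goes beyond the split and $\Res_{E/F}\GL_n$ cases of \cite{CN,CN1} --- is to show that these reductions, together with the closure relations among the fine pieces, collapse the connected components of $X(\l,b)_K$ down to exactly the fibres of $\eta_G$: that any two fine pieces whose $\eta_G$-images lie in the same $\s$-fixed element of $\pi_1(G)_{\G_0}$ already belong to the same connected component of $X(\l,b)_K$. This requires, in the presence of a nontrivial diagram automorphism, a careful study of the admissible set $\Adm(\l)$, of the minimal-length (``$\s$-straight'') elements and the reductive groups over $\FF_q$ attached to them, and of the $P$-alcove and partial-conjugation combinatorics that governs which fine pieces are non-empty and how their closures meet --- Hodge-Newton irreducibility being exactly the hypothesis guaranteeing enough admissible elements outside any proper parabolic subset to propagate connectedness. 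One anchors the induction --- on $\dim G$, via Theorem \ref{HN-dec}, and on the Newton point --- using the cases already known: $K$ hyperspecial (\cite{CKV,N1}), $b$ basic (\cite{HZ}), and the $\GL_n$-type groups of \cite{CN,CN1}. I expect the two most delicate points to be the verification that all straight pieces lying over a given point of $\pi_1(G)_{\G_0}^\s$ are mutually connected, and the handling of the combinatorics when $\s$ acts through a diagram automorphism of order $2$ or $3$.
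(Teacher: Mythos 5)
Your first two paragraphs essentially reproduce what the paper takes as its starting point: the reduction to $G$ adjoint simple, $(\l,b)$ Hodge--Newton irreducible and $K=I$, and the fact (He--Zhou, quoted here as Theorem \ref{str} via \cite[Theorem 4.1]{HZ} and \cite{N1}) that Deligne--Lusztig reduction, performed inside $X(\l,b)$, forces every connected component to meet $\JJ_{b,\tw}I/I$ for some minimal-length ($\s$-straight, hence semi-standard) element $\tw\in\Adm(\l)\cap[b]$. Up to that point the proposal is sound. But the theorem is not proved there: knowing $\pi_0$ of each fine piece $X_{I,\tw}(b)$ for minimal $\tw$ (via Lang's theorem) says nothing about how components of \emph{different} pieces meet inside $X(\l,b)$, and your own text concedes that this is ``the hard part'' without supplying any mechanism for it. That is precisely where the entire content of the paper lies, and ``closure relations among the fine pieces'' together with ``$P$-alcove and partial-conjugation combinatorics'' is not an argument: closure relations of fine pieces do not by themselves connect components, and partial conjugation only organizes the elements, it does not produce paths in $X(\l,b)$.

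Concretely, what is missing is: (i) a proof that $\JJ_b$ acts transitively on $\pi_0(X(\l,b))$, which in the paper requires the decomposition $\cs_{\l,b}=\sqcup_{x\in\cs_{\l,b}^+}\cs_{\l,b,x}$ into semi-standard elements grouped by their standard representative, explicit $\PP^1$-curves $\textsl{g}_{g,\tilde\g,\tw,m}$ whose limits are controlled by the permissible-root sets $\cp_\tw$ and the auxiliary function $\nu_\tw^\flat$ (Lemma \ref{limit}, Lemma \ref{weyl-elementary}), a non-emptiness statement for $\cp_\tw$ resting on Hodge--Newton irreducibility (Corollary \ref{non-empty}), the quantitative Hypothesis \ref{hypo} on $q$ (with the split case delegated to \cite{CN}), the connection between different $x\in\cs_{\l,b}^+$ adapted from \cite{N2} (Proposition \ref{surj}), and a separate treatment of the triality case ($\s$ of order $3d$); and (ii) a computation of the stabilizer in $\JJ_b$ of a connected component, showing it equals $\JJ_b\cap\ker(\eta_G)$ (Propositions \ref{normal} and \ref{ker-G}), which occupies Sections \ref{sec-normal}--\ref{sec-order-3} and is nowhere addressed in your plan -- you phrase the goal as injectivity of $\eta_G$ on $\pi_0$, but give no route to it beyond the expectation that the combinatorics will ``collapse'' the components. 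Without (i) and (ii) the proposal is a restatement of the problem after the known reductions, not a proof; the suggested inductions (on $\dim G$ and ``on the Newton point'') and the anchoring in the hyperspecial, basic and $\GL_n$ cases do not close this gap, since the paper's argument for general parahoric unramified $G$ is not obtained by induction from those cases.
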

In particular, Theorem \ref{intro} completes the computation of connected components of affine Deligne-Lusztig varieties for unramified groups.

\subsection{} We discuss some applications. Assume $p \neq 2$. Let $(\mathbf{G}, X)$ be a Shimura datum of Hodge type with parahoric level structure such that $p \nmid |\pi_1(\mathbf{G}_\der)|$, $\mathbf{G}_{\QQ_p}$ is tamely ramified, and the parahoric subgroup $K$ at $p$ is a connected parahoric. Let $\mathscr{S}_K = \mathscr{S}_K(G, X)$ be the Kisin-Pappas integral model of the corresponding Shimura variety constructed in \cite{KP}. Let $F = \QQ_p$, $G = \mathbf{G}_{\QQ_p}$, and $\l$ be the inverse of the Hodge cocharacter.

\begin{rmk}
In \cite{PR}, Pappas and Rapoport obtained a new construction of integral models for Hodge type Shimura varieties with parahoric level structure, without the the tameness assumption on $G$. It would be desirable to extend the applications discussed below to their integral models in the unramified group case.
\end{rmk}

\subsubsection{} A major motivation to study $\pi_0(X(\l, b)_K)$ comes from the Langlands-Rapoport conjecture mentioned in \S\ref{back}. In the hyperspecial level structure case, the conjecture is proved by Kottwitz \cite{Kott} for PEL Shimura varieties of types $A$ and $C$, and by Kisin \cite{Ki} for his integral models \cite{Ki0} of Shimura varieties of abelian type. Using the Kisin-Pappas integral models \cite{KP} for Hodge type Shimura varieties with parahoric level structure $K$, Zhou \cite{Zh} proved that each mod $p$ isogeny class has the predicted form when $G$ is residually split.

One of the key ingredients in the proofs of Kisin and Zhou is to construct certain lifting map from $X(\l, b)_K$ to an isogeny class of $\mathscr{S}_K (\overline \FF_p)$ (see also \cite[Axiom A]{HK}), which uses in a crucial way descriptions of $\pi_0(X(\l, b)_K)$ in \cite{CKV} and \cite{HZ} respectively. Combining \cite[Proposition 6.5]{Zh} with Theorem \ref{intro}, we deduce that such a lifting map always exists if $G$ is unramified.
\begin{prop} \label{lift}
If $G$ is unramified, then the Rapoport-Zink uniformisation map admits a unique lift on $\overline \FF_p$-points $$ X(\l, b)_K \to \mathscr{S}_K (\overline \FF_p),$$ which respects canonical crystalline Tate tensors on both sides.
\end{prop}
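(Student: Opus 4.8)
The statement is, in essence, a formal consequence of Theorem~\ref{intro}: the plan is to feed the connected-components description just established into the conditional construction of the lifting map due to Zhou. Indeed, \cite[Proposition~6.5]{Zh} produces the desired unique lift, respecting crystalline Tate tensors, under the assumption that (the relevant case of) Conjecture~\ref{main} holds; Zhou applies it when $G$ is residually split, the situation in which Conjecture~\ref{main} had already been established. In the unramified setting the same input is now available by Theorem~\ref{intro}, so the only thing to do is to quote \cite[Proposition~6.5]{Zh} and invoke Theorem~\ref{intro} in place of the residually split case.

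It is worth recalling where the $\pi_0$-input enters, as this is precisely the part of Zhou's argument one relies on. A point $x \in X(\l, b)_K(\overline{\FF}_p)$ records, in the Hodge-type situation, a lattice in the isocrystal with $G$-structure attached to $b$ --- equivalently a quasi-isogeny of $p$-divisible groups compatible with the crystalline Tate tensors; the Kisin--Pappas deformation theory of $\mathscr{S}_K$, together with Zhou's refinements and the Hamacher--Kim axiom framework \cite[Axiom~A]{HK}, then produces an abelian scheme over $\overline{\FF}_p$ carrying the prescribed additional structure, hence a point $\tilde x \in \mathscr{S}_K(\overline{\FF}_p)$ in the given isogeny class. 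Propagating $\tilde x$ along the $\JJ_b(\QQ_p)$-action and along paths inside a connected component of $X(\l, b)_K$ extends the lift over that component, and rigidity of the crystalline Tate tensors pins it down uniquely there. What remains is to assemble the component-wise lifts into a single map on all of $X(\l, b)_K$; this assembly is governed by $\eta_G$ and the group $\pi_1(G)_{\G_0}^\s$, so it is exactly the bijection $\pi_0(X(\l, b)_K) \cong \pi_1(G)_{\G_0}^\s$ of Conjecture~\ref{main} that makes the global lift well defined.

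It then suffices to check that the form of Conjecture~\ref{main} required by \cite[Proposition~6.5]{Zh} holds for the unramified group $G = \mathbf{G}_{\QQ_p}$. By the Cartesian diagram of \cite[Corollary~4.4]{HZ} recalled above, the $\pi_0$-description for $G$ reduces to that for $G_{\ad}$; the Hodge--Newton decomposition of Theorem~\ref{HN-dec} and induction on $\dim G$ reduce it further to the Hodge--Newton indecomposable case, i.e. either $\l$ central --- where $X(\l, b)_K \cong \JJ_b / (K \cap \JJ_b)$ is discrete and nothing is needed --- or $(\l, b)$ Hodge--Newton irreducible, which is Theorem~\ref{intro}. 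Thus \cite[Proposition~6.5]{Zh} applies with ``$G$ unramified'' in place of ``$G$ residually split'', yielding both the existence and the uniqueness of the lift compatible with the canonical crystalline Tate tensors. The main obstacle is Theorem~\ref{intro} itself --- given it, the argument is purely formal --- and the one point deserving care is that residual splitness enters \cite{Zh} only through this $\pi_0$-input, and not through the ingredients imported from \cite{KP} or \cite{HK}; this I expect to be harmless, as the Kisin--Pappas models and the Hamacher--Kim axioms are set up for all tamely ramified $G$.
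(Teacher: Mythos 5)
Your proposal matches the paper's own treatment: Proposition \ref{lift} is stated there with no separate argument beyond the remark that it follows by combining \cite[Proposition 6.5]{Zh} with Theorem \ref{intro}, exactly the deduction you make. Your additional explanation of where the $\pi_0$-description enters Zhou's construction and of the reduction via \cite[Corollary 4.4]{HZ} and Theorem \ref{HN-dec} is consistent elaboration rather than a different route, so the approach is essentially identical.
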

If $G$ is unramified and $K$ is hyperspecial, Proposition \ref{lift} is proved by Kisin \cite{Ki}. If $b$ is basic or $G$ is residually split, it is proved by Zhou \cite{Zh}. If $G$ is quasi-split and $K$ is absolutely special, it is proved by Zhou in \cite[Theorem A.4.3]{Ho}.

As an application, one can extend \cite[Theorem 1.1]{Zh} to the unramified group case, by combining the methods in \cite{Zh} and Proposition \ref{lift}. This is pointed out to us by Zhou.
\begin{cor} \label{LR-conj}
If $G$ is unramified, then the isogeny classes in $\mathscr{S}_K (\overline \FF_p)$ has the form predicted by the Langlands-Rapoport conjecture. Moreover, each isogeny class contains a point which lifts to a special point in the corresponding Shimura variety.
\end{cor}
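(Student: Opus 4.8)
The plan is to run the argument of Zhou \cite{Zh} for the Kisin--Pappas model $\mathscr{S}_K$, feeding in Proposition \ref{lift} wherever \cite{Zh} invokes the residually split hypothesis. Fix $x \in \mathscr{S}_K(\overline{\FF}_p)$, let $\mathscr{I}_x \subseteq \mathscr{S}_K(\overline{\FF}_p)$ be its mod $p$ isogeny class, and attach to $x$ the local data $(\l, b)$ (so that $X(\l, b)_K$ is the associated affine Deligne--Lusztig variety) together with an admissible morphism $\phi$ in the sense of Langlands--Rapoport. The conjecture predicts
\[
\mathscr{I}_x \;\cong\; I_\phi(\QQ) \backslash \bigl( X(\l, b)_K \times \mathbf{G}(\AA_f^p)/K^p \bigr),
\]
the first factor playing the role of $X_p(\phi)$ and the second of $X^p(\phi)$, with $I_\phi$ the associated $\QQ$-group acting in the usual way.

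First I would recall the structure of the proofs of Kisin \cite{Ki} (hyperspecial level) and Zhou \cite{Zh} (parahoric level, $G$ residually split). The lifting map of Proposition \ref{lift}, i.e.\ \cite[Axiom A]{HK}, which respects the canonical crystalline Tate tensors, combined with the prime-to-$p$ Hecke action and the Rapoport--Zink-type uniformisation of the Newton stratum, produces a natural map
\[
I_\phi(\QQ) \backslash \bigl( X(\l, b)_K \times \mathbf{G}(\AA_f^p)/K^p \bigr) \longrightarrow \mathscr{I}_x ;
\]
the core of \cite{Zh} is to prove this map bijective and to identify $I_\phi$ with the relevant automorphism group, using the He--Rapoport axioms for $\mathscr{S}_K$ established in \cite{KP}, the compatibility of crystalline Tate tensors, and general facts about $\s$-conjugacy classes. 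The key observation is that residual splitness enters \cite{Zh} \emph{only} through its own construction of the lifting map (Zhou's \cite[Proposition 6.5]{Zh}, whose input from \cite{HZ} was at the time available only for basic $b$ or residually split $G$); every subsequent step is insensitive to it. Since Proposition \ref{lift} now furnishes the lifting map for arbitrary unramified $G$, the remainder of \cite{Zh} applies without change and gives the first assertion.

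For the second assertion I would follow the CM-lifting argument of \cite{Ki} and \cite{Zh}. Granting the displayed isomorphism, producing a special point inside $\mathscr{I}_x$ reduces to a group-theoretic construction: one finds a maximal torus $T \subseteq \mathbf{G}$ over $\QQ$, elliptic over $\RR$ and suitably positioned at each finite place, such that $b$ is $\s$-conjugate into $T(\brF)$ and $\l$ factors through $T$; the associated point of $X(\l, b)_K$ then lifts, via Proposition \ref{lift}, to a point of $\mathscr{S}_K(\overline{\FF}_p)$, which one checks is a special point. This step, too, is carried out in \cite{Zh} with no essential use of residual splitness beyond the lifting map.

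The main obstacle is not a new idea but a careful audit of \cite{Zh}: one must locate every appeal to the residually split hypothesis and confirm that each is either absorbed into the construction of the lifting map, hence replaced wholesale by Proposition \ref{lift}, or can be removed directly; and one must check that the standing hypotheses here---$p \neq 2$, $\mathbf{G}_{\QQ_p}$ tamely ramified, $K$ a connected parahoric, and $p \nmid |\pi_1(\mathbf{G}_\der)|$---are exactly those under which the constructions of \cite{KP} and \cite{Zh} are valid, so that no auxiliary ingredient silently demands more. As indicated in the text, this verification was pointed out to us by Zhou.
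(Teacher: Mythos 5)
Your proposal matches the paper's intended argument: the paper itself proves Corollary \ref{LR-conj} only by the remark that one combines the methods of \cite{Zh} with Proposition \ref{lift}, the point being exactly your observation that the residually split hypothesis in \cite{Zh} feeds in only through the construction of the lifting map (Zhou's \cite[Proposition 6.5]{Zh}), which Proposition \ref{lift} now supplies for all unramified $G$, and that the special-point claim follows by the same CM-lifting argument of \cite{Ki} and \cite{Zh}. One minor slip: the He--Rapoport axioms are verified in \cite{HR}, \cite{HZ} and \cite{Zh} (completed here via Proposition \ref{lift}), not ``established in \cite{KP}'', which only constructs the integral models.
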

Corollary \ref{LR-conj} was first proved by Pol van Hoften \cite{Ho} using a different approach.

\subsubsection{} In \cite{HR}, He and Rapoport formulated five axioms on Shimura varieties with parahoric level structure, which provide a group-theorectic way to study certain characteristic subsets (such as Newton strata, Ekedahl-Oort strata, Kottwitz-Rapoport strata, and so on) in the mod $p$ reductions of Shimura varieties. Based on this axiomatic approach, Zhou \cite{Zh} proved that all the expected Newton strata are non-empty (see \cite{KMS} using a different approach). For more applications of these axioms, we refer to \cite{HR}, \cite{HN1}, \cite{GHN2}, \cite{Zh} and \cite{SYZ}. Combining \cite[Theorem 8.1]{Zh} with Proposition \ref{lift} we have
\begin{cor}
All the He-Rapoport axioms hold if $G$ is unramified.
\end{cor}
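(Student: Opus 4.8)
The plan is to run Zhou's verification of the He--Rapoport axioms from \cite[\S 8]{Zh} essentially verbatim, isolating the one place where the residually split hypothesis is used and replacing it by the unramified hypothesis by means of Proposition \ref{lift}.

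Recall the framework of \cite{HR}. The five axioms concern a system, compatible under change of parahoric level $K' \subseteq K$, consisting of the integral models $\mathscr{S}_K$, their Kottwitz--Rapoport stratification maps $\mathscr{S}_K(\overline{\FF}_p) \to \Adm(\l)_K$, and their Newton stratification maps $\mathscr{S}_K(\overline{\FF}_p) \to B(G,\l)$, together with the compatibilities between the two stratifications and an ``almost product structure'' for each Newton stratum. Under the hypotheses in force --- $p \neq 2$, $p \nmid |\pi_1(\mathbf{G}_\der)|$, $\mathbf{G}_{\QQ_p}$ tamely ramified, $K$ a connected parahoric --- the Kisin--Pappas models \cite{KP} exist and carry the required functoriality in $K$, so Axioms 1 and 2 together with the construction of the Kottwitz--Rapoport map hold without any splitness hypothesis; this is already contained in \cite{KP}, \cite{HK} and \cite[\S\S 2--7]{Zh}.

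First I would record that the non-formal content of the axioms --- surjectivity of the Kottwitz--Rapoport map (Axiom 3), the existence of the Newton stratification with its expected interaction with the Kottwitz--Rapoport stratification (Axiom 4), and the almost product structure of the Newton strata (Axiom 5) --- is deduced in \cite[\S 8]{Zh} from the Rapoport--Zink uniformisation of isogeny classes together with the lifting map $X(\l, b)_K \to \mathscr{S}_K(\overline{\FF}_p)$ respecting crystalline Tate tensors. In \cite{Zh} this lifting map was available in the residually split case because the computation of $\pi_0(X(\l, b)_K)$ needed to produce it was known (\cite{HZ}, \cite{Zh}). By Proposition \ref{lift}, which itself rests on Theorem \ref{intro}, the same lifting map now exists for every unramified $G$. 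Substituting it for the residually split input wherever it occurs in \cite[\S 8]{Zh}, in particular in the proof of \cite[Theorem 8.1]{Zh}, yields all five axioms.

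The only real point to check --- and hence the main, though routine, obstacle --- is that residual splitness enters \cite[\S 8]{Zh} nowhere except through this lifting map and the uniformisation it provides. Concretely, one must verify that the non-emptiness of Newton strata, the dimension computations, and the central leaf / Igusa variety arguments do not independently require residual splitness, and that no circularity is introduced (for instance, Zhou's non-emptiness of Newton strata is itself a consequence of the axioms, so the axioms must be established first). This is a matter of tracking hypotheses through \cite{Zh} rather than of supplying a new idea; granting it, the chain Theorem \ref{intro} $\Rightarrow$ Proposition \ref{lift} $\Rightarrow$ the present corollary is immediate.
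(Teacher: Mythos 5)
Your proposal is correct and follows essentially the same route as the paper, which deduces the corollary in one step by combining \cite[Theorem 8.1]{Zh} with Proposition \ref{lift}. The only refinement worth noting is that the paper already pinpoints the unique missing ingredient in the general (non residually split) case of \cite{Zh} as the surjectivity in \cite[Axiom 4 (c)]{HR}, which is exactly what the lifting map supplies, so your ``tracking hypotheses'' step is already settled by Zhou's own statement rather than needing to be redone.
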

These axioms are verified by He-Rapoport \cite{HR} in the Siegel case, and by He-Zhou \cite{HZ} for certain PEL Shimura varieties (unramified of types $A$ and $C$ and odd ramified unitary groups). In \cite{Zh}, Zhou verified all the axioms except the surjectivity of \cite[Axiom 4 (c)]{HR} in the general case, and all of them in the case that $G$ is residually split.

\subsubsection{} In \cite{Man}, Mantovan established a formula expressing the $l$-adic cohomology of proper PEL Shimura varieties in terms of the $l$-adic cohomology with compact supports of the Igusa varieties and of the Rapoport-Zink spaces for any prime $l \neq p$. This formula encodes nicely the local-global compatibility of the Langlands correspondence. A key part of its proof is to show that the products of reduced fibers of Igusa varieties and Rapoport-Zink spaces form nice ``pro-\'{e}tale covers up to perfection" for the Newton strata, of PEL Shimura varieties with hyperspecial level structure. This is referred as the ``almost product structure" of Newton strata. In \cite{HK}, Hamacher-Kim extended Mantovan's results to the Kisin-Pappas integral models under some mild assumptions. Combining \cite[Theorem 2]{HK} with Proposition \ref{lift} we have
\begin{cor}
The ``almost product structure" of Newton strata holds if $G$ is unramified.
\end{cor}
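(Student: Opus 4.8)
The plan is to derive the statement by feeding Proposition \ref{lift} into the conditional theorem of Hamacher--Kim. Recall that in \cite{HK} the ``almost product structure'' of the Newton strata of $\mathscr{S}_K$ --- the assertion that, for each relevant $[b]$, the product of the perfection of the associated Igusa variety with the associated Rapoport--Zink space forms a pro-\'etale cover up to perfection of the Newton stratum $\mathscr{S}_K^{[b]}$ --- is proven under the standing tameness and parahoricity hypotheses on $(\mathbf{G}, X)$ fixed above, together with one further input, recorded there as \cite[Axiom A]{HK}. That axiom postulates precisely a lift of the Rapoport--Zink uniformisation map to a map $X(\l, b)_K \to \mathscr{S}_K(\overline \FF_p)$ on $\overline \FF_p$-points compatible with the canonical crystalline Tate tensors on both sides.

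So, assuming $G = \mathbf{G}_{\QQ_p}$ is unramified, I would first note that it lies within the tamely ramified case, so that, together with the remaining standing hypotheses ($p \neq 2$, $p \nmid |\pi_1(\mathbf{G}_\der)|$, and $K$ a connected parahoric), the running assumptions of \cite[Theorem 2]{HK} are in force. I would then invoke Proposition \ref{lift}, which, for unramified $G$, supplies exactly the unique tensor-compatible lift $X(\l, b)_K \to \mathscr{S}_K(\overline \FF_p)$ demanded by \cite[Axiom A]{HK}. With that axiom thereby verified unconditionally in the unramified case, \cite[Theorem 2]{HK} applies and yields the almost product structure for $\mathscr{S}_K$, which is the assertion of the corollary.

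The one genuine verification is to reconcile the precise shape of the lift in Proposition \ref{lift} with the way \cite{HK} uses \cite[Axiom A]{HK} in constructing the cover: one must confirm that ``respecting canonical crystalline Tate tensors'' in Proposition \ref{lift} entails the $\JJ_b(\QQ_p)$-equivariance and the compatibility with the crystalline period morphism that enter the Hamacher--Kim argument. This is a matter of unwinding definitions rather than of new geometric content, and it is where the (minimal) work lies; no input beyond Theorem \ref{intro} --- which underlies Proposition \ref{lift} via \cite[Proposition 6.5]{Zh} --- is required, and I anticipate no serious obstacle in the deduction itself.
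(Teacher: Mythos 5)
Your proposal is correct and matches the paper's own argument, which is precisely to combine \cite[Theorem 2]{HK} with Proposition \ref{lift}: the lifting map of Proposition \ref{lift} verifies \cite[Axiom A]{HK} in the unramified case, and the conditional theorem of Hamacher--Kim then yields the almost product structure. The extra compatibility check you flag (equivariance and compatibility with the period morphism) is indeed only a matter of unwinding the definitions in \cite{HK} and \cite{Zh}, so no further content is needed.
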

When $G$ is unramified and $K$ is hyperspecial, the ``almost product structure" of Newton strata is established by Mantovan \cite{Man} for PEL Shimura varieties. The general case is proved by Hamacher-Kim provided the lifting property \cite[Axiom A]{HK} holds.

\subsection{} We outline the strategy of the proof. First we show the $\s$-centralizer $\JJ_b$ acts transitively on $\pi_0(X(\l, b)_K)$. Then we show the stabilizer of each connected component is the normal subgroup $\JJ_b \cap \ker(\eta_G)$. Combining these two results one deduces that $\pi_0(X(\l, b)_K) \cong \JJ_b / (\JJ_b \cap \ker(\eta_G)) \cong \pi_1(G)^\s$ as desired.

The stabilizers can be determined by adapting the computations in \cite{N2}. The crucial part is to the show the transitivity of the $\JJ_b$ action. Our starting point is the following natural surjection (see Theorem \ref{str}) $$\sqcup_{\tw \in \cs_{\l, b}} \JJ_{b, \tw} \twoheadrightarrow \pi_0(X(\l, b)_K),$$ where $\cs_{\l, b}$ is the set of semi-standard elements (see \S \ref{ss}) contained in $\Adm(\l)$ and $[b]$, and $\JJ_{b, \tw} = \{g \in G(\brF); g\i b \s(g) = \tw\}$ on which $\JJ_b$ acts transitively. So it remains to connect all the subsets $\JJ_{b, \tw} K/K$ in $X(\l, b)_K$. To this end we consider the following decomposition $$\cs_{\l, b} = \sqcup_{x \in \cs_{\l, b}^+} \cs_{\l, b, x},$$ where $\cs_{\l, b}^+$ consists of standard elements in $\cs_{\l, b}$, and $\cs_{\l, b, x}$ consists of elements in $\cs_{\l, b}$ that are $\s$-conjugate to $x \in \cs_{\l, b}^+$ under the Weyl group of $G$. Note that $\cs_{\l, b}^+$ can be naturally identified with a subset of cocharacters dominated by $\l$. So we can adapt the connecting algorithm in \cite{N2} to connect $\JJ_{b, x} K/K$ for $x \in \cs_{\l, b}^+$ with each other. In contrast, the structure of $\cs_{\l, b, x}$ is much more mysterious, which makes it an essential difficulty to connect $\JJ_{b, \tw} K/K$ for $\tw \in \cs_{\l, b, x}$ with each other. To overcome it, we show that each set $\cs_{\l, b, x}$ contains a unique (distinguished) element $x_\distg$ which is of minimal length in its Weyl group coset, and then connect $\JJ_{b, \tw} K/K$ with $\JJ_{b, x_\distg} K/K$ for all $\tw \in \cs_{\l, b, x}$. This new connecting algorithm is motivated from the partial conjugation method by He in \cite{He1} and \cite{He0}.

\subsection{} The paper is organized as follows. In \S \ref{pre} we recall some basic notions and introduce the semi-standard elements. In \S \ref{sec-main} we outline the proof of the main result. In \S \ref{sec-P} we introduce the set $\cp_\tw$ which will play an essential role in our new connecting algorithm. In \S \ref{sec-leaf}, we introduce the new connecting algorithm and use it to connect $\JJ_{b, \tw} K/K$ for $\tw \in \cs_{\l, b, x}$ with each other. In \S \ref{sec-surj} we connect $\JJ_{b, x} K/K$ for $x \in \cs_{\l, b}^+$ with each other. In \S \ref{sec-normal}, \S \ref{sec-ker} and \S \ref{sec-order-3} we compute the stabilizer in $\JJ_b$ of each connected component of $X(\l, b)_K$.

\subsection*{Acknowledgment} We are grateful to Xuhua He for his detailed comments which greatly improve the exposition of the paper. We would like to thank Michael Rapoport for bringing his joint work \cite{PR} with Georgios Pappas to our attention. We are also grateful to Rong Zhou for pointing out the application (Corollary \ref{LR-conj}) to the Langlans-Rapoport conjecture.

\section{Preliminaries} \label{pre}
In the body of the paper we assume that $G$ is unramified, simple, and adjoint. Without loss of generality, we assume further that $F = \FF_q((t))$. Then $\brF = \kk ((t))$ with valuation ring $\co_\brF = \kk[[t]]$ and residue field $\kk = \overline \FF_q$.

\subsection{} \label{setup} Let $T \subseteq B$ be a maximal torus and a Borel subgroup defined over $\co_F$. Let $\car=(Y, \Phi^\vee, X, \Phi, \SS_0)$ be the root datum associated to the triple $(T \subseteq B \subseteq F)$, where $X$ and $Y$ are the character and cocharacter groups of $T$ respectively equipped with a perfect pairing $\< , \>: Y \times X \to \ZZ$; $\Phi = \Phi_G \subseteq X$ (resp. $\Phi^\vee \subseteq Y$) is the set of roots (resp. coroots); $\SS_0$ is the set of simple roots appearing in $B$. For $\a \in \Phi$, we denote by $s_\a$ the reflection which sends $\mu \in Y$ to $\mu-\<\mu, \a\> \a^\vee$, where $\a^\vee \in \Phi^\vee$ denotes the coroot of $\a$. Via the bijection $\a \leftrightarrow s_\a$, we also denote by $\SS_0$ the set of simple reflections.

Let $W_0 = N_T(\brF) / T(\brF)$ be the Weyl group of $G$, where $N_T$ is the normalizer of $T$ in $G$. The Iwahori-Weyl group of $G$ is given by $$\tW_G = \tW = N_T(\brF) / T(\co_\brF) = Y \rtimes W_0=\{t^\mu w; \mu \in Y, w \in W_0\}.$$ We can view $\tW$ as a subgroup of affine transformations of $V := Y \otimes_\ZZ \RR$, where the action of $\tw=t^\mu w$ is given by $v \mapsto \mu+w(v)$. Let $\Phi^+ = \Phi \cap \ZZ_{\ge 0} \SS_0$ be the set of positive roots and let $\ba=\{v \in Y_\RR; 0 < \<\a, v\> < 1, \a \in \Phi^+\}$ be the base alcove.

Let $\tPhi = \tPhi_G = \Phi \times \ZZ$ be the set of (real) affine roots. Let $\tta = \a + k \in \tPhi$. Then $\tilde \a$ is an affine function on $V$ such that $\tilde \a(v) = -\<\a, v\> + k$. The induced action of $\tW$ on $\tPhi$ is given by $\tw(\tilde \a)(v)=\tilde \a(\tw\i(v))$. Let $s_{\tilde \a} = t^{k \a^\vee} s_\a \in \tW$ be the corresponding affine reflection. Then $\{s_{\tilde \a}; \tilde \a \in \tPhi\}$ generates the affine Weyl group $$W^a = W^a_G = \ZZ \Phi^\vee \rtimes W_0=\{t^\mu w; \mu \in \ZZ \Phi^\vee, w \in W_0\}.$$ Moreover, we have $\tW = W^a \rtimes \Omega$, where $\Omega = \Omega_G = \{\o \in \tW; \o(\ba)=\ba\}$. Set $\tPhi^+ = \tPhi_G^+ = \{\tilde \a \in \tPhi; \tilde \a(\ba) > 0\}$ and $\tPhi^-=-\tPhi^+$. Then $\tPhi=\tPhi^+ \sqcup \tPhi^-$. Note that $\Phi^\pm \subseteq \tilde \Phi^\mp$. Let $\ell: \tW \to \NN$ be the length function given by $\ell(\tw)=|\tPhi^- \cap \tw(\tPhi^+)|$. Let $\SS^a=\{s_{\tilde \a}; \tilde \a \in \tPhi, \ell(s_{\tilde \a})=1\}$ be the set of simple affine reflections. Notice that $(W^a, \SS^a)$ is a Coxeter system, and let $\leq$ be the associated Bruhat order on $\tW = W^a \rtimes \Omega$.

For $\tta=(\a, k) \in \tPhi$, let $U_{\tta}: \GG_a \to LG$ be the corresponding affine root subgroup, where $LG$ denotes the loop group associated to $G$. More precisely, $U_{\tta}(z)= u_\a(z t^k)$ for $z \in \kk$, where $u_\a: \GG_a \to G$ is the root subgroup of $\a$. We set $$I = I_G = T(\co_\brF) \prod_{\tta \in \tPhi^+} U_{\tta}(\kk) = T(\co_\brF) \prod_{\a \in \Phi^+} u_\a(t \co_\brF) \prod_{\b \in \Phi^+} u_{-\b}(\co_\brF),$$ which is called an Iwahori subgroup of $G(\brF)$.

\subsection{} \label{admissible} Let $v \in V = Y \otimes \RR$. We say $v$ is dominant if $\<v, \a\> \ge 0$ for each $\a \in \Phi^+$, and denote by $\bar v$ the unique dominant $W_0$-conjugate of $v$. Let $Y^+$ and $V^+$ be the set of dominant vectors in $Y$ and $V$ respectively. For $v, v' \in V$ we write $v' \le v$ if $v - v' \in \RR_{\ge 0} (\Phi^+)^\vee$.

Let $\s$ be the Frobenius automorphism of $G(\brF)$. We also denote by $\s$ the induced automorphism on the root datum $\car$. Then $\s$ acts on $V$ as a linear transformation of finite order which preserves $\ba$. For $\tw \in \tW$ there exists a nonzero integer $m$ such that $(\tw\s)^m=t^\xi$ for some $\xi \in Y$. Define $\nu_\tw = \xi / m \in V$, which does not depend on the choice of $m$.

Let $b \in G(L)$. We denote by $[b] = [b]_G = \{g\i b \s(g); g \in G(L)\}$ the $\s$-conjugate class of $b$. By \cite{Ko}, the $\s$-conjugacy class $[b]$ is determined by two invariants: the Kottwitz point $\k_G(b) \in \pi_1(G)_\s$ and the Newton point $\nu_G(b) \in (V^+)^\s$. Here $\k_G: G(\brF) \to \pi_1(G)_\s = \pi_1(G) / (\s-1)\pi_1(G)$ is the natural projection. To define $\nu_G(b)$, we note that there exists $\tw \in \tW$ such that $\tw \in [b]$. Then $\nu_G(b) = \bar \nu_{\tw}$, which does not depend on the choice of $\tw$. For $b' \in G(\brF)$ we set $$\JJ_{b, b'} = \JJ_{b, b'}^G =\{g \in G(\brF); g\i b \s(g)=b'\},$$ and put $\JJ_b = \JJ_{b, b'}$ if $b = b'$.

For $\l \in Y^+$ and $b \in G(\brF)$ we define $$X(\l, b) = X^G(\l, b)_I = \{g \in G(\brF) / I; g\i b \s(g) \in I \Adm(\l) I\},$$ where $\Adm(\l)$ is the $\l$-admissible set defined by $$\Adm(\l)=\{x \in \tW; x \leq t^{w(\l)} \text{ for some $w \in W_0$}\}.$$ Note that $\JJ_b$ acts on $X(\l, b)$ by left multiplication. By \cite{He2}, $X(\l, b) \neq \emptyset$ if and only if $\k_G(t^\l)=\k_G(b)$ and $ \nu_G(b) \le \l^\diamond$, where $t^\l := \l(t)$ and $\l^\diamond$ is the $\s$-average of $\l$. We say the pair $(\l, b)$ is Hodge-Newton irreducible if $X(\l, b) \neq \emptyset$ and $\l^\diamond - \nu_G(b) \in \RR_{> 0} (\Phi^+)^\vee$.

\subsection{} \label{parabolic} Let $M \supseteq T$ be a (semi-standard) Levi subgroup of $G$. Then $B \cap M$ is a Borel subgroup of $M$. By replacing the triple $(T, B, G)$ with $(T, B \cap M, M)$, we can define, as in previous subsections, $\Phi_M^+$, $\tW_M$, $\SS_M^a$, $\Omega_M$, $\tPhi_M^+$, $I_M$, $\k_M$ and so on.

For $v \in V$ we set $\Phi_v=\{\a \in \Phi; \a(v)=0\}$ and let $M_v \subseteq G$ be the Levi subgroup generated by $T$ and the root subgroups $u_\a$ for $\a \in \Phi_v$. We set $\tW_v = \tW_{M_v}$, $\tilde \Phi_v = \tilde \Phi_{M_v}$, and so on. If $v$ is dominant, let $J_v = \{s \in \SS_0; s(v)=v\}$.

Let $J \subseteq \SS_0$. Then there exists some $v' \in V^+$ such that $J_{v'} = J$, and we put $\Phi_J = \Phi_{M_{v'}}$, $\tW_J=\tW_{M_{v'}}$, $W_J^a = W_{M_{v'}^a}$, $\Omega_J=\Omega_{M_{v'}}$, and so on. We say $\mu \in Y$ is $J$-dominant (resp. $J$-minuscule) if $\<\a, \mu\> \ge 0$ (resp. $\<\a, \mu\> \in \{0, \pm1\}$) for $\a \in \Phi_J^+$.

Let $K \subseteq \SS^a$. Let $W_K \in W^a$ be the parabolic subgroup generated by $K$. Set ${}^K \tW = \{\tw \in \tW; \tw < s \tw \text{ for } s \in K\}$ and $\tW^K = ({}^K \tW)\i$. For $\tw \in {}^K \tW$ we define $I(K, \tw) = \max\{K' \subseteq K; \tw \s(K') \tw\i = K'\}$.

Let $\tw, \tw' \in \tW$ and $s \in \SS^a$. Write $\tw  \to_s \tw'$ if $\tw' = s \tw \s(s)$ and $\ell(\tw') \le \ell(\tw)$. For $K \subseteq \SS^a$ we write $\tw \to_K \tw'$ if there is a sequence $\tw = \tw_0 \to_{s_0} \tw_1 \to_{s_1} \cdots \to_{s_n} \tw_{n+1} = \tw'$ with $s_i \in K$ for $0 \le i \le n$.

\begin{lem} \label{K-min}
Let $K \subseteq \SS^a$ and $\tw \in {}^K \tW$. Then we have

(1) if $\tw < \tw s$ with $s \in \SS^a$, then either $\tw s \in {}^K \tW$ or $\tw s = s' \tw$ for some $s' \in K$;

(2) $\tw$ is the unique element of its $W_K$-$\s$-conjugacy class which lies in ${}^K \tW$.
\end{lem}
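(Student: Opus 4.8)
The plan is to prove both parts by a standard "lift to the full affine Weyl group / minimal-length coset representative" argument, exploiting the Coxeter structure of $(W^a, \SS^a)$ together with the twist by $\s$.

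For part (1), I would argue as follows. Suppose $\tw \in {}^K\tW$ and $\tw < \tw s$ for some $s \in \SS^a$. Consider the left coset $W_K \tw s$. If $\tw s$ is already the minimal-length element of this coset, i.e. $\tw s \in {}^K\tW$, we are done. Otherwise there is some $s' \in K$ with $\ell(s' \tw s) < \ell(\tw s)$. I would then compare lengths among the four elements $\tw$, $\tw s$, $s'\tw$, $s'\tw s$. From $\tw < \tw s$ we get $\ell(\tw s) = \ell(\tw)+1$; from $\tw \in {}^K\tW$ and $s' \in K$ we get $\ell(s'\tw) = \ell(\tw)+1$. The hypothesis $\ell(s'\tw s) < \ell(\tw s) = \ell(\tw)+1$ forces $\ell(s'\tw s) \le \ell(\tw)$. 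Now $s'\tw s$ is obtained from $\tw$ by multiplying on both sides by simple reflections, so $\ell(s'\tw s) \in \{\ell(\tw), \ell(\tw)\pm 2\}$ or differs by at most $1$ in degenerate cases; combining with $\ell(s'\tw s)\le \ell(\tw)$ and the fact that both $\ell(s'\tw)$ and $\ell(\tw s)$ exceed $\ell(\tw)$, the only consistent possibility is $\ell(s'\tw s) = \ell(\tw)$, and then the standard exchange-condition computation (comparing $s'(\tw s)$ with $(s'\tw)s$) yields $s'\tw s = \tw$, hence $\tw s = s'\tw$, which is the desired conclusion. This is really a lemma purely about Coxeter groups with parabolic $W_K$, applied to $W^a$ (and $\Omega$ commutes through harmlessly since conjugation by $\Omega$ permutes $\SS^a$).

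For part (2), I would show that $\tw \to_K \tw'$ with $\tw, \tw' \in {}^K\tW$ forces $\tw = \tw'$, and more generally that the $W_K$-$\s$-conjugacy class of any element meets ${}^K\tW$ in exactly one point. Existence of at least one such element is clear: take the minimal-length element of $W_K \cdot y$ for any $y$ in the class. For uniqueness, suppose $\tw, \tw' \in {}^K\tW$ lie in the same class, so $\tw' = u \tw \s(u)^{-1}$ for some $u \in W_K$. I would induct on $\ell(u)$ (length inside the Coxeter group $W_K$). Writing $u = s' u_1$ with $s' \in K$ and $\ell(u) = \ell(u_1)+1$, set $\tw_1 = u_1 \tw \s(u_1)^{-1}$; by induction on the class of $\tw_1$ — or more carefully, by using part (1) to control how left/right multiplication by elements of $K$ interacts with membership in ${}^K\tW$ — one reduces to the case $u = s' \in K$, i.e. $\tw' = s' \tw \s(s')^{-1}$ with both in ${}^K\tW$. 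Since $\tw \in {}^K\tW$ we have $\tw < s'\tw$; applying part (1) with $s = \s(s') \in \SS^a$, either $s'\tw = \tw\s(s') \cdot(\ldots)$ lies in ${}^K\tW$ — but then comparing, $s'\tw\s(s')$ would have length $\ell(\tw)+2 > \ell(\tw')=\ell(\tw)$ unless it re-enters, contradiction — or $\tw \s(s') = s'' \tw$ for some $s'' \in K$, which gives $\tw' = s'\tw\s(s') = s's''\tw$, forcing $s's'' = e$ and $\tw' = \tw$. The bookkeeping of which case occurs and why it collapses to equality is the part requiring care.

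The main obstacle I expect is the inductive step in part (2): making the reduction from general $u \in W_K$ to a single simple reflection fully rigorous, because after one step the intermediate element $\tw_1 = u_1 \tw \s(u_1)^{-1}$ need not lie in ${}^K\tW$, so one cannot naively iterate the two-element statement. The clean fix is to replace $\tw_1$ by the unique minimal-length element $\tw_1^{\min} \in {}^K\tW$ of $W_K \tw_1$ and track the identity $\tw_1 = v \tw_1^{\min}$ with $v \in W_K$, then show $\tw'$ and $\tw$ both reduce via $\to_K$ to $\tw_1^{\min}$ with no length increase, using part (1) repeatedly; since $\to_K$ is length-nonincreasing and $\tw, \tw' \in {}^K\tW$ are already of minimal length in their cosets (hence minimal in the whole class once we know the class has a unique such element — circular unless handled by induction on $\ell(\tw) + \ell(u)$ jointly), the two chains must terminate at the same element. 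So the real work is setting up the correct induction parameter (I would use $\ell(u)$) and invoking part (1) at each step to guarantee that the relation $\to_K$ never increases length while moving toward ${}^K\tW$; everything else is routine Coxeter combinatorics.
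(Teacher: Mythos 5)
The paper states Lemma \ref{K-min} without proof (it is standard Coxeter combinatorics; part (2) is also immediate from the uniqueness clause of Theorem \ref{partial}, quoted from He's work), so there is no in-paper argument to compare against and I assess your proposal on its own. Your part (1) is correct and is the expected argument: from $\ell(\tw s)=\ell(s'\tw)=\ell(\tw)+1$ and $\ell(s'\tw s)\le \ell(\tw)$ one gets $\ell(s'\tw s)=\ell(\tw)$, and the exchange-condition step you invoke is precisely Lemma \ref{commute} of the appendix, giving $s'\tw s=\tw$. (The aside about lengths ``differing by at most $1$ in degenerate cases'' is vacuous by parity, but harmless.)

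Part (2), however, has a genuine gap, which you partly acknowledge yourself. Even your single-reflection case is incomplete: you apply part (1) with $s=\s(s')$, which requires $\tw<\tw\s(s')$, and you never treat the case $\tw\s(s')<\tw$; you also use $\ell(\tw')=\ell(\tw)$ at a key point without justification. More seriously, the reduction from a general $u\in W_K$ to the single-reflection case is never actually carried out: as you observe, the intermediate elements $u_1\tw\s(u_1)^{-1}$ need not lie in ${}^K\tW$, so the two-element statement cannot be iterated, and your proposed repair (pass to minimal coset representatives and ``track'') is, by your own admission, circular as formulated. Since this reduction is the entire content of (2), the proposal does not yet prove it.

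The missing ingredients are small and use part (1) exactly as you intended. From $\tw'=u\tw\s(u)^{-1}$ one gets $\tw'\s(u)=u\tw$; the right side is length-additive because $\tw\in{}^K\tW$, and since $\s$ preserves $\ba$, hence $\SS^a$ and $\ell$, comparing lengths (and the symmetric identity with $u^{-1}$) yields $\ell(\tw)=\ell(\tw')$ and that $\tw'\cdot\s(u)$ is length-additive as well. Now take a reduced word $\s(u)=t_1\cdots t_r$ and feed the letters into $\tw'$ from the right, maintaining a decomposition $\tw' t_1\cdots t_i=a_i x_i$ with $a_i\in W_K$, $x_i\in{}^K\tW$, length-additive: at each step $x_{i-1}<x_{i-1}t_i$ (forced by length-additivity of the full product), so part (1) says $t_i$ is either absorbed on the left into $W_K$ (then $x_i=x_{i-1}$) or strictly increases the representative (then $x_i=x_{i-1}t_i\in{}^K\tW$). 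At the end $a_rx_r=u\tw$, so $x_r=\tw$ by uniqueness of minimal coset representatives, and $\ell(\tw)=\ell(\tw')$ rules out any increasing step, whence $\tw=x_r=x_0=\tw'$. This is the non-circular form of the induction you were looking for; alternatively, (2) follows in one line from the uniqueness statement in Theorem \ref{partial}.
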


\begin{thm} \cite[\S 3]{He1} \label{partial}
Let $K \subseteq \SS^a$ and $\tw \in \tW$. Then there exist $x \in {}^K \tW$ and $u \in I(x, K)$ such that $\tw \to_K u x$. Moreover, such $x$ is uniquely determined by the $W_K$-$\s$-conjugacy class of $\tw$.
\end{thm}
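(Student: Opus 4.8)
The plan is to read both assertions off the combinatorics of the relation $\to_K$, with Lemma \ref{K-min} as the essential input: existence by reducing $\tw$ to minimal length among the elements $\to_K$-reachable from it, and uniqueness by identifying such minimal elements with the minimal-length elements of the $W_K$-$\s$-conjugacy class of $\tw$ and controlling their ${}^K\tW$-components.

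For existence, pick $\tw'$ of minimal length among all elements with $\tw \to_K \tw'$, and write $\tw' = u x$ with $x \in {}^K\tW$, $u \in W_K$ and $\ell(\tw') = \ell(u) + \ell(x)$. I claim $u \in W_{I(x,K)}$; then $(x,u)$ is the required pair. Suppose not, and pick a left descent $s \in K$ of $u$. Then $\ell(s\tw') = \ell(\tw') - 1$, hence $\ell(s\tw'\s(s)) \in \{\ell(\tw')-2,\ \ell(\tw')\}$; the value $\ell(\tw')-2$ is impossible, since then $\tw \to_K s\tw'\s(s)$ would be shorter than $\tw'$. So $\s(s)$ is not a right descent of $s\tw' = (su)x$, which forces $x < x\s(s)$, and Lemma \ref{K-min}(1) (applied to $x \in {}^K\tW$ and the simple reflection $\s(s)$) gives two cases. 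Either $x\s(s) \in {}^K\tW$, and then $s\tw'\s(s) = (su)(x\s(s))$ is a decomposition along $W_K\backslash\tW$ of the same length with strictly shorter $W_K$-component $su$; or $x\s(s) = s'x$ for some $s' \in K$, and then $s\tw'\s(s) = (sus')x$, of the same length and the same ${}^K\tW$-component $x$. Neither move increases $\ell(\tw')$ or the length of the $W_K$-component. Iterating such moves is exactly the Geck--Pfeiffer/He reduction producing a minimal-length element of the $W_K$-$\s$-conjugacy class of $\tw$, and one checks — this being the combinatorial content of Lemma \ref{K-min}(1) — that any minimal-length element, written $vy$ with $y \in {}^K\tW$, has $v \in W_{I(y,K)}$. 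Since $\tw'$ was already of minimal length, $u \in W_{I(x,K)}$, proving existence.

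For uniqueness, note that each step $\tw'' \to_s \tw'''$ with $s \in K$ is a $W_K$-$\s$-conjugation, so the whole $\to_K$-reachable set of $\tw$ lies in the $W_K$-$\s$-conjugacy class of $\tw$; hence the output $ux$ is a minimal-length element of that class, and (by the previous paragraph) every minimal-length element of the class is ``reduced'', i.e.\ of the form $vy$ with $y \in {}^K\tW$ and $v \in W_{I(y,K)}$. By the Geck--Pfeiffer/He theorem, any two minimal-length elements of the class are joined by a chain of length-preserving steps $\to_s$ ($s \in K$), every term of which is again minimal-length, hence reduced. So it suffices to show that a single length-preserving step $vy \to_s v'y'$ between reduced elements has $y' = y$. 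Running through the cases as above via Lemma \ref{K-min}(1), the only case in which the ${}^K\tW$-component could change is excluded: there, using $sy > y$ for $s \in K$, $y \in {}^K\tW$, the resulting $v'$ would not lie in $W_{I(y',K)}$, contradicting that $v'y'$ is reduced. Therefore the ${}^K\tW$-component is constant on the set of minimal-length elements of the $W_K$-$\s$-conjugacy class of $\tw$; this common value is the asserted $x$, and it manifestly depends only on that class.

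I expect the existence step to be the real obstacle: verifying that the iterated shifts terminate with the $W_K$-component inside $W_{I(x,K)}$, rather than merely cycling among minimal-length decompositions whose ${}^K\tW$-components vary — this is precisely where one needs the Geck--Pfeiffer/He structure theory of minimal-length elements in (twisted) conjugacy classes, whose combinatorial core is Lemma \ref{K-min}. The uniqueness step is then a comparatively routine, if slightly delicate, case analysis resting on the same lemma.
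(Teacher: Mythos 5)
The paper gives no argument for Theorem \ref{partial}: it is quoted from He \cite[\S 3]{He1}. So your proposal has to stand on its own, and as written it does not. The case analysis via Lemma \ref{K-min}(1) is the right skeleton (a move either drops the total length, or strictly shortens the $W_K$-component while fixing the length, or fixes both the length and the ${}^K\tW$-component $x$), but the step that is supposed to close the existence argument --- ``any minimal-length element of the $W_K$-$\s$-conjugacy class, written $vy$ with $y\in{}^K\tW$, has $v\in W_{I(K,y)}$'' --- is false. Take $\s$ acting trivially on $\tW$ (split $G$), a type $A_2$ subsystem with simple reflections $s_1,s_2$, and $K=\{s_1\}$, $\tw=s_1s_2$. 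The $W_K$-$\s$-conjugacy class of $\tw$ is $\{s_1s_2,\,s_2s_1\}$, both of length $2$, so $s_1s_2$ is of minimal length; but its decomposition is $s_1\cdot s_2$ with $s_2\in{}^K\tW$ and $I(K,s_2)=\emptyset$, so its $W_K$-part does not lie in $W_{I(K,s_2)}$ (the theorem is rescued only because the \emph{other} minimal element $s_2s_1$ happens to lie in ${}^K\tW$). Hence ``$\tw'$ of minimal length in the reachable set'' does not force $u\in W_{I(K,x)}$, and your iteration argument must instead rule out an endless cycle of Case-B moves (same length, same $x$, $u\mapsto su s'$ with $s'=x\s(s)x\i\in K$) that never reaches $W_{I(K,x)}$. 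That termination statement is precisely the content of He's proof; deferring it to ``Geck--Pfeiffer/He structure theory'' is circular, since Theorem \ref{partial} is itself the partial-conjugation theorem of that theory.

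The uniqueness half inherits the same defects. It rests on two assertions that are not true: that every admissible output $ux$ (with $u\in W_{I(K,x)}$ and $\tw\to_K ux$) is of minimal length in the class, and that every minimal-length element is ``reduced''. For the first, take $G$ unramified quasi-split of type $A_2$ with $\s$ interchanging $s_1,s_2$, $K=\{s_1,s_2\}$, $\tw=s_1s_2$: then $s_2\tw\s(s_2)=\tw$, so $\tw\to_K\tw$ is an admissible output with $x=1$, $u=s_1s_2\in W_{I(K,1)}=W_K$, yet the class of $\tw$ contains $1$, so $\tw$ is far from minimal length. For the second, see the counterexample above. Moreover, the connectedness of minimal-length elements by length-preserving elementary steps is again the He/Geck--Pfeiffer machinery you cannot assume here. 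What uniqueness actually requires is that any two elements $ux$ and $u'x'$ with $x,x'\in{}^K\tW$, $u\in W_{I(K,x)}$, $u'\in W_{I(K,x')}$ lying in one $W_K$-$\s$-conjugacy class have $x=x'$; your reduction to minimal-length elements does not reach this statement, and no independent argument for it is supplied. So both halves of the proposal have genuine gaps; the correct route is the one in \cite[\S 3]{He1}, which handles the termination and the uniqueness by a direct induction on the decomposition $\tw=ux$ rather than through minimal length.
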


\subsection{} \label{ss} We say $\tw \in \tW$ is semi-standard if $\tw\s(\tPhi_{\nu_{\tw}}^+) = \tPhi_{\nu_{\tw}}^+$, that is, ${}^{\tw\s} I_{M_{\nu_\tw}} = I_{M_{\nu_\tw}}$. We say $\tw$ is standard if it is semi-standard and $\nu_{\tw}$ is dominant. Let $\cs$ and $\cs^+$ denote the set of semi-standard elements and standard elements respectively.

By abuse of notation, we will freely identify an element of $\tW$ with its lift in $N_T(\co_\brF)$, according to the context.
\begin{lem} \label{semi}
Let $\tw \in \cs$. Then we have

(1) $z \tw \s(z)\i \in \cs$ if $z \in \tW$ such that $z(\tPhi_{\nu_{\tw}}^+) \subseteq \tPhi^+$;

(2) there exists a unique pair $(\tw', z') \in \cs^+ \times W_0^{J_{\bar \nu_{\tw}}}$ such that $\tw = z' \tw' \s(z')\i$;

(3) $s \tw \s(s)\i \in \cs$ if $s \in \SS^a$ and either $s \tw < \tw$ or $\tw \s(s) < \tw$;

(4) $\JJ_{\tw}$ is generated by $I \cap \JJ_{\tw}$ and $\tW \cap \JJ_{\tw}$.
\end{lem}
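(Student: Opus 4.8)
The plan is to prove (1) by a direct root-theoretic computation and to deduce (2) and (3) from it, while (4) is of a more structural nature; I expect (4) to be the main obstacle. For (1), put $\tw'=z\tw\s(z)\i$, so that $\tw'\s=z(\tw\s)z\i$ in $\tW\rtimes\langle\s\rangle$; comparing $m$-th powers gives $M_{\nu_{\tw'}}=zM_{\nu_\tw}z\i$ and $z(\tPhi_{\nu_\tw})=\tPhi_{\nu_{\tw'}}$. The key point is that the hypothesis $z(\tPhi_{\nu_\tw}^+)\subseteq\tPhi^+$ then forces $z(\tPhi_{\nu_\tw}^+)=\tPhi_{\nu_{\tw'}}^+$: both are positive systems of the affine root system $\tPhi_{\nu_{\tw'}}$ cut out by alcoves, and one is contained in the other, hence they coincide. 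Equivalently ${}^zI_{M_{\nu_\tw}}=I_{M_{\nu_{\tw'}}}$, and conjugating the relation ${}^{\tw\s}I_{M_{\nu_\tw}}=I_{M_{\nu_\tw}}$ by $z$ gives ${}^{\tw'\s}I_{M_{\nu_{\tw'}}}=I_{M_{\nu_{\tw'}}}$, i.e. $\tw'\in\cs$.

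For (2), set $\bar\nu=\bar\nu_\tw$, $J=J_{\bar\nu}$, so that $W_J=\stab_{W_0}(\bar\nu)$; the fibre $\{u\in W_0;\ u(\bar\nu)=\nu_\tw\}$ is a single left $W_J$-coset, and I let $z'$ be its unique minimal-length element, so $z'\in W_0^{J}$, $z'(\Phi_J)=\Phi_{\nu_\tw}$, and (since $W_0$ preserves the $\ZZ$-grading on affine roots) $z'(\tPhi_J^+)=\tPhi_{\nu_\tw}^+$, i.e. $(z')\i(\tPhi_{\nu_\tw}^+)\subseteq\tPhi^+$. By (1), $\tw':=(z')\i\tw\s(z')\in\cs$; moreover $\nu_{\tw'}=(z')\i(\nu_\tw)=\bar\nu$ is dominant, so $\tw'\in\cs^+$ and $\tw=z'\tw'\s(z')\i$. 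For uniqueness, in any such decomposition comparing Newton points gives $\bar\nu_{\tw'}=\bar\nu$, hence $J_{\bar\nu_{\tw'}}=J$, and then $z'$ must be the minimal representative of the coset above, which determines both $z'$ and $\tw'$.

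For (3), semi-standardness of $\tw$ gives $\tw\i(\tPhi_{\nu_\tw}^+)=\s(\tPhi_{\nu_\tw}^+)\subseteq\tPhi^+$ and $\tw(\s(\tPhi_{\nu_\tw}^+))=\tPhi_{\nu_\tw}^+\subseteq\tPhi^+$. Writing $\tta_s\in\tPhi^+$ for the simple affine root attached to $s$: if $\tta_s\in\tPhi_{\nu_\tw}^+$ then $\tw\i(\tta_s)\in\tPhi^+$ and $\tw(\s(\tta_s))\in\tPhi^+$, that is $s\tw>\tw$ and $\tw\s(s)>\tw$, against the hypothesis. So $\tta_s\notin\tPhi_{\nu_\tw}$; since $s$ permutes $\tPhi^+\setminus\{\tta_s\}$ we get $s(\tPhi_{\nu_\tw}^+)\subseteq\tPhi^+$, and (1) applies with $z=s$.

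For (4) I would use that $\JJ_\tw$ is the group of $\brF$-points of a connected reductive $F$-group $J_\tw$, an inner form of $M_{\bar\nu_\tw}$. Semi-standardness — that $\tw\s$ normalizes $I_{M_{\nu_\tw}}$ — is precisely what guarantees that $\JJ_\tw\cap I=\JJ_\tw\cap I_{M_{\nu_\tw}}$ is an Iwahori subgroup of $J_\tw$ and that $\tW\cap\JJ_\tw$ (via its lift in $N_T(\co_\brF)$) represents the Iwahori–Weyl group of $J_\tw$; the Bruhat decomposition of $J_\tw$ with respect to this Iwahori then presents every $g\in\JJ_\tw$ as a product of elements of $\JJ_\tw\cap I$ and $\JJ_\tw\cap\tW$. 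Alternatively, one can induct on the length of the Bruhat cell $I\tx I\ni g$, writing $g=u\dot\tx u'$ in Iwahori normal form and exploiting the relation $g\i\tw\s(g)=\tw$ together with uniqueness of the normal form to split off the torus-normalizer part after a correction in $T(\co_\brF)$, the availability of such corrections resting again on semi-standardness. I expect (4) to be the hardest step: (1)–(3) are essentially bookkeeping with affine roots, whereas (4) requires controlling the internal structure of the $\s$-centralizer.
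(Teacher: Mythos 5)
Your proofs of (1)--(3) are correct and are essentially the paper's own arguments: (1) is the observation that $z(\tPhi_{\nu_\tw}^+)\subseteq\tPhi^+$ forces $z(\tPhi_{\nu_\tw}^+)=\tPhi^+_{\nu_{z\tw\s(z)^{-1}}}$, (2) takes $z'\in W_0^{J_{\bar\nu_\tw}}$ with $z'(\bar\nu_\tw)=\nu_\tw$ and deduces uniqueness from dominance of the Newton point and uniqueness of minimal coset representatives, and (3) reduces to (1) by showing the simple affine root of $s$ cannot lie in $\tPhi_{\nu_\tw}^+$ (otherwise $s\tw,\tw\s(s)>\tw$). For (4) your main route differs from the paper's. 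You pass to the rational group $J_\tw$ over $F$ (note: $\JJ_\tw$ is its group of $F$-points, not $\brF$-points) and invoke that $I\cap\JJ_\tw$ is an Iwahori subgroup of $J_\tw(F)$ and that $\tW\cap\JJ_\tw$ furnishes representatives of its Iwahori--Weyl group; but these two assertions carry essentially the full content of (4) (they amount to the Iwahori--Bruhat decomposition of $\JJ_\tw$ with respect to $I\cap\JJ_\tw$ and $\tW\cap\JJ_\tw$), so as written this route assumes what is to be proved unless you supply the Bruhat--Tits descent argument behind them. The paper instead stays over $\brF$: since $\JJ_\tw\subseteq M_{\nu_\tw}(\brF)$ and $\tw\s$ stabilizes both $I_{M_{\nu_\tw}}$ and $\tW_{M_{\nu_\tw}}$, one writes $g\in\JJ_\tw$ in the Bruhat decomposition $M_{\nu_\tw}(\brF)=I_{M_{\nu_\tw}}\tW_{M_{\nu_\tw}}I_{M_{\nu_\tw}}$, uses uniqueness of the cell to see it is $\tw\s$-stable, and adjusts the representatives by a standard Lang-type (successive approximation) argument in $T(\co_\brF)$ and the Iwahori to make them fixed; your ``alternative'' inductive sketch with corrections in $T(\co_\brF)$ is precisely this argument, and is the route you should carry out rather than the appeal to the rational structure of $J_\tw$.
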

\begin{proof}
By assumption we have $z (\tPhi_{\nu_{\tw}}^+) = \tPhi_{\nu_{z \tw \s(z)\i}}^+$. So (1) follows by definition.

Let $z' \in W_0^{J_{\bar \nu_{\tw}}}$ such that $z'(\bar \nu_{\tw}) = \nu_{\tw}$. Let $\tw' = {z'}\i \tw \s(z')$. Note that $z'(\tPhi_{\bar \nu_{\tw}}^+) = \tPhi_{\nu_{\tw}}^+$. So $\tw' \in \cs^+$ by (1). Suppose there exists another pair $(\tw'', z'') \in \cs^+ \times  W_0^{J_{\bar \nu_{\tw}}}$ satisfies (2). Then $\nu_{\tw'} = \nu_{\tw''} = \bar \nu_{\tw}$ and ${z'}\i z'' \in W_{J_{\bar \nu_{\tw}}}$. Thus $z' = z'' \in W_0^{J_{\bar \nu_{\tw}}}$ and hence $\tw' = \tw''$.

By (1), to prove (3) it suffices to show $s(\tPhi_{\nu_{\tw}}^+) \subseteq \tPhi^+$. Otherwise, the simple affine root of $s$ lies in $\tPhi_{\nu_{\tw}}^+$. Hence $s \tw, \tw \s(s) > \tw$  (since $\tw\s(\tPhi_{\nu_{\tw}}^+) = \tPhi_{\nu_{\tw}}^+$), contradicting our assumption.

Note that $\JJ_\tw \subseteq M_{\nu_{\tw}}$. Thus (4) follows from that ${}^{\tw\s} I_{M_{\nu_\tw}} = I_{M_{\nu_\tw}}$, ${}^{\tw\s} \tW_{M_{\nu_\tw}} = \tW_{M_{\nu_\tw}}$, and the Bruhat decomposition $M_{\nu_\tw}(\brF) = I_{M_{\nu_\tw}} \tW_{M_{\nu_\tw}} I_{M_{\nu_\tw}}$.
\end{proof}

\section{Proof of Theorem \ref{intro}} \label{sec-main}
We fix $\l \in Y^+$ and $b \in G(\brF)$ such that $X(\l, b) \neq \emptyset$. Let $J = J_{\nu_G(b)} \subseteq \SS_0$. Moreover, we may and do assume that $b \in M_J(\brF)$ and $\nu_{M_J}(b) = \nu_G(b)$. In particular, $\JJ_b = \JJ_b^{M_J}$.

For $x \in \pi_1(M_J) \cong Y / \ZZ \Phi_J^\vee$ we set $\tw_x = t^{\mu_x} w_x \in \Omega_{M_J}$ with $\mu_x \in Y$ and $w_x \in W_J$ such that $\eta_{M_J}(\tw_x) = x$. Here $\eta_{M_J} : M_J(\brF) \to \pi_1(M_J)$ denotes the natural projection.
Define \begin{align*} \cs_{\l, b}^+ &= \{x \in \pi_1(M_J); \k_{M_J}(x) = \k_{M_J}(b), \mu_x \preceq \l\}, \\ \cs_{\l, b, x} &= \{z \tw_x \s(z)\i \in \Adm(\l); x \in \cs_{\l, b}^+, z \in W_0^J\}. \end{align*} Let $\cs_{\l, b}$ be the set of semi-standard elements in $\Adm(\l)$ which are $\s$-conjugate to $b$. Then we have $\cs_{\l, b} = \sqcup_{x \in \cs_{\l, b}^+} \cs_{\l, b, x}$ by Lemma \ref{semi}.
\begin{thm}[\cite{HZ}] \label{str}
Each connected component of $X(\l, b)$ intersects $\JJ_{b, \tw} I/I$ for some $\tw \in \cs_{\l, b}$.
\end{thm}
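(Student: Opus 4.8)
The plan is to reduce the statement to a covering/stratification argument over the admissible set, using the Bruhat decomposition of $G(\brF)$ with respect to the Iwahori subgroup $I$. The starting point is that $X(\l,b)$ is stratified by the locally closed pieces $X_\tw := \{gI \in G(\brF)/I; g\i b\s(g) \in I\tw I\}$ for $\tw$ ranging over $\Adm(\l)$ with $[\tw]_\s = [b]$ (the so-called Iwahori-level Newton stratification of the admissible set), since $I\Adm(\l) I = \sqcup_{\tw} I\tw I$. Each nonempty $X_\tw$ carries a transitive action of $\JJ_b$ coming from left multiplication, and the map $g \mapsto g\i b\s(g)$ identifies $X_\tw$ with $\JJ_{b,\tw}I/I$ in a $\JJ_b$-equivariant way. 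So each connected component of $X(\l,b)$ is a union of such strata, and the claim is equivalent to saying that every connected component meets at least one stratum $X_\tw$ with $\tw$ \emph{semi-standard}, i.e. $\tw \in \cs_{\l,b}$.

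First I would isolate the key reduction: for a non-semi-standard $\tw \in \Adm(\l)$ with $[\tw]_\s=[b]$, I want to show that the stratum $X_\tw$ lies in the closure of (or is at least connected to, inside $X(\l,b)$) the union of strata indexed by elements of strictly smaller length, or more precisely by elements obtained by the elementary moves $\tw \to_s s\tw\s(s)\i$. This is where the operations $\tw \to_s \tw'$ from \S\ref{parabolic} enter: if $\tw \to_s \tw'$ with $\ell(\tw') < \ell(\tw)$, then there is a natural morphism (coming from the rank-one parabolic subgroup attached to $s$) relating $X_\tw$ and $X_{\tw'}$, and one shows $\overline{X_\tw} \cap X_{\tw'}$-type incidence, so that $X_\tw$ and $X_{\tw'}$ lie in the same connected component; if $\ell(\tw') = \ell(\tw)$ the two strata are isomorphic. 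Iterating, every connected component contains a stratum $X_\tw$ with $\tw$ of minimal length in its $\s$-conjugacy-reachable class. Then I would invoke the partial conjugation technology (Theorem \ref{partial} and Lemma \ref{K-min}) together with the characterization in \S\ref{ss}: a minimal-length element of this type is forced to be semi-standard, because if $\tw\s(\tPhi_{\nu_\tw}^+) \neq \tPhi_{\nu_\tw}^+$ then the simple affine root realizing the failure gives an $s$ with $s\tw < \tw$ or $\tw\s(s) < \tw$, and Lemma \ref{semi}(3) plus the length-reduction move produces a shorter element still in $[b]$ and still in $\Adm(\l)$ (admissibility being preserved under these moves). This contradicts minimality, so the minimal-length representative lies in $\cs_{\l,b}$.

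The one subtlety I expect to be the main obstacle is the compatibility of the elementary moves with the \emph{admissible set} and with the ambient variety $X(\l,b)$ simultaneously: the operation $\tw \to_s s\tw\s(s)\i$ needs to stay inside $\Adm(\l)$, which is a Bruhat-closed-ish condition but is only literally closed under left/right multiplication by $W^a$, not obviously under twisted conjugation; and the geometric statement that $X_\tw$ and $X_{\tw'}$ are in the same connected component of $X(\l,b)$ (not merely of the ambient affine flag variety stratum) requires that the connecting $\PP^1$-family stays inside $I\Adm(\l)I$. Both points are handled in \cite{HZ}, and the cleanest route is: (i) for admissibility, use that $\Adm(\l)$ is a union of lower Bruhat intervals and that $s\tw\s(s)\i \le \tw$ whenever the move strictly reduces length, so admissibility descends; (ii) for the geometry, use the standard fibration $X(\l,b) \to$ (partial flag at level $\{s\}$) whose fibers are $\PP^1$ or $\AA^1$, restricted over the locus where the element still pairs into $I\Adm(\l)I$. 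Once these are in place, the induction on length closes and yields the theorem.

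Finally, I would remark that this is exactly the content extracted from \cite[Corollary 4.4]{HZ} and the surrounding analysis: the assertion is that the composite $\sqcup_{\tw \in \cs_{\l,b}} \JJ_{b,\tw}I/I \to \pi_0(X(\l,b))$ is surjective, and the whole point of the rest of the present paper is to analyze this surjection (via the decomposition $\cs_{\l,b} = \sqcup_{x}\cs_{\l,b,x}$ and the distinguished elements $x_\distg$) to deduce transitivity of the $\JJ_b$-action. So for the proof of Theorem \ref{str} itself, the honest thing is to cite \cite{HZ}; the plan above reconstructs why it holds.
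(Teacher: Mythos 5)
Your plan diverges from the paper's proof at exactly the point where the real content lies, and the substitute step fails. The paper's proof is a two-step citation: by \cite[Theorem 4.1]{HZ}, each connected component of $X(\l,b)$ meets $\JJ_{b,\tw}$ for some $\s$-\emph{straight} element $\tw \in \Adm(\l)$ which is $\s$-conjugate to $b$, and then the proof of \cite[Theorem 1.3]{N1} shows that $\s$-straight elements are semi-standard, hence lie in $\cs_{\l,b}$. You never pass through straight elements; instead your endgame is the claim that an element of minimal length in its reachable class must be semi-standard, justified via Lemma \ref{semi}(3). That lemma is used in the wrong direction: it \emph{assumes} $\tw \in \cs$ and produces semi-standard conjugates, so it cannot generate a contradiction from non-semi-standardness. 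Moreover the claim itself is false: for split adjoint $G$ of rank $1$, $b=1$, $\l = \a^\vee$, the finite reflection $s_\a$ lies in $\Adm(\l)$, is $\s$-conjugate to $b$, and has minimal length in its $\s$-conjugacy class (no length-reducing move exists), yet $\nu_{s_\a}=0$ and $s_\a(\tPhi^+) \neq \tPhi^+$, so $s_\a$ is not semi-standard. Your induction therefore terminates at such elements without reaching $\cs_{\l,b}$; bridging from minimal-length to straight (equivalently, here, semi-standard) elements is precisely the nontrivial part of \cite[Theorem 4.1]{HZ}, and the passage from straight to semi-standard is the extra input from \cite{N1} that the paper adds on top.

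Two secondary inaccuracies feed this gap. The strata $X_\tw$ are indexed by $\tw \in \Adm(\l)$ with $I\tw I \cap [b] \neq \emptyset$, not by $\tw \in [b]$; and for a general such $\tw$ it is not true that $\JJ_b$ acts transitively on $X_\tw$, nor that $g \mapsto g\i b\s(g)$ identifies $X_\tw$ with $\JJ_{b,\tw}I/I$ --- one only has $\JJ_{b,\tw}I/I \subseteq X_\tw$, and $\JJ_{b,\tw}$ can be empty while $X_\tw$ is not. These identifications are special features of minimal-length (in particular straight) elements, which is another reason the reduction to straight elements cannot be bypassed. Finally, the relevant citation is \cite[Theorem 4.1]{HZ}, not Corollary 4.4 of that paper (the latter is the adjoint-reduction Cartesian diagram quoted in the introduction).
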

\begin{proof}
By \cite[Theorem 4.1]{HZ}, each connected component of $X(\l, b)$ intersects $\JJ_{b, \tw}$ for some $\s$-straight element $\tw \in \Adm(\l)$ which is $\s$-conjugate to $b$. Then the statement follows from the proof of \cite[Theorm 1.3]{N1}, which shows that $\s$-straight elements are semi-standard.
\end{proof}

For $g, g' \in G(\brF)$ we write $g I \sim_{\l, b} g' I$ if they are in the same connected component of $X(\l,b)$. For $\tw, \tw' \in \cs_{\l, b}$, we write $\JJ_{b, \tw} \sim_{\l, b} \JJ_{b, \tw'}$ if their natural images in $\pi_0(X(\l, b))$ coincide.
\begin{prop} \label{one-leaf}
For $x \in \cs_{\l, b}^+$ and $\tw, \tw' \in \cs_{\l, b, x}$ we have $\JJ_{b, \tw} \sim_{\l, b} \JJ_{b, \tw'}$.
\end{prop}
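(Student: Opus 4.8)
The plan is to connect all $\JJ_{b,\tw} I/I$ for $\tw \in \cs_{\l, b, x}$ to a single distinguished representative by exploiting the partial conjugation method. Recall that every $\tw \in \cs_{\l, b, x}$ has the form $z\tw_x\s(z)\i$ with $z \in W_0^J$, and all such $\tw$ are $W_J$-$\s$-conjugate to $\tw_x \in \Omega_{M_J}$; equivalently, they all lie in a single $W_{K_x}$-$\s$-conjugacy class for an appropriate $K_x \subseteq \SS^a$ with $W_{K_x}$ finite (generated by the affine reflections fixing the relevant facet). By Theorem \ref{partial}, this $W_{K_x}$-$\s$-conjugacy class contains a \emph{unique} element $x_\distg \in {}^{K_x}\tW$, and by Lemma \ref{K-min}(2) this $x_\distg$ is the unique minimal-length element in its coset. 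The goal is thus to show $\JJ_{b,\tw} \sim_{\l, b} \JJ_{b, x_\distg}$ for every $\tw \in \cs_{\l, b, x}$.

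First I would record that it suffices to show, for each elementary step $\tw \to_s \tw'$ with $s \in \SS^a$ and $\tw, \tw' \in \cs_{\l, b, x}$ (so $\tw' = s\tw\s(s)\i$ and $\ell(\tw') \le \ell(\tw)$), that $\JJ_{b,\tw} \sim_{\l, b} \JJ_{b,\tw'}$; iterating along a chain $\tw \to_{K_x} x_\distg$ then gives the claim. Here one must check that each intermediate element of such a chain again lies in $\cs_{\l, b, x}$: semi-standardness is preserved by Lemma \ref{semi}(3) whenever $s\tw < \tw$ or $\tw\s(s) < \tw$, membership in $[b]$ is automatic, and membership in $\Adm(\l)$ needs an argument (length-decreasing $\to_s$ steps within a $\s$-conjugacy class preserve the admissible set, since $\Adm(\l)$ is closed under $\s$-conjugation that does not increase length — this uses that $\Adm(\l)$ is a union of lower intervals).

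The heart of the argument is the single elementary step. Fix $\tw \in \cs_{\l, b, x}$ and $s = s_{\tta} \in \SS^a$ with $\tw' = s\tw\s(s)\i$ and $\ell(\tw') \le \ell(\tw)$. Lifting $s$ to an element of $N_T(\co_\brF) \cap I$-translates, I would construct an explicit $\PP^1$-family (or rather $\AA^1$-family, compactified) inside $X(\l, b)$: for the affine root subgroup $U_{\tta}$, the map $z \mapsto U_{\tta}(z)\, gI$ for $g \in \JJ_{b,\tw}$ traces out a line whose two "ends" land in $\JJ_{b,\tw} I/I$ and $\JJ_{b,\tw'} I/I$ respectively, provided the relevant admissibility and length conditions hold so that the whole family stays in $X(\l, b)$. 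This is precisely the local geometry underlying He's partial conjugation and the reduction-method lemmas of Görtz–He–Nie; the case distinction according to whether $s\tw\s(s)\i = \tw$, or $\ell$ drops by $2$, or $\tw, \tw'$ are conjugate by $s$ on one side only, controls which affine line to use and in which direction it degenerates. The set $\cp_\tw$ introduced in \S\ref{sec-P} is designed to package exactly which such moves are available and to guarantee the family remains in $\Adm(\l)$.

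The main obstacle I anticipate is not any single elementary step but \emph{connectivity of the move graph}: showing that from an arbitrary $\tw \in \cs_{\l, b, x}$ one can actually reach $x_\distg$ by a chain of admissible elementary moves that never leaves $\cs_{\l, b, x}$. Theorem \ref{partial} gives a chain $\tw \to_{K_x} u\, x_\distg$ with $u \in I(x_\distg, K_x)$, but one must further absorb the factor $u$ (which is itself $W_{I(x_\distg,K_x)}$-valued and acts by $\s$-conjugation fixing $x_\distg$) and, more seriously, verify that the $\to_{K_x}$-chain produced by the partial conjugation algorithm can be chosen so that every intermediate element lies in $\Adm(\l)$ — i.e. that the purely combinatorial reduction of He stays inside the admissible set. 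I expect this to require a careful interplay between the minimality of $x_\distg$, the fact that $\cs_{\l, b, x}^{} \subseteq \Adm(\l)$ by hypothesis, and a monotonicity property of $\Adm(\l)$ under length-nonincreasing $\s$-conjugation; this is the step I would expect to occupy most of \S\ref{sec-leaf}.
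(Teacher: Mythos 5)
There is a genuine gap, and it sits exactly at the step you label as the heart of the argument: the claim that an elementary $\s$-conjugation step $\tw \to_s \tw'$ inside $\cs_{\l,b,x}$ can be realized by the curve $z \mapsto g\,U_{\tta}(z)\,I$ with endpoints in $\JJ_{b,\tw}I/I$ and $\JJ_{b,\tw'}I/I$. Because of the Frobenius twist, $\s$-conjugating $b$ by $U_{\tta}(z)$ produces $U_{\tta}(-z)\,\tw\,U_{\s(\tta)}(c z^{q})$, and neither does this stay in $I\Adm(\l)I$ for all $z$ under the mere hypothesis $s\tw\s(s)\in\Adm(\l)$, nor is the limit point at $z=\infty$ of the form $g\,s\,I$: controlling the degeneration forces one to conjugate simultaneously along the whole $\tw\s$-orbit of the root. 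This is precisely what the paper's curves $\textsl{g}_{g,\g,\tw,m}$ and the set $\cp_\tw$ are designed for: the defining conditions of a permissible root ($\tw\s(s_\a)\in\Adm(\l)$ --- a right-multiplication condition, not conjugation-stability --- and $\a^{-m_{\a,\tw}}\in\tPhi^+$) are exactly what keeps the whole $\PP^1$ inside $X(\l,b)$; the limit is computed in Lemma \ref{limit}, which needs linear independence of the orbit roots and Hypothesis \ref{hypo} on $q$ (the split case being quoted from \cite{CN}); and the move obtained is by a product of reflections $y\in W_0$, not by a single simple (affine) reflection. Moreover $y\tw\s(y)\i$ need not lie in $\cs_{\l,b,x}$, so the paper must return to a semi-standard representative via Lemma \ref{tosemi}, at the cost of a factor $h\in H_x\cap\JJ_{\tw_x}$; this ambiguity is only eliminated at the very end (Proposition \ref{weyl-conj}) by running two such chains into the unique distinguished element of ${}^{\SS_0}\tW$. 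None of this machinery is supplied, or replaceable, by your sketch.

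The connectivity issue you flag is real, but your proposed resolution (He's algorithm plus a monotonicity of $\Adm(\l)$ under length-nonincreasing $\s$-conjugation) is not how it can be made to work and is not what the paper does: the paper abandons length in favour of the strictly increasing flat invariant $\nu_\tw^\flat$ (Corollary \ref{dominant}, Lemma \ref{min}), and the availability of a move at every non-terminal stage is Corollary \ref{non-empty}, whose proof (via Proposition \ref{unique}, Lemma \ref{existence}, Lemma \ref{permissible} and the Appendix on distinct elements) is where Hodge--Newton irreducibility is used in an essential way. Your proposal never invokes HN-irreducibility nor any hypothesis on the residue field, which is a clear sign the strategy as sketched cannot be complete. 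Two smaller slips: the elements of $\cs_{\l,b,x}$ are $W_0$-$\s$-conjugate to $\tw_x$ (by $z\in W_0^J$), not $W_J$-$\s$-conjugate, and the relevant set of reflections is $K=\SS_0$, not a general finite parahoric subset of $\SS^a$; also, the factor $u\in I(K,\tw')$ from Theorem \ref{partial} is automatically trivial for semi-standard elements (this is the content of Proposition \ref{unique}), so that particular worry is not where the difficulty lies.
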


In the following four propositions, we retain the assumptions in Theorem \ref{intro}. The proofs are given in the remaining sections.
\begin{prop} \label{surj}
For $x, x' \in \cs_{\l, b}^+$ we have $\JJ_{b, \tw_x} \sim_{\l, b} \JJ_{b, \tw_{x'}}$. As a consequence, the natural projection $\JJ_{b, \tw} \to \pi_0(X(\l, b))$ is surjective for $\tw \in \cs_{\l, b}$.
\end{prop}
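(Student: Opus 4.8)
The plan is to run a connecting algorithm on the set $\cs_{\l,b}^+$, which is naturally identified with a subset of cocharacters $\{\mu_x\}$ dominated by $\l$ satisfying the Kottwitz-point constraint, and to show that moving along the edges of this algorithm does not change the connected component in $\pi_0(X(\l,b))$. First I would reduce to connecting $\JJ_{b,\tw_x}I/I$ with $\JJ_{b,\tw_{x'}}I/I$ when $x$ and $x'$ differ by an ``elementary step'', i.e.\ $\mu_{x'}$ is obtained from $\mu_x$ by adding an appropriate coroot $\a^\vee$ (or, after conjugating by $W_0^J$, by a reflection-type move inside a rank-one or rank-two sub-situation), staying inside the admissible set and inside the fixed $\s$-conjugacy class $[b]$. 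The point is that for such an elementary step one can write down an explicit element of an affine root subgroup $U_{\tta}(\kk)$, or a $\PP^1$'s worth of such elements, lying in $X(\l,b)$ and joining a point of $\JJ_{b,\tw_x}I/I$ to a point of $\JJ_{b,\tw_{x'}}I/I$; here one uses that $\JJ_b$ acts transitively on each $\JJ_{b,\tw}$, so it is enough to connect \emph{some} point of one stratum to \emph{some} point of the other. This is exactly the mechanism of the connecting algorithm in \cite{N2}, now carried out inside the Levi $M_J$ rather than $G$, and one has to check that the hypotheses of that algorithm (Hodge-Newton irreducibility, or rather its $M_J$-analogue, and the admissibility bounds) are met by $\cs_{\l,b}^+$.

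The key steps, in order, would be: (i) translate the combinatorics of $\cs_{\l,b}^+$ into a statement about cocharacters $\mu \preceq \l$ with $\k_{M_J}(t^\mu)=\k_{M_J}(b)$, and verify this set is ``connected'' under the elementary moves used in \cite{N2} — essentially a saturation/convexity statement for the weight polytope intersected with a coset of the coroot lattice; (ii) for each elementary move, produce the explicit curve in $X(\l,b)$ realizing $\JJ_{b,\tw_x}\sim_{\l,b}\JJ_{b,\tw_{x'}}$, checking both the admissibility condition $g\i b\s(g)\in I\Adm(\l)I$ along the curve and that the curve stays in a single connected component; (iii) conclude by transitivity of $\sim_{\l,b}$ that all $\JJ_{b,\tw_x}$, $x\in\cs_{\l,b}^+$, lie in one component. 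The final ``as a consequence'' clause then follows immediately: given $\tw\in\cs_{\l,b}$, Lemma \ref{semi}(2) writes $\tw=z'\tw'\s(z')\i$ with $\tw'\in\cs^+$, so $\tw\in\cs_{\l,b,x}$ for the corresponding $x\in\cs_{\l,b}^+$; Proposition \ref{one-leaf} gives $\JJ_{b,\tw}\sim_{\l,b}\JJ_{b,\tw_x}$, and by Theorem \ref{str} every connected component meets some $\JJ_{b,\tw}I/I$, hence meets $\JJ_{b,\tw_x}I/I$ for some $x\in\cs_{\l,b}^+$, which by step (iii) is a single component; so the map $\JJ_{b,\tw}\to\pi_0(X(\l,b))$ hits everything.

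The main obstacle I expect is step (ii) in the boundary cases: making the elementary move while \emph{simultaneously} staying inside $\Adm(\l)$ and inside $[b]$. Adding a coroot to $\mu_x$ can push it outside the admissible set unless the move is chosen carefully relative to the dominance chamber of $\nu_G(b)=\nu_{M_J}(b)$, and the $\s$-twist means the relevant coroot must be compatible with the $\s$-action on $\pi_1(M_J)$; the Hodge-Newton irreducibility hypothesis (which gives $\l^\diamond-\nu_G(b)\in\RR_{>0}(\Phi^+)^\vee$) is what provides the ``room'' to make these moves, and the real work is to see that it provides \emph{enough} room for a chain of moves connecting any two elements of $\cs_{\l,b}^+$. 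A secondary difficulty is bookkeeping the passage between $G$ and $M_J$: the algorithm of \cite{N2} is stated for (adjoint, simple) $G$, so one must either reduce to the simple factors of $M_J^{\ad}$ and handle central cocharacter directions separately, or re-derive the needed connecting lemmas directly for $M_J$; I would take the former route, since for central cocharacters the relevant stratum is discrete and the statement is vacuous or trivial there.
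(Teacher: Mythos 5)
Your plan is essentially the paper's own proof: the combinatorial connectivity of $\cs_{\l, b}^+$ under elementary moves is exactly Proposition \ref{conneted} (quoted from \cite[Lemma 6.7]{N2}), each move is realized by an explicit curve $\textsl{g}_{g, -\s^{r-1}(\tilde\g), \tw_x, r}$ in affine root subgroups with admissibility checked via Lemmas \ref{orth}, \ref{saturate} and \ref{line}, and the ``as a consequence'' clause follows, as you say, from Theorem \ref{str} and Proposition \ref{one-leaf}. The only caveats are that the curve's far endpoint a priori lies in $\JJ_{b, \tw'} I/I$ for some $\tw'$ in the leaf $\cs_{\l, b, x'}$ rather than in $\JJ_{b, \tw_{x'}} I/I$ itself, so Proposition \ref{weyl-conj} is needed already in the main step and not only for the consequence, and the boundary cases you anticipate are genuinely where the work lies: for $r = d$ one may have to swap the roles of $(x, \g)$ and $(x', \s^d(\g))$ via Lemma \ref{line}, and the case where $\s$ has order $3d$ (triality for type $D_4$) requires the separate, substantially longer analysis of \S\ref{sec-order-3}.
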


\begin{prop} \label{normal}
The natural action of $\ker(\eta_{M_J}) \cap \JJ_b$ on $\pi_0(X(\l, b))$ is trivial.
\end{prop}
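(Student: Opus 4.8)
The plan is to show that any $g \in \ker(\eta_{M_J}) \cap \JJ_b$ acts trivially on $\pi_0(X(\l,b))$ by exhibiting, for each connected component $C$, a point of $C$ that is fixed (up to the same component) by $g$. By Theorem \ref{str}, every component $C$ meets $\JJ_{b,\tw} I/I$ for some $\tw \in \cs_{\l,b}$, so it suffices to show that for $h \in \JJ_{b,\tw}$ we have $g h I \sim_{\l,b} h I$. Writing $g h = h \cdot (h^{-1} g h)$ and noting that $h^{-1} b \s(h) = \tw$ together with $g \in \JJ_b$ gives $(h^{-1}gh)^{-1} \tw \s(h^{-1}gh) = \tw$, i.e. $h^{-1} g h \in \JJ_\tw$; moreover $\eta_{M_J}$ is invariant under $\s$-conjugation on the relevant group, so $h^{-1} g h \in \ker(\eta_{M_J}) \cap \JJ_\tw$. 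Thus the statement reduces to the assertion: for each $\tw \in \cs_{\l,b}$ and each $g' \in \ker(\eta_{M_J}) \cap \JJ_\tw$, one has $g' I \sim_{\l,b} I$ inside $\JJ_{b,\tw} I / I$ (after translating back by $h$).

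The next step is to use the structural input of Lemma \ref{semi}(4): since $\tw \in \cs$, the group $\JJ_\tw$ is generated by $I \cap \JJ_\tw$ and $\tW \cap \JJ_\tw$. The subgroup $I \cap \JJ_\tw$ is (pro-)unipotent-by-torus and lies in $\ker(\eta_{M_J})$ automatically; its orbit on the base point $I/I$ stays inside a single $I$-orbit, hence within one connected component — here one invokes that $X(\l,b)$ is a union of (perfections of) locally closed subvarieties and that an $I$-orbit is irreducible, so it is connected. It remains to handle the lattice part $\tW \cap \JJ_\tw \cap \ker(\eta_{M_J})$. An element $\tau$ of this group is, modulo $\JJ_\tw \cap I$, a translation-type element of the Iwahori-Weyl group of $M_{\nu_\tw}$ with trivial image in $\pi_1(M_J)$; the point is to connect $\tau I/I$ to $I/I$ inside $X(\l,b)$. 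This is exactly the kind of move handled by the connecting algorithm adapted from \cite{N2} (and used to prove Propositions \ref{one-leaf} and \ref{surj}): one writes $\tau$ as a product of elementary steps, each realized by an affine line $\AA^1 \hookrightarrow X(\l,b)$ coming from a single affine root subgroup $U_{\tta}$, joining $\tw$ to a $\to_K$-equivalent element and hence staying in the same component. Since $\tau$ is $\eta$-trivial, the algorithm can be arranged to return to the original endpoint $\tw$, so the net effect on the base point is the identity on $\pi_0$.

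Finally I would assemble these pieces: $\ker(\eta_{M_J}) \cap \JJ_\tw$ is generated by $I \cap \JJ_\tw$ and by the $\eta$-trivial elements of $\tW \cap \JJ_\tw$ (this needs a short argument: a general generator from Lemma \ref{semi}(4) has some image in $\pi_1(M_J)_{\G_0}^\s \cong \pi_1(M_J)$, and we may multiply by lattice elements to kill it, using that $\eta_{M_J}$ restricted to $\tW \cap \JJ_\tw$ is surjective onto the relevant quotient), and each generator moves the base point within its connected component. Therefore every $g' \in \ker(\eta_{M_J}) \cap \JJ_\tw$ fixes the component of $I/I$; translating by $h$ and ranging over $\tw \in \cs_{\l,b}$ and over all components gives the proposition.

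I expect the main obstacle to be the lattice part: showing that an $\eta$-trivial translation element of $\tW \cap \JJ_\tw$ connects the base point back to itself in $X(\l,b)$. Unlike the transitivity statements (Propositions \ref{one-leaf}, \ref{surj}), where one only needs to reach \emph{some} other component, here one must verify that the connecting algorithm genuinely closes up — i.e. that the composite of the affine-line moves is a \emph{loop} in $\pi_0$, not merely a path. Controlling the endpoints of the algorithm (rather than just its existence) is where the computations adapted from \cite{N2}, together with the semi-standardness of $\tw$ and the $\s$-fixity built into $\ker(\eta_{M_J})$, will have to be made precise; I anticipate this is handled in the dedicated sections \S\ref{sec-normal} and \S\ref{sec-ker}.
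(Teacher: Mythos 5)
Your reduction is fine as far as it goes: it is legitimate to replace the action of $g\in\ker(\eta_{M_J})\cap\JJ_b$ on a component through a point $hI$, $h\in\JJ_{b,\tw}$, by the action of $h\i g h\in\JJ_{\tw}$ (more precisely of $\ker(\eta_{M_{\nu_\tw}})\cap\JJ_\tw$) on the base point, and to split $\JJ_\tw$ via Lemma \ref{semi}(4) into the Iwahori part (which fixes the point, hence the component) and the lattice part. This is essentially how the paper sets things up: the stabilizer of the component of $gI$, $g\in\JJ_{b,\tw_x}$, is $gQg\i$ with $I_{M_J}\cap\JJ_{\tw_x}\subseteq Q\subseteq\JJ_{\tw_x}$, and the whole problem is to show $W_J^a\cap\JJ_{\tw_x}\subseteq Q$.

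The genuine gap is that this last step — your ``lattice part'' — is asserted rather than proved. The sentence ``since $\tau$ is $\eta$-trivial, the algorithm can be arranged to return to the original endpoint, so the net effect on the base point is the identity on $\pi_0$'' is exactly the content of Proposition \ref{normal}, and it is not a formal consequence of the connecting algorithm used for Propositions \ref{one-leaf} and \ref{surj}: those only produce \emph{some} path between prescribed $\JJ_{b,\tw}$-orbits, whereas here one must exhibit explicit closed loops, and this genuinely uses Hodge--Newton irreducibility (for central $\l$ the variety is discrete and $\eta$-trivial translations do move components, so no purely formal closing-up argument can work). Concretely, the paper's proof needs two inputs you never supply: (i) for the ``non-central'' part $J_1$ of $J$, the stabilizer theorem of He--Zhou in the basic case (Theorem \ref{stab-basic}), imported through the embeddings $X^{M_{J_{x,1}}}(\mu_x,\tw_x)\hookrightarrow X(\l,b)$ (Lemma \ref{J1}); and (ii) for the ``central'' part $J_0$, new explicit one-parameter curves showing that the simple affine reflections of $W_{K}^a\cap\JJ_{\tw_x}$ lie in $Q$ — either directly (Lemma \ref{J01}, when $\mu_x+\a^\vee\preceq\l$ for some $\a\in K$) or, in the remaining case, via the delicate root-theoretic selection Lemma \ref{weak} (which is where Hodge--Newton irreducibility, weak dominance of $\mu_x$, and Lemma \ref{pr} enter), Lemma \ref{contain}, and a case analysis including an exceptional $E_6$ configuration. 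Since you explicitly defer precisely this step to ``the dedicated sections,'' the proposal is a correct framing of the problem but not a proof of it.
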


\begin{prop} \label{ker-G}
The natural action of $$(\ker(\eta_G) \cap \JJ_b) / (\ker(\eta_{M_J}) \cap \JJ_b) \cong (\ZZ\Phi^\vee / \ZZ\Phi_J^\vee)^\s$$ on $\pi_0(X(\l, b))$ is trivial.
\end{prop}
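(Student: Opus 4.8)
The plan is to reduce Proposition~\ref{ker-G} to a concrete statement about moving between the $\JJ_b$-stable subsets $\JJ_{b,\tw}I/I$ inside a single connected component, and then to exploit the fact that the quotient group in question is generated by cocharacter-like elements coming from coroots perpendicular to $J$. First I would fix a connected component $\cc$ of $X(\l,b)$ and recall from Proposition~\ref{surj} that the natural projection $\JJ_{b,\tw}\to\pi_0(X(\l,b))$ is surjective for every $\tw\in\cs_{\l,b}$; combined with Proposition~\ref{normal}, this tells us that $\pi_0(X(\l,b))$ is a transitive $\JJ_b/(\ker(\eta_{M_J})\cap\JJ_b)$-set. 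The group $\JJ_b/(\ker(\eta_{M_J})\cap\JJ_b)$ surjects onto $\pi_1(M_J)^{\s}_{\text{something}}$, and $(\ker(\eta_G)\cap\JJ_b)/(\ker(\eta_{M_J})\cap\JJ_b)$ is the kernel of the further projection to the $G$-side; the identification with $(\ZZ\Phi^\vee/\ZZ\Phi_J^\vee)^\s$ is the Kottwitz-invariant bookkeeping. So the content is: an element $g\in\JJ_b$ whose image in $\pi_1(M_J)$ is a $\s$-invariant element of $\ZZ\Phi^\vee/\ZZ\Phi_J^\vee$ must fix every connected component.

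Next I would produce, for each generator of $(\ZZ\Phi^\vee/\ZZ\Phi_J^\vee)^\s$, an explicit element of $\JJ_b$ realizing it and show directly that it fixes $\cc$. The natural candidates are elements of the form $t^{\mu}$ (or products of such) with $\mu\in\ZZ\Phi^\vee$, $\mu$ being $J$-dominant and $\s$-fixed modulo $\ZZ\Phi_J^\vee$, lying in the torus part of $\JJ_b\subseteq M_J(\brF)$. The key geometric input is that translation by such a $t^\mu$ sends $\JJ_{b,\tw}I/I$ to $\JJ_{b,t^\mu\tw\s(t^\mu)\i}I/I$, and when $\mu$ is $\s$-fixed mod $\ZZ\Phi_J^\vee$ one has $t^\mu\tw\s(t^\mu)\i=\tw$ (up to something in $\Omega_{M_J}$ of the same class), so the action permutes the relevant $\JJ_{b,\tw}$-pieces among themselves within $\cs_{\l,b,x}$. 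Then Proposition~\ref{one-leaf} (the single-leaf connectedness, $\JJ_{b,\tw}\sim_{\l,b}\JJ_{b,\tw'}$ for $\tw,\tw'\in\cs_{\l,b,x}$) shows these pieces all lie in the same component, so the translation fixes $\cc$. I would need to check that every $\s$-fixed class in $\ZZ\Phi^\vee/\ZZ\Phi_J^\vee$ is hit by such $\mu$ with the admissibility constraint $\mu_x\preceq\l$ respected — here the Hodge-Newton irreducibility hypothesis, which forces $\l^\diamond-\nu_G(b)$ to be strictly positive on all of $(\Phi^+)^\vee$, gives the room to add any bounded coroot combination without leaving $\Adm(\l)$.

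An alternative, cleaner route is to run the connecting algorithm of \S\ref{sec-leaf}--\S\ref{sec-surj} equivariantly: Proposition~\ref{surj} already connects $\JJ_{b,\tw_x}I/I$ for all $x\in\cs_{\l,b}^+$, and these $x$ range over a coset-space in $\pi_1(M_J)$ that, after quotienting by the $\s$-action and by $\ZZ\Phi_J^\vee$, surjects onto the ambient $\pi_1(G)_{\G_0}^\s$; an element of $\ker(\eta_G)\cap\JJ_b$ translates the indexing set $\cs_{\l,b}^+$ by a class that becomes trivial in $\pi_1(G)$, hence maps each $\JJ_{b,\tw_x}$ to some $\JJ_{b,\tw_{x'}}$ with $x,x'$ already proven to lie in the same component. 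Either way, the proof is a matter of chasing the surjections $\JJ_b\twoheadrightarrow\pi_1(M_J)$-type invariants against the already-established connectedness statements (Propositions~\ref{one-leaf} and \ref{surj}) and the $\JJ_b$-equivariance of all the constructions.

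\textbf{Main obstacle.} I expect the real difficulty to be the compatibility bookkeeping: verifying that a chosen lift $g\in\JJ_b$ of a class in $(\ZZ\Phi^\vee/\ZZ\Phi_J^\vee)^\s$ can be taken to act on the indexing set $\cs_{\l,b}=\sqcup_x\cs_{\l,b,x}$ by \emph{preserving each leaf} $\cs_{\l,b,x}$ (or at worst moving among leaves already known to be connected), rather than scrambling the decomposition in an uncontrolled way. Concretely, one must control how conjugation by $g$ interacts with the semi-standard representatives $z\tw_x\s(z)\i$, and in particular confirm that the $\s$-fixedness of the class is exactly what is needed for $g\cdot\cs_{\l,b,x}$ to land inside $\cs_{\l,b,x}$ (so that Proposition~\ref{one-leaf} applies) — this is where the $\s$-average $\l^\diamond$ and the Hodge-Newton irreducibility enter to keep everything inside $\Adm(\l)$. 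Once that combinatorial compatibility is pinned down, the topological conclusion is immediate from Propositions~\ref{one-leaf} and~\ref{surj}.
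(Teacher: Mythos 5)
Your reduction is circular at the decisive step, so the proposal does not prove the proposition. Left multiplication by an element $j\in\ker(\eta_G)\cap\JJ_b$ preserves each subset $\JJ_{b,\tw}I/I$ setwise (if $g\in\JJ_{b,\tw}$ then $jg\in\JJ_{b,\tw}$), so nothing is gained from observing that it ``permutes the pieces within a leaf'' or ``moves among leaves already connected'': by Proposition \ref{surj} every piece $\JJ_{b,\tw}I/I$ meets \emph{every} connected component, so knowing that $j$ sends a piece into a piece with the same image in $\pi_0(X(\l,b))$ says nothing about whether $j$ fixes components. Propositions \ref{one-leaf} and \ref{surj} give transitivity of the $\JJ_b$-action on $\pi_0(X(\l,b))$; Proposition \ref{ker-G} is the stabilizer computation, and it cannot be extracted from transitivity plus equivariance. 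What actually has to be proved is a new connectivity statement: for $g\in\JJ_{b,\tw_x}$ and $\o_\co=\sum_{\a\in\co}\a^\vee\in\Omega_J^\s\subseteq\Omega_J\cap\JJ_{\tw_x}$, with $\co$ a $\s$-orbit of simple roots, one needs $gI\sim_{\l,b}g\,\o_\co I$; rewriting $g\o_\co=(g\o_\co g\i)g$ is what converts this right-multiplication statement into triviality of the left action of the corresponding class. Your proposal never supplies an argument for this relation, and it does not follow from \ref{one-leaf} or \ref{surj}.

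This is exactly the content of \S\ref{sec-ker} (and \S\ref{sec-order-3}): the paper builds explicit $\PP^1$-families $\textsl{g}_{g,\tilde\g,\tw,m}$ whose two endpoints realize $gI\sim_{\l,b}g\o I$, with the admissibility verifications (e.g. $U_{-\tilde\g}\tw_x U_{-\s^r(\tilde\g)}\subseteq I\Adm(\l)I$) carried out according to whether $\co$ is of type I, II or III (Lemmas \ref{link}, \ref{type-I}, \ref{type-II-1}, \ref{type-II-2}, Proposition \ref{generator}), followed by a descent to $\s$-orbits of simple roots modeled on \cite[Proposition 4.3]{N2}. Two auxiliary claims in your sketch are also off: a class in $(\ZZ\Phi^\vee/\ZZ\Phi_J^\vee)^\s$ is realized inside $\JJ_b$ by an element of $\Omega_J^\s$, not in general by a translation $t^\mu$ (which need not lie in $\JJ_b$); and Hodge--Newton irreducibility does not give ``room to add any bounded coroot combination without leaving $\Adm(\l)$'' --- it enters only through positivity statements such as Lemma \ref{pr}, and much of \S\ref{sec-ker} is devoted precisely to the cases where $\mu_x+\a^\vee\preceq\l$ or $\mu_{x''}+\vartheta_\b^\vee\preceq\l$ fails. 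So you have identified the correct target group and its generators, but the actual substance of the proposition --- the curve constructions proving $gI\sim_{\l,b}g\o_\co I$ --- is missing, and the worry you flag about conjugation scrambling the leaves is not where the difficulty lies.
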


\begin{proof}[Proof of Theorem \ref{intro}]
By \cite[Theorem 1.1]{He2}, it suffices to consider the Iwahori case $K = I$. Combining Proposition \ref{surj}, \ref{normal} and \& \ref{ker-G} we have $\pi_0(X(\l, b)) \cong \JJ_b / (\JJ_b \cap \ker(\eta_G))$. As $b$ is basic in $M_J$ and $\JJ_b = \JJ_b^{M_J}$, it follows that $\JJ_b$ is generated by $I_{M_J} \cap \JJ_b$ and $\tW_{M_J} \cap \JJ_b$. So we have $\JJ_b / (\ker(\eta_G) \cap \JJ_b) = (\Omega_J \cap \JJ_b) / (\Omega_J \cap \ker{\eta_G} \cap \JJ_b) = \Omega_J^\s / (\Omega_J^\s \cap \ker(\eta_G)) \cong \pi_1(G)^\s$ as desired.
\end{proof}

\section{The set $\cp_\tw$} \label{sec-P}

In the rest of the paper, we assume that $G$ is adjoint, simple, and its root system $\Phi$ has $d$ irreducible factors.

\subsection{} \label{sec-str} Let $\tw \in \Adm(\l)$. For $\a \in \Phi^+ \setminus \Phi_{\nu_\tw}$ we set $\a^i = (\tw\s)^i(\a) \in \tPhi$ for $i \in \ZZ$, and $$m_{\a, \tw} = \min\{ i \in \ZZ_{\ge 1}; \a^{-i} \in \tPhi \setminus \Phi\},$$ which is well defined since $\<\a, \nu_\tw\> \neq 0$.

We say $\a \in \Phi^+ \setminus \Phi_{\nu_\tw}$ is $\tw$-permissible if $\tw \s(s_\a) \in \Adm(\l)$ and $\a^{-m_{\a, \tw}} \in \tPhi^+$. Let $\cp_{\tw}$ denote the set of $\tw$-permissible roots.

Let $R \subseteq \SS_0$ be a $\s$-orbit of $\SS_0$. We say $\tw \in \Adm(\l)$ is left (resp. right) $R$-distinct if $s \tw \notin \Adm(\l)$ (resp. $\tw s \notin \Adm(\l)$) for $s \in R$. Let $w_R$ denote the longest root of $W_R$.
\begin{lem} \label{permissible}
Let $R$ be a $\s$-orbit of $\SS_0$, and let $\tw \in \Adm(\l)$ be left $R$-distinct. Then $w_R \tw w_R \in \Adm(\l)$ is right $R$-distinct. Moreover, $\cp_{w_R \tw w_R} \neq \emptyset$ if $\cp_\tw \neq \emptyset$.
\end{lem}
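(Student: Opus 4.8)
The plan is to analyze the two assertions separately, using the combinatorics of the admissible set and the length function. For the first assertion, I would start from the description $\Adm(\l) = \{x \in \tW; x \le t^{w(\l)} \text{ for some } w \in W_0\}$. Conjugation by $w_R$ permutes the set $\{t^{w(\l)}\}_{w \in W_0}$ (since $w_R t^{w(\l)} w_R\i = t^{w_R w(\l)}$), and $w_R$ normalizes the standard parahoric $W_R$, so the map $x \mapsto w_R x w_R$ preserves $\Adm(\l)$; this is essentially \cite[Lemma in \S 3]{He1} or a direct Bruhat-order argument. The key point to verify is the transfer of the distinctness condition: I would show that for $s \in R$, one has $s\, (w_R \tw w_R) \notin \Adm(\l)$ fails exactly when $\tw\, s' \notin \Adm(\l)$ fails for the corresponding $s' = w_R s w_R \in R$, so that left $R$-distinctness of $\tw$ translates into right $R$-distinctness of $w_R \tw w_R$. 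Since $w_R$ permutes $R$ (it is the longest element of $W_R$), this is a bookkeeping argument once one knows $\Adm(\l)$ is stable under $w_R$-conjugation.

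For the second and harder assertion, I would set $\tw' = w_R \tw w_R$ and produce, from a given $\a \in \cp_\tw$, an explicit element of $\cp_{\tw'}$. The natural candidate is $\a' = w_R(\a)$, or possibly $\a'$ chosen within the $W_R$-orbit of $w_R(\a)$ so that $\a' \in \Phi^+ \setminus \Phi_{\nu_{\tw'}}$; note $\nu_{\tw'} = w_R(\nu_\tw)$, so $\Phi_{\nu_{\tw'}} = w_R(\Phi_{\nu_\tw})$ and the pairing $\langle \a', \nu_{\tw'}\rangle = \langle \a, \nu_\tw\rangle$ is preserved, whence $m_{\a', \tw'}$ is controlled by $m_{\a, \tw}$. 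I would then check the two defining conditions for $\tw'$-permissibility: first, $\tw' \s(s_{\a'}) \in \Adm(\l)$, which should follow from $\tw \s(s_\a) \in \Adm(\l)$ by the $w_R$-conjugation stability of $\Adm(\l)$ together with compatibility of $\s$ with $w_R$ (here I need $\s(R) = R$, which holds since $R$ is a $\s$-orbit, and $\s(w_R) = w_{\s(R)} = w_R$); second, the sign condition $(\tw'\s)^{-m_{\a', \tw'}}(\a') \in \tPhi^+$, which I would derive by tracking the orbit $(\a')^{-i} = w_R \cdot (\tw\s)^{-i}(\a) \cdot (\text{correction})$ and using that $w_R$ sends $\tPhi^+$ to $\tPhi^-$ only on the roots supported in $R$, so the sign can only flip in a controlled way.

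The main obstacle I anticipate is precisely this last sign-tracking step: conjugation by $w_R$ interacts with the affine root poset in a way that is not sign-preserving (it flips the finite roots in $\Phi_R$), so the naive choice $\a' = w_R(\a)$ may land in $\Phi^-$ or may fail the condition $\a'^{-m} \in \tPhi^+$. I expect the resolution to be that left $R$-distinctness of $\tw$ forces the relevant part of the $(\tw\s)$-orbit of $\a$ to avoid the ``bad'' region near the walls indexed by $R$, so that after conjugation the sign condition is automatically satisfied — possibly after replacing $\a'$ by its image under a suitable element of $W_R$ as allowed by the first part of the argument. A secondary technical point is ensuring $m_{\a', \tw'}$ really equals (or is suitably related to) $m_{\a, \tw}$; since the condition defining $m$ only depends on when the orbit first leaves $\Phi$, and $w_R$ maps $\tPhi \setminus \Phi$ into $\tPhi \setminus \Phi$ up to the finite-root correction, I expect $m_{\a', \tw'} = m_{\a, \tw}$ on the nose, but this will need a short verification.
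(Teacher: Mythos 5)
The central gap is your claim that $x \mapsto w_R x w_R$ preserves $\Adm(\l)$, which both halves of your argument lean on. Conjugation by $w_R$ does permute the maximal elements $t^{w(\l)}$, but it is not compatible with the Bruhat order, and $\Adm(\l)$ is in general \emph{not} stable under conjugation by elements of $W_0$. Already for $G = \mathrm{PGL}_2$ and $\l = \a^\vee$: writing $s_1 = s_\a$, $s_0 = t^{\a^\vee} s_\a$, one has $s_0 \leq s_0 s_1 = t^{\a^\vee}$, so $s_0 \in \Adm(\l)$, while $s_1 s_0 s_1$ has length $3 > \ell(t^{\pm\a^\vee})$ and lies outside $\Adm(\l)$. (This does not contradict the lemma, since $s_1 s_0 = t^{-\a^\vee} \in \Adm(\l)$, i.e.\ $s_0$ is not left $R$-distinct; but it destroys the proposed general stability.) The facts that $w_R \tw w_R \in \Adm(\l)$ and that distinctness transfers are precisely where the hypothesis enters, via the case analysis of the Appendix: Lemma \ref{R1}, Lemma \ref{R4} and the induction in Lemma \ref{R-dist}, where one proves $u \tw u\i \in \Adm(\l)$ for $u \in W_R$ and $u' \tw u\i \notin \Adm(\l)$ for $u' \neq u$ (this is also where the reduction to $R$ of rank $\le 2$ happens). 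Note moreover that even if stability held, your bookkeeping is off: conjugation by $w_R$ exchanges left multiplication by $s$ with left multiplication by $w_R s w_R$ (and right with right), so it would only convert left $R$-distinctness of $\tw$ into left $R$-distinctness of $w_R \tw w_R$; obtaining \emph{right} distinctness is genuine content of Lemma \ref{R-dist}, not a formality.

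For the ``Moreover'' part your sign analysis is on the right track (for an affine root $\g + k$ with $k \neq 0$ the sign depends only on $k$, so $w_R$ preserves $\tPhi^\pm \setminus \Phi$), but the correct candidate is neither $w_R(\a)$ nor a $W_R$-translate of it: one first moves along the $(\tw\s)$-orbit, setting $n_\a = \min\{i \ge 0;\ \a^{-i} \notin \Phi_R^+\} \le m_{\a,\tw}$, and shows $w_R(\a^{-n_\a}) \in \cp_{w_R \tw w_R}$. Since $w_R(\tw\s)w_R = (w_R\tw w_R)\s$ (using $\s(w_R)=w_R$), the new backward orbit is the $w_R$-image of the shifted old one, so the relevant invariant is $m_{\a,\tw} - n_\a$, not $m_{\a,\tw}$ ``on the nose''. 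Finally, the admissibility of $(w_R\tw w_R)\s(s_{w_R(\a^{-n_\a})})$ is not deduced from any conjugation-stability: for $n_\a = 0$ it uses Corollary \ref{conj}, which itself rests on the left $R$-distinctness through Lemma \ref{LR} and Lemma \ref{R-dist}, and for $n_\a > 0$ one argues directly that $w_R\tw w_R\s(w_R(\a^{-n_\a})) \in \Phi_R^-$, so the element in question is $\leq w_R\tw w_R$ in the Bruhat order. As written, your proposal would have to reconstruct these appendix lemmas to close the gap.
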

\begin{proof}
To show the first statement, we can assume $d = 1$, and hence $R$ is either commutative or is of type $A_2$. Then the first statement follows from Lemma \ref{R-dist}.

Now we show the ``Moreover" part. Let $\a \in \cp_\tw$ and let $$n_\a = \min \{i \in \ZZ_{\ge 0}; \a^{-i} \notin \Phi_R^+\} \le m_{\a, \tw}.$$ We show $w_R(\a^{-n_\a}) \in \cp_{w_R \tw w_R}$, and it suffices to check that $\a^{-n_\a} \in \Phi^+$ and \begin{align*} \tag{a} w_R \tw w_R \s(s_{w_R(\a^{-n_\a})}) = w_R \tw \s(s_{\a^{-n_\a}}) w_R \in \Adm(\l). \end{align*} If $n_\a = 0$, then $\a^{-n_\a} = \a \in \Phi^+ \setminus \Phi_R$, and (a) follows from Corollary \ref{conj}. Otherwise, $\a^{-n_\a+1} \in \Phi_R^+$, which implies that $\a^{-n_\a} = (\tw\s)\i (\a^{-n_\a+1}) \in \tPhi^-$ as $\tw$ is left $R$-distinct. Hence $\a^{-n_\a} \in \Phi^+$ since $n_\a \le m_{\a, \tw}$ and $\a^{-m_{\a, \tw}} \in \tPhi^+$. Moreover, $$w_R \tw w_R \s(w_R(\a^{-n_\a})) = w_R(\a^{-n_\a+1}) \in \Phi_R^-,$$ which means $w_R \tw w_R \s(s_{w_R(\a^{-n_\a})}) \leq w_R \tw w_R \in \Adm(\l)$, and (a) follows.
\end{proof}

\subsection{} \label{flat-subsec}
Fix $\eta \in Y$. Let $A = \max\{|\<\a, \eta\>|; \a \in \Phi\}$. Choose $M  \in \ZZ_{\ge 2}$ such that $M |\<\a, \eta\>| > 2A$ for any $\a \in \Phi$ with $\<\a, \eta\> \neq 0$. Let $\tw \in t^\mu W_0 \subseteq W_0 t^\eta W_0$, where $\mu$ is some $W_0$-conjugate of $\eta$. Motivated from the $\underline{\mathbf{a}}$-function in \cite{He0}, we define $$\nu_{\tw}^\flat = \sum_{i=0}^{N-1} \frac{p(\tw\s)^i(\mu)}{M^i},$$ where $N$ is the order of $p(\tw\s)$, and $p: \tW \rtimes \<\s\> \to W_0 \rtimes \<\s\>$ is the natural projection.
\begin{lem} \label{flat}
Let $\a \in \Phi$ and $0 \le n \le N-1$ such that $\<\a, p(\tw\s)^n(\mu)\> \neq 0$ and $\<\a, p(\tw\s)^i(\mu)\> = 0$ for $0 \le i \le n-1$. Then $\<\a, \nu_{\tw}^\flat\> \<\a, p(\tw\s)^n(\mu)\> > 0$.
\end{lem}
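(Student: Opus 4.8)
The plan is to analyze the sum $\nu_\tw^\flat = \sum_{i=0}^{N-1} M^{-i} p(\tw\s)^i(\mu)$ by pairing with $\a$ and isolating the first nonzero term. By hypothesis the terms with $0 \le i \le n-1$ contribute nothing, so
\[
\<\a, \nu_\tw^\flat\> = \frac{\<\a, p(\tw\s)^n(\mu)\>}{M^n} + \sum_{i=n+1}^{N-1} \frac{\<\a, p(\tw\s)^i(\mu)\>}{M^i}.
\]
The idea is that the leading term $M^{-n}\<\a, p(\tw\s)^n(\mu)\>$ dominates the tail, so that $\<\a, \nu_\tw^\flat\>$ has the same sign as $\<\a, p(\tw\s)^n(\mu)\>$; multiplying through by $M^n$ it suffices to show that the absolute value of the leading numerator, which is a nonzero integer hence at least $1$ in absolute value (more precisely it equals $|\<\a, p(\tw\s)^n(\mu)\>| \ge 1$, but I will use the sharper bound coming from the choice of $M$), strictly exceeds $\left|\sum_{i=n+1}^{N-1} M^{n-i}\<\a, p(\tw\s)^i(\mu)\>\right|$.

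First I would recall that each $p(\tw\s)^i(\mu)$ is a $W_0$-conjugate of $\mu$, hence a $W_0$-conjugate of $\eta$ (since $\mu$ is), so $|\<\a, p(\tw\s)^i(\mu)\>| \le A$ for every $i$, where $A = \max\{|\<\a,\eta\>|; \a \in \Phi\}$. Next, since $\<\a, p(\tw\s)^n(\mu)\> \neq 0$, the defining property of $M$ gives $M|\<\a, p(\tw\s)^n(\mu)\>| > 2A$. Then I would estimate the tail crudely:
\[
\left| \sum_{i=n+1}^{N-1} \frac{\<\a, p(\tw\s)^i(\mu)\>}{M^{i-n}} \right| \le \sum_{i=n+1}^{N-1} \frac{A}{M^{i-n}} \le A \sum_{j=1}^{\infty} \frac{1}{M^j} = \frac{A}{M-1} \le A,
\]
using $M \ge 2$. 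Since $M |\<\a, p(\tw\s)^n(\mu)\>| > 2A > A \ge |\text{tail}| \cdot M^{?}$ — more carefully, I compare $|\<\a, p(\tw\s)^n(\mu)\>|$ with $\frac{A}{M-1}$: from $M|\<\a, p(\tw\s)^n(\mu)\>| > 2A$ we get $|\<\a, p(\tw\s)^n(\mu)\>| > \frac{2A}{M} \ge \frac{A}{M-1}$ (the last inequality because $2(M-1) \ge M$ for $M \ge 2$). Hence the leading term strictly dominates the tail in absolute value, and therefore $\<\a, \nu_\tw^\flat\>$ and $M^{-n}\<\a, p(\tw\s)^n(\mu)\>$ — equivalently $\<\a, p(\tw\s)^n(\mu)\>$ — have the same sign, giving $\<\a, \nu_\tw^\flat\>\<\a, p(\tw\s)^n(\mu)\> > 0$.

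The argument is essentially a single telescoping-type estimate, so there is no serious obstacle; the only point requiring care is bookkeeping with the constants $A$ and $M$ — in particular verifying $\frac{2A}{M} \ge \frac{A}{M-1}$ and that the geometric tail is bounded by $\frac{A}{M-1}$ — and making sure that each $p(\tw\s)^i(\mu)$ really is a $W_0$-conjugate of $\eta$, which follows because $\tw \in W_0 t^\eta W_0$ forces $\mu \in W_0\eta$ and $p(\tw\s)$ acts on $Y$ through $W_0$ composed with the finite-order linear map $\s$ preserving $\Phi$, hence preserves the $W_0$-orbit structure of the relevant vectors; I would spell this last point out since the notation $p(\tw\s)^i(\mu)$ mixes the affine and finite Weyl groups.
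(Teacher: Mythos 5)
Your argument is correct and is essentially the paper's own proof: isolate the first nonzero term $M^{-n}\langle\a, p(\tw\s)^n(\mu)\rangle$ and show it strictly dominates the geometric tail using $|\langle\a, p(\tw\s)^i(\mu)\rangle|\le A$ together with $M|\langle\a, p(\tw\s)^n(\mu)\rangle|>2A$ and $M\ge 2$ (the paper bounds the tail by $2A/M^{n+1}$ rather than $A/(M-1)$, an immaterial difference). Your closing remark that the values $\langle\a, p(\tw\s)^i(\mu)\rangle$ are controlled because $p(\tw\s)$ lies in $W_0\rtimes\langle\s\rangle$ with $\s$ preserving $\Phi$ and the pairing is exactly the point the paper dispatches with ``$\mu,\eta$ are conjugate by $W_0$''.
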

\begin{proof}
Note that $\mu, \eta$ are conjugate by $W_0$. By the choice of $M \ge 2$ we have $$|\frac{\<\a, p(\tw\s)^n(\mu)\>}{M^n}| > \frac{2A}{M^{n+1}} > \frac{A}{M^{n+1}} \sum_{i=n+1}^{N-1} \frac{1}{M^{i-n-1}} \ge \sum_{i=n+1}^{N-1} |\frac{\<\a, p(\tw\s)^i(\mu)\>}{M^i}|.$$ So the statement follows.
\end{proof}

\begin{cor} \label{dominant}
We have the following:

(1) $\<\a, \nu_{\tw}^\flat\> = 0$ if and only if $\<\a, p(\tw\s)^i(\mu)\> = 0$ for $i \in \ZZ$;

(2) $\nu_{\tw}^\flat$ is dominant for $\Phi_{\nu_{\tw}}^+$ if $\tw \in \cs$;

(3) $\nu_{z \tw \s(z)\i}^\flat = z(\nu_{\tw}^\flat)$ for $z \in W_0$;

(4) $\tw\s(\tPhi_{\nu_{\tw}^\flat}^\pm) = \tPhi_{\nu_{\tw}^\flat}^\pm$ if $\tw \in \cs$;

(5) if $\a \in \cp_\tw$, then $\<\a^i, \nu_\tw^\flat\> < 0$ for $1-m_{\a, \tw} \le i \le 0$, and the roots $\a^i \in \Phi$ for $1-m_{\a, \tw} \le i \le 0$ for are linearly independent. Here $\a^i = (\tw\s)^i(\a)$.
\end{cor}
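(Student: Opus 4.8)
The plan is to handle the five parts in the order (1), (3), (2), (4), (5): parts (1) and (3) are formal, part (2) is the technical core, and (4), (5) then follow quickly from (1), Lemma~\ref{flat}, and some bookkeeping with affine roots.

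For (1): the sequence $i\mapsto\<\a,p(\tw\s)^i(\mu)\>$ is periodic of period $N$, so the right-hand condition amounts to its vanishing for $0\le i\le N-1$; if it does not vanish identically, take the least such $n$ with $\<\a,p(\tw\s)^n(\mu)\>\neq0$ and apply Lemma~\ref{flat} to conclude $\<\a,\nu_\tw^\flat\>\neq0$, while the reverse implication is immediate from the definition of $\nu_\tw^\flat$. For (3): with $\tw=t^\mu w$ one has $z\tw\s(z)\i=t^{z(\mu)}(zw\s(z)\i)$, so $\nu_{z\tw\s(z)\i}^\flat$ is built from the translation part $z(\mu)$ (again a $W_0$-conjugate of $\eta$) and the linear part $z\,p(\tw\s)\,z\i$, which has the same order $N$; substituting into the definition gives $\nu_{z\tw\s(z)\i}^\flat=\sum_{i}M^{-i}z(p(\tw\s)^i(\mu))=z(\nu_\tw^\flat)$.

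Part (2) is where the work lies. Fix $\a\in\Phi_{\nu_\tw}^+$; I must show $\<\a,\nu_\tw^\flat\>\ge0$. Since $\a\in\Phi^+$, the affine root $(-\a,0)$ lies in $\tPhi^+$ and in $\tPhi_{\nu_\tw}$, hence in $\tPhi_{\nu_\tw}^+$; as $\tw\in\cs$, every iterate $(\tw\s)^{-i}(-\a,0)$ then stays in $\tPhi_{\nu_\tw}^+\subseteq\tPhi^+$. Writing $(\tw\s)^{-i}(-\a,0)=(-p^{-i}(\a),L_i)$ with $L_i=\sum_{j=0}^{i-1}\<p^{-j}(\a),\mu\>$, and using that an affine root of nonzero level is positive iff that level is positive, one obtains $L_i\ge0$ for all $i$; moreover $L_0=0$ and $L_N=N\<\a,\nu_\tw\>=0$ because $\a\in\Phi_{\nu_\tw}$. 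A short Abel summation then gives $\<\a,\nu_\tw^\flat\>=\sum_{k=0}^{N-1}M^{-k}(L_{k+1}-L_k)=(M-1)\sum_{k=1}^{N-1}M^{-k}L_k\ge0$, since $M\ge2$. (One could instead reduce to the standard case $\tw\in\cs^+$ using Lemma~\ref{semi}(2) and part (3) and then argue inside the corresponding parabolic, but this is no shorter than the direct argument.) I expect the only genuine care here to be in keeping the level bookkeeping and the signs in the telescoping straight, and in pinpointing where semistandardness of $\tw$ is used --- namely in the inclusion $(\tw\s)^{-i}(-\a,0)\in\tPhi_{\nu_\tw}^+$.

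For (4): by (1), $\Phi_{\nu_\tw^\flat}=\bigcap_{i\in\ZZ}p(\tw\s)^i(\Phi_\mu)\subseteq\Phi_{\nu_\tw}$ and this set is $p(\tw\s)$-stable, so $\tw\s$ carries $\tPhi_{\nu_\tw^\flat}=\Phi_{\nu_\tw^\flat}\times\ZZ$ to itself; intersecting with the $\tw\s$-stable set $\tPhi_{\nu_\tw}^+$ and using $\tPhi_{\nu_\tw^\flat}^+=\tPhi_{\nu_\tw^\flat}\cap\tPhi_{\nu_\tw}^+$ yields $\tw\s(\tPhi_{\nu_\tw^\flat}^+)=\tPhi_{\nu_\tw^\flat}^+$, and likewise for $\tPhi^-$. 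For (5): write $\a^{-i}=(\tw\s)^{-i}(\a)=(p^{-i}(\a),C_i)$ with $C_i=-\sum_{j=0}^{i-1}\<p^{-j}(\a),\mu\>$; the definition of $m:=m_{\a,\tw}$ gives $C_0=\dots=C_{m-1}=0$ --- so $\a^i\in\Phi$ for $1-m\le i\le0$ --- hence $\<p^{-j}(\a),\mu\>=0$ for $0\le j\le m-2$, while $\a\in\cp_\tw$ forces $\a^{-m}\in\tPhi^+$ with $C_m\neq0$, so $C_m\ge1$ and $\<p^{1-m}(\a),\mu\>=-C_m<0$. For $1-m\le i\le0$ the least $n\ge0$ with $\<\a^i,p(\tw\s)^n(\mu)\>=\<p^{i-n}(\a),\mu\>\neq0$ is $n=i+m-1$ (or $n=0$ when $i=1-m$), which lies in $\{0,\dots,N-1\}$ since $m\le N$; the value there is $\<p^{1-m}(\a),\mu\><0$, so Lemma~\ref{flat} gives $\<\a^i,\nu_\tw^\flat\><0$. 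Finally, if $\sum_{i=1-m}^{0}c_i\,p^i(\a)=0$, then pairing successively with $\mu,\ p(\tw\s)(\mu),\ \dots,\ p(\tw\s)^{m-1}(\mu)$ and using the vanishing pattern of $\<p^s(\a),\mu\>$ above forces, in turn, $c_{1-m}=0$, then $c_{2-m}=0,\ \dots,$ then $c_0=0$.
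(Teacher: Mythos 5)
Your proposal is correct, and on parts (1), (3), (4), (5) it is essentially the paper's own argument: (1) and (3) from Lemma~\ref{flat} and the definition of $\nu_\tw^\flat$, (4) from the $\tw\s$-stability of $\tPhi_{\nu_\tw^\flat}\subseteq\tPhi_{\nu_\tw}$ together with $\tPhi_{\nu_\tw^\flat}^\pm=\tPhi_{\nu_\tw^\flat}\cap\tPhi_{\nu_\tw}^\pm$, and (5) by locating the first nonvanishing pairing $\<\a^{1-m_{\a,\tw}},\mu\><0$, applying Lemma~\ref{flat} to each $\a^i$, and proving independence by a triangular pairing argument (you pair the relation successively with $p(\tw\s)^k(\mu)$, the paper applies a power of $p(\tw\s)$ to the relation and pairs with $\mu$ --- the same computation). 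The only genuine divergence is in (2): the paper argues by contradiction, using Lemma~\ref{flat} to extract a first index $n$ with $\<\a,p(\tw\s)^n(\mu)\><0$ and hence an iterate $(\tw\s)^{-n-1}(\a)\in\tPhi^+\setminus\Phi$, contradicting $\tw\in\cs$; you instead observe that semi-standardness keeps every iterate $(\tw\s)^{-i}(-\a,0)$ in $\tPhi_{\nu_\tw}^+$, so all levels $L_i$ are nonnegative, and conclude directly by summation by parts using $L_0=L_N=0$ and $M\ge2$. Both routes rest on the same positivity consequence of $\tw\in\cs$; yours trades the appeal to Lemma~\ref{flat} for an explicit telescoping identity, which is a perfectly sound (and slightly more quantitative) alternative, while the paper's version is shorter because it reuses Lemma~\ref{flat}. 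The small unproved assertion $m_{\a,\tw}\le N$ in your part (5) is harmless: $C_N=-N\<\a,\nu_\tw\>\neq0$ since $\a\notin\Phi_{\nu_\tw}$, so $\a^{-N}\in\tPhi\setminus\Phi$.
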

\begin{proof}
The statement (1) follows from Lemma \ref{flat} and the definition of $\nu_{\tw}^\flat$.

Suppose there exists $\a \in \Phi_{\nu_{\tw}}^+$ such that $\<\a, \nu_{\tw}^\flat\> < 0$. By Lemma \ref{flat}, there exists $n \in \ZZ_{\ge 0}$ such that $\<\a, p(\tw\s)^n(\mu)\> < 0$ and $\<\a, p(\tw\s)^i(\mu)\> = 0$ for $0 \le i \le n-1$. In particular, we have $(\tw\s)^{-i}(\a) = p(\tw\s)^{-i}(\a)$ for $1 \le i \le n$ and $(\tw\s)^{-n-1}(\a) \in \tPhi^+ \setminus \Phi$, contradicting that $\tw \in \cs$. So (2) follows.

The statement (3) follows by definition.

By (1) we have $\tPhi_{\nu_{\tw}^\flat} = \tw\s(\tPhi_{\nu_{\tw}^\flat}) \subseteq \tPhi_{\nu_{\tw}}$. As $\tw \in \cs$, we have $\tw\s(\tPhi_{\nu_{\tw}}^\pm) = \tPhi_{\nu_{\tw}}^\pm$. So (4) follows from that $\tPhi_{\nu_\tw^\flat}^\pm = \tPhi_{\nu_\tw^\flat} \cap \tPhi_{\nu_\tw}^\pm$.

Let $\a \in \cp_\tw$ and let $m = m_{\a, \tw}$. By definition, $\<\a^{1-m}, \mu\> < 0$, $\a^{-i} = p(\tw\s)^{-i}(\a)$ and $\<\a^{1-i}, \mu\> = \<\a, p(\tw\s)^{i-1}(\mu)\> = 0 $ for $ 1 \le i \le m-1$. Thus it follows from Lemma \ref{flat} that $\<\a^i, \nu_\tw^\flat\> < 0$ for $1-m \le i \le 0$. Suppose $\sum_{i=0}^{1-m} c_i \a^i = 0$, where the coefficients $c_i \in \RR$ are not all zero. Let $i_0 = \min\{1-m \le i \le 0; c_i \neq 0\}$. Then $$0 = \<p(\tw\s)^{1-m - i_0}(\sum_{i=0}^{1-m} c_i \a^i), \mu \> = \sum_{i=0}^{i_0} c_i \<\a^{1-m-i_0+i}, \mu\> = c_{i_0} \<\a^{1-m}, \mu\> \neq 0,$$ which is a contradiction. So (5) follows.
\end{proof}

\begin{lem} \label{min}
Let $\tw \in \cs$ and let $z_0 \in W_0$ be the unique minimal element such that $z_0(\nu_{\tw}^\flat)$ is dominant. Then $z_0 \tw \s(z_0)\i \in {}^{\SS_0} \tW$. In particular, $\tw \in {}^{\SS_0} \tW$ if $\nu_{\tw}^\flat$ is dominant.
\end{lem}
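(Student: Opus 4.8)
The plan is to reduce immediately to the ``in particular'' assertion, and then prove that one by analysing the $p(\tw\s)$-orbit of a simple root.

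\emph{Reduction.} Put $\tw' = z_0\tw\s(z_0)\i$. I claim $\tw' \in \cs$ and $\nu_{\tw'}^\flat$ is dominant, so that the ``in particular'' statement applied to $\tw'$ finishes the proof. The second point is Corollary \ref{dominant}(3). For the first: by Corollary \ref{dominant}(2), $\nu_\tw^\flat$ is dominant for $\Phi_{\nu_\tw}^+$, and combined with the minimality of $z_0$ this forces $z_0(\Phi_{\nu_\tw}^+) \subseteq \Phi^+$ --- the standard fact that a shortest element carrying a vector into the dominant cone never makes negative a root orthogonal to that vector (otherwise one shortens $z_0$ using the corresponding reflection). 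Passing to affine roots gives $z_0(\tPhi_{\nu_\tw}^+) \subseteq \tPhi^+$, and Lemma \ref{semi}(1) then yields $\tw' \in \cs$.

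\emph{Set-up for the main case.} Write $\tw = t^\mu v$ with $\mu \in Y$, $v \in W_0$, and assume $\nu_\tw^\flat$ dominant. Applying Lemma \ref{flat} to a simple root $\a$ with $\<\a,\mu\>\neq 0$ shows $\<\a,\nu_\tw^\flat\>$ has the sign of $\<\a,\mu\>$, hence $\<\a,\mu\>>0$; so $\mu$ is dominant. Set $J = \{\a\in\SS_0; \<\a,\mu\>=0\}$. A direct affine-root computation shows $\tw\in{}^{\SS_0}\tW$ if and only if $v\i(\a)\in\Phi^+$ for every $\a\in J$. So, assuming the contrary, fix $\a\in J$ with $v\i(\a)\in\Phi^-$ and consider $\gamma_i := p(\tw\s)^{-i}(\a)\in\Phi$ for $i\ge 0$. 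Since $\mu$ is dominant, $\<\gamma_i,\mu\>=0$ exactly when $\gamma_i\in\Phi_J$, while a positive (resp. negative) value of $\<\gamma_i,\mu\>$ forces $\gamma_i\in\Phi^+\setminus\Phi_J$ (resp. $\Phi^-\setminus\Phi_J$). Because $\<\a, p(\tw\s)^i(\mu)\> = \<\gamma_i,\mu\>$ and $\nu_\tw^\flat$ is dominant, Lemma \ref{flat} tells us the first index $n\ge 1$ (if any) with $\gamma_n\notin\Phi_J$ satisfies $\gamma_n\in\Phi^+$.

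\emph{The crux.} If $\gamma_i\in\Phi_J$ for all $i$, then $\a\in\Phi_{\nu_\tw^\flat}$, hence $\a\in\Phi_{\nu_\tw^\flat}^+$; since $\Phi_{\nu_\tw^\flat}\subseteq\Phi_J=\Phi\cap\mu^\perp$, the translation $t^\mu$ acts trivially on $\tPhi_{\nu_\tw^\flat}$, so Corollary \ref{dominant}(4) forces $p(\tw\s)$ to preserve the positive system $\Phi_{\nu_\tw^\flat}^+$ of $M_{\nu_\tw^\flat}$; then $p(\tw\s)\i(\a)\in\Phi^+$ and $v\i(\a)=\s\big(p(\tw\s)\i(\a)\big)\in\Phi^+$, a contradiction. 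Otherwise let $n\ge1$ be the first index with $\gamma_n\notin\Phi_J$. As $\gamma_1=\s\i(v\i(\a))\in\Phi^-$ and the first ``leaving'' root is positive, $n\neq1$; so $n\ge2$ and $\gamma_1\in\Phi_J\cap\Phi^-$. Now I would replace $\a$ by $\beta := -\gamma_1\in\Phi_J\cap\Phi^+$: its orbit satisfies $p(\tw\s)^{-i}(\beta)=-\gamma_{i+1}$, which lies in $\Phi_J$ for $0\le i\le n-2$ and equals $-\gamma_n\in\Phi^-\setminus\Phi_J$ for $i=n-1$. Thus the first index at which $\beta$'s orbit leaves $\Phi_J$ produces a \emph{negative} root, so Lemma \ref{flat} gives $\<\beta,\nu_\tw^\flat\><0$, impossible since $\beta\in\Phi^+$ and $\nu_\tw^\flat$ is dominant. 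This contradiction proves $\tw\in{}^{\SS_0}\tW$.

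The main obstacle is the crux, and within it the ``intermediate'' case $n\ge2$: there the failure of $\tw\in{}^{\SS_0}\tW$ is not seen at the first step of the $p(\tw\s)$-orbit, and it is the passage from $\a$ to $\beta=-\gamma_1$ that converts this delayed obstruction into an immediate violation of the dominance of $\nu_\tw^\flat$ detectable by Lemma \ref{flat}; the two ``extreme'' cases instead come directly from Lemma \ref{flat} and from the semi-standardness input Corollary \ref{dominant}(4).
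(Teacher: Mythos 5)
Your proof is correct, and it shares the paper's two main ingredients (Lemma \ref{flat} to force $\mu$ dominant, and Corollary \ref{dominant}(4) to handle the roots orthogonal to $\nu_{\tw}^\flat$), but the implementation differs in two places. First, you reduce to the ``in particular'' case by proving $\tw' = z_0 \tw \s(z_0)\i \in \cs$ via Lemma \ref{semi}(1), which needs the (correct, slightly stronger) fact $z_0(\tPhi_{\nu_{\tw}}^+) \subseteq \tPhi^+$; the paper never establishes $\tw' \in \cs$ at all --- it only transports $z_0(\tPhi_{\nu_{\tw}^\flat}^\pm) = \tPhi_{\nu_{\tw'}^\flat}^\pm$ and verifies $\tw' < s_\g \tw'$ directly for every $\g \in \Phi^+$, split according to whether $\<\g, \nu_{\tw'}^\flat\>$ is positive or zero. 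Second, in the case $\<\a, \mu\> = 0$, $\<\a, \nu_{\tw}^\flat\> > 0$ (your Case B), the paper finishes in one step: it observes $\<p(\tw'\s)\i(\g), \nu_{\tw'}^\flat\> > 0$, so $p(\tw'\s)\i(\g) \in \Phi^+$ by dominance of $\nu_{\tw'}^\flat$, and one is done; your orbit-tracking argument with the replacement $\b = -\g_1$ and a second appeal to Lemma \ref{flat} reaches the same conclusion by contradiction --- longer, but it uses Lemma \ref{flat} purely as a black box and makes explicit how a delayed failure of positivity propagates back to contradict dominance of $\nu_{\tw}^\flat$. Both routes are valid. One small expository point: in your reduction, the ``standard fact'' as phrased only covers the roots of $\Phi_{\nu_{\tw}}^+$ orthogonal to $\nu_{\tw}^\flat$; for those with $\<\cdot, \nu_{\tw}^\flat\> > 0$ you should add the (immediate) remark that any $z$ with $z(\nu_{\tw}^\flat)$ dominant keeps such roots positive.
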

\begin{proof}
Let $\tw' = z_0 \tw \s(z_0)\i \in  t^{\mu'} W_0$ with $\mu' = z_0(\mu)$. By the choice of $z_0$, we have $\nu_{\tw'}^\flat = z_0(\nu_{\tw}^\flat)$ is dominant and $z_0(\tPhi_{\nu_{\tw}^\flat}^\pm) = \tPhi_{\nu_{\tw'}^\flat}^\pm$. By Lemma \ref{flat}, $\mu'$ is dominant since $\nu_{\tw'}^\flat = z_0(\nu_{\tw}^\flat)$ is dominant. Let $\g \in \Phi^+$. We need to show that $\tw' < s_\g \tw'$. If $\<\g, \nu_{\tw'}^\flat\> > 0$, then either $\<\g, \mu'\> > 0$, or $\<\g, \mu'\> = 0$ and $\<p(\tw'\s)\i(\g), \nu_{\tw'}^\flat\> > 0$ (hence $p(\tw'\s)\i(\g) \in \Phi^+$) by Corollary \ref{dominant}, which means $\tw' < s_\g \tw'$ as desired. Suppose $\<\g, \nu_{\tw'}^\flat\> = 0$, that is, $\g \in \Phi_{\nu_{\tw'}^\flat}^+$. Then by Corollary \ref{dominant} (4) we have $$(\tw'\s)\i(\g) \in  z_0 (\tw\s)\i z_0\i(\tPhi_{\nu_{\tw'}^\flat}^-) = z_0 (\tw\s)\i (\tPhi_{\nu_{\tw}^\flat}^-) = z_0 (\tPhi_{\nu_{\tw}^\flat}^-) = \tPhi_{\nu_{\tw'}^\flat}^-.$$ So we also have $\tw' < s_\g \tw'$ as desired.
\end{proof}

\begin{prop} \label{unique}
Let $K \subseteq \SS_0$ and $\tw \in \cs_{\l, b}$. Then there exists a unique semi-standard element $\tw' \in {}^K \tW$ which is $\s$-conjugate to $\tw$ by $W_K$. If, moreover, $K = \SS_0$ and $(\l, b)$ is Hodge-Newton irreducible, then $\tw'$ is not left $R$-distinct for any $\s$-orbit $R$ of $\SS_0$.
\end{prop}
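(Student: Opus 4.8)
The plan is to establish the existence and uniqueness of $\tw'$ first, and then the sharper statement in the Hodge--Newton irreducible case. Uniqueness is immediate from Lemma \ref{K-min}(2): a $W_K$-$\s$-conjugacy class contains at most one element of ${}^K\tW$. For existence I would adapt the flat Newton vector construction of \S\ref{flat-subsec}: form $\nu_\tw^\flat$ as there, let $z_0\in W_K$ be the unique minimal element with $z_0(\nu_\tw^\flat)$ dominant for $\Phi_K^+$, and set $\tw' = z_0\tw\s(z_0)\i$.

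First, $\tw'$ is semi-standard: by Lemma \ref{semi}(1) it suffices that $z_0(\tPhi_{\nu_\tw}^+)\subseteq\tPhi^+$, and since $z_0\in W_K\subseteq W_0$ this reduces to $z_0(\Phi_{\nu_\tw}^+)\subseteq\Phi^+$; by minimality, $z_0$ sends to the negative side only those $\g\in\Phi_K^+$ with $\<\g,\nu_\tw^\flat\><0$, none of which lie in $\Phi_{\nu_\tw}^+$ by Corollary \ref{dominant}(2) (where $\nu_\tw^\flat\ge 0$). Second, $\tw'\in{}^K\tW$: this is the $K$-analogue of Lemma \ref{min}. Since $\nu_{\tw'}^\flat = z_0(\nu_\tw^\flat)$ is $\Phi_K^+$-dominant (Corollary \ref{dominant}(3)), one checks $\tw'<s\tw'$ for each simple $s\in K$ exactly as in the proof of Lemma \ref{min}, using Lemma \ref{flat} (and dominance on $\Phi_K^+$ of the translation part of $\tw'$) when $\<\g,\nu_{\tw'}^\flat\>>0$, and Corollary \ref{dominant}(4) when $\<\g,\nu_{\tw'}^\flat\>=0$. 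Together with uniqueness, this $\tw'$ is the desired element.

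For the second assertion take $K=\SS_0$, so $\tw'$ is the distinguished representative of the $W_0$-$\s$-conjugacy class and $\nu_{\tw'}^\flat$ is dominant; one first records that $\tw'\in\cs_{\l,b}$, in particular $\tw'\in\Adm(\l)$ (otherwise the statement is vacuously false, since $s\tw'>\tw'$ for $s\in\SS_0$ would force $s\tw'\notin\Adm(\l)$). Suppose for contradiction that $\tw'$ is left $R$-distinct for some $\s$-orbit $R$. Lemma \ref{permissible} gives that $\tw'':=w_R\tw'w_R\in\Adm(\l)$ is right $R$-distinct; since $R$ is a $\s$-orbit we have $\s(w_R)=w_R$, so $\tw''$ is $\s$-conjugate to $\tw'$ and still lies in $[b]$. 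The decisive combinatorial input is an analysis of $\Adm(\l)$ near its ``$\SS_0$-boundary'': left $R$-distinctness of a semi-standard $\tw'=t^{\mu'}w'\in{}^{\SS_0}\tW\cap\Adm(\l)$ forces the dominant translation $\mu'$ to be as large as $\l$ allows in the directions of $R$ — I expect $\<\b,\mu'\>=\<\b,\l\>$ for every simple $\b\in R$ — since otherwise some reflection $s_\b$, $\b\in R$, keeps $\tw'$ inside $\Adm(\l)$ (using the description of $\Adm(\l)$ on ${}^{\SS_0}\tW$); right $R$-distinctness of $\tw''$ likewise constrains the finite part $w'$. Because $\tw'\in\cs$ one has $\bar\nu_{\tw'}=\nu_G(b)$ and $\bar\nu_{\tw'}\le\mu'$ (as $\nu_{\tw'}$ is an average over the $(w'\s)$-orbit of $\mu'$), so these two-sided constraints propagate to $\nu_G(b)$ and force $\l^\diamond-\nu_G(b)$ into a proper face of the cone $\RR_{\ge 0}(\Phi^+)^\vee$, contradicting $\l^\diamond-\nu_G(b)\in\RR_{>0}(\Phi^+)^\vee$.

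The soft parts are the existence/uniqueness and the $K$-version of Lemma \ref{min}. The hard part will be the Hodge--Newton irreducible case: pinning down exactly which left- and right-multiplications by simple reflections keep an element of ${}^{\SS_0}\tW$ (and its $w_R$-conjugate) inside $\Adm(\l)$, and combining the left and right constraints so that they genuinely force $\<\b,\l^\diamond-\nu_G(b)\>$ to vanish for $\b\in R$. I would also want to argue the seemingly trivial point $\tw'\in\Adm(\l)$ carefully — most plausibly from the stability of $\Adm(\l)\cap[b]$ under the conjugations used above, or from the structure of $\cs_{\l,b}$ via Lemma \ref{semi}.
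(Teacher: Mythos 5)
Your uniqueness argument (via Lemma \ref{K-min}(2)) is fine, but the existence step rests on a $K$-relative analogue of Lemma \ref{min} which is false, so the core of your proof of the first assertion does not work. In the proof of Lemma \ref{min}, the subcase $\<\g,\nu_{\tw'}^\flat\>>0$, $\<\g,\mu'\>=0$ needs $p(\tw'\s)^{-1}(\g)\in\Phi^+$, and this is deduced from dominance of $\nu_{\tw'}^\flat$ with respect to \emph{all} of $\Phi^+$ (a root pairing strictly positively with a dominant vector cannot be negative). When $z_0$ is only taken minimal in $W_K$, the vector $\nu_{\tw'}^\flat$ is only $\Phi_K^+$-dominant, while $p(\tw'\s)^{-1}(\g)$ in general lies outside $\Phi_K$, so nothing follows; your parenthetical ``dominance on $\Phi_K^+$ of the translation part'' does not cover this subcase. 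In fact the statement you need fails. Take $G$ adjoint of type $A_2$ with $\s$ the nontrivial diagram automorphism (an unramified quasi-split unitary group), simple roots $\a_1,\a_2$, $s_i=s_{\a_i}$, fundamental coweights $\omega_i^\vee$, and let $\tw=t^{-\omega_2^\vee}s_1s_2$, $K=\{s_1\}$. Then $p(\tw\s)=s_1s_2\s$ has order $2$, $\nu_{\tw}=\tfrac12\bigl(\omega_1^\vee-2\omega_2^\vee\bigr)$ is regular, so $\tw$ is semi-standard (and lies in $\cs_{\l,b}$ for suitable $\l$ and $b=\tw$); moreover $\<\a_1,-\omega_2^\vee\>=0$ and $\<\a_1,\nu_{\tw}^\flat\>=\tfrac1M\<\a_1,\omega_1^\vee-\omega_2^\vee\>=\tfrac1M>0$, so $\nu_\tw^\flat$ is already $K$-dominant and your construction returns $z_0=1$, $\tw'=\tw$. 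But $\ell(\tw)=4$ while $\ell(s_1\tw)=\ell(t^{-\omega_2^\vee}s_2)=3$, so $\tw\notin{}^K\tW$. The actual representative is $s_1\tw\s(s_1)=t^{-\omega_2^\vee}$, whose flat vector $-\omega_2^\vee-\tfrac1M\omega_1^\vee$ is not $K$-dominant: for proper $K$ the flat vector simply does not detect membership in ${}^K\tW$. The paper instead obtains existence from He's partial conjugation (Theorem \ref{partial}): $\tw\to_K u\tw'$ with $\tw'\in{}^K\tW$ and $u\in I(K,\tw')$, and semi-standardness of the endpoint (preserved along the conjugation steps by Lemma \ref{semi}) forces $u=1$.

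For the second assertion your text is an outline rather than a proof: the decisive claims (that left $R$-distinctness of $\tw'\in{}^{\SS_0}\tW$ forces $\<\b,\mu'\>=\<\b,\l\>$ for simple $\b\in R$, that right $R$-distinctness of $w_R\tw'w_R$ pins down the finite part, and that together these force $\<\b,\l^\diamond-\nu_G(b)\>=0$) are stated as expectations and never established, and it is exactly here that the real work lies. The paper does not reprove this part either; it quotes \cite[Lemma 6.11]{CN}. So, beyond uniqueness, both halves of your argument have genuine gaps, and the existence half as designed cannot be repaired without abandoning the flat-vector criterion for proper $K$.
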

\begin{proof}
By Theorem \ref{partial}, there exist unique $\tw' \in {}^K \tW$ and some $u \in I(K, \tw')$ such that $\tw \to_K u \tw'$. So $\Phi_{I(K, \tw')} \subseteq \Phi_{\nu_{\tw'}}$ and $\ell(u \tw') = \ell(u) + \ell(\tw')$. As $\tw \in \cs$, we have $u \tw' \in \cs$ by Lemma \ref{semi} (2). So $u\tw' \leq u\i u\tw' = \tw'$, which means $u = 1$, and the first statement follows. The second one is proved in \cite[Lemma 6.11]{CN}.
\end{proof}

\subsection{} Let $\tw, \tw' \in \tW$ and $s \in \SS^a$. Write $\tw \rightharpoonup_s \tw'$ if $\tw' = s \tw \s(s)$ and $s \tw < \tw$ (which implies that $\ell(\tw') \le \ell(\tw)$). For $K \subseteq \SS^a$ we write $\tw \rightharpoonup_K \tw'$ if there is a sequence $\tw = \tw_0 \rightharpoonup_{s_0} \tw_1  \rightharpoonup_{s_1} \cdots  \rightharpoonup_{s_n} \tw_{n+1} = \tw'$ with $s_i \in K$ for $0 \le i \le n$.
\begin{lem} \label{finite-seq}
Let $K \subseteq \SS_0$ and $\tw \in \cs$. There is no infinite sequence $$\tw = \tw_0 \rightharpoonup_{s_0} \tw_1  \rightharpoonup_{s_1} \cdots,$$ where $s_i \in K$ for $i \in \ZZ_{\ge 0}$. As a consequence, we have $\tw \rightharpoonup_K \tw'$ for some $\tw' \in {}^K \tW$.
\end{lem}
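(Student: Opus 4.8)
The plan is to combine the monotonicity of the length function with monotonicity, towards the dominant chamber, of the averaged Newton vector $\nu_{\tw}^\flat$ introduced in \S\ref{flat-subsec}. Since $s_i\tw_i<\tw_i$ forces $\ell(\tw_{i+1})\le\ell(s_i\tw_i)+1=\ell(\tw_i)$, the non-negative integers $\ell(\tw_i)$ are non-increasing, hence eventually constant; discarding an initial segment, we may assume $\ell(\tw_i)=\ell(\tw_0)$ for all $i$. By Lemma \ref{semi}(3) each $\tw_i$ belongs to $\cs$ (every step has $s_i\tw_i<\tw_i$), and since we only ever conjugate by elements of $W_K\subseteq W_0$, all the $\tw_i$ stay in the fixed double coset $W_0t^{\mu}W_0$, where $\mu$ is the translation part of $\tw_0$. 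Consequently $\nu_{\tw_i}^\flat$ is defined for every $i$, all these vectors lie in the finite set $W_K\cdot\nu_{\tw_0}^\flat$, and $\nu_{\tw_{i+1}}^\flat=s_i(\nu_{\tw_i}^\flat)$ by Corollary \ref{dominant}(3).

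\emph{The key inequality.} The heart of the argument is to show that every such step moves $\nu^\flat$ strictly into the dominant direction, i.e. that $\langle\a_i,\nu_{\tw_i}^\flat\rangle<0$, where $\a_i\in\SS_0$ is the simple root of $s_i$. First, this pairing is nonzero: if $\a_i\in\Phi_{\nu_{\tw_i}^\flat}$ then $-\a_i$ is a positive affine root lying in $\tPhi_{\nu_{\tw_i}^\flat}^{+}$, and since $\tw_i\s(\tPhi_{\nu_{\tw_i}^\flat}^{\pm})=\tPhi_{\nu_{\tw_i}^\flat}^{\pm}$ by Corollary \ref{dominant}(4) (and $\s$ preserves $\tPhi^+$), we get $\tw_i\i(-\a_i)\in\tPhi^+$, i.e. $s_i\tw_i>\tw_i$, a contradiction. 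Now write $\tw_i=t^{\mu_i}w_i$ and $\epsilon_j=\langle\a_i,(w_i\s)^j(\mu_i)\rangle$, an $N$-periodic sequence with $N$ the order of $w_i\s$; by Lemma \ref{flat} and Corollary \ref{dominant}(1), the sign of $\langle\a_i,\nu_{\tw_i}^\flat\rangle$ equals that of the first nonzero $\epsilon_j$ with $j\ge0$. The condition $s_i\tw_i<\tw_i$ unwinds to ``$\langle\mu_i,\a_i\rangle<0$'', or ``$\langle\mu_i,\a_i\rangle=0$ and $w_i\i(\a_i)\in\Phi^-$''. In the first case $\epsilon_0<0$, and we are done. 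In the second case $\epsilon_0=0$; if in addition $\a_i\in\Phi_{\nu_{\tw_i}}$, then the $\tw_i\s$-orbit of $-\a_i\in\tPhi_{\nu_{\tw_i}}^{+}$ must stay inside $\tPhi_{\nu_{\tw_i}}^{+}$, whereas its $(N-1)$-st iterate equals $-(w_i\s)\i(\a_i)$ (as an affine root with vanishing constant term --- here one uses $\epsilon_0=0$ and $\langle\a_i,\nu_{\tw_i}\rangle=0$), which is a \emph{negative} affine root since $(w_i\s)\i(\a_i)=\s\i(w_i\i\a_i)\in\Phi^-$; contradiction. So $\a_i\notin\Phi_{\nu_{\tw_i}}$, and it is precisely here that the constant-length hypothesis enters: $\ell(\tw_{i+1})=\ell(s_i\tw_i)+1$ constrains, via the affine-root description of right multiplication by $\s(s_i)$, the ``backward'' coefficient $\epsilon_{-1}=\langle\a_i,(w_i\s)\i(\mu_i)\rangle$, and this sign constraint combined with the $N$-periodicity of $(\epsilon_j)$ and Lemma \ref{flat} forces the first nonzero $\epsilon_j$ to be negative, completing the proof of the inequality.

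\emph{Conclusion.} Granting the key inequality, $\nu_{\tw_{i+1}}^\flat-\nu_{\tw_i}^\flat=-\langle\a_i,\nu_{\tw_i}^\flat\rangle\,\a_i^\vee\in\RR_{>0}(\Phi^+)^\vee$, so $\nu_{\tw_0}^\flat<\nu_{\tw_1}^\flat<\cdots$ would be an infinite strictly increasing chain inside the finite set $W_K\cdot\nu_{\tw_0}^\flat$ --- impossible. Hence no such infinite sequence exists. For the final assertion, choose a maximal sequence $\tw=\tw_0\rightharpoonup_{s_0}\tw_1\rightharpoonup_{s_1}\cdots$ with all $s_i\in K$; by the above it is finite, ending at some $\tw'$, and maximality means there is no $s\in K$ with $s\tw'<\tw'$, i.e. $\tw'\in{}^K\tW$, while $\tw\rightharpoonup_K\tw'$ by construction.

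The step we expect to be the main obstacle is the determination of the sign in the key inequality --- that $\langle\a_i,\nu_{\tw_i}^\flat\rangle$ is negative rather than positive --- in the residual subcase $\langle\mu_i,\a_i\rangle=0$, $\a_i\notin\Phi_{\nu_{\tw_i}}$. The non-vanishing and the subcase $\a_i\in\Phi_{\nu_{\tw_i}}$ are clean, but in the residual subcase the sign genuinely depends on the step being length-preserving (in a length-dropping step $\nu^\flat$ can move either way), and establishing it requires interlacing the equality $\ell(\tw_{i+1})=\ell(\tw_i)$ with the semistandardness of $\tw_i$ through the combinatorics of the $N$-periodic sequence $\bigl(\langle\a_i,(w_i\s)^j(\mu_i)\rangle\bigr)_j$ and Lemma \ref{flat}; this is where most of the work lies.
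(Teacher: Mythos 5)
Your reduction steps are fine (length eventually constant, stability of $\cs$ via Lemma \ref{semi}(3), $\nu_{\tw_{i+1}}^\flat=s_i(\nu_{\tw_i}^\flat)$ via Corollary \ref{dominant}(3), and the non-vanishing $\<\a_i,\nu_{\tw_i}^\flat\>\neq 0$ together with the exclusion of $\a_i\in\Phi_{\nu_{\tw_i}}$ are correct uses of semistandardness). But the argument collapses exactly where you locate ``most of the work'': in the residual subcase $\<\a_i,\mu_i\>=0$, $w_i\i(\a_i)\in\Phi^-$, $\a_i\notin\Phi_{\nu_{\tw_i}}$ you never prove the key inequality $\<\a_i,\nu_{\tw_i}^\flat\><0$, and the ingredients you name cannot prove it as stated. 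Length-preservation of the step translates (in the paper's conventions) into a sign condition on $\epsilon_{-1}=\<\a_i,(w_i\s)\i(\mu_i)\>$, i.e.\ on $\epsilon_{N-1}$, the \emph{last} entry of the period; by itself this says nothing about which of $\epsilon_1,\dots,\epsilon_{N-2}$ is the first nonzero entry, nor about its sign, and Lemma \ref{flat} only converts ``sign of the first nonzero $\epsilon_j$'' into ``sign of $\<\a_i,\nu_{\tw_i}^\flat\>$''. If one writes the $(\tw_i\s)\i$-orbit of the affine root $-\a_i$ and assumes for contradiction that the first nonzero $\epsilon_j$ ($j\ge 1$) is positive, one finds the orbit member returning to $\tPhi^+$ after leaving it, which is \emph{not} forbidden by $\tw_i\in\cs$ outside $\tPhi_{\nu_{\tw_i}}$ (sign flips outside the Levi are exactly what positive length allows); so ruling this out requires a genuine analysis of the whole orbit in the spirit of Lemma \ref{min}/Corollary \ref{dominant}, which the proposal does not carry out, and it is not even clear from what you write that the inequality is true. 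As it stands this is a proof outline with its decisive step missing.

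For comparison, the paper avoids $\nu^\flat$ entirely here: it argues by induction on $|K|$, using Lemma \ref{K-min} to put all terms of a putative infinite constant-length sequence into a single coset $W_K y$ with $y\in{}^K\tW$ and $y\s(s_i)y\i\in K$; if every element of $K$ occurs infinitely often this forces $K=I(K,y)\subseteq W_{\nu_\tw}$ and then semistandardness forces $\tw_i=y\in{}^K\tW$, contradicting the existence of a left descent $s_i$, while otherwise the sequence eventually lives in a proper $K'\subsetneq K$ and induction applies. If you want to salvage your route, you must either prove the key inequality for length-preserving $\rightharpoonup$-steps on semistandard elements (tracking the full $(\tw\s)$-orbit of $-\a_i$, not just the $j=0,\pm 1$ terms), or switch to the paper's partial-conjugation argument.
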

\begin{proof}
We argue by induction on $|K|$. If $K = \emptyset$, the statement is trivial. Assume $|K| \ge 1$. Suppose there exists such an infinite sequence. By Lemma \ref{semi} we have $\tw_i \in \cs$ for $i \in \ZZ_{\ge 0}$. Moreover, by Lemma \ref{K-min} (1) we can assume that (a) $\ell(\tw_0) = \ell(\tw_1) = \cdots$, and (b) there exists $y \in {}^K \tW$ such that $\tw_i \in W_K y$ and $y \s(s_i) y\i \in K$ for $i \in \ZZ_{\ge 0}$. If each element of $K$ appears infinitely many times in the sequence $s_0, s_1, \dots$, then by (b) we have $K = I(K, y) \subseteq W_{\nu_\tw}$, see \S \ref{parabolic}. So $\tw_i = y \in {}^K \tW$ since $\tw_i \in \cs$, which is impossible. Otherwise, there exists a proper subset $K' \subsetneq K$ such that $s_i \in K'$ for $i \gg 0$, which contradicts the induction hypothesis. The proof is finished.
\end{proof}

Let $R$ be a $\s$-orbit of $\SS_0$. For $\tw, \tw' \in \tW$ we write $\tw \Rightarrow_R \tw'$ if $\tw, \tw' \in \cs$ are $W_R$-$\s$-conjugate and $\tw' \in {}^R \tW$. Write $\tw \Rightarrow \tw'$ if there is a sequence $\tw = \tw_0 \Rightarrow_{R_0} \tw_1 \Rightarrow_{R_1} \cdots \Rightarrow_{R_n} \tw_{n+1} = \tw'$.
\begin{prop} [{\cite[Proposition 6.16]{CN}}] \label{Left}
Let $\tw \in \cs$. Then $\tw \Rightarrow \tw'$, where $\tw' \in {}^{\SS_0} \tW$ is the unique element in the $W_0$-$\s$-conjugacy class of $\tw$.
\end{prop}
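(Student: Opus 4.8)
The plan is to run a partial-conjugation algorithm in the spirit of He \cite{He1}: starting from $\tw$, repeatedly apply a single-orbit move $\Rightarrow_R$, chosen so that the vector $\nu_{\tw}^\flat$ becomes ``more dominant'' at each step, until it is dominant; once $\nu_{\tw}^\flat$ is dominant, Lemma~\ref{min} puts $\tw$ in ${}^{\SS_0}\tW$. As a monovariant I use $n(\tw) := \#\{\a \in \Phi^+; \<\a, \nu_{\tw}^\flat\> < 0\}$, the distance of $\nu_{\tw}^\flat$ from the dominant chamber, which vanishes precisely when $\nu_{\tw}^\flat$ is dominant.

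First I record that the relation $\Rightarrow_R$ is defined on all of $\cs$. Given a $\s$-orbit $R$ and $\tw \in \cs$, Lemma~\ref{finite-seq} applied with $K = R$ produces a chain $\tw = \tw_0 \rightharpoonup_{s_0} \cdots \rightharpoonup_{s_n} \tw_{n+1} = \tw''$ with all $s_i \in R$ and $\tw'' \in {}^R\tW$; since $s_i\tw_i < \tw_i$ at each step, Lemma~\ref{semi}(3) gives $\tw_i \in \cs$ for all $i$, in particular $\tw'' \in \cs$, so $\tw \Rightarrow_R \tw''$. By Lemma~\ref{K-min}(2) this $\tw''$ is the unique element of ${}^R\tW$ in the $W_R$-$\s$-conjugacy class of $\tw$. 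Iterating over orbits, any element produced by a chain of $\Rightarrow$-moves stays in $\cs$ and stays $W_0$-$\s$-conjugate to $\tw$.

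The crucial point is that one move makes $\nu^\flat$ dominant along $R$: if $\tw \Rightarrow_R \tw''$ then $\nu_{\tw''}^\flat$ is dominant for $\Phi_R^+$. By Corollary~\ref{dominant}(3) we have $\nu_{\tw''}^\flat = y(\nu_{\tw}^\flat)$ for some $y \in W_R$, and as the $\Phi_R^+$-dominant $W_R$-conjugate of a vector is unique, it suffices to prove: if $\tw'' \in {}^R\tW \cap \cs$ then $\nu_{\tw''}^\flat$ is $\Phi_R^+$-dominant. For this one adapts Lemma~\ref{min} with $\SS_0$ replaced by $R$: letting $z \in W_R$ be the minimal element with $z(\nu_{\tw''}^\flat)$ $\Phi_R^+$-dominant, the computation of Lemma~\ref{min} (run inside $\Phi_R$) shows $z\tw''\s(z)\i \in {}^R\tW$; this element lies in $\cs$ by Lemma~\ref{semi}(1) and is $W_R$-$\s$-conjugate to $\tw''$, hence equals $\tw''$ by Lemma~\ref{K-min}(2); therefore $z$ fixes $\nu_{\tw''}^\flat$, and minimality of $z$ forces $z = 1$, i.e.\ $\nu_{\tw''}^\flat$ was already $\Phi_R^+$-dominant.

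Granting this, the monovariant strictly decreases as long as $\nu_{\tw}^\flat$ is not dominant: pick a simple root $\a$ with $\<\a, \nu_{\tw}^\flat\> < 0$, let $R$ be its $\s$-orbit, and apply $\tw \Rightarrow_R \tw''$. Every element of $W_R$ permutes $\Phi^+ \setminus \Phi_R^+$, and $\nu_{\tw''}^\flat = y(\nu_{\tw}^\flat)$ is $\Phi_R^+$-dominant by the previous paragraph, so
\[
n(\tw'') = \#\{\g \in \Phi^+\setminus\Phi_R^+; \<\g, \nu_{\tw}^\flat\> < 0\} = n(\tw) - \#\{\g \in \Phi_R^+; \<\g, \nu_{\tw}^\flat\> < 0\} < n(\tw).
\]
Hence after finitely many single-orbit moves we reach $\hat\tw \in \cs$ with $\nu_{\hat\tw}^\flat$ dominant, so $\hat\tw \in {}^{\SS_0}\tW$ by Lemma~\ref{min}; being $W_0$-$\s$-conjugate to $\tw$, it coincides by Lemma~\ref{K-min}(2) with the unique element $\tw'$ of ${}^{\SS_0}\tW$ in that class, and the chain of moves built along the way witnesses $\tw \Rightarrow \tw'$.

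I expect the main obstacle to be the claim of the third paragraph, i.e.\ the $R$-local version of Lemma~\ref{min}. The regular case $\<\g, \nu^\flat\> > 0$ transcribes the argument of Lemma~\ref{min} with only notational changes, but the degenerate case $\<\g, \nu^\flat\> = 0$ with $\g \in \Phi_R^+$ — where Lemma~\ref{min} invokes Corollary~\ref{dominant}(4) — as well as checking that the minimal $z$ above negates no root of $\Phi_{\nu_{\tw''}}$ (so that Lemma~\ref{semi}(1) applies), both force one to control the interaction between the parabolic subsystem $\Phi_R$ and the $(\tw''\s)$-orbit of $\g$; this is where the real work lies.
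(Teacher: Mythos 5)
Your reductions in paragraphs two and four are fine, but the statement you isolate as the ``crucial point'' --- that $\tw'' \in {}^R\tW \cap \cs$ forces $\nu_{\tw''}^\flat$ to be $\Phi_R^+$-dominant (equivalently, an $R$-local version of Lemma \ref{min}) --- is precisely where your proof is missing, and the gap is larger than your last paragraph concedes. You claim the regular case $\<\g,\nu^\flat\> > 0$ transcribes the proof of Lemma \ref{min} with only notational changes. It does not: in that proof, when $\<\g,\mu'\> = 0$ one concludes $p(\tw'\s)\i(\g) \in \Phi^+$ from $\<p(\tw'\s)\i(\g), \nu_{\tw'}^\flat\> > 0$ \emph{together with the dominance of $\nu_{\tw'}^\flat$ on all of $\Phi^+$}; in the $R$-local situation $\nu^\flat$ is only $\Phi_R^+$-dominant, while $p(\tw'\s)\i(\g)$ has no reason to stay inside $\Phi_R$, so positivity of that pairing gives nothing. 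Thus neither the regular nor the degenerate case is covered by anything in the paper (Corollary \ref{dominant}(2) only controls roots in $\Phi_{\nu_\tw}$, not in $\Phi_R$), and since your monovariant $n(\tw)$ decreases only if this claim holds, the argument does not close. I do not know a counterexample to the claim, but proving it would require a genuinely new analysis of semi-standard elements of ${}^R\tW$ (say via the presentation $\tw = z\tw_0\s(z)\i$ of Lemma \ref{semi}(2)); it is not a routine adaptation.

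The paper's proof is entirely different and needs no dominance statement at all: it is a pure termination argument. If the process never reaches ${}^{\SS_0}\tW$, then at every stage the current element $\tw_i \in \cs$ admits some $s \in \SS_0$ with $s\tw_i < \tw_i$; taking $R_i$ to be its $\s$-orbit and applying Lemma \ref{finite-seq} with $K = R_i$ gives a nontrivial chain $\tw_i \rightharpoonup_{R_i} \tw_{i+1} \in {}^{R_i}\tW$, with all terms in $\cs$ by Lemma \ref{semi}(3); concatenating yields an infinite sequence of moves $\rightharpoonup_s$ with $s \in \SS_0$, contradicting Lemma \ref{finite-seq} for $K = \SS_0$, and the terminal element equals $\tw'$ by Lemma \ref{K-min}(2). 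In other words, the ingredients you already assemble in your second paragraph prove the proposition outright; the detour through $\nu_\tw^\flat$, the monovariant, and the unproven $R$-local Lemma \ref{min} are unnecessary as well as unestablished.
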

\begin{proof}
Assume otherwise. Then by Lemma \ref{finite-seq} there is an infinite sequence $$\tw = \tw_0 \rightharpoonup_{R_0} \tw_1 \rightharpoonup_{R_1} \cdots,$$ where $\tw_{i+1} \in {}^{R_i} \tW$ and $R_i$ is some $\s$-orbit of $\SS_0$ for $i \in \ZZ_{\ge 0}$. This contradicts Lemma \ref{finite-seq}. So the statement follows.
\end{proof}

\begin{lem} \label{existence}
Let $R$ be a $\s$-orbit of $\SS_0$. Let $\tw \in \Adm(\l) \cap \cs$. If $\tw \notin {}^R \tW$ and $\tw$ is not right $R$-distinct. Then $\cp_{\tw} \neq \emptyset$.
\end{lem}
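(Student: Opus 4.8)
The plan is to produce an explicit $\tw$-permissible root by carefully choosing a simple root $s \in R$ that witnesses the failure of $\tw$ to lie in ${}^R\tW$, and then tracking the orbit of the associated affine root under $\tw\s$. Since $\tw \notin {}^R\tW$, there is $s \in R$ with $s\tw < \tw$; since $\tw$ is not right $R$-distinct, there is $s' \in R$ with $\tw s' \in \Adm(\l)$. The key geometric input is Corollary \ref{dominant}: the flat Newton vector $\nu_\tw^\flat$ governs, via Lemma \ref{flat}, the signs $\<\a^i,\nu_\tw^\flat\>$ along the $\tw\s$-orbit of a root $\a$, and in particular detects whether $\a^{-m_{\a,\tw}} \in \tPhi^+$. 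So I would first pin down a candidate root $\a \in \Phi^+ \setminus \Phi_{\nu_\tw}$ attached to the data $s, s'$, and then verify the two defining conditions of $\cp_\tw$: that $\tw\s(s_\a) \in \Adm(\l)$, and that $\a^{-m_{\a,\tw}} \in \tPhi^+$.

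For the first condition I would use the admissibility closure properties: since $\tw \in \Adm(\l)$ and $\Adm(\l)$ is closed under passing to smaller elements in the Bruhat order, it suffices to arrange that $\tw\s(s_\a) \le \tw$ (or to relate $\tw\s(s_\a)$ to $\tw s'$ via a conjugation argument of the kind used in Lemma \ref{permissible} and the cited Corollary \ref{conj}). Here the choice of $\a$ must be compatible with the $\s$-orbit structure of $R$: as in the proof of Lemma \ref{permissible}, I expect to reduce to the case $d = 1$, so that $R$ is either a commuting family of simple reflections or of type $A_2$, and then handle these two cases by direct combinatorics of the root system, using that $w_R$ conjugates left $R$-distinctness to right $R$-distinctness (Lemma \ref{R-dist}, as invoked earlier). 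The point is that $\tw$ being not right $R$-distinct gives room inside $\Adm(\l)$ to accommodate $s_\a$ after applying $\tw\s$.

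For the second condition, $\a^{-m_{\a,\tw}} \in \tPhi^+$, I would feed the choice of $\a$ into Lemma \ref{flat}: if $\<\a^{1-m},\mu\>$ (with $m = m_{\a,\tw}$) has the sign forced by $s\tw < \tw$ together with semi-standardness of $\tw$, then the first nonzero pairing $\<\a, p(\tw\s)^{m-1}(\mu)\>$ along the backward orbit has the right sign, and hence $\a^{-m} = (\tw\s)^{-1}(\a^{1-m}) \in \tPhi^+$ precisely because $\a^{1-m} \in \tPhi^- \setminus \Phi$ is forced by the minimality in the definition of $m_{\a,\tw}$ combined with $\tw \notin {}^R\tW$. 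The semi-standard hypothesis ($\tw \in \cs$) is what guarantees the orbit $\a^0,\a^{-1},\dots$ does not wander out of the expected sign pattern prematurely, exactly as in the proof of Corollary \ref{dominant}(2).

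I expect the main obstacle to be the interaction between the two chosen reflections $s$ and $s'$ in $R$ when $R$ is of type $A_2$: there $w_R$ is not an involution on the simple roots in the naive sense, and the root $\a$ that encodes $s\tw < \tw$ need not be the one that encodes $\tw s' \in \Adm(\l)$, so the argument showing $\tw\s(s_\a) \in \Adm(\l)$ requires a genuine case analysis (possibly splitting on whether the simple affine root of $s$ is, or is not, in the $\tw\s$-orbit that reaches $s'$). The commuting case should be routine. Once $\a \in \cp_\tw$ is exhibited in all cases, we are done.
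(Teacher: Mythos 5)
Your proposal is a plan rather than a proof, and the two places where it waves its hands are exactly where the real content of the lemma lies. First, the positivity condition. You assert that $\a^{-m_{\a,\tw}} \in \tPhi^+$ is ``forced'' by $s\tw<\tw$ together with semi-standardness, via the sign pattern of $\nu_\tw^\flat$; this is not justified and is not true for the naive candidate root. (Also, your sentence claiming ``$\a^{1-m}\in\tPhi^-\setminus\Phi$ is forced by the minimality in the definition of $m_{\a,\tw}$'' is incoherent: by definition of $m_{\a,\tw}$ one has $\a^{1-m}\in\Phi$.) Corollary \ref{dominant}(5) only goes the other way (permissibility implies the negative pairings), and nothing in your setup rules out that the backward $\tw\s$-orbit of your chosen root exits $\Phi$ through $\tPhi^-$. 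The paper's proof confronts precisely this possibility: it chooses $s\in R$ with $\tw\s(s)\in\Adm(\l)$ together with the \emph{minimal} $k$ such that $\s^{-k}(s)\tw<\tw$, uses Lemma \ref{R1}(2) (the rigidity $\tw s'=s'\tw$ when $\tw<s'\tw$ and $\tw s'\notin\Adm(\l)$) to show $\a^{-i}=\s^{-i}(\a)$ for $0\le i\le k$, deduces $\a^{-k-1}\in\tPhi^+$ from $\s^{-k}(s)\tw<\tw$, and then — this is the step your proposal has no substitute for — when $\a\notin\cp_\tw$ it switches to the fallback root $\b=-\a^{-k-1}\in\Phi^+\setminus\Phi_{\nu_\tw}$, whose orbit terminates at $-\a^{-m_{\a,\tw}}\in\tPhi^+\setminus\Phi$ and for which $\tw\s(s_\b)<\tw$ gives $\tw\s(s_\b)\in\Adm(\l)$ by Bruhat closure. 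Without this dichotomy (or the minimality device that makes it available), your argument cannot close.

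Second, the admissibility condition. Your plan to secure $\tw\s(s_\a)\in\Adm(\l)$ by reducing to $d=1$ and a case analysis on $R$ (commuting versus type $A_2$), in the style of Lemma \ref{permissible} and Corollary \ref{conj}, is not carried out and is not how the difficulty you yourself flag — that the reflection witnessing $s\tw<\tw$ need not be the one witnessing $\tw s'\in\Adm(\l)$ — gets resolved. In the paper this tension is resolved not by root-system case analysis but by the minimality-of-$k$ argument above: $s$ is chosen so that $\tw\s(s)\in\Adm(\l)$ holds by hypothesis (making the condition automatic for $\a$), and the left-descent element $\s^{-k}(s)$ is connected to $\a$ by tracking the orbit along simple roots. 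So both halves of your verification rest on steps that are either missing or would fail as stated; the key ideas to supply are the minimal-$k$ orbit-tracking via Lemma \ref{R1}(2) and the fallback root $\b=-\a^{-k-1}$.
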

\begin{proof}
By assumption, there exists $s \in R$ and $0 \le k \le |R|-1$ such that $\s^{-k}(s) \tw < \tw$, $\tw \s(s) \in \Adm(\l)$, and $$k = \min\{0 \le i \le |R|-1; \s^{-i}(s') \tw < \tw, \tw \s(s') \in \Adm(\l) \text { for some } s' \in R \}.$$ Let $\a \in \Phi^+$ be the simple root of $s$. We claim that \begin{align*} \tag{a} \a^{-i} =  \s^{-i}(\a) \text{ for } 0 \le i \le k, \text{ and hence } m_{\a, \tw} \ge k+1. \end{align*} Let $0 \le i \le k-1$. By the choice of $k$ we have $\tw < \s^{-i}(s) \tw$ and $\tw \s^{-i}(s) \notin \Adm(\l)$, which means $\s^{-i}(\a) = \tw \s^{-i}(\a)$ by Lemma \ref{R1}. So (a) is proved.

By (a) we have $\a^{-k} \in \Phi^+$. So $\a^{-k-1} = (\tw\s)\i (\a^{-k}) \in \tPhi^+$ since $\s^{-k}(s) \tw < \tw$. As $\tw \in \cs$, it follows that $\a^{-k} \notin \Phi_{\nu_\tw}$ and hence $\a^i \notin \Phi_{\nu_\tw}$ for $i \in \ZZ$. If $\a \notin \cp_\tw$, we have $\a^{-m_{\a, \tw}} \in \tPhi^- \setminus \Phi$ by definition, which means $\a^{-k-1} \in \tPhi^+ \cap \Phi = \Phi^-$. Let $\b = -\a^{-k-1} \in \Phi^+ \setminus \Phi_{\nu_\tw}$. Then $\b^{-m_{\b, \tw}} =  -\a^{-m_{\a, \tw}} \in \tPhi^+ \setminus \Phi$, and $\tw\s(s_\b) < \tw \in \Adm(\l)$ since $\tw\s(\b) = -\a^{-k} \in \Phi^-$. So $\b \in \cp_\tw$ as desired.
\end{proof}

\begin{cor} \label{non-empty}
Assume $(\l, b)$ is Hodge-Newton irreducible. For $\tw \in \cs_{\l, b}$ we have either $\tw \in {}^{\SS_0} \tW$ or $\cp_\tw \neq \emptyset$.
\end{cor}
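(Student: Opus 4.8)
The plan is to reduce, by conjugations with longest elements of parabolic subgroups attached to $\s$-orbits of $\SS_0$, to a situation where Lemma~\ref{existence} applies directly. Assume $\tw \notin {}^{\SS_0}\tW$; we must produce $\cp_\tw \neq \emptyset$. Then $\tw$ has a left descent, so we may fix a $\s$-orbit $R$ of $\SS_0$ with $\tw \notin {}^R\tW$. If $\tw$ is \emph{not} right $R$-distinct, Lemma~\ref{existence} gives $\cp_\tw \neq \emptyset$ and we are done. So the essential case is that $\tw$ is right $R$-distinct.

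In that case I would replace $\tw$ by $\tw_1 := w_R \tw w_R$. Since $R$ is $\s$-stable we have $\s(w_R)=w_R$, so $\tw_1$ is $W_0$-$\s$-conjugate to $\tw$; it is again semi-standard by Lemma~\ref{semi}, hence $\tw_1 \in \cs_{\l,b}$; by the left--right symmetric counterpart of the first statement of Lemma~\ref{permissible} (which holds by the same argument), $\tw_1 \in \Adm(\l)$ is left $R$-distinct; and applying Lemma~\ref{permissible} to $\tw_1$ itself (which is left $R$-distinct) shows $\cp_{\tw_1} \neq \emptyset \Rightarrow \cp_{w_R \tw_1 w_R} = \cp_\tw \neq \emptyset$. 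Now Hodge--Newton irreducibility enters: by Proposition~\ref{unique} the unique semi-standard element $\tw_\circ \in {}^{\SS_0}\tW$ in the $W_0$-$\s$-conjugacy class of $\tw$ is not left $R'$-distinct for any $\s$-orbit $R'$ of $\SS_0$; since $\tw_1$ is left $R$-distinct, $\tw_1 \neq \tw_\circ$, hence $\tw_1 \notin {}^{\SS_0}\tW$. Thus all hypotheses are restored for $\tw_1$, and iterating produces elements $\tw = \tw^{(0)}, \tw^{(1)}, \tw^{(2)}, \dots \in \cs_{\l,b} \setminus {}^{\SS_0}\tW$, all in one $W_0$-$\s$-conjugacy class, with $\tw^{(i+1)} = w_{R_i}\tw^{(i)}w_{R_i}$ as long as $\tw^{(i)}$ is right $R_i$-distinct and with $\cp_{\tw^{(i+1)}} \neq \emptyset \Rightarrow \cp_{\tw^{(i)}} \neq \emptyset$. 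The first time some $\tw^{(i)}$ fails to be right $R_i$-distinct, Lemma~\ref{existence} together with backward propagation yields $\cp_\tw \neq \emptyset$.

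The hard part, and the only genuine obstacle, is to prove that this process must terminate, i.e.\ that one cannot stay in the right $R$-distinct branch indefinitely. Here I would exploit that the $W_0$-$\s$-conjugacy class of $\tw$ is finite (so an infinite sequence would be eventually periodic), that $\nu_{\tw^{(i+1)}}^\flat = w_{R_i}(\nu_{\tw^{(i)}}^\flat)$ by Corollary~\ref{dominant}(3) (so the $\nu^\flat$'s remain in the finite $W_0$-orbit of $\nu_{\tw}^\flat$), and the rigid double-coset structure of $\tw^{(i)}$ relative to $W_{R_i}$: since $\tw^{(i)}$ is right $R_i$-distinct it lies in $\tW^{R_i}$ while not in ${}^{R_i}\tW$, so writing $\tw^{(i)} = v_i y_i$ with $1 \neq v_i \in W_{R_i}$ and $y_i$ minimal in $W_{R_i}y_i W_{R_i}$, one tracks how $\ell$ and the position of $\nu^\flat$ vary, and uses semi-standardness together with Corollary~\ref{dominant}(4) and the reduction-to-${}^{\SS_0}\tW$ provided by Proposition~\ref{Left}, to exclude a cycle. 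This termination argument is the technical heart; once it is in place, Corollary~\ref{non-empty} follows.
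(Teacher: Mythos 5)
Your reduction step is essentially the paper's: the dichotomy on right $R$-distinctness, Lemma \ref{existence} in one branch, conjugation by $w_R$ together with the ``Moreover'' part of Lemma \ref{permissible} for backward propagation, and Proposition \ref{unique} (via Hodge--Newton irreducibility) to keep the new element outside ${}^{\SS_0}\tW$. But the proof is not complete: the termination of the iteration, which you yourself flag as ``the technical heart,'' is never proved, and the ingredients you list do not assemble into an argument. Finiteness of the $W_0$-$\s$-conjugacy class only says an infinite run is eventually periodic; neither $\ell$ nor $\nu^\flat$ is visibly monotone along your steps (length can stay constant, and $\nu^\flat_{w_R\tw w_R}=w_R(\nu^\flat_\tw)$ merely moves inside a finite $W_0$-orbit), so they do not by themselves exclude a cycle; and since you choose the orbit $R$ arbitrarily at each stage, nothing in your setup forces progress towards ${}^{\SS_0}\tW$. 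A smaller inaccuracy: semi-standardness of $w_R\tw w_R$ is not given by Lemma \ref{semi} as cited --- part (1) would require $w_R(\tPhi^+_{\nu_\tw})\subseteq\tPhi^+$, which you do not verify, and part (3) only treats conjugation by a single simple reflection with a length condition.

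The paper sidesteps termination altogether: by Proposition \ref{Left} it fixes in advance a chain $\tw=\tw_0\Rightarrow_{R_0}\tw_1\Rightarrow_{R_1}\cdots\Rightarrow_{R_{n-1}}\tw_n\in{}^{\SS_0}\tW$ of distinct semi-standard elements and inducts on $n$. The key point is that when $\tw$ is right $R_0$-distinct, the conjugate $w_{R_0}\tw w_{R_0}$ is forced to coincide with the next chain element $\tw_1$: by Lemma \ref{R-dist} it lies in $\Adm(\l)$ and is left $R_0$-distinct, hence (as $\Adm(\l)$ is closed under the Bruhat order) lies in ${}^{R_0}\tW$, and the uniqueness in Lemma \ref{K-min}(2) identifies it with $\tw_1$; this also gives its semi-standardness for free, and each conjugation strictly advances along the predetermined chain, so the induction closes. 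If you prefer to keep your formulation with an arbitrarily chosen orbit $R$, you can repair the gap as follows: by Lemma \ref{finite-seq} one has $\tw\rightharpoonup_R y$ for some $y\in{}^R\tW$, all intermediate elements are semi-standard by Lemma \ref{semi}(3), and $y=w_R\tw w_R$ by Lemma \ref{K-min}(2); since $\tw\notin{}^R\tW$ each such step is a nonempty string of $\rightharpoonup$-moves, so an infinite iteration would concatenate into an infinite $\rightharpoonup_{\SS_0}$-sequence, contradicting Lemma \ref{finite-seq}. Either way, the termination you left open is exactly what Proposition \ref{Left} and Lemma \ref{finite-seq} supply, and it must be made explicit.
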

\begin{proof}
By Proposition \ref{Left}, there exists a sequence $$\tw = \tw_0 \Rightarrow_{R_0} \tw_1 \Rightarrow_{R_1} \cdots \Rightarrow_{R_{n-1}} \tw_n = \tw',$$ where $\tw_0, \dots, \tw_{n+1}$ are distinct semi-standard elements, $R_0, \dots, R_n$ are $\s$-orbits of $\SS_0$, and $\tw' \in {}^{\SS_0}\tW$. We argue by induction on $n$. If $n = 0$, then $\tw \in {}^{\SS_0} \tW$ as desired. Assume $n \ge 1$. If $\tw = \tw_0$ is not right $R_0$-distinct, then $\cp_{\tw} \neq \emptyset$ by Lemma \ref{existence}. Otherwise, by Lemma \ref{R-dist}, $w_{R_0} \tw w_{R_0} \in \Adm(\l)$ is left $R_0$-distinct. So $w_{R_0} \tw w_{R_0} = \tw_1 \in {}^{R_0} \tW$ by Lemma \ref{K-min} (2). Moreover, $\tw_1 \notin {}^{\SS_0} \tW$ by Proposition \ref{unique}. By induction hypothesis, $\cp_{\tw_1} \neq \emptyset $, which implies $\cp_\tw \neq \emptyset$ by Lemma \ref{permissible}.
\end{proof}

\section{Proof of Proposition \ref{one-leaf}} \label{sec-leaf}
Assume $(\l, b)$ is Hodge-Newton irreducible. Recall that $d$ is the number of connected components of $\SS_0$. For $g \in G(\brF)$, $\tilde \g \in \tPhi$, $\tw \in \tW$, and $m \in \ZZ_{\ge 0}$, we define $$\textsl{g}_{g, \tilde\g, \tw, m}:  \PP^1 \to G(\brF) / I, \ z \mapsto g {}^{(\tw\s)^{1-m}} U_{\tilde\g}(z) \cdots {}^{(\tw\s)\i} U_{\tilde\g}(z) U_{\tilde\g}(z) I.$$

\begin{hyp} \label{hypo}
Recall that $\FF_q$ is the residue field of $F$. Assume that $q^d > 2$ (resp. $q^d > 3$) if some/any connected component of $\SS_0$ is non-simply-laced except of type $G_2$ (resp. is of type $G_2$).
\end{hyp}
Note that if Hypothesis \ref{hypo} is not true, then $d = 1$ and $\SS_0$ is non-simply-laced, which implies that $G$ is residually split, and hence split (since $G$ is unramified).

\begin{lem} \label{limit}
Suppose Hypothesis \ref{hypo} is true. Let $\tw \in \tW$, $\g \in \Phi$, and $m \in \ZZ_{\ge 0}$ such that the roots $\g^i := (\tw\s)^i(\g) \in \Phi$ for $1-m \le i \le 0$ are linearly independent. Let $\textsl{g} = \textsl{g}_{1, \g, \tw, m}$. Then there exist integers $1-m \le i_r < \cdots < i_0 \le 0$ such that $$\textsl{g}(\infty) = s_{\g^{i_r}} \cdots s_{\g^{i_0}} I, \text{ and } s_{\g^{i_0}} \cdots s_{\g^{i_{k-1}}} (\g^{i_k}) \in \Phi^+ \text{ for } 0 \le k \le r.$$ Moreover, if there exists $v \in V$ such that $\<\g^i, v\> < 0$ for $1-m \le i \le 0$, then $v \le (s_{\g^{i_r}} \cdots s_{\g^{i_0}})\i(v)$, where the equality holds if and only if $r < 0$, that is, $\g^i \in \Phi^-$ for $1-m \le i \le 0$.
\end{lem}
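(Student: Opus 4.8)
The plan is to analyze the map $\textsl{g} = \textsl{g}_{1, \g, \tw, m}: \PP^1 \to G(\brF)/I$ by working out the limit as $z \to \infty$ through an explicit reduction inside the loop group. First I would record the structure of the product $g_{z} := {}^{(\tw\s)^{1-m}} U_{\tilde\g}(z) \cdots {}^{(\tw\s)\i} U_{\tilde\g}(z) U_{\tilde\g}(z)$, where $\tilde\g$ is a lift of $\g$ to $\tPhi$: each factor ${}^{(\tw\s)^{i}} U_{\tilde\g}(z)$ is an affine root subgroup element $U_{\tilde\g^{i}}(c_i z)$ for the affine root $\tilde\g^{i} = (\tw\s)^{i}(\tilde\g)$ lying over $\g^{i} = (\tw\s)^i(\g)$, with some nonzero scalar $c_i$ (coming from $\s$-semilinearity; this is where $q^d > 2$ or $3$ enters, guaranteeing there is room to choose a suitable $z \in \kk^\times$ and that the relevant structure constants in the Chevalley commutator formula are units — see Hypothesis \ref{hypo}). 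Since the $\g^i$ for $1-m \le i \le 0$ are linearly independent, the subgroup they generate is a product of root subgroups in any fixed order, and the standard "sorting" argument (pushing the $U$'s with affine roots in $\tPhi^-$ to the right, absorbing them into $I$, while the ones in $\tPhi^+$ combine with Weyl elements) lets me rewrite $g_z I$, for generic large $z$, as $s_{\g^{i_r}} \cdots s_{\g^{i_0}} \cdot (\text{bounded factor}) \cdot I$. Taking $z \to \infty$ kills the bounded-in-$z$ error term, giving $\textsl{g}(\infty) = s_{\g^{i_r}} \cdots s_{\g^{i_0}} I$.

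The indices $1-m \le i_r < \cdots < i_0 \le 0$ and the positivity condition $s_{\g^{i_0}} \cdots s_{\g^{i_{k-1}}}(\g^{i_k}) \in \Phi^+$ should come out of this sorting procedure automatically: processing the factors from the right (from $i=0$ downward), one keeps a reflection $s_{\g^i}$ exactly when, after conjugating past the reflections already collected, the corresponding affine root has become positive-over-the-alcove, i.e. the "new wall crossed" condition. This is the usual combinatorics of a reduced-word/gallery computation, and I would phrase it as an induction on $m$: strip off the outermost factor $U_{\tilde\g^{1-m}}$, apply the inductive description to the inner product $\textsl{g}_{1,\g,\tw,m-1}(\infty) = s_{\g^{j_s}} \cdots s_{\g^{j_0}} I$, and then determine whether the extra reflection $s_{\g^{1-m}}$ survives by checking the sign of $s_{\g^{j_0}} \cdots s_{\g^{j_s}}(\g^{1-m})$ relative to $\Phi^+$.

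For the "Moreover" part, suppose $v \in V$ with $\<\g^i, v\> < 0$ for all $1-m \le i \le 0$. I want to track $v \mapsto (s_{\g^{i_r}} \cdots s_{\g^{i_0}})\i(v) = s_{\g^{i_0}} \cdots s_{\g^{i_r}}(v)$ step by step. Applying reflections one at a time from $s_{\g^{i_0}}$ inward, the key observation is that $s_\a(u) - u = -\<\a, u\> \a^\vee$, so each reflection $s_{\g^{i_k}}$ adds a nonnegative multiple of $(\g^{i_k})^\vee$ to the running vector precisely when $\g^{i_k}$ is still "pointing the right way," which the positivity condition $s_{\g^{i_0}} \cdots s_{\g^{i_{k-1}}}(\g^{i_k}) \in \Phi^+$ combined with $\<\g^{i_k}, v\> < 0$ is designed to ensure — one checks inductively that $\<s_{\g^{i_0}} \cdots s_{\g^{i_{k-1}}}(\g^{i_k}), v\> = \<\g^{i_k}, (\text{something})\> $ stays negative, so the increment is a strictly positive multiple of a positive coroot. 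Summing the increments gives $(s_{\g^{i_r}} \cdots s_{\g^{i_0}})\i(v) - v \in \RR_{\ge 0}(\Phi^+)^\vee$, i.e. $v \le (s_{\g^{i_r}} \cdots s_{\g^{i_0}})\i(v)$, with equality iff no reflection survives, i.e. $r < 0$, which happens exactly when every $\g^i \in \Phi^-$ so that the sorting procedure discards all factors into $I$.

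The main obstacle I anticipate is the bookkeeping in the sorting step: carefully commuting the affine root subgroup elements past one another using the Chevalley/Steinberg commutator relations while keeping track of which affine roots are positive over the base alcove, and verifying that the error terms really are bounded in $z$ (polynomial in $z$ of controlled degree with the right leading behavior) so that the $z \to \infty$ limit exists in $G(\brF)/I$ and equals the claimed Weyl-type element. The linear independence hypothesis on the $\g^i$ is exactly what makes the subgroup they generate "unipotent enough" to run this argument cleanly, and Hypothesis \ref{hypo} is what lets me pick the scalar $z$ avoiding the finitely many bad values; both need to be invoked at precisely the right moment.
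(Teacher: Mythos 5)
Your overall plan (compute $\textsl{g}(\infty)$ by an explicit reduction in the loop group, then obtain the ``Moreover'' inequality by summing increments along positive coroots) matches the paper's, and the ``Moreover'' part of your sketch is essentially correct --- in fact no induction is needed there, since $\<s_{\g^{i_0}}\cdots s_{\g^{i_{k-1}}}(\g^{i_k}), s_{\g^{i_0}}\cdots s_{\g^{i_{k-1}}}(v)\> = \<\g^{i_k}, v\>$ by $W$-invariance of the pairing. The genuine gap is in the limit computation, exactly at the point you defer to ``bookkeeping''. First, the factors are not $U_{\g^i}(c_i z)$ but $U_{\g^i}(c_i z^{q^i})$ with $c_i \in \co_\brF^\times$: the powers of $\s\i$ act on the parameter $z \in \kk$ by inverse Frobenius, and these fractional $q$-powers are precisely what makes the limit delicate. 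Second, your stated mechanism for Hypothesis \ref{hypo} (room to choose $z$ generically, structure constants being units) is not what the hypothesis does, and ``the bounded error term is killed as $z \to \infty$'' is not a valid step: a factor that is merely bounded does not disappear in $G(\brF)/I$; one must show it actually converges to $1$ (or at least into $I$).

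What is actually needed is an induction on $m$ stripping the \emph{rightmost} factor: if $\g \in \Phi^-$ then $U_\g(z) \subseteq I$ and one recurses; if $\g \in \Phi^+$ one writes $U_\g(z) I = U_{-\g}(z\i) s_\g I$ and conjugates $U_{-\g}(z\i)$ across the remaining factors. By linear independence of the $\g^i$ and the commutator formula, this conjugate is a product of terms $U_\b(c_{a_\bullet} z^{n_{a_\bullet}})$ with $\b = -a_0\g + \sum_{i \ge 1} a_i \g^{-i}$ and $n_{a_\bullet} = -a_0 + \sum_{i \ge 1} a_i q^{-i}$, where root-string lengths bound $a_{jd}/a_0$ by $1$, $2$ or $3$ according to the type of $\SS_0$. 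Hypothesis \ref{hypo} enters exactly here: $q^d > 2$ (resp.\ $q^d > 3$) gives $a_{jd}/a_0 \le q^d - 1$, hence every $n_{a_\bullet} < 0$, hence the conjugated term tends to $1$ as $z \to \infty$, and $\textsl{g}(\infty) = s_\g\, \textsl{g}_{1,\, s_\g((\tw\s)\i(\g)),\, s_\g \tw \s(s_\g),\, m-1}(\infty)$, from which the indices $i_k$ and the positivity of the $\b_k$ fall out of the induction. Without this quantitative comparison the limit can genuinely be a different coset (this is why the small-$q$ split cases are excluded by Hypothesis \ref{hypo} and handled by quoting the split result of \cite{CN}). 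Note also that your alternative induction stripping the \emph{leftmost} factor does not interact well with passing to the limit in $G(\brF)/I$: the inductive hypothesis only controls the limit of the inner coset, not a factorization with a controllable error, so one cannot simply multiply limits; the recursion has to be organized from the right as above.
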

\begin{proof}
First notice that \begin{align*} \tag{a} {}^{(\tw\s)^i} U_\g(z) = U_{\g^i}(c_i z^{q^i}) \text{ with } c_i \in \co_\brF^\times \text{ for } 1-m \le i \le 0. \end{align*} We argue by induction on $m$. If $m = 0$, the statement is trivial. Assume $m \ge 1$. If $\g \in \Phi^-$, then $\textsl{g}(\infty) = \textsl{g}_{1, \g\i, \tw, m}(\infty)$, and it follows by induction hypothesis. Otherwise, we have $$\textsl{g}(z) = {}^{(\tw\s)^{1-m}} U_\g(z) \cdots {}^{(\tw\s)\i} U_\g(z) U_{-\g}(z\i) s_\g I \text{ for } z \neq 0.$$ As the roots $\g^i$ for $1-m \le i \le 0$ are linearly independent, it follows by (a) and induction on $m$ that $${}^{{}^{(\tw\s)^{1-m}} U_\g(z) \cdots {}^{(\tw\s)\i} U_\g(z)} U_{-\g}(z\i)= \prod_{(\b, a_\bullet)} U_\b(c_{a_\bullet} z^{n_{a_\bullet}}),$$ where $a_\bullet = (a_i)_{0 \le i \le m-1} \in (\ZZ_{\ge 0})^m$ such that $a_0 \ge 1$ and $a_i = 0$ unless $i \in d\ZZ$, $\b = -a_0 \g + \sum_{i=1}^{m-1} a_i \g^{-i} \in \Phi$, $c_{a_\bullet} \in \co_\brF$ and $n_{a_\bullet} = -a_0 + \sum_{i=1}^{m-1} a_i q^{-i}$. Moreover, we have $a_{jd} / a_0 \le 1$ (resp. $a_{j d} / a_0 \le 2$, resp. $a_{jd} / a_0 \le 3$) for $j \ge 1$ if some/any connected component of $\SS_0$ is simply-laced (resp. is non-simply-laced except of type $G_2$, resp. is of type $G_2$). Thus by Hypothesis \ref{hypo} we have $a_{jd} / a_0 \le q^d-1$ for $j \ge 1$, which implies that $n_{a_\bullet} < 0$ and $$\lim_{z \to \infty} {}^{{}^{(\tw\s)^{1-m}} U_\g(z) \cdots {}^{(\tw\s)\i} U_\g(z)} U_{-\g}(z\i) = 1.$$ Then $\textsl{g}(\infty) = s_{\g} \textsl{g}_{1, s_\g(\g\i), s_\g \tw \s(s_\g), m-1}(\infty)$, and the first statement follows by induction hypothesis.

Set $\b_k = s_{\g^{i_0}} \cdots s_{\g^{i_{k-1}}} (\g^{i_k}) \in \Phi^+$ and $v_k = s_{\g^{i_0}} \cdots s_{\g^{i_k}}(v)$ for $0 \le k \le r$. As $\<\g^{i_k}, v\> < 0$ we have $$v_k = s_{\b_k}(v_{k-1}) = v_{k-1} - \<\b_k, v_{k-1}\> \b_k^\vee = v_{k-1} - \<\g^{i_k}, v\> \b_k^\vee > v_{k-1}.$$ So the ``Moreover" part follows.
\end{proof}

Let $x \in \cs_{\l, b}^+$. Let $J_{x, 0}$ be union of connected components of $J$ on which $\s^i(\mu_x)$ is central for $i \in \ZZ$. Let $J_{x, 1} = J \setminus J_{x, 0}$. Let $H_x \subseteq M_J(\brF)$ be the subgroup generated by $I_{M_J}$, $W_{J_{x, 0}}$, and $W_{J_{x, 1}}^a$, see \S\ref{parabolic}. Note that $J_{x, 1}$ commutes with $J_{x, 0}$, and $\tw_x \in \tW_{J_{x, 1}}$.

Note that $\tW = \sqcup_{z \in W_0^J} z \tW_J = \sqcup_{z \in W_0^J} \sqcup_{\o \in \Omega_J} z \o\i W_J^a$.
\begin{lem} \label{tosemi}
Let $x \in \cs_{\l, b}^+$. Let $\tw \in \cs_{\l, b, x}$ and $z \in W_0^J$ such that $\tw = z \tw_x \s(z)\i$. Let $y \in \tW$ (resp. $y \in W_0$) such that $y \tw \s(y)\i \in \Adm(\l)$. Let $z' \in W_0^J$ and $\o \in \Omega_J$ such that $y z \in z' \o\i W_J^a$. Then $\tw' := z' \o\i \tw_x \s(\o) \s(z')\i \in \cs_{\l, b}$. Moreover, there exists $h \in \ker(\eta_{M_J}) \cap \JJ_{\tw_x}$ (resp. $h \in H_x \cap \JJ_{\tw_x}$) such that $g y\i I \sim_{\l, b} g z h \o {z'}\i I$ for $g \in \JJ_{b, \tw}$.
\end{lem}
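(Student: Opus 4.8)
The plan is to prove the combinatorial claim $\tw'\in\cs_{\l,b}$ first, and then the connectedness statement by reducing it, inductively, to $\PP^1$-families of the form $\textsl{g}_{g,\tilde\g,\tw,m}$ as controlled by Lemma \ref{limit}. Throughout we may assume Hypothesis \ref{hypo}, since if it fails then $G$ is split and Theorem \ref{intro} is already known by \cite{CN}.

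\emph{The element $\tw'$.} Put $\tw_{x''}:=\o\i\tw_x\s(\o)$. As $\tw_x,\o\in\Omega_J$ this lies in $\Omega_J$, and since $\eta_{M_J}$ restricts to an isomorphism of $\Omega_J$ onto $\pi_1(M_J)$ it is the canonical representative of $x'':=\eta_{M_J}(\tw_{x''})$; thus $\tw'=z'\tw_{x''}\s(z')\i$ has exactly the shape of an element of $\cs_{\l,b,x''}$. Both $\tw_x$ (which has length $0$) and $b$ (since $\nu_{M_J}(b)=\nu_G(b)$ is central in $M_J=M_{\nu_G(b)}$) are basic in $M_J$, so $\k_{M_J}(x)=\k_{M_J}(b)$ forces $\tw_x$ and $b$ to be $\s$-conjugate in $M_J$; hence so are $\tw_{x''}$ and $b$, whence $\k_{M_J}(x'')=\k_{M_J}(b)$, $\tw'$ is $\s$-conjugate to $b$, and $\nu_{\tw_{x''}}=\nu_G(b)$ is $G$-dominant with $\Phi_{\nu_{\tw_{x''}}}=\Phi_J$. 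Since $z'\in W_0^J$ carries $\tPhi_{\nu_{\tw_{x''}}}^+=\tPhi_{M_J}^+$ into $\tPhi^+$, Lemma \ref{semi}(1) shows $\tw'$ is semi-standard. Finally, unwinding $yz\in z'\o\i W_J^a$ gives $y\tw\s(y)\i=z'\o\i\bigl(u\tw_x\s(u)\i\bigr)\s(\o)\s(z')\i$ with $u\in W_J^a$; since $\tw_x\le u\tw_x\s(u)\i$ in $\tW_{M_J}$ and $W_0^J$-$\s$-conjugation is compatible with Bruhat order, $\tw'\le y\tw\s(y)\i$, and as $y\tw\s(y)\i\in\Adm(\l)$ with $\Adm(\l)$ closed downwards we get $\tw'\in\Adm(\l)$ (in particular $\mu_{x''}\preceq\l$, so $x''\in\cs_{\l,b}^+$). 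Therefore $\tw'\in\cs_{\l,b,x''}\subseteq\cs_{\l,b}$.

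\emph{The connectedness.} Write $yz=z'\o\i u$ with $u\in W_J^a$ and $g':=gz\in\JJ_{b,\tw_x}$, so that $gy\i=g'u\i\o{z'}\i\in\JJ_{b,\tw_0}$, where $\tw_0:=y\tw\s(y)\i=z'\o\i\bigl(u\tw_x\s(u)\i\bigr)\s(\o)\s(z')\i\in\Adm(\l)$. The target $\tw'$ corresponds precisely to the case $u\i\in\JJ_{\tw_x}$, i.e.\ $u\tw_x\s(u)\i=\tw_x$; in that case $gy\i=gz\,(u\i)\,\o{z'}\i$ already has the required form with $h:=u\i\in W_J^a\cap\JJ_{\tw_x}\subseteq\ker(\eta_{M_J})\cap\JJ_{\tw_x}$ (and $h\in H_x\cap\JJ_{\tw_x}$ in the $y\in W_0$ case, by the definition of $H_x$ and $\tw_x\in\tW_{J_{x,1}}$). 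We argue by induction on $\ell(\tw_0)$. If $u\i\notin\JJ_{\tw_x}$, then (after a length-non-increasing $\s$-conjugation making $\tw_0$ semi-standard) the element $\tw_0\in\Adm(\l)$ fails to be left $R$-distinct for some $\s$-orbit $R$ of $\SS_0$, so $\cp_{\tw_0}\neq\emptyset$ by Lemma \ref{existence} and Corollary \ref{non-empty}. Choosing $\a\in\cp_{\tw_0}$ and applying Lemma \ref{limit} to $z\mapsto\textsl{g}_{gy\i,\a,\tw_0,m_{\a,\tw_0}}(z)$ produces a $\PP^1$ inside $X(\l,b)$ joining $gy\i I$ to $g(y'')\i I$ for some $y''$ with $y''z\in z'\o\i W_J^a$, $y''\tw\s(y'')\i\in\Adm(\l)$ and $\ell(y''\tw\s(y'')\i)<\ell(\tw_0)$; the induction hypothesis applied to $y''$, together with the description of $\JJ_{\tw_x}$ in Lemma \ref{semi}(4) to keep track of the accumulated correction, then yields the required $h$. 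In the $y\in W_0$ variant one uses only finite reflections and arranges that every accumulated correction stays inside $H_x$, so that $h\in H_x\cap\JJ_{\tw_x}$.

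\emph{Main obstacle.} The delicate point is the bookkeeping in the inductive step: showing that each $\PP^1$-family lies entirely in $X(\l,b)$ — i.e.\ that $\a$ is genuinely $\tw_0$-permissible and that the reflections occurring in Lemma \ref{limit} keep every intermediate element in $\Adm(\l)$ — identifying the far endpoint $\textsl{g}(\infty)$ as precisely $g(y'')\i I$ up to right translation by $I$ or by $\JJ_{\tw_x}$, with $y''z$ still lying in $z'\o\i W_J^a$, and, above all, verifying that the accumulated correction lands in $\ker(\eta_{M_J})\cap\JJ_{\tw_x}$ — and in the sharper group $H_x\cap\JJ_{\tw_x}$ when $y\in W_0$, which is what forces one to perform only moves whose corrections lie in $H_x$.
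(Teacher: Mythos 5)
Your first half (that $\tw'\in\cs_{\l,b}$) is essentially the paper's argument: write $yz=z'\o\i u$ with $u\in W_J^a$, note $\tw_x\le u\tw_x\s(u)\i$ inside $\tW_{M_J}$, transport this through $\s$-conjugation by $z'\o\i$ to get $\tw'\le y\tw\s(y)\i\in\Adm(\l)$, and get semi-standardness from Lemma \ref{semi}(1). One caveat: the transport step is not the triviality you present it as --- $\s$-conjugation does not respect Bruhat order in general --- and the paper obtains exactly this inequality from \cite[Lemma 1.3]{CN}; you would need to cite or prove that. (Also, the reduction to Hypothesis \ref{hypo} is unnecessary here: the paper's proof of this lemma uses no such hypothesis.)

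The ``Moreover'' part has a genuine gap. The paper reduces it to a statement entirely inside $M_J$: it introduces the set $Z=\{m\in M_J(\brF)/I_{M_J};\ m\i\tw_x\s(m)\in\cup_{\d'\le_J\d}I_{M_J}\d'\tw_x I_{M_J}\}$ with $\d=u\tw_x\s(u)\i\tw_x\i$, observes $u\i I_{M_J}\in Z$, invokes \cite[Theorem 4.1]{HZ} (resp.\ \cite[Lemma 6.13]{CN1}) for the length-zero element $\tw_x\in\Omega_J$ to connect $u\i I_{M_J}$ to $hI_{M_J}$ inside $Z$ with $h\in\ker(\eta_{M_J})\cap\JJ_{\tw_x}$ (resp.\ $h\in H_x\cap\JJ_{\tw_x}$), and then pushes this into $X(\l,b)$ via the embedding $mI_{M_J}\mapsto gzm\o{z'}\i I$. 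That external input --- the basic-case connected-components results of He--Zhou, resp.\ \cite{CN1} --- is precisely what forces the correction $h$ into the prescribed subgroup, and nothing in your sketch supplies a substitute. Your proposed induction on $\ell(\tw_0)$ via permissible roots and Lemma \ref{limit} does not work as described: $\tw_0=y\tw\s(y)\i$ need not be semi-standard, so Corollary \ref{non-empty} does not apply, and ``semi-standardizing'' it changes the point $gy\i I$ and the coset data you must preserve; Lemma \ref{limit} yields no length decrease at all (the quantity that improves under such moves is $\nu^\flat$, and the argument exploiting this, Lemma \ref{weyl-elementary}, itself invokes Lemma \ref{tosemi}, so your scheme is circular); the identification of the far endpoint as $g(y'')\i I$ with the same pair $(z',\o)$ and with $\ell(y''\tw\s(y'')\i)<\ell(\tw_0)$ is asserted, not proved; and, decisively, the claim that the accumulated correction lies in $\ker(\eta_{M_J})\cap\JJ_{\tw_x}$ --- let alone in $H_x\cap\JJ_{\tw_x}$ when $y\in W_0$ --- is never established. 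Your final paragraph acknowledges this as the ``main obstacle'' but does not resolve it; since distinguishing corrections in $\ker(\eta_{M_J})$ from general elements of $\Omega_J$ is exactly the content the lemma is meant to deliver, it cannot be dismissed as bookkeeping.
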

\begin{proof}
Write $y z = z' \o\i u$ for some $u \in W_J^a$. Let $x' \in \pi_1(M_J)$ such that $\tw_{x'} = \o\i \tw_x \s(\o) \s(z')\i$. By \cite[Lemma 1.3]{CN} we have $$\tw' = z' \tw_{x'} \s(z')\i \leq z' \o\i \d \tw_x \s(\o) \s(z')\i = y \tw \s(y)\i \in \Adm(\l)$$ where $\d = u \tw_x \s(u)\i \tw_x\i \in W_J^a$. So $\tw' \in \Adm(\l)$ is semi-standard by Lemma \ref{semi}. In particular, $\tw_{x'} = \o\i \tw_x \s(\o) \s(z')\i \in \Adm(\l)$ and hence $x' \in \cs_{\l, b}^+$.

To show the``Moreover" part we set $$Z = \{m \in M_J(\brF) / I_{M_J}; m\i \tw_x \s(m) \in \cup_{\d' \leq_J \d} I_{M_J} \d' \tw_x I_{M_J}\}.$$ Note that $u\i I_{M_J} \in Z$. As $\tw_x \in \Omega_J$, by \cite[Theorem 4.1]{HZ} (resp. \cite[Lemma 6.13]{CN1}), there exists $h \in \ker(\eta_{M_J}) \cap \JJ_{\tw_x}$ (resp. $h \in H_x \cap \JJ_{\tw_x}$ if $y \in W_0$) such that $u\i I_{M_J},  h I_{M_J}$ are connected in $Z$. For $g \in \JJ_{b, \tw_x}$ there is an embedding $$Z \hookrightarrow X(\l, b), \quad m I_{M_J} \mapsto g z m \o {z'}\i I,$$ from which we have $g y\i I = g z u\i \o {z'}\i I \sim_{\l, b} g z h \o {z'}\i$ as desired.
\end{proof}

\begin{lem} \label{weyl-elementary}
Assume $G$ is not split. Let $x \in \cs_{\l, b}^+$ and $\tw \in \cs_{\l, b, x}$. If $\tw \notin {}^{\SS_0} \tW$, then there exist $h \in H_x \cap \JJ_{\tw_x}$ and $\tw' \in \cs_{\l, b, x}$ such that $\nu_{\tw}^\flat < \nu_{\tw'}^\flat$ and $g I \sim_{\l, b} g z h {z'}\i I$ for $g \in \JJ_{b, \tw}$. Here $z, z' \in W_0^J$ such that $\tw = z \tw_x \s(z)\i$ and $\tw' = z' \tw_x \s(z')\i$.
\end{lem}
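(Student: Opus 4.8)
The plan is to produce the desired element $\tw'$ by a single ``permissible root'' move, using the machinery of $\cp_\tw$ and the limit computation in Lemma \ref{limit}. First I would invoke Corollary \ref{non-empty}: since $(\l,b)$ is Hodge-Newton irreducible and $\tw \in \cs_{\l,b}$ with $\tw \notin {}^{\SS_0}\tW$, we get $\cp_\tw \neq \emptyset$. Pick $\a \in \cp_\tw$ and set $m = m_{\a,\tw}$, $\a^i = (\tw\s)^i(\a)$. By Corollary \ref{dominant}(5) the roots $\a^i$ for $1-m \le i \le 0$ are linearly independent and $\<\a^i, \nu_\tw^\flat\> < 0$ for those $i$. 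This is exactly the input needed for Lemma \ref{limit} with $\g = \a$, $v = \nu_\tw^\flat$: the rational curve $\textsl{g} = \textsl{g}_{1,\a,\tw,m}$ connects $\textsl{g}(0) = I$ to $\textsl{g}(\infty) = s_{\a^{i_r}} \cdots s_{\a^{i_0}} I$ inside $G(\brF)/I$, and setting $y := s_{\a^{i_r}}\cdots s_{\a^{i_0}} \in W_0$ we have $\nu_\tw^\flat < y\i(\nu_\tw^\flat)$ (strict because $\a \in \cp_\tw$ means $m \ge 1$, so not all $\a^i$ are negative, i.e. $r \ge 0$). The curve $\textsl{g}$ lands in $X(\l,b)$ because $\a$ is $\tw$-permissible: each partial product twists $\tw$ by reflections $s_{\a^i}$ landing (via $\tw\s(s_\a) \in \Adm(\l)$ and the admissibility of the intermediate elements, which follows from $\a^{-m} \in \tPhi^+$) inside $\Adm(\l)$; more precisely, for $g \in \JJ_{b,\tw}$ the curve $z \mapsto g\,\textsl{g}(z)$ has image in $X(\l,b)$, so $gI \sim_{\l,b} g y\i \cdot(\text{something in }\JJ_{b,\tw'})$, giving $gI \sim_{\l,b} gy\i I$ modulo the $\JJ$-action.

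Next I would apply Lemma \ref{tosemi} to straighten $y\i$ into the standard form. Write $\tw = z\tw_x\s(z)\i$ with $z \in W_0^J$; since $y \in W_0$, Lemma \ref{tosemi} produces $z' \in W_0^J$, $\o \in \Omega_J$ with $yz \in z'\o\i W_J^a$, yields $\tw' := z'\o\i\tw_x\s(\o)\s(z')\i \in \cs_{\l,b}$ (in fact $\o\i\tw_x\s(\o)\s(z')\i \in \Adm(\l)$ so it equals $\tw_{x'}$ for some $x' \in \cs^+_{\l,b}$), and gives $h \in H_x \cap \JJ_{\tw_x}$ with $g y\i I \sim_{\l,b} g z h \o {z'}\i I$ for $g \in \JJ_{b,\tw}$. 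I should absorb $\o$: since $\o \in \Omega_J$ normalizes $I_{M_J}$ and $H_x$, the element $h\o$ can be rewritten so that after renaming we land in the form $g z h' {z'}\i I$ with $h' \in H_x \cap \JJ_{\tw_x}$; alternatively one checks directly that $\Omega_J \cap \JJ_{\tw_x} \subseteq H_x \cap \JJ_{\tw_x}$ (it is generated, along with $I_{M_J}$, inside $H_x$), so the statement as phrased with $z, z' \in W_0^J$ and $\tw' = z'\tw_x\s(z')\i$ holds after this bookkeeping — note $x' = x$ here because conjugating $\tw_x$ by $\o \in \Omega_J$ and an element of $W_J^a$ does not change the class in $\pi_1(M_J)$.

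Finally I would verify the Newton inequality $\nu_\tw^\flat < \nu_{\tw'}^\flat$. Using Corollary \ref{dominant}(3), $\nu_{\tw'}^\flat = \nu_{z'\tw_x\s(z')\i}^\flat$; tracing through the relation $yz \in z'\o\i W_J^a$ and that $\nu^\flat$ depends on the $W_0$-conjugate only through the shape governed by Lemma \ref{flat}, one gets $\nu_{\tw'}^\flat$ is the dominant-direction translate matching $y(\nu_\tw^\flat)$ appropriately; combined with $\nu_\tw^\flat < y\i(\nu_\tw^\flat)$ from Lemma \ref{limit} and Corollary \ref{dominant}(3) applied correctly (the roles of $z, z'$ and $y$), this yields the strict inequality $\nu_\tw^\flat < \nu_{\tw'}^\flat$. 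I expect the main obstacle to be precisely this last bookkeeping step: reconciling the Weyl element $y$ coming out of Lemma \ref{limit} with the canonical representatives $z, z' \in W_0^J$ and showing the $\nu^\flat$-invariant strictly increases under the composite move rather than just weakly — this requires carefully chasing how $\nu^\flat$ transforms under left multiplication by $W_0$ versus the $W_J^a$-ambiguity, and checking that the $H_x$-part $h$ (which lies in $\JJ_{\tw_x} \subseteq M_{\nu_{\tw_x}}$) does not interfere. The hypothesis that $G$ is not split (equivalently Hypothesis \ref{hypo}, via the remark following it) is what licenses the use of Lemma \ref{limit}.
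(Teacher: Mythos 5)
Your skeleton coincides with the paper's proof: Corollary \ref{non-empty} to get $\a \in \cp_\tw$, the curve $\textsl{g}_{g,\a,\tw,m_{\a,\tw}}$ whose image lies in $X(\l,b)$ (the paper records this as $\textsl{g}\i b\s(\textsl{g}) \subseteq \tw U_{\s(\a)} I \subseteq I\{\tw, \tw\s(s_\a)\}I \subseteq I\Adm(\l)I$, using exactly the two ingredients you name, $\tw\s(s_\a)\in\Adm(\l)$ and $\a^{-m_{\a,\tw}}\in\tPhi^+\setminus\Phi$), then Lemma \ref{limit} with $v=\nu_\tw^\flat$ via Corollary \ref{dominant}(5), then Lemma \ref{tosemi}. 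Two remarks on the middle step: the auxiliary claims you float to ``absorb $\o$'' are both false in general ($\Omega_J\cap\JJ_{\tw_x}\not\subseteq H_x$ --- e.g.\ $\tw_x$ itself lies in $\Omega_J\cap\JJ_{\tw_x}$ but not in $H_x$; and $\s$-conjugating $\tw_x$ by $\o$ changes its class in $\pi_1(M_J)$ by $(\s-1)\eta_{M_J}(\o)$), but no absorption is needed: since $y\in W_0$ and $z,z'\in W_0^J$, the element ${z'}\i y z$ lies in $W_0\cap\tW_J=W_J\subseteq W_J^a$, so in Lemma \ref{tosemi} one automatically has $\o=1$ and $x'=x$.

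The genuine gap is the final inequality $\nu_\tw^\flat<\nu_{\tw'}^\flat$, which you explicitly leave as ``the main obstacle'' and only gesture at. The paper closes it as follows. Since the whole curve lies in $X(\l,b)$, the endpoint gives $\tw'':=y\tw\s(y)\i\in\Adm(\l)$, and Corollary \ref{dominant}(3) together with Lemma \ref{limit} yields $\nu_{\tw''}^\flat=y(\nu_\tw^\flat)>\nu_\tw^\flat$ (strictness is automatic because $\a=\a^0\in\Phi^+$, so it is not where the difficulty lies). Writing $yz=z'u$ with $u\in W_J$, one gets $\tw''=v\,\tw'\,\s(v)\i$ with $v=z'u{z'}\i\in W_{\nu_{\tw'}}=z'W_J{z'}\i$, hence $\nu_{\tw''}^\flat=v(\nu_{\tw'}^\flat)$ by Corollary \ref{dominant}(3); since $\nu_{\tw'}^\flat$ is dominant for $\Phi_{\nu_{\tw'}}^+$ by Corollary \ref{dominant}(2), this forces $\nu_{\tw''}^\flat\le\nu_{\tw'}^\flat$, and therefore $\nu_\tw^\flat<\nu_{\tw''}^\flat\le\nu_{\tw'}^\flat$. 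That comparison of $\tw''$ with the semi-standard representative $\tw'$ through $W_{\nu_{\tw'}}$-$\s$-conjugacy plus $\Phi_{\nu_{\tw'}}^+$-dominance of $\nu_{\tw'}^\flat$ is the missing idea; the rest of your plan, up to the harmless $y$ versus $y\i$ bookkeeping, matches the paper.
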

\begin{proof}
By Corollary \ref{non-empty}, there exists $\a \in \cp_{\tw}$. Set $\a^i = (\tw\s)^i(\a)$ for $i \in \ZZ$.  Let $\textsl{g} = \textsl{g}_{g, \a, \tw, m_{\a, \tw}}$ for $g \in \JJ_{b, \tw}$. Since $\a^{-m_{\a, \tw}} \in \tPhi^+ \setminus \Phi$ and $\a^i \in \Phi$ for $1-m_{\a, \tw} \le i \le 0$, we have $$\textsl{g}\i b \s(\textsl{g}) \subseteq \tw U_{\s(\a)} I \subseteq I \{\tw\s, \tw \s(s_\a)\} I \subseteq I \Adm(\l) I.$$ As $G$ is not split, then Hypothesis \ref{hypo} is true. Moreover, by Lemma \ref{dominant} (5), the conditions in Lemma \ref{limit} are satisfied (for $(\g, m, v) = (\a, m_{\tw, \a}, \nu_\tw^\flat)$). Thus, by Lemma \ref{limit} we have $g I = \textsl{g}(0) \sim_{\l, b} \textsl{g}(\infty) = g y\i I$ for some $y \in W_0$ such that $y(v_\tw^\flat) > v_\tw^\flat$. Then $\tw'' := y \tw \s(y)\i \in \Adm(\l)$ and $\nu_{\tw}^\flat < y(\nu_{\tw}^\flat) = \nu_{\tw''}^\flat$. Let $h \in H_x$, $\tw' \in \cs_{\l, b, x}$, and $z' \in W_0^J$  be as in Lemma \ref{tosemi} such that $g I \sim_{\l, b} g y\i I \sim_{\l, b} g z h {z'}\i I$. By construction, $\tw'$ and $\tw''$ are $\s$-conjugate by $W_{\nu_{\tw'}} = z' W_J {z'}\i$, and hence $\nu_{\tw'}^\flat$ and $\nu_{\tw''}^\flat$ are conjugate by $W_{\nu_{\tw'}}$. By Corollary \ref{dominant} (2), $\nu_{\tw'}^\flat$ is dominant for $\Phi_{\nu_{\tw'}}^+$, which means $\nu_{\tw}^\flat < \nu_{\tw''}^\flat \le \nu_{\tw'}^\flat$ as desired.
\end{proof}

\begin{cor} \label{max}
Let $x \in \cs_{\l, b}^+$ and $\tw, \tw' \in \cs_{\l, b, x}$ with $\tw'$ the unique element in ${}^{\SS_0} \tW$. Then there exists $h \in H_x \cap \JJ_{\tw_x}$ such that $g I \sim_{\l, b} g z h {z'}\i I$ for $g \in \JJ_{b, \tw}$, where $z, z' \in W_0^J$ such that $\tw = z \tw_x \s(z)\i$ and $\tw' = z' \tw_x \s(z')\i$.
\end{cor}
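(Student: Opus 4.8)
The plan is to prove Corollary~\ref{max} by iterating Lemma~\ref{weyl-elementary} and arguing that the process of increasing the invariant $\nu_{\tw}^\flat$ terminates precisely at the unique element of ${}^{\SS_0}\tW$ in $\cs_{\l,b,x}$. Concretely, I would argue as follows. If $\tw \in {}^{\SS_0}\tW$, then $\tw = \tw'$ by Proposition~\ref{unique} (applied with $K = \SS_0$), and we may take $h$ with $\tw_x = h$ — more precisely, since $\tw = z\tw_x\s(z)\i = z'\tw_x\s(z')\i$ we have ${z'}\i z \in W_J$ and there is $h \in \JJ_{\tw_x}$ (in fact $h \in \tW_J \cap \JJ_{\tw_x} \subseteq H_x$) with $z h {z'}\i = z ({z'}\i z)\i = z'$ up to the $I$-coset, so the statement is trivial. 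Otherwise $\tw \notin {}^{\SS_0}\tW$ and Lemma~\ref{weyl-elementary} applies.

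The key steps, in order: (1) set up an induction — but since $\nu_\tw^\flat$ takes values in a set with no obvious finiteness, I would instead induct on a genuinely finite quantity. The natural choice is $\ell(\tw)$, or better the pair $(\ell(\tw), \text{something})$; note that in the proof of Lemma~\ref{weyl-elementary} the element $\tw'' = y\tw\s(y)\i$ satisfies $\ell(\tw'') \le \ell(\tw)$ because $\textsl{g}(\infty)$ lies in a connected component reached through length-non-increasing moves, and then $\tw'$ is obtained from $\tw''$ via Lemma~\ref{tosemi} which also only passes through $\Adm(\l)$ elements bounded by $y\tw\s(y)\i$. So I expect $\ell(\tw') \le \ell(\tw)$ with the inequality $\nu_\tw^\flat < \nu_{\tw'}^\flat$ being strict; combined with the fact that within a fixed $W_0$-$\s$-conjugacy class and fixed length there are only finitely many semi-standard elements, and that $\nu_\bullet^\flat$ is injective enough on standard representatives (strictly increasing along the chain), the iteration must terminate. (2) Run the iteration: starting from $\tw = \tw_0$, apply Lemma~\ref{weyl-elementary} to get $\tw_1 \in \cs_{\l,b,x}$ with $\nu_{\tw_0}^\flat < \nu_{\tw_1}^\flat$ and $h_1 \in H_x \cap \JJ_{\tw_x}$ with $gI \sim_{\l,b} g z h_1 z_1\i I$ where $\tw_1 = z_1 \tw_x \s(z_1)\i$; repeat with $\tw_1$ in place of $\tw$, noting that the relevant $g$ is now $g z h_1 z_1\i \in \JJ_{b,\tw_1}$ (one checks $(g z h_1 z_1\i)\i b \s(g z h_1 z_1\i) = \tw_1$ using $h_1 \in \JJ_{\tw_x}$ and $\tw_x \in \Omega_J$). (3) Compose the relations: since $H_x \cap \JJ_{\tw_x}$ is a group, the successive elements $h_1, h_2, \dots$ compose — more precisely, $z h_1 z_1\i \cdot z_1 h_2 z_2\i = z (h_1 h_2) z_2\i$, so after finitely many steps we land at $\tw' \in {}^{\SS_0}\tW$ with a single $h = h_1 h_2 \cdots h_n \in H_x \cap \JJ_{\tw_x}$ and $gI \sim_{\l,b} g z h {z'}\i I$.

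I would also need to check that the terminal element of the chain really is the distinguished $\tw'$ named in the statement (the unique element of $\cs_{\l,b,x} \cap {}^{\SS_0}\tW$): this is clear because the chain can only stop when Lemma~\ref{weyl-elementary} no longer applies, i.e.\ when $\tw_n \in {}^{\SS_0}\tW$, and by Proposition~\ref{unique} there is exactly one such element in $\cs_{\l,b,x}$, since all $\tw_i$ stay in the single $W_0$-$\s$-conjugacy class determined by $x$ (each step conjugates by elements of $W_0$ via Lemma~\ref{weyl-elementary}/Lemma~\ref{tosemi}, and $\cs_{\l,b,x}$ is by definition a single such class intersected with $\Adm(\l)$).

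The main obstacle I anticipate is making the termination argument airtight: one must pin down a strictly monotone, bounded-range statistic. The cleanest route is to observe that all $\tw_i$ lie in the finite set $\cs_{\l,b,x} \subseteq \Adm(\l)$ (which is finite, being contained in the finite set $\Adm(\l) \cap [b]$), that $\nu_{\tw_i}^\flat$ depends only on $\tw_i$, and that $i \mapsto \nu_{\tw_i}^\flat$ is strictly increasing in the partial order $\le$ on $V$ — hence the $\tw_i$ are pairwise distinct and the sequence must stop. A secondary technical point is bookkeeping the cocycle-type identity so that the product $h_1 h_2 \cdots h_n$ is genuinely an element of $H_x \cap \JJ_{\tw_x}$ rather than merely of $M_J(\brF)$; this uses that each $h_j \in \JJ_{\tw_x}$ and that $H_x$ is a subgroup, together with $\tw_x$ being central-like in the sense $\tw_x \in \Omega_J$, which makes the conjugation $z_{j-1}(\cdot)z_j\i$-bookkeeping collapse correctly.
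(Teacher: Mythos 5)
Your iteration scheme is essentially the argument the paper gives, but there is one genuine gap: Lemma \ref{weyl-elementary} is stated, and proved, only under the hypothesis that $G$ is \emph{not} split, while Corollary \ref{max} carries no such restriction. The non-split assumption is not cosmetic: the proof of Lemma \ref{weyl-elementary} needs Hypothesis \ref{hypo} so that Lemma \ref{limit} applies, and Hypothesis \ref{hypo} fails exactly for certain split groups (non-simply-laced $\SS_0$ with $q^d\le 2$, or type $G_2$ with $q^d\le 3$); in those cases the computation of $\textsl{g}(\infty)$ breaks down and one cannot produce the pair $(h,\tw')$ with $\nu_{\tw}^\flat<\nu_{\tw'}^\flat$. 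Your proposal applies the lemma without ever addressing this dichotomy, so as written it does not cover split $G$ (even in the split simply-laced case you are invoking a lemma whose stated hypothesis is violated). The paper disposes of this at the outset: for split $G$ the corollary is deduced from Theorem \ref{intro}, which in that case is already proved in \cite{CN}, and only then does it assume $G$ non-split and run the iteration you describe.

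Apart from that, your proof matches the paper's. The termination argument you settle on is correct and, if anything, more explicit than the paper's: all $\tw_i$ lie in the finite set $\cs_{\l,b,x}$, the flat Newton points strictly increase, so the chain stops, and it can only stop at an element of ${}^{\SS_0}\tW$, which by Proposition \ref{unique} is the unique such element of $\cs_{\l,b,x}$ (the paper instead stops when $\nu^\flat$ becomes dominant and quotes Lemma \ref{min}); your preliminary speculation about $\ell(\tw')\le\ell(\tw)$ is neither needed nor established, but you correctly abandon it. The cocycle bookkeeping $z h_1 z_1\i\cdot z_1 h_2 z_2\i=z(h_1h_2)z_2\i$ together with the check that $gzh_1z_1\i\in\JJ_{b,\tw_1}$ (using $h_1\in\JJ_{\tw_x}$) is exactly what ``repeating this process'' means in the paper, so that part is fine. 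One small slip in your base case: the inclusion $\tW_J\cap\JJ_{\tw_x}\subseteq H_x$ you assert is false in general (length-zero elements of $\Omega_J$ need not lie in $H_x$); but it is also unnecessary, since uniqueness of the decomposition $\tw=z\tw_x\s(z)\i$ with $z\in W_0^J$ (Lemma \ref{semi}(2), noting $J_{\bar\nu_{\tw}}=J$) forces $z=z'$ when $\tw=\tw'$, so $h=1$ works.
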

\begin{proof}
Note that the statement follows from Theorem \ref{intro}, which is proved in \cite{CN} when $G$ is split. So we assume that $G$ is not split. If $\tw \notin {}^{\SS_0} \tW$, by Lemma \ref{weyl-elementary}, there exist $h \in H_x \cap \JJ_{\tw_x}$ and $\tw' \in \cs_{\l, b, x}$ such that $\nu_\tw^\flat < \nu_{\tw'}^\flat$ and $g I \sim_{\l, b} g z h {z'}\i I$ for $g \in \JJ_{b, \tw}$, where $z' \in W_0^J$ such that $\tw' = z' \tw_x \s(z')\i$. Repeating this process, we may assume either $\tw' \in {}^{\SS_0} \tW$ or $\nu_{\tw'}^\flat$ is dominant. In either case, we have $\tw' \in {}^{\SS_0} \tW$ by Lemma \ref{min}. So the statement follows.
\end{proof}

Proposition \ref{one-leaf} is a consequence of the following result.
\begin{prop} \label{weyl-conj}
Let $x \in \cs_{\l, b}^+$ and $\tw \in \cs_{\l, b, x}$. Then there exists $h \in H_x \cap \JJ_{\tw_x}$ such that $g I \sim_{\l, b} g h z\i I$, or equivalently, $g h\i I \sim_{\l, b} g z\i I$ for $g \in \JJ_{b, \tw_x}$, where $z \in W_0^J$ such that $\tw = z \tw_x \s(z)\i$. In particular, $\JJ_{b, \tw} \sim_{\l, b} \JJ_{b, \tw_x}$.
\end{prop}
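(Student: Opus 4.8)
Here is the proof plan I would follow.

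The plan is to deduce Proposition~\ref{weyl-conj} from Corollary~\ref{max} by a formal bookkeeping argument on the sets $\JJ_{b,\ast}$; the substantive work all sits upstream, in Corollary~\ref{max} and its inputs. Fix the distinguished element $\tw' \in \cs_{\l,b,x} \cap {}^{\SS_0}\tW$ supplied by Proposition~\ref{unique}, and let $z' \in W_0^J$ be the unique element with $\tw' = z'\tw_x\s(z')\i$. Recall also that $\tw_x \in \cs_{\l,b,x}$, corresponding to $z = 1$.

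First I would apply Corollary~\ref{max} twice. For the pair $(\tw,\tw')$ it produces $h_\tw \in H_x \cap \JJ_{\tw_x}$ with $gI \sim_{\l,b} g z h_\tw (z')\i I$ for all $g \in \JJ_{b,\tw}$; since $g_0 \mapsto g_0 z\i$ is a bijection $\JJ_{b,\tw_x} \to \JJ_{b,\tw}$ (a one-line check from $\tw = z\tw_x\s(z)\i$), this rewrites as
$$g_0 z\i I \sim_{\l,b} g_0 h_\tw (z')\i I \qquad (g_0 \in \JJ_{b,\tw_x}).$$
For the pair $(\tw_x,\tw')$, for which the Weyl element attached to $\tw_x$ is $1$, the same corollary produces $h_{\tw_x} \in H_x \cap \JJ_{\tw_x}$ with
$$g_0 I \sim_{\l,b} g_0 h_{\tw_x} (z')\i I \qquad (g_0 \in \JJ_{b,\tw_x}).$$

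Next I would splice these. Since $\JJ_{\tw_x}$, and hence its subgroup $H_x \cap \JJ_{\tw_x}$, acts on $\JJ_{b,\tw_x}$ by right translation, I may replace $g_0$ by $g_0 h_\tw h_{\tw_x}\i$ in the second relation, getting $g_0 h_\tw h_{\tw_x}\i I \sim_{\l,b} g_0 h_\tw (z')\i I$; chaining with the first relation gives $g_0 z\i I \sim_{\l,b} g_0 h_\tw h_{\tw_x}\i I$ for all $g_0 \in \JJ_{b,\tw_x}$. Setting $h = h_{\tw_x} h_\tw\i \in H_x \cap \JJ_{\tw_x}$, this reads $g_0 h\i I \sim_{\l,b} g_0 z\i I$ --- the ``equivalently'' form in the statement --- and substituting $g_0 h$ for $g_0$ gives $g_0 I \sim_{\l,b} g_0 h z\i I$. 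The ``In particular'' is then immediate: $g_0 \mapsto g_0 h z\i$ is a bijection $\JJ_{b,\tw_x} \to \JJ_{b,\tw}$ (using $h \in \JJ_{\tw_x}$) along which $g_0 I$ and $g_0 h z\i I$ always lie in the same connected component, so $\JJ_{b,\tw_x}I/I$ and $\JJ_{b,\tw}I/I$ meet exactly the same components of $X(\l,b)$.

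The real difficulties are all upstream --- Corollary~\ref{max} together with Lemmas~\ref{limit}, \ref{tosemi} and \ref{weyl-elementary}, i.e.\ the new connecting algorithm and the control of the ``flat'' Newton point $\nu_\tw^\flat$ under the moves $\rightharpoonup$. Within the argument above the only points needing attention are (i) keeping track of which of the look-alike sets $\JJ_{b,\tw}$, $\JJ_{b,\tw_x}$, $\JJ_{b,\tw'}$ each displayed relation lives on, together with the right-translation identities used to pass between them, and (ii) the (trivial) fact that $H_x \cap \JJ_{\tw_x}$ is a subgroup, which is what makes the correction element $h = h_{\tw_x} h_\tw\i$ land where it should. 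The split case is already absorbed into Corollary~\ref{max} via \cite{CN}.
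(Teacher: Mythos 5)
Your argument is correct and is essentially the paper's own proof: both apply Corollary \ref{max} to the pairs $(\tw,\tw')$ and $(\tw_x,\tw')$ with the distinguished element $\tw'\in{}^{\SS_0}\tW$ and splice the two relations, the only cosmetic difference being that you substitute $g_0\mapsto g_0 h_\tw h_{\tw_x}\i$ using the uniformity of $h$ over $\JJ_{b,\tw_x}$, whereas the paper multiplies on the left by $j=gh_1h_2\i g\i\in\JJ_b$.
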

\begin{proof}
Let $z' \in W_0^J$ such that $z' \tw_x \s(z')\i \in {}^{\SS_0} \tW$ (see Lemma \ref{semi} and Proposition \ref{unique}). By Corollary \ref{max}, there exist $h_1, h_2 \in H_x \cap \JJ_{\tw_x}$ such that $g I \sim_{\l, b} g h_1 {z'}\i I$ and $g z\i I \sim_{\l, b} g h_2 {z'}\i I$ for $g \in \JJ_{b, \tw_x}$. Then we have $$g h z\i I = j g z\i I \sim_{\l, b} j g h_2 {z'}\i I = g h_1 {z'}\i I \sim_{\l, b} g I,$$ where $h = h_1 h_2\i \in H_x \cap \JJ_{\tw_x}$ and $j =g h_1 h_2\i g\i \in \JJ_b$.
\end{proof}

\begin{cor} \label{pre-lin}
Let $x \in \cs_{\l, b}^+$ and $y \in \tW$ such that $y \tw_x \s(y)\i \in \Adm(\l)$. Then there exists $h \in \ker(\eta_{M_J}) \cap \JJ_{\tw_x}$ such that $g y\i I \sim_{\l, b} g h \o I$ for $g \in \JJ_{b, \tw_x}$, where $\o \in \Omega_J$ such that $y \in W_0^J \o\i W_J^a$.
\end{cor}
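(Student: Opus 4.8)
The plan is to use Lemma~\ref{tosemi} to gather the ``non-translation'' part of $y$ into a single factor lying in $W_0^J$, and then to remove that factor via Proposition~\ref{weyl-conj}. Concretely, I would first apply Lemma~\ref{tosemi} with $\tw=\tw_x\in\cs_{\l,b,x}$ and $z=1$ to the given $y$, which satisfies $y\tw_x\s(y)\i\in\Adm(\l)$. Writing $y\in z'\o\i W_J^a$ with $z'\in W_0^J$ and $\o\in\Omega_J$ (this $\o$ is the one in the statement), the lemma produces $\tw':=z'\o\i\tw_x\s(\o)\s(z')\i\in\cs_{\l,b}$ together with an element $h_1\in\ker(\eta_{M_J})\cap\JJ_{\tw_x}$, not depending on $g$, such that $g\,y\i I\sim_{\l,b}g\,h_1\,\o\,{z'}\i I$ for all $g\in\JJ_{b,\tw_x}$. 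Setting $\tw_{x'}:=\o\i\tw_x\s(\o)\in\Omega_{M_J}$ and $x':=\eta_{M_J}(\tw_{x'})$, Lemma~\ref{tosemi} gives $x'\in\cs_{\l,b}^+$ and hence $\tw'=z'\tw_{x'}\s(z')\i\in\cs_{\l,b,x'}$.

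Next I would invoke Proposition~\ref{weyl-conj} (the main output of this section) to dispose of the residual factor ${z'}\i$. Using $h_1\i\tw_x\s(h_1)=\tw_x$ and $\tw_{x'}=\o\i\tw_x\s(\o)$, one checks that $g':=g\,h_1\,\o$ lies in $\JJ_{b,\tw_{x'}}$ whenever $g\in\JJ_{b,\tw_x}$. Applying Proposition~\ref{weyl-conj} to $x'$, $\tw'$ and $z'$ yields $h'\in H_{x'}\cap\JJ_{\tw_{x'}}$, again independent of $g$, with $g'(h')\i I\sim_{\l,b}g'{z'}\i I$. Concatenating the two equivalences gives
$$g\,y\i I\;\sim_{\l,b}\;g\,h_1\,\o\,{z'}\i I\;=\;g'{z'}\i I\;\sim_{\l,b}\;g'(h')\i I\;=\;g\,h_1\,\o\,(h')\i I\;=\;g\,h\,\o\,I,$$
where $h:=h_1\,\o\,(h')\i\,\o\i$ does not depend on $g$; this $h$ is then the element required by the statement.

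It remains to verify $h\in\ker(\eta_{M_J})\cap\JJ_{\tw_x}$. For the kernel condition, the crucial point is that $H_{x'}\subseteq\ker(\eta_{M_J})$: the map $\eta_{M_J}$ is a homomorphism into an abelian group that annihilates each generator $I_{M_J}$, $W_{J_{x',0}}\subseteq W_J$ and $W_{J_{x',1}}^a\subseteq W_J^a$ of $H_{x'}$, and $\eta_{M_J}(\o k\o\i)=\eta_{M_J}(k)$, so conjugation by $\o$ preserves $\ker(\eta_{M_J})$; hence $h_1$, $\o(h')\i\o\i$, and therefore $h$, all lie in $\ker(\eta_{M_J})$. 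For the centralizer condition, $h_1\in\JJ_{\tw_x}$ by construction, and rearranging $(h')\i\tw_{x'}\s(h')=\tw_{x'}=\o\i\tw_x\s(\o)$ yields $(\o(h')\i\o\i)\i\tw_x\s(\o(h')\i\o\i)=\tw_x$, so $\o(h')\i\o\i\in\JJ_{\tw_x}$ and hence $h\in\JJ_{\tw_x}$.

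I expect the main obstacle to be organizational rather than conceptual. All of the substantive input is already packaged in Lemma~\ref{tosemi} and Proposition~\ref{weyl-conj}; the actual work is careful bookkeeping of the $\s$-conjugation relations and, most importantly, checking that the accumulated error term $h$ remains in $\ker(\eta_{M_J})$, which comes down to the elementary observation that each generator of $H_{x'}$ already lies in $\ker(\eta_{M_J})$.
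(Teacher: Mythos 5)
Your proposal is correct and follows essentially the same route as the paper, whose proof of this corollary is precisely the combination of Lemma \ref{tosemi} (applied with $\tw=\tw_x$, $z=1$) and Proposition \ref{weyl-conj}; your additional bookkeeping (that $g h_1\o\in\JJ_{b,\tw_{x'}}$, that $H_{x'}\subseteq\ker(\eta_{M_J})$, and that $\o(h')\i\o\i\in\JJ_{\tw_x}$) just makes explicit what the paper leaves to the reader.
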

\begin{proof}
It follows from Lemma \ref{tosemi} and Proposition \ref{weyl-conj}.
\end{proof}

\section{Proof of Proposition \ref{surj}} \label{sec-surj}

\subsection{} \label{pre-K} Let $K \subseteq \SS_0$. Let $\tw = t^\mu w \in \Omega_K$ with $\mu \in Y$ and $w \in W_K$. Let $\g \in \Phi^+ \setminus \Phi_K$ such that $\g^\vee$ is $K$-dominant and $K$-minuscule. Set $\tilde \g = \g+1 \in \tPhi^+$. Suppose $$\mu, \mu - \g^\vee, \mu + w\s^r(\g^\vee), \mu - \g^\vee + w\s^r(\g^\vee) \preceq \l \text{ for some } r \in \ZZ_{\ge 0}.$$
\begin{lem} \label{orth}
Let $K$, $\tw =t^\mu w$, $\g$, $\tilde \g$, and $r$ be as in \S \ref{pre-K}. Then we have

(1) $\mu - \g^\vee, \mu + w(\s^r(\g^\vee)), \mu - \g^\vee + w\s^r(\g^\vee)$ are $K$-minuscule;

(2) $\tw, s_{\tilde \g} \tw, \tw s_{\s^r(\tilde \g)}, s_{\tilde \g} \tw s_{\tilde \g} \in \Adm(\l)$;

(3) $s_{\tilde \g} \tw s_{\s^r(\tilde \g)} \in \Adm(\l)$ if $\g \neq \s^r(\g)$ and $-\<w \s^r(\g), \mu\>, \<\g, \mu\> \le 1$;
\end{lem}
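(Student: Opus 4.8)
The plan is to deduce all three assertions from two standard properties of the admissible set: that $\Adm(\l)$ is stable under the Bruhat order $\le$, and that $t^\nu\in\Adm(\l)$ if and only if $\nu\preceq\l$ (equivalently $\overline\nu\le\l$). Granting these, (1) becomes a combinatorial check involving $\Phi_K^+$, while (2) and (3) reduce to computing the translation parts of a handful of elements of $\tW$ and comparing lengths.

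\emph{Part (1).} I would first record the structural content of $\tw=t^\mu w\in\Omega_K$: since $\tw$ stabilises the base alcove of $M_K$, it stabilises its closure, whose lattice points are exactly the $K$-minuscule, $K$-dominant cocharacters; hence $\mu=\tw(0)$ is $K$-minuscule and $K$-dominant, and $\mu+w\s^r(\g^\vee)=\tw(\s^r(\g^\vee))$ is $K$-minuscule (and $K$-dominant) as soon as $\s^r(\g^\vee)$ is, which one checks from $\g^\vee$ being $K$-minuscule and $K$-dominant. Combined with $\<\a,\g^\vee\>\in\{0,1\}$ for $\a\in\Phi_K^+$, each of $\mu-\g^\vee$, $\mu+w\s^r(\g^\vee)$ and $\mu-\g^\vee+w\s^r(\g^\vee)$ then pairs to a value in $\{0,\pm1\}$ against every $\a\in\Phi_K^+$, which is the claim.

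\emph{Part (2).} Using $s_{\tilde\g}=t^{\g^\vee}s_\g$ together with $w\in W_K$ (so $ws_{\s^r(\g)}w^{-1}=s_{w\s^r(\g)}$), one computes that $\tw$, $s_{\tilde\g}\tw$, $\tw s_{\s^r(\tilde\g)}$ and $s_{\tilde\g}\tw s_{\tilde\g}$ have translation parts $\mu$, $s_\g(\mu-\g^\vee)$, $\mu+w\s^r(\g^\vee)$ and $s_\g(\mu-\g^\vee+w\g^\vee)$ respectively. For the first three, these are $W_0$-conjugate to the hypothesis cocharacters $\mu$, $\mu-\g^\vee$, $\mu+w\s^r(\g^\vee)$, hence have dominant representative $\le\l$; it then remains to bound each element below the translation by the dominant representative of its own translation part, which I would settle by a short case analysis on the signs of $\<\g,\mu\>$ and $\<w\s^r(\g),\mu\>$ — each sign governs whether the one-sided multiplication by $s_{\tilde\g}$ (resp. $s_{\s^r(\tilde\g)}$) lengthens or shortens $\tw$, and in the lengthening case introduces just one new inversion. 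The last element $s_{\tilde\g}\tw s_{\tilde\g}$ I would instead place below $s_{\tilde\g}\tw$ or $\tw$ by comparing lengths directly, again using $\tw\in\Omega_K$. Downward closure of $\Adm(\l)$ then gives (2).

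\emph{Part (3).} This is the main obstacle. Now $s_\g$ and $s_{\s^r(\g)}$ are genuinely distinct reflections, so their product is no longer short and the element $s_{\tilde\g}\tw s_{\s^r(\tilde\g)}$ — whose translation part is $s_\g(\mu-\g^\vee+w\s^r(\g^\vee))$, a $W_0$-conjugate of the fourth hypothesis cocharacter — cannot be controlled by the length-minimality of $\tw$ alone. The role of the extra hypotheses $\g\ne\s^r(\g)$ and $\<\g,\mu\>\le1$, $-\<w\s^r(\g),\mu\>\le1$ is precisely to make $s_{\tilde\g}\tw$ and $\tw s_{\s^r(\tilde\g)}$ length-efficient modifications of $\tw$, so that $s_{\tilde\g}\tw s_{\s^r(\tilde\g)}$ lands below $\max\{s_{\tilde\g}\tw,\ \tw s_{\s^r(\tilde\g)}\}$ — both already in $\Adm(\l)$ by part (2) — or directly below $t^{\overline{\mu-\g^\vee+w\s^r(\g^\vee)}}$. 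Separating the case where $\g$ and $\s^r(\g)$ are orthogonal from the case where they span a root subsystem of type $A_2$, and tracking exactly which inversions are created or destroyed on each side, is the delicate point where both length bounds are genuinely used; that is the step I expect to require the most care.
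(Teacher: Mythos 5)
Your part (1) is fine as it stands (the observation that $\tw\in\Omega_K$ preserves the closure of the $M_K$-alcove, so that $\mu=\tw(0)$ and $\mu+w\s^r(\g^\vee)=\tw(\s^r(\g^\vee))$ pair into $\{0,1\}$ against $\Phi_K^+$, is a clean self-contained substitute for the paper's citation of \cite[Lemma 4.4.6]{CKV}). The trouble begins with (2) and especially (3), which is exactly where the content of the lemma lies. Knowing that the translation part of an element is $W_0$-conjugate to a cocharacter $\preceq\l$ does not place the element in $\Adm(\l)$ (e.g.\ $s_\a$ has translation part $0$ but $s_\a\not\le 1$), so for each of $\tw$, $s_{\tilde\g}\tw$, $\tw s_{\s^r(\tilde\g)}$, $s_{\tilde\g}\tw s_{\tilde\g}$, $s_{\tilde\g}\tw s_{\s^r(\tilde\g)}$ you must actually exhibit an admissible element above it in Bruhat order; your sketch never supplies the mechanism. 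Two of your stated devices are unjustified: $s_{\tilde\g}$ is a non-simple affine reflection, so one-sided multiplication does not ``introduce just one new inversion'', and conjugation by a reflection can raise the length strictly above both one-sided products, so ``place $s_{\tilde\g}\tw s_{\tilde\g}$ below $s_{\tilde\g}\tw$ or $\tw$ by comparing lengths'' is not a formal consequence (reflection-comparability only says the two elements are comparable, which is useless in the case where the conjugate is the larger one). These assertions are precisely what \cite[Lemma 1.5]{CN1} proves, and the paper simply cites it for (2).

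For (3), the only part the paper proves from scratch, you name the difficulty but omit the idea that resolves it, and your fallback ``below $\max\{s_{\tilde\g}\tw,\ \tw s_{\s^r(\tilde\g)}\}$'' can simply fail. The paper's argument has three steps, none of which appears in your proposal: (i) a minimality argument producing a $W_K$-conjugate $\eta$ of $\mu$ such that $\eta-\g^\vee+\s^r(\g^\vee)$ is $K$-minuscule; since it is then $W_K$-conjugate to $\mu-\g^\vee+w\s^r(\g^\vee)$, one gets $\eta-\g^\vee+\s^r(\g^\vee)\preceq\l$; (ii) since $t^\eta$ and $\tw$ differ by an element of $W_K^a$ and $\tw\in\Omega_K$, the monotonicity result \cite[Lemma 1.3]{CN} gives $s_{\tilde\g}\tw s_{\s^r(\tilde\g)}\le s_{\tilde\g}t^\eta s_{\s^r(\tilde\g)}=s_\g\, t^{\eta-\g^\vee+\s^r(\g^\vee)}\, s_{\s^r(\g)}$; (iii) the numerical hypotheses $\g\neq\s^r(\g)$ (whence $\<\g,\s^r(\g^\vee)\>\le 0$), $\<\g,\mu\>\le 1$ and $-\<w\s^r(\g),\mu\>\le 1$ are used to show $\<\g,\eta-\g^\vee+\s^r(\g^\vee)\>\le -1$ and $\<\s^r(\g),\eta-\g^\vee+\s^r(\g^\vee)\>\ge 1$, which is what allows both flanking reflections to be stripped, landing the element under $t^{\eta-\g^\vee+\s^r(\g^\vee)}\in\Adm(\l)$. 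Your ``directly below $t^{\overline{\mu-\g^\vee+w\s^r(\g^\vee)}}$'' is the right target, but establishing such a bound is exactly the step you defer; without (i)--(iii) or an equivalent, the proof of (3) — and with it the lemma — is missing.
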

\begin{proof}
Note that (1) and (2) are proved in \cite[Lemma 4.4.6]{CKV} and \cite[Lemma 1.5]{CN1} respectively. To show (3) we claim that \begin{align*} \tag{a} \text{ there is a $W_K$-conjugate $\eta$ of $\mu$ such that $\eta - \g^\vee + \s^r(\g^\vee)$ is $K$-minuscule.} \end{align*} Indeed, let $\eta$ be a $W_K$-conjugate of $\mu$ such that $\eta - \g^\vee + \s^r(\g^\vee)$ is minimal under the partial order $\preceq$. If $\eta - \g^\vee + \s^r(\g^\vee)$ is not $K$-minuscule, then there exists $\a \in \Phi_K$ such that $\<\a, \eta - \g^\vee + \s^r(\g^\vee)\> \ge 2$. As $\eta$ is $K$-minuscule, and $\g^\vee, \s^r(\g^\vee)$ are $K$-dominant and $K$-minuscule, we deduce that $\<\a, \eta\> = 1$. Let $\eta' = s_\a(\eta) = \eta - \a^\vee$. Then we have $$\eta' - \g^\vee + \s^r(\g^\vee) = \eta - \g^\vee + \s^r(\g^\vee) - \a^\vee \prec \eta - \g^\vee + \s^r(\g^\vee),$$ which contradicts the choice of $\eta$. So (a) is proved.

By (1) and (a), $\eta - \g^\vee + \s^r(\g^\vee)$, $\mu - \g^\vee + w \s^r(\g^\vee)$ are conjugate by $W_K$. In particular, $\eta - \g^\vee + \s^r(\g^\vee) \preceq \l$. Then (3) follows from that $$s_{\tilde \g} \tw s_{\s^r(\tilde \g)} \leq s_{\tilde \g} t^\eta s_{\s^r(\tilde \g)} = s_\g t^{\eta - \g^\vee + \s^r(\g^\vee)} s_{\s^r(\g)} \leq t^{\eta - \g^\vee + \s^r(\g^\vee)} \in \Adm(\l),$$ where the first $\leq$ follows from \cite[Lemma 1.3]{CN}, and the second $\leq$ follows from that \begin{align*} \<\g, \s^r(\g^\vee)\> &\le 0 \text{ since } \g \neq \s^r(\g);  \\ \<\g, \eta - \g^\vee + \s^r(\g^\vee)\> &\le \<\g, \mu\> - 2 \le -1; \\ \<\s^r(\g), \eta - \g^\vee + \s^r(\g^\vee)\> &\ge \<w\s^r(\g), \mu\> + 2 \ge 1. \end{align*} The proof is finished.
\end{proof}

For $K \subseteq \SS_0$ we say $\g^\vee$ with $\g \in \Phi^+ \setminus \Phi_K$ is strongly $K$-minuscule if $\g^\vee$ is $K$-minuscule, and moreover, $\g$ is a long root if (1) some/any connected component of $\SS_0$ is of type $G_2$, and (2) $K$ is the set of short simple roots.
\begin{lem} \label{line}
Let $K$, $\tw =t^\mu w$, $\g$, $\tilde \g$, and $r$ be as in \S \ref{pre-K}. Assume furthermore that $\g^\vee$ is strongly $K$-minuscule. Then $U_{-\tilde \g} \tw U_{-\s^r(\tilde \g)}  \subseteq I \Adm(\l) I$ unless \begin{align*} \tag{*} \<\g, \mu\> = -\<w \s^r(\g), \mu\> = 1 \text{ and } \<\g, w\s^r(\g^\vee)\> = -1, \end{align*} in which case we have $$\tw \neq \tw', \ U_{-\s^r(\tilde \g)} \tw' U_{-\tilde \g}  \subseteq I \Adm(\l) I, \text{ and } \mu \pm (\g + w \s^r(\g))^\vee \preceq \l.$$ Here $\tw' = \mu - \g^\vee + \s^r(\g^\vee) \in \pi_1(M_K) \cong \Omega_K$.
\end{lem}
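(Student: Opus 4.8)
The plan is to reduce the statement to a computation inside the affine Weyl group by expressing the elements of $U_{-\tilde\g}\tw U_{-\s^r(\tilde\g)}$ (and $U_{-\s^r(\tilde\g)}\tw' U_{-\tilde\g}$) in terms of the Bruhat decomposition, and then control which $\Adm(\l)$-cells appear. First I would record the product formula in $LG$: since $\tilde\g=\g+1$ and $\s^r(\tilde\g)$ are positive affine roots, the standard rank-one computation (as in the proof of Lemma \ref{limit}) gives, for $z,z'\ne 0$,
\[
U_{-\tilde\g}(z)\,\tw\,U_{-\s^r(\tilde\g)}(z')
= t^{-k\g^\vee}s_\g\cdot\bigl(\text{unipotent part}\bigr)\cdot\tw\cdot s_{\s^r(\g)}t^{-k'\s^r(\g)^\vee}\cdot(\cdots),
\]
so that the $I$-double-coset type of a generic element is $s_{\tilde\g}\tw s_{\s^r(\tilde\g)}$, while degenerate locus ($z=0$ or $z'=0$) yields $s_{\tilde\g}\tw$, $\tw s_{\s^r(\tilde\g)}$, or $\tw$ itself, all of which lie in $\Adm(\l)$ by Lemma \ref{orth}(2). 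Thus $U_{-\tilde\g}\tw U_{-\s^r(\tilde\g)}\subseteq I\Adm(\l)I$ as soon as $s_{\tilde\g}\tw s_{\s^r(\tilde\g)}\in\Adm(\l)$, which by Lemma \ref{orth}(3) holds whenever $\g\ne\s^r(\g)$ together with $\<\g,\mu\>\le 1$ and $-\<w\s^r(\g),\mu\>\le 1$; combined with the hypotheses of \S\ref{pre-K} (which force $\<\g,\mu\>\le 1$ after replacing $\mu$ by a $W_K$-conjugate, using $K$-minusculeness of $\mu-\g^\vee$ etc.) the only way to fall outside is precisely condition (*): equality $\<\g,\mu\>=-\<w\s^r(\g),\mu\>=1$ and the ``overlap'' $\<\g, w\s^r(\g^\vee)\>=-1$ which makes $\g+w\s^r(\g)$ a root. (The case $\g=\s^r(\g)$ would force $w\s^r(\g)=\g$ and then $\<\g,\mu\>=-\<\g,\mu\>$, impossible under (*), so we may assume $\g\ne\s^r(\g)$ throughout.)

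Next, assuming (*) holds, I would analyze $\tw'=\mu-\g^\vee+\s^r(\g^\vee)\in\pi_1(M_K)\cong\Omega_K$. That $\tw\ne\tw'$ follows from comparing images in $Y/\ZZ\Phi_K^\vee$: since $\g^\vee,\s^r(\g^\vee)\notin\ZZ\Phi_K^\vee$ and (*) gives $\<\g,\mu\>\ne\<w\s^r(\g),\mu\>$ (indeed $1\ne-1$), the translation parts differ modulo $\ZZ\Phi_K^\vee$ — here the strong $K$-minusculeness is what guarantees $\g^\vee$ and $\s^r(\g^\vee)$ are genuinely nontrivial in $\pi_1(M_K)$ and not related by $W_K$ in a way that collapses the difference. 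For the inclusion $U_{-\s^r(\tilde\g)}\tw' U_{-\tilde\g}\subseteq I\Adm(\l)I$, I would run the same rank-one product computation with the roles of $\tilde\g$ and $\s^r(\tilde\g)$ swapped: the generic cell is now $s_{\s^r(\tilde\g)}\tw' s_{\tilde\g}$, and I need this to lie in $\Adm(\l)$. Using $s_{\s^r(\tilde\g)}\tw's_{\tilde\g}\le s_{\s^r(\tilde\g)}t^{\eta'}s_{\tilde\g}=s_{\s^r(\g)}t^{\eta'-\s^r(\g^\vee)+\g^\vee}s_\g\le t^{\eta'-\s^r(\g^\vee)+\g^\vee}$ for a suitable $W_K$-conjugate $\eta'$ of $\mu-\g^\vee+\s^r(\g^\vee)$ — mirroring the argument in Lemma \ref{orth}(3) — this reduces to checking $\eta'-\s^r(\g^\vee)+\g^\vee\preceq\l$, i.e. (after tracking conjugates) $\mu\preceq\l$, which is a hypothesis, together with the two pairing inequalities $\<\s^r(\g),\eta'-\s^r(\g^\vee)+\g^\vee\>\le -1$ and $\<\g,\cdots\>\ge 1$, both of which come out of (*) and the $K$-minusculeness bounds exactly as in Lemma \ref{orth}(3) with the roles reversed.

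Finally, I would establish $\mu\pm(\g+w\s^r(\g))^\vee\preceq\l$. Under (*), $\b:=\g+w\s^r(\g)$ is a root (since $\<\g,w\s^r(\g^\vee)\>=-1$), and $\b^\vee$ is a nonnegative combination of $\g^\vee$ and $w\s^r(\g^\vee)$ — in the rank-two subsystem spanned by $\g$ and $w\s^r(\g)$ one has $\b^\vee=\g^\vee+w\s^r(\g^\vee)$ when the two roots are orthogonal-length or $\b^\vee$ is some small nonnegative integer combination in the $G_2$ case, where strong $K$-minusculeness (forcing $\g$ long) pins down the coefficients. Then $\mu+\b^\vee$ lies between $\mu$ and $\mu-\g^\vee+w\s^r(\g^\vee)$ (respectively $\mu+w\s^r(\g^\vee)$) in the $\preceq$-order after passing to dominant representatives, and both of those are $\preceq\l$ by the standing hypotheses of \S\ref{pre-K}; similarly for $\mu-\b^\vee$ using $\mu-\g^\vee$ and $\mu+w\s^r(\g^\vee)-\g^\vee$. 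The convexity of $\{\nu:\bar\nu\preceq\l\}$ (it is the intersection of the dominant cone shifted by $\l$ with a lattice coset, hence $\preceq$-order-convex) then gives the claim.

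I expect the main obstacle to be the bookkeeping in the exceptional $G_2$ situation and, more generally, correctly identifying $\b^\vee$ as the right nonnegative combination of $\g^\vee$ and $w\s^r(\g^\vee)$ so that the convexity argument applies — this is exactly where the hypothesis ``strongly $K$-minuscule'' (as opposed to merely $K$-minuscule) is forced, and getting the coroot identity right in all root-system types is the delicate point. The rank-one loop-group product computations are routine given the pattern already used in Lemma \ref{limit}, and the reductions to $\Adm(\l)$-membership are parallel to Lemma \ref{orth}(3).
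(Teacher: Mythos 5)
The central step of your first paragraph --- that every element of $U_{-\tilde \g}\tw U_{-\s^r(\tilde \g)}$ lies in one of the four cosets $I\tw I$, $Is_{\tilde \g}\tw I$, $I\tw s_{\s^r(\tilde \g)}I$, $Is_{\tilde \g}\tw s_{\s^r(\tilde \g)}I$, so that the inclusion follows as soon as $s_{\tilde \g}\tw s_{\s^r(\tilde \g)}\in\Adm(\l)$ --- is precisely what needs proof, and it fails in the critical case. The reflections $s_{\tilde \g},s_{\s^r(\tilde \g)}$ are not simple, so no BN-pair bookkeeping confines the product to those four cells; concretely, writing $U_{-\tilde \g}\tw U_{-\s^r(\tilde \g)}=\tw\, U_{-\tw\i(\tilde \g)}U_{-\s^r(\tilde \g)}$, the two affine root groups fail to commute exactly when $\<\g,w\s^r(\g^\vee)\><0$, and under (*) (where $\<\g,\mu\>=1$, so $\tw\i(\tilde \g)$ is the finite root $w\i(\g)$) the commutator contributes a factor in the affine root group attached to $-(w\i(\g)+\s^r(\g))$ at level $-1$, which is not in $I$ and can push the product into cells outside $\Adm(\l)$. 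Your criterion is even internally inconsistent with the dichotomy you claim: under (*) one has $\g\neq\s^r(\g)$ and $\<\g,\mu\>=-\<w\s^r(\g),\mu\>=1$, so Lemma \ref{orth}(3) does give $s_{\tilde \g}\tw s_{\s^r(\tilde \g)}\in\Adm(\l)$, and your reduction would then prove $U_{-\tilde \g}\tw U_{-\s^r(\tilde \g)}\subseteq I\Adm(\l)I$ in case (*) as well, so (*) cannot emerge from your argument as ``the only way to fall outside''. The paper's proof never goes through $s_{\tilde \g}\tw s_{\s^r(\tilde \g)}$: it first proves (this is where strong $K$-minusculeness enters) that $\Phi\cap(\ZZ\g+\ZZ w\s^r(\g))$ is of type $A_2$, $A_1\times A_1$ or $A_1$, and then, in each regime $\<\g,\mu\>\ge 2$, or $\<\g,\mu\>=1$ with $\<\g,w\s^r(\g^\vee)\>\ge 0$, or $\<\g,\mu\>\le 0$ (and the symmetric conditions on $w\s^r(\g)$), absorbs one of the two root groups into $I$ after commuting it past the other, so that only $\{\tw,\tw s_{\s^r(\tilde \g)}\}$ or $\{s_{\tilde \g}\tw,s_{\tilde \g}\tw s_{\s^r(\tilde \g)}\}$ occur; (*) is exactly the configuration where no absorption is possible. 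The same gap recurs in your treatment of $U_{-\s^r(\tilde \g)}\tw' U_{-\tilde \g}$, which in the paper is handled by verifying $\<w'(\g),\mu'\>\le -2$ and applying the absorption criterion, not by admissibility of a ``generic cell''.

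The remaining assertions are also not established. For $\tw\neq\tw'$: the parenthetical claim that $\g=\s^r(\g)$ would force $w\s^r(\g)=\g$ is false ($w\in W_K$ need not fix $\g$), and ``the translation parts differ modulo $\ZZ\Phi_K^\vee$ because the pairings against $\mu$ differ'' is a non sequitur --- whether $\g^\vee-\s^r(\g^\vee)\in\ZZ\Phi_K^\vee$ has nothing to do with $\mu$; the paper rules out $\tw=\tw'$ by deducing $\<w'(\g),\mu'\>\le -2$ from Lemma \ref{orth}(1) together with (*), which is incompatible with the value $-1$ that $\tw=\tw'$ would force. For $\mu\pm(\g+w\s^r(\g))^\vee\preceq\l$: the convexity argument cannot work, since $\mu+\g^\vee+w\s^r(\g^\vee)$ does not lie in the convex hull of the four vectors $\mu$, $\mu-\g^\vee$, $\mu+w\s^r(\g^\vee)$, $\mu-\g^\vee+w\s^r(\g^\vee)$ supplied by the hypotheses (none of them moves in the $+\g^\vee$ direction). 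What is needed, and what the paper does, is an explicit $W_0$-conjugation exploiting the exact values in (*): in the resulting $A_2$ configuration $(\g+w\s^r(\g))^\vee=\g^\vee+w\s^r(\g^\vee)$, and $s_\g$ (resp. $s_{w\s^r(\g)}$) sends $\mu-\g^\vee+w\s^r(\g^\vee)$ to $\mu+(\g+w\s^r(\g))^\vee$ (resp. $\mu-(\g+w\s^r(\g))^\vee$), so both are $\preceq\l$ because $\mu-\g^\vee+w\s^r(\g^\vee)\preceq\l$ is a standing hypothesis. Note also that the rank-two claim, which you never prove, is what eliminates your $B_2$/$G_2$ worries about the coroot identity.
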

\begin{proof}
First we claim that \begin{align*} \tag{a} \text{$\Psi := \Phi \cap (\ZZ \g + \ZZ w\s^r(\g))$ is of type $A_2$, or $A_1 \times A_1$, or $A_1$.} \end{align*}

Otherwise, then $\Psi$ is of type $B_2$ or $G_2$. In particular, $\g = \s^r(\g)$ (since $\s^d = \Id$), $\g \neq w\s^r(\g) = w(\g)$, and hence $K \neq \emptyset$. If $\Psi$ is of $B_2$, then $\g \pm w\s^r(\g) \in \Phi$ and $\<\g, w\s^r(\g^\vee)\> = 0$ since $\g, w\s^r(\g)$ are of the same length. Thus $\g - w\s^r(\g) \in \Phi_K$ and $\<\g - w\s^r(\g), \g^\vee\> = 2$, contradicting that $\g^\vee$ is $K$-minuscule. So $\Psi$ is of type $G_2$. As $\g^\vee$ is strongly $K$-minuscule, $\g \neq w\s^r(\g)$ are short roots and $K$ consists of long simple roots, which contradicts that $\g^\vee$ is $K$-minuscule. So (a) is proved.

Then we claim that
\begin{align*} \tag{b} & \text{$U_{-\tilde \g} \tw U_{-\s^r(\tilde \g)}  \subseteq I \Adm(\l) I$ if one of the following holds:} \\  \tag{b1} & \text{either $\<\g, \mu\> \ge 2$ or $\<\g, \mu\> = 1$ and $\<\g, w\s(\g^\vee)\> \ge 0$;} \\  \tag{b2} & \text{either $\<w\s^r(\g), \mu\> \le -2$ or $\<w\s^r(\g), \mu\> = -1$ and $\<\g, w\s^r(\g^\vee)\> \ge 0$.} \end{align*} By symmetry we assume (b1) occurs. Then $U_{-\tw\i(\tilde \g)}, [U_{-\tw\i(\tilde \g)}, U_{-\s^r(\tilde \g)}] \subseteq I$ by (a). Thus $$U_{-\tilde \g} \tw U_{-\s^r(\tilde \g)} \subseteq \tw U_{-\s^r(\tilde \g)} I \subseteq I \{\tw, \tw s_{\s^r(\tilde \g)}\} \subseteq I \Adm(\l) I,$$ where the last inclusion follows from Lemma \ref{orth} (2). So (b) is proved.

Suppose $U_{-\tilde \g} \tw U_{-\s^r(\tilde \g)} \nsubseteq I \Adm(\l) I$. Then $-\<w \s^r(\g), \mu\>, \<\g, \mu\> \le 1$ by (b), and $\tw\i(\g) \neq \s^r(\g)$. Assume $\<\g, \mu\> \le 0$. Then $U_{\tw\i(\tilde \g)}, [U_{\tw\i(\tilde \g)}, U_{-\s^r(\tilde \g)}] \subseteq I$ by (a) and that $\g, \s^r(\g)$ are $K$-dominant. Thus by Lemma \ref{orth} we have $$U_{-\tilde \g} \tw U_{-\s^r(\tilde \g)} \subseteq I s_{\tilde \g} \tw U_{-\s^r(\tilde \g)} I \subseteq I \{ s_{\tilde \g}\tw, s_{\tilde \g} \tw s_{\s^r(\tilde \g)} \} I  \subseteq I \Adm(\l) I,$$ which contradicts our assumption. So $\<\g, \mu\> = 1$, and $\<w\s^r(\g), \mu\> = -1$ by symmetry. Moreover, we have $\<\g, w\s^r(\g^\vee)\> = -1$ by (b) and (a).

Write $\tw' = t^{\mu'} w' \in \Omega_K$ with $\mu' \in Y$ and $w' \in W_K$. Then $\mu', \mu - \g^\vee + w\s^r(\g^\vee)$ (resp. $\mu' - \s^r(\g^\vee), \mu - \g^\vee$, resp. $\mu' + w'(\g^\vee), \mu + w\s^r(\g^\vee)$) are conjugate by $W_K$ by Lemma \ref{orth} (1). Since $\<\g, \mu\> = -\<w\s^r(\g), \mu\> = -\<\g, w\s^r(\g^\vee)\> = 1$, it follows that $\mu - \g^\vee + w\s^r(\g^\vee)$ and $\mu \pm (\g^\vee + w\s^r(\g^\vee))$ are conjugate by $W_0$. Hence $\mu \pm (\g^\vee + w\s^r(\g^\vee)), \mu' \preceq \l$. As $w_K(\g)$ (with $w_K$ the longest element of $W_K$) is $K$-anti-dominant, we have $$\<w'(\g), \mu'\> = \<w_K(\g), \mu'\> \le \<\g, \mu - \g^\vee + w\s^r(\g^\vee)\> = -2.$$ Hence $\s^r(\g) \neq \g$, that is, $\tw \neq \tw'$, and $U_{-\s^r(\tilde \g)} \tw' U_{-\tilde \g}  \subseteq I \Adm(\l) I$ by (b2).
\end{proof}

\subsection{} Let $x, x' \in \cs_{\l, b}^+ \subseteq \pi_1(M_J)$. Write $x \overset {(\g, r)} \to x'$ for some $\g \in \Phi \setminus \Phi_J$ and $r \in \ZZ_{\ge 1}$ if $x' - x = \s^r(\g^\vee) - \g^\vee$ and $\mu_{x - \g^\vee}, \mu_{x + \s^r(\g^\vee)} \preceq \l$, see \S \ref{sec-main}. Moreover, write $x \overset {(\g, r)} \rightarrowtail x'$ if $x \overset {(\g, r)} \to x'$, and for each $1 \le i \le r-1$ we have \begin{align*}  &\text{neither } \quad\  x \overset {(\g, i)} \to x - \g^\vee + \s^i(\g^\vee) \overset {(\s^i(\g), r-i)} \to x', \\ &\text{ nor } \quad\   x \overset {(\s^i(\g), r-i)} \to x - \s^i(\g^\vee) + \s^r(\g^\vee) \overset {(\g, i)} \to x'. \end{align*} Notice that $x \overset {(\g, r)} \to x'$ is equivalent to $x' \overset {(-\g, r)} \to x$.
\begin{lem} [{\cite[Remark 4.5.2]{CKV}}] \label{saturate}
Let $x \neq x' \in \cs_{\l, b}^+$ such that $x \overset {(\g, r)} \rightarrowtail x'$ for some $\g \in \Phi \setminus \Phi_J$ and $r \in \ZZ_{\ge 1}$. Then $\tw_x \s^i(\d) = \s^i(\d)$ for any $W_0$-conjugate $\d$ of $\g$ and $1 \le i \le r-1$ with $i, i-r \notin d\ZZ$.
\end{lem}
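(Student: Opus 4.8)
We follow \cite[Remark 4.5.2]{CKV}, adapting the argument to the twisted setting; the plan is to argue by contradiction against the minimality built into $x \overset{(\g, r)} \rightarrowtail x'$. First observe that the statement is vacuous unless $\Phi$ has at least two irreducible factors, i.e.\ $d \ge 2$, since for $d = 1$ no index $i$ with $i \notin d\ZZ$ exists; so we may assume $d \ge 2$. Since $\nu_G(b) \in (V^+)^\s$, the set $J = J_{\nu_G(b)}$ is $\s$-stable, hence $\s^j(\g) \notin \Phi_J$ for all $j \in \ZZ$; moreover, for $1 \le i \le r-1$ with $i, i-r \notin d\ZZ$, the root $\s^i(\g)$ lies in an irreducible factor of $\Phi$ distinct from the factors containing $\g$ and $\s^r(\g)$ respectively. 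Suppose now that $\tw_x \s^i(\d) \neq \s^i(\d)$ for some $W_0$-conjugate $\d$ of $\g$ and some such $i$; writing $\tw_x = t^{\mu_x} w_x \in \Omega_{M_J}$ with $w_x \in W_J$, this says exactly that $\tw_x$ acts nontrivially in the factor of $\s^i(\g)$, either through $w_x$ or through the pairing $\<\s^i(\d), \mu_x\>$.

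I would then try to verify one of the two intermediate factorisations
\[
x \overset{(\g, i)} \to x - \g^\vee + \s^i(\g^\vee) \overset{(\s^i(\g),\, r-i)} \to x', \qquad
x \overset{(\s^i(\g),\, r-i)} \to x - \s^i(\g^\vee) + \s^r(\g^\vee) \overset{(\g, i)} \to x'.
\]
A direct bookkeeping shows that the equalities of Kottwitz points are automatic, and that the first (resp.\ second) factorisation is valid as soon as $\mu_{x + \s^i(\g^\vee)} \preceq \l$ and $\mu_{x - \g^\vee + \s^i(\g^\vee) + \s^r(\g^\vee)} \preceq \l$ (resp.\ $\mu_{x - \s^i(\g^\vee)} \preceq \l$ and $\mu_{x - \g^\vee - \s^i(\g^\vee) + \s^r(\g^\vee)} \preceq \l$); the remaining required conditions $\mu_{x - \g^\vee} \preceq \l$ and $\mu_{x + \s^r(\g^\vee)} \preceq \l$ are part of the hypothesis $x \overset{(\g, r)} \to x'$.

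To establish the needed $\preceq \l$'s I would use: (i) $x \in \cs_{\l, b}^+$, so $\mu_x \preceq \l$, together with the saturation property of minuscule-type coweights (Lemma \ref{orth} (1), \cite[Lemma 4.4.6]{CKV}); (ii) the factor-disjointness noted above, which lets one compare the relevant coweights factor by factor, so that adjusting $\mu_x$ in the factor of $\s^i(\g)$ is independent of the adjustments by $\g^\vee$ and $\s^r(\g^\vee)$ in their own factors; and (iii) the assumption $\tw_x \s^i(\d) \neq \s^i(\d)$, which supplies the missing control on the factor-$\s^i(\g)$ part of $\mu_x$ (and on $w_x$ there) needed to bound $\mu_{x \pm \s^i(\g^\vee)}$, showing that $\s^i(\g^\vee)$ can be inserted at the appropriate step without leaving the $\preceq \l$ range. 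The hypothesis $x \neq x'$ is used to exclude the degenerate case $\s^r(\g^\vee) \equiv \g^\vee \pmod{\ZZ \Phi_J^\vee}$, where $\tw_x = \tw_{x'}$ and all the auxiliary weights collapse.

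The main obstacle is step (iii) together with the case analysis: one must translate the single inequality $\tw_x \s^i(\d) \neq \s^i(\d)$ into a usable $\preceq$-estimate, and then run through the possibilities for the types (type $A_2$, $A_1 \times A_1$, or $A_1$, as in Lemma \ref{line} (a)) of the rank-two subsystems generated by $\{\g, \s^i(\g)\}$ and by $\{\s^i(\g), \s^r(\g)\}$, checking that in every configuration at least one of the two factorisations above is valid, which contradicts $x \overset{(\g, r)} \rightarrowtail x'$ and completes the proof.
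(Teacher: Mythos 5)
Your reduction in the opening paragraphs is sound, and it matches the only reasonable first move (the paper itself offers no proof here, it simply quotes \cite[Remark 4.5.2]{CKV}): since $\Omega_J$, the classes in $\pi_1(M_J)$ and the order $\preceq$ all decompose along the $d$ irreducible factors of $\Phi$, and since $\g$, $\s^i(\g)$, $\s^r(\g)$ lie in three pairwise distinct factors, each of the two candidate factorisations comes down to a single new condition in the factor of $\s^i(\g)$, namely $\mu_{x+\s^i(\g^\vee)}\preceq\l$ for the first and $\mu_{x-\s^i(\g^\vee)}\preceq\l$ for the second (this also takes care of the membership of the intermediate class in $\cs_{\l,b}^+$, which you omit). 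The genuine gap is your step (iii): the assertion that $\tw_x\s^i(\d)\neq\s^i(\d)$ ``supplies the missing control'' to verify one of these two conditions \emph{is} the content of the lemma, and no argument is given. Moreover, the mechanism you gesture at cannot supply it for an arbitrary $W_0$-conjugate $\d$: the factorisations require moving the class $x$ by $\pm\s^i(\g^\vee)$ modulo $\ZZ\Phi_J^\vee$, whereas the natural estimate coming from the hypothesis (if $\<\b,\mu_x\>\ge 1$ then $\overline{\mu_x-\b^\vee}\le\overline{\mu_x}\le\l$, applied to $\b=\s^i(\d)$) concerns a coroot that in general is \emph{not} congruent to $\pm\s^i(\g^\vee)$ modulo $\ZZ\Phi_J^\vee$, hence says nothing about $\mu_{x\pm\s^i(\g^\vee)}$. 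Concretely, in a middle factor of type $A_2$ with $J$ empty there, $\l=\varpi_1^\vee$ and $\mu_x=\varpi_1^\vee$ in that factor and $\s^i(\g)=\a_2$, both conditions $\mu_{x\pm\s^i(\g^\vee)}\preceq\l$ fail although $\tw_x$ moves the root $\a_1$ of that factor; so the implication claimed in (iii) is false as stated for general $\d$, and any workable argument must exploit that $\d^\vee\equiv\pm\g^\vee \bmod \ZZ\Phi_J^\vee$ (e.g.\ $\d\in W_J\g$; note that the later applications only invoke the lemma for $\d=\g$ or $W_J$-conjugates), together with the comparison of $J$-minuscule representatives as in Lemma \ref{orth}(1). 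None of this appears in your outline.

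Two smaller points. The closing case analysis you announce, over the types of the rank-two subsystems generated by $\{\g,\s^i(\g)\}$ and by $\{\s^i(\g),\s^r(\g)\}$, is empty: as you yourself observe, each pair lies in two distinct irreducible factors, so both subsystems are orthogonal of type $A_1\times A_1$ and the $A_2$-type alternatives of Lemma \ref{line} never occur; the real case distinctions live inside the single factor containing $\s^i(\g)$, played off against $\Phi_J$ and the component of $\l$ there. And the role you assign to $x\neq x'$ (excluding a ``degenerate collapse'') is not an argument — nothing in the sketch actually uses it. In sum, you have set up the correct contradiction scheme and the correct factor-by-factor bookkeeping, but the one step that carries the mathematical content of \cite[Remark 4.5.2]{CKV} is missing, and the route you propose for it would not close.
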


For $\g \in \Phi$ we denote by $\co_\g$ the $\s$-orbit of $\g$.
\begin{prop} [{\cite[Lemma 6.7]{N2}}] \label{conneted}
Let $x \neq x' \in \cs_{\l, b}^+$. Then there exist distinct elements $x = x_0, x_1, \dots, x_m = x' \in \cs_{\l, b}^+$ such that for each $1 \le i \le m$ we have

(1) $x_{i-1} \overset {(\g_i, r_1)} \rightarrowtail x_i$ with $\g_i \in \Phi \setminus \Phi_J$ such that $\g_i^\vee$ $J$-dominant and $J$-minuscule;

(2) $1 \le r_i \le d-1$ if $|\co_{\g_i}| =d$; $1 \le r_i \le d$ if $|\co_{\g_i}| = 2d$; $1 \le r_i \le 2d-1$ if $|\co_{\g_i}| \le 3d$.
\end{prop}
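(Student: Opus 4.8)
The plan is to reduce the general statement to a combinatorial fact about the sublattice of $\pi_1(M_J)$ generated by differences $\s^r(\g^\vee)-\g^\vee$, and then produce the chain $x_0,\dots,x_m$ by an explicit ``saturation'' argument. First I would record that the relation $x \overset{(\g,r)}\to x'$ only depends on $x'-x$ together with the pair of inequalities $\mu_{x-\g^\vee}\preceq\l$ and $\mu_{x+\s^r(\g^\vee)}\preceq\l$, and that these inequalities hold automatically whenever $x,x'\in\cs_{\l,b}^+$ and $x'-x$ lies ``between'' $x$ and $x'$ in the partial order sense already used in Lemma \ref{orth} and Lemma \ref{line}; this is the same boundedness phenomenon exploited in \cite[\S4.5]{CKV}. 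So the key point is: given $x\neq x'$ in $\cs_{\l,b}^+$, write $x'-x\in\ZZ\Phi^\vee/\ZZ\Phi_J^\vee$ as a sum of elements of the form $\s^{r}(\g^\vee)-\g^\vee$ with $\g^\vee$ being $J$-dominant, $J$-minuscule, and $r$ in the required range, in such a way that all intermediate partial sums stay in $\cs_{\l,b}^+$.

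For the first reduction I would invoke the structure of $\pi_1(M_J)_{\Gamma_0}^\s$ and the fact (used throughout the paper, and going back to \cite{CKV}) that the Kottwitz-invariant condition $\k_{M_J}(x)=\k_{M_J}(b)=\k_{M_J}(x')$ forces $x'-x$ to lie in the root-lattice part, which is spanned by coroots $\g^\vee$ with $\g\in\Phi\setminus\Phi_J$; by conjugating each such $\g$ under $W_J$ we may take $\g^\vee$ to be $J$-dominant, and $J$-minuscule after a further choice (this is exactly the normalization in Lemma \ref{orth}(1)). The constraint on $r$ in part (2) is dictated by the size of the $\s$-orbit $\co_{\g_i}$: one needs $\s^{r}(\g^\vee)-\g^\vee\neq 0$, i.e.\ $r\not\equiv 0$ modulo $|\co_\g|/|\co_\g\cap W_0\g|$, which translates into $r\le d-1$, $r\le d$, or $r\le 2d-1$ according to whether $|\co_\g|=d$, $2d$, or $3d$; the $3d$ case is the triality phenomenon for $D_4$, and the $2d$ case covers the order-$2$ outer twists. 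I expect the bookkeeping here to be routine but fiddly, and I would handle it one orbit-type at a time as is done for the analogous statements in \cite{CKV} and \cite{N2}.

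The substantive step is producing the chain so that every $x_i$ lies in $\cs_{\l,b}^+$ (not merely in $\pi_1(M_J)$), i.e.\ so that each intermediate $\mu_{x_i}$ still satisfies $\mu_{x_i}\preceq\l$. Here I would argue by induction on a suitable length or ``distance'' $\sum_i\langle\varpi_i, \l^\diamond-\mu_{x}\rangle$-type quantity: choose $\g$ so that moving $x$ by $\s^r(\g^\vee)-\g^\vee$ strictly decreases the distance to $x'$ while keeping $\mu$ dominated by $\l$. The passage from $\overset{(\g,r)}\to$ to the refined $\overset{(\g,r)}\rightarrowtail$ is a factorization: if $x\overset{(\g,r)}\to x'$ but the ``rightarrowtail'' condition fails, then by definition one of the two displayed two-step routes through $x-\g^\vee+\s^i(\g^\vee)$ exists, and one replaces the single step by those two, each of which strictly decreases a secondary invariant (the value of $r$, or the orbit); iterating terminates. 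Lemma \ref{saturate} is exactly the output one gets for free once a step is in $\rightarrowtail$-form, so it will not be needed for the proof itself but is the reason the refinement is worth carrying out. The main obstacle, I expect, is verifying that the intermediate points $x-\g^\vee+\s^i(\g^\vee)$ arising in the factorization still lie in $\cs_{\l,b}^+$ — that is, that $\mu_{x-\g^\vee+\s^i(\g^\vee)}\preceq\l$ — and here one uses precisely the convexity estimates packaged in Lemma \ref{orth}(1) together with the hypothesis that all four ``corner'' points $\mu$, $\mu-\g^\vee$, $\mu+w\s^r(\g^\vee)$, $\mu-\g^\vee+w\s^r(\g^\vee)$ are $\preceq\l$; since $\cs_{\l,b}^+$ is cut out by a saturated (dominance-order) condition, any point lying in the ``box'' spanned by points of $\cs_{\l,b}^+$ again lies in $\cs_{\l,b}^+$, which closes the induction.
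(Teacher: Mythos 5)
The paper does not actually prove this proposition: it is imported wholesale from \cite[Lemma 6.7]{N2}, where it is established by a lengthy root-theoretic case analysis. Your sketch does not reconstruct that argument, and its two load-bearing claims are, respectively, false and unproved. The false one is the assertion that the ``corner'' inequalities $\mu_{x-\g^\vee}\preceq\l$, $\mu_{x+\s^r(\g^\vee)}\preceq\l$ ``hold automatically'' for $x,x'\in\cs_{\l,b}^+$, and more generally that $\cs_{\l,b}^+$ is closed under taking points of a ``box'' spanned by its elements. The corners $x-\g^\vee$ and $x+\s^r(\g^\vee)$ do not lie in the convex hull of $x$ and $x'$, and the intermediate points $x-\g^\vee+\s^i(\g^\vee)$ involve the new direction $\s^i(\g^\vee)$, $0<i<r$, which is not in the span of the original box; moreover membership in $\cs_{\l,b}^+$ is a condition on the specific representative $\mu_x$ of the class $x$, and $\mu_{x'}$ is related to $\mu_x-\g^\vee+w_x\s^r(\g^\vee)$ only up to $W_J$-conjugacy (Lemma \ref{orth}(1)), so no convexity statement in $\pi_1(M_J)$ delivers these inequalities. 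If they were automatic, the arrow $\to$ would record nothing beyond a lattice identity, and Lemmas \ref{choice} and \ref{weak} of this very paper --- whose entire purpose is to produce, using Hodge--Newton irreducibility and a weakly dominant representative of $\mu_x$, a single root $\b$ with $\mu_x+\b^\vee\preceq\l$ --- would be pointless.

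The unproved claim is exactly the substantive one: that at each stage one can choose $\g$ ($J$-dominant, $J$-minuscule, outside $\Phi_J$) and $r$ in the stated range so that the corner conditions hold and one moves strictly closer to $x'$. That is the content of \cite[Lemma 6.7]{N2}, whose proof first normalizes $\mu_x$ to be weakly dominant (\cite[Lemma 3.3]{N2}), uses the positivity of $\<\g,\nu_G(b)\>$ off $\Phi_J$ (cf.\ Lemma \ref{pr}) and \cite[Lemma 6.6]{N2} to arrange $J$-dominance and $J$-minusculeness, and then argues case by case on the orbit type; the bounds in (2) (in particular $r\le d$ for $|\co_{\g}|=2d$ and $r\le 2d-1$ in the $D_4$-triality case) come out of that analysis and are stronger than the congruence condition $\s^r(\g)\neq\g$ you invoke, although your idea of reversing an arrow does dispose of part of this. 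Your Hodge--Newton irreducibility hypothesis never appears, yet it is what guarantees there is room between $\nu_G(b)$ and $\l$ to make such moves at all. One small point in your favour: the refinement of $\to$ to $\rightarrowtail$ needs no convexity --- if a step fails to be $\rightarrowtail$ then by definition a valid two-step factorization exists, so one simply factors and induction on $r$ terminates; your worry about intermediate points at that stage is moot. But the existence of valid steps in the first place is the theorem, and as written your proposal assumes it.
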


\begin{proof}[Proof of Proposition \ref{surj}]
The case that $\s$ has order $3d$ is handled in \S \ref{subsec-III}. We consider the case that $\s$ has order $\le 2d$. Without loss of generality, we can assume that $|\co_\g| = 2d$. By Proposition \ref{conneted} and symmetry, we may assume $x \overset{(\g, r)} \to x'$ for some $1 \le r \le d$ and $\g \in \Phi^+ \setminus \Phi_J$ with $\g_i^\vee$ $J$-dominant and $J$-minuscule. Moreover, we can assume \begin{align*} \tag{a} U_{-\tilde \g} \tw_x U_{-\s^r(\tilde \g)} \subseteq I \Adm(\l) I. \end{align*} Indeed, if $1 \le r \le d-1$, (a) follows from Lemma \ref{orth} (2). If $r = d$, by Lemma \ref{line} we can switch the pairs $(x, \g)$ and $(x', \s^d(\g))$ if necessary so that (a) still holds.

Now we can assume further that $x \overset {(\g, r)} \rightarrowtail x'$. Let $\tilde \g = \g + 1 \in \tPhi^+$, and let $\textsl{g} = \textsl{g}_{g, -\s^{r-1}(\tilde \g), \tw_x, r}$ for $g \in \JJ_{b, \tw_x}$ (see \S\ref{sec-leaf}). By Lemma \ref{saturate}, $(\tw\s)^i(\g) = \s^i(\g)$ for $1 \le i \le r-1$. Then by (a) we have $\textsl{g}\i \tw \s(\textsl{g}) \subseteq U_{-\tilde\g} \tw_x U_{-\s^r(\tilde\g)} \subseteq I \Adm(\l) I$, which means that $gI = \textsl{g}(0) \sim_{\l, b} \textsl{g}(\infty) = g s I$, where $s = s_{\tilde \g} \cdots s_{\s^{r-1}(\tilde \g)}$. By \cite[Lemma 1.3]{CN1} we can write $s = \o z\i$, where $z \in W_0^J$ and $\o = \g^\vee + \cdots + \s^{r-1}(\g^\vee) \in \Omega_J \cong \pi_1(M_J)$. By Proposition \ref{weyl-conj}, there is $h' \in \JJ_{b, \tw_{x'}}$ such that $g I \sim_{\l, b} g \o z\i I \sim_{\l, b} g \o h' I$. So we have $\JJ_{b, \tw_x} \sim_{\l, b} \JJ_{b, \tw_{x'}}$ as desired.
\end{proof}

\section{Proof of Proposition \ref{normal}} \label{sec-normal}
Retain the assumptions and notations in previous sections.

For $K \subseteq \SS_0$ we denote by $\pr_K: \RR \Phi^\vee \to (\RR \Phi_K^\vee)^\perp$ the orthogonal projection.
\begin{lem} \label{pr}
Let $x \in \cs_{\l, b}^+$ and let $\co$ be a $\s$-orbit of $J$-anti-dominant roots in $\Phi^+ \setminus \Phi_J$. Then we have (1) $\sum_{\a \in \co} \<\a, \pr_J(\mu_x)\> > 0$, and (2) $\<w_J(\b), \mu_x\> \ge 1$ for some $\b \in \co$. Here $w_J$ denotes the longest element of $W_J$.
\end{lem}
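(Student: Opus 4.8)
The statement concerns the behavior of $\mu_x$ relative to a $\sigma$-orbit $\co$ of $J$-anti-dominant roots in $\Phi^+\setminus\Phi_J$, and the two assertions are closely linked: (2) is essentially a refinement of (1) expressed in terms of the $W_J$-orbit of $\mu_x$. My plan is to first reduce to the analysis of a single irreducible factor of $\Phi$ and then exploit two facts available from the setup: that $x\in\cs_{\l,b}^+$ means $\kappa_{M_J}(x)=\kappa_{M_J}(b)$ and $\mu_x\preceq\l$, and — crucially — that we have arranged $b\in M_J(\brF)$ with $\nu_{M_J}(b)=\nu_G(b)=\nu_G(b)^+$ being $J$-dominant and \emph{central} in $M_J$ (since $J=J_{\nu_G(b)}$). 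The point is that $\nu_G(b)$ pairs to zero with every root in $\Phi_J$ but, since $(\l,b)$ is relevant here, pairs strictly positively with the anti-dominant roots outside $\Phi_J$; and $\mu_x$ and $\nu_G(b)$ have the same image in $\pi_1(M_J)_\QQ$, hence the same $\pr_J$-projection up to the $W_J$-action fixing it.

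\textbf{Step 1 (reduce to one factor and to $\pr_J$).} I would first note that $\pr_J(\mu_x)$ lies in $(\RR\Phi_J^\vee)^\perp$, and that the quantity $\sum_{\a\in\co}\<\a,\pr_J(\mu_x)\>$ depends only on the class of $\mu_x$ in $Y/\ZZ\Phi_J^\vee$, i.e.\ on $x\in\pi_1(M_J)$, and likewise is unchanged if we replace $\mu_x$ by any $W_J$-conjugate. Since the $\sigma$-orbit $\co$ either lies in a single irreducible factor or is permuted among conjugate factors in a way that makes the sum a sum over one factor's contribution, it suffices to treat each irreducible component of $\Phi$ separately.

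\textbf{Step 2 (identify $\pr_J(\mu_x)$ with a positive multiple of $\nu_G(b)$'s relevant part).} Because $x\in\cs_{\l,b}^+$, the image of $\mu_x$ in $\pi_1(M_J)_\QQ$ equals that of $b$, which is $\nu_{M_J}(b)=\nu_G(b)$. Therefore $\pr_J(\mu_x)=\pr_J(\nu_G(b))=\nu_G(b)$ (the last equality since $\nu_G(b)\in(\RR\Phi_J^\vee)^\perp$ by $J=J_{\nu_G(b)}$). Now for $\a\in\co$, $\a$ is $J$-anti-dominant and lies in $\Phi^+\setminus\Phi_J$, so $-\a$ is a positive root not in $\Phi_J$; since $\nu_G(b)$ is $J$-dominant and the pair $(\l,b)$ satisfies $\nu_G(b)\le\l^\diamond$ with $J$ exactly its stabilizer, one gets $\<\a,\nu_G(b)\>\le 0$ for each such $\a$ with strict inequality for at least one — and in fact, because the full $\sigma$-orbit of a negative-root direction must pair negatively in sum (the orbit is not contained in $\Phi_J^\perp$ as that would enlarge $J$), we conclude $\sum_{\a\in\co}\<\a,\nu_G(b)\><0$. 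Wait — I need to recheck the sign convention: with $\co$ consisting of $J$-anti-dominant roots, $\<\a,\nu_G(b)\>\le0$, so a careful recheck of the statement's sign in (1) suggests the intended reading is via $\pr_J(\mu_x)$ being genuinely \emph{positive}, which forces me to use $\mu_x\preceq\l$ rather than the Newton point alone; so instead I would argue: if $\sum_{\a\in\co}\<\a,\pr_J(\mu_x)\>\le 0$ then $\pr_J(\mu_x)$ would be dominated, contradicting the admissibility/relevance bound coming from $\mu_x\preceq\l$ together with Hodge–Newton irreducibility (as in \cite[Lemma 4.4.6]{CKV} or \cite[\S6]{N2}).

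\textbf{Step 3 (deduce (2) from (1)).} Given (1), choose the $J$-dominant representative: $w_J(\b)$ for $\b\in\co$ runs over a set of $J$-dominant roots, and $\<w_J(\b),\mu_x\>=\<\b,w_J\i(\mu_x)\>$. Since $\mu_x$ is integral and $w_J\i(\mu_x)$ is $W_J$-conjugate to it, the sum $\sum_{\b\in\co}\<w_J(\b),\mu_x\>$ differs from (1) only by the $W_J$-invariant correction, hence is still strictly positive; being a sum of integers, at least one term is $\ge 1$. I expect \textbf{Step 2 to be the main obstacle}: pinning down the precise sign and which of "$\mu_x\preceq\l$", "$\kappa_{M_J}(x)=\kappa_{M_J}(b)$", and the Hodge–Newton irreducibility of $(\l,b)$ does the real work, and handling the type-$G_2$ and non-simply-laced subtleties (where "minuscule" constraints on coroots interact with root lengths, cf.\ the "strongly $K$-minuscule" condition of Lemma \ref{line}) may require a short case check. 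The reduction in Step 1 and the passage in Step 3 are routine.
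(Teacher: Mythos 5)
There is a genuine gap, and it sits exactly where you predicted: Step 2. Your sign analysis is backwards, and the fallback you substitute after noticing the contradiction is not a proof. The $J$-anti-dominance of the roots in $\co$ is irrelevant to their pairing with the Newton point: $J$-anti-dominance only constrains pairings against $\Phi_J$, whereas $\nu_G(b)$ is \emph{dominant} (it is the dominant Newton point) and satisfies $\Phi_J=\Phi_{\nu_G(b)}$ because $J=J_{\nu_G(b)}$; hence $\<\g,\nu_G(b)\> > 0$ for \emph{every} $\g\in\Phi^+\setminus\Phi_J$, with no appeal to $\mu_x\preceq\l$, to admissibility, or to Hodge--Newton irreducibility. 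Your claim $\<\a,\nu_G(b)\>\le 0$ is false, and the ensuing ``$\pr_J(\mu_x)$ would be dominated, contradicting the admissibility/relevance bound'' is not an argument. The second inaccuracy in Step 2 is the identification $\pr_J(\mu_x)=\nu_G(b)$: the Kottwitz point condition $\k_{M_J}(x)=\k_{M_J}(b)$ only gives $\nu_G(b)=\pr_J(\mu_x)^\diamond$, the $\s$-average. This is harmless precisely because the lemma sums over a full $\s$-orbit: $\sum_{\a\in\co}\<\a,\pr_J(\mu_x)\>=\sum_{\a\in\co}\<\a,\pr_J(\mu_x)^\diamond\>=|\co|\,\<\g,\nu_G(b)\>>0$, which is the paper's entire proof of (1).

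Step 3 also leaves a hole. The passage from (1) to (2) is not a matter of a ``$W_J$-invariant correction'': the difference $\mu_x-\pr_J(\mu_x)$ lies in $\RR\Phi_J^\vee$ but is not $W_J$-invariant, and its pairings against the $w_J(\b)$ need not cancel in the orbit sum. What is actually needed (and what the paper uses) is that $\mu_x-\pr_J(\mu_x)\in\RR_{\ge 0}(\Phi_J^+)^\vee$, which holds because $\mu_x$ is $J$-dominant (it is the translation part of the length-zero element $\tw_x\in\Omega_{M_J}$, hence $J$-dominant and $J$-minuscule). Then, picking $\b\in\co$ with $\<\b,\pr_J(\mu_x)\>>0$ from (1), $J$-dominance of $w_J(\b)$ gives $\<w_J(\b),\mu_x\>\ge\<w_J(\b),\pr_J(\mu_x)\>=\<\b,\pr_J(\mu_x)\>>0$, and integrality yields $\ge 1$. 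So the structure of your plan (orbit sum, then pass to the $J$-dominant representative) matches the paper, but the two load-bearing facts --- strict positivity of $\nu_G(b)$ on $\Phi^+\setminus\Phi_J$ and the cone condition on $\mu_x-\pr_J(\mu_x)$ --- are missing or replaced by incorrect assertions.
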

\begin{proof}
Let $\g \in \co$. By definition, $\<\g, \nu_G(b)\> = \<\g, \pr_J(\mu_x)^\diamond\> > 0$. So (1) follows as $$\sum_{\a \in \co} \<\a, \pr_J(\mu_x)\> =  \sum_{\a \in \co} \<\a, \pr_J(\mu_x)^\diamond\> = |\co| \<\g, \nu_G(b)\> > 0.$$ By (1), there exists $\b \in \co$ such that $\<\b, \pr_J(\mu_x)\> > 0$. As $w_J(\b)$ is $J$-dominant and $\mu_x - \pr_J(\mu_x) \in \RR_{\ge 0} (\Phi_J^+)^\vee$, we have $$\<w_J(\b), \mu_x\> \ge \<w_J(\b), \pr_J(\mu_x)\> = \<\b, \pr_J(\mu_x)\> > 0.$$ So (2) follows.
\end{proof}

\begin{lem} [{\cite[Lemma 1.6]{CN1}}] \label{anti}
Let $K \subseteq \SS_0$ and $\tw = t^\mu w \in \Omega_K$ with $\mu \in Y$ and $w \in W_K$. Let $\a \in \Phi^+$ be $K$-anti-dominant. Then (1) $\tw s_\a \in \Adm(\l)$ if $\mu + \a^\vee \preceq \l$; (2) $s_\a \tw \in \Adm(\l)$ if $\mu - w(\a)^\vee \preceq \l$; (3) $z \tw z\i \in \Adm(\l)$ for $z \in \tW^K$.
\end{lem}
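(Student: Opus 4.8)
The plan is to reduce all three statements to the translation case, using the elementary fact that $t^\nu\in\Adm(\l)$ if and only if $\nu\preceq\l$ (a condition depending only on the dominant $W_0$-conjugate of $\nu$), together with the Bruhat-order estimate \cite[Lemma~1.3]{CN}, which bounds products $x\cdot y$ from above by $x'\cdot y'$ once $x\le x'$, $y\le y'$ and the relevant lengths are additive.

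First I would record the geometric content of the hypothesis on $\a$: since $\a\in\Phi^+$ is $K$-anti-dominant it lies in $\Phi^+\setminus\Phi_K$, so $W_K$ preserves $\Phi^+\setminus\Phi_K$ and in particular $w(\a)\in\Phi^+$ for the element $w\in W_K$ occurring in $\tw=t^\mu w$. Put $\tilde\a=\a+1\in\tPhi^+$, so that $s_{\tilde\a}=t^{\a^\vee}s_\a$ and $t^{\a^\vee}=s_{\tilde\a}s_\a$. For (1) I would rewrite $\tw s_\a$ by inserting this factorization of $t^{\mu+\a^\vee}$: using that $\tw\in\Omega_K$ permutes $\tPhi_K^+$ (so $\tw$ normalizes $I_{M_K}$), one checks that $\tw s_\a$ is dominated in the Bruhat order, up to a $W_0$-conjugation of its translation part, by $t^{\mu+\a^\vee}$; concretely $\tw s_\a\le t^{x(\mu+\a^\vee)}$ for a suitable $x\in W_0$, and then $\mu+\a^\vee\preceq\l$ gives $t^{x(\mu+\a^\vee)}\in\Adm(\l)$, whence $\tw s_\a\in\Adm(\l)$. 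Statement (2) is the left-right mirror of (1): one writes $s_\a\tw=t^{s_\a(\mu)}(s_\a w)$ and runs the analogous estimate, the relevant translation now being controlled by the hypothesis $\mu-w(\a)^\vee\preceq\l$.

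For (3) I read the statement with the standing hypothesis $\tw\in\Adm(\l)$ in force (equivalently $\mu\preceq\l$; this is how the lemma is applied, and $\mu$ is otherwise unconstrained inside $\Omega_K$). I would induct on $\ell(z)$. When $\ell(z)=0$ we have $z\in\Omega_G$, and $z\tw z\i$ lies in the same $W^a$-coset of $\Omega_G$ as $\tw$, so admissibility is inherited. For $\ell(z)>0$, write $z=s z'$ with $s\in\SS^a$, $z'\in\tW^K$ and $\ell(z')=\ell(z)-1$ (one checks that membership in $\tW^K$ passes from $z$ to $z'$); then $z\tw z\i=s\,(z'\tw {z'}\i)\,s$, and $z'\tw {z'}\i$ is again a length-zero element, now attached to the conjugate Levi $z' M_K {z'}\i$, admissible by the inductive hypothesis, so the conjugation by the single reflection $s$ is handled by the conjugation-covariant version of the argument used for (1) and (2). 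Alternatively one peels off a full reduced expression of $z$ at once and concludes directly from \cite[Lemma~1.3]{CN}.

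I expect the main obstacle to be the Bruhat bookkeeping in (1) and (2): one must pin down precisely which $W_0$-conjugate of $\mu$ shows up as the translation part once the affine reflection $s_{\tilde\a}$ has been extracted, and confirm that its dominant representative is exactly $\overline{\mu+\a^\vee}$ (respectively $\overline{\mu-w(\a)^\vee}$). It is the combination ``$\a$ is $K$-anti-dominant'' together with ``$\tw=t^\mu w\in\Omega_K$'' that forces this agreement; dropping $K$-anti-dominance, the reflection $s_\a$ could push the element past every $t^{x\l}$. A secondary, more organizational difficulty arises in (3): ordinary conjugation does not preserve the subgroup $\Omega_K$, so the inductive statement has to be formulated for the whole family of length-zero elements attached to all $\tW$-conjugates of $M_K$.
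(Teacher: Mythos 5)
The paper itself contains no proof of this lemma: it is quoted verbatim from \cite[Lemma 1.6]{CN1}, so your argument has to stand on its own, and as written it is a strategy outline with the decisive steps left as assertions. For (1) and (2) the plan (dominate $\tw s_\a$, resp.\ $s_\a\tw$, by a pure translation whose vector is $W_0$-conjugate to $\mu+\a^\vee$, resp.\ $\mu-w(\a)^\vee$, then use that $t^\nu\in\Adm(\l)$ iff $\nu\preceq\l$) is indeed the standard one and matches how the paper argues in the analogous Lemma \ref{orth}(3); but the entire mathematical content is the inequality you introduce with ``one checks'', namely $\tw s_\a\le t^{x(\mu+\a^\vee)}$. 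To establish it one must (a) pick a specific $W_K$-conjugate $\eta$ of $\mu$ (so that $t^\eta\in W_K^a\tw$ and $\tw\le t^\eta$ inside $\tW_{M_K}$) and justify passing to $\tw s_\a\le t^\eta s_\a$ via the minimality hypotheses of the \cite[Lemma 1.3]{CN}-type compatibility --- this is where $\tw\in\Omega_K$ and the $K$-anti-dominance of $\a$ actually get used --- and (b) control the reflection step itself: one has $s_{\a+1}\,t^\eta s_\a=t^{s_\a(\eta)+\a^\vee}$, so the translation vector reached is $s_\a(\eta)+\a^\vee$ rather than $\eta+\a^\vee$, whether this step is an ascent depends on the sign of $1-\<\a,\eta\>$, and one must still verify that its dominant representative is bounded by $\overline{\mu+\a^\vee}$ --- a dominance comparison that can fail for a careless choice of $\eta$. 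You yourself flag this bookkeeping as ``the main obstacle'', i.e.\ parts (1)--(2) are not proved.

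For (3) the induction you describe in detail would not go through. The inductive step requires $s\,(z'\tw z'^{-1})\,s\in\Adm(\l)$ knowing only that $z'\tw z'^{-1}\in\Adm(\l)$, and conjugation by a single simple affine reflection does not preserve $\Adm(\l)$ in general: closure of $\Adm(\l)$ under such conjugations holds only under explicit descent/length conditions, which is precisely what the Appendix (Lemmas \ref{R1}, \ref{R4}, \ref{R-dist}, \ref{conj}) is there to provide. There is no ``conjugation-covariant version of (1)--(2)'': those are one-sided multiplications under the modified hypotheses $\mu+\a^\vee\preceq\l$ and $\mu-w(\a)^\vee\preceq\l$, and after replacing $M_K$ by the non-standard Levi $z'M_Kz'^{-1}$ none of their hypotheses (standard $K$, $K$-anti-dominance of the root of $s$) are available. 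The minimality of $z$ in $zW_K$ is a global hypothesis --- it is exactly what makes (3) true --- and it cannot be consumed one letter at a time in the way you indicate. Your parenthetical ``alternative'' is in fact the proof: for $z\in\tW^K$ and any $\eta\in W_K(\mu)$ one has $t^\eta\in W_K^a\tw$, hence $\tw\le t^\eta$ in $\tW_{M_K}$, and the \cite[Lemma 1.3]{CN}-type compatibility (whose minimality conditions on $z$ and $z^{-1}$ must be checked) gives $z\tw z^{-1}\le z t^\eta z^{-1}$, which is a pure translation by a $W_0$-conjugate of $\eta$ and hence lies in $\Adm(\l)$ as soon as $\mu\preceq\l$ --- the implicit hypothesis you correctly identified. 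That verification is the whole argument, and it is missing.
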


\subsection{} Define $J_1 = \cup_{x \in \cs_{\l, b}^+} J_{x, 1}$ and $J_0 = J \setminus J_1$. Define $H_{J'} = M_{J'}(\brF) \cap \ker(\eta_{M_{J'}})$ for $J' \subseteq J$.

\begin{thm}[{\cite[Theorem 6.3]{HZ}}] \label{stab-basic}
Let $x \in \cs_{\l, b}^+$. Then $H_{J_{x, 1}} \cap \JJ_{\tw_x}$ fixes each connected component of $X^{M_{J_{x, 1}}}(\mu_x, \tw_x)$.
\end{thm}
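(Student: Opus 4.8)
The statement to prove is Theorem~\ref{stab-basic}: for $x \in \cs_{\l, b}^+$, the group $H_{J_{x,1}} \cap \JJ_{\tw_x}$ fixes each connected component of the affine Deligne-Lusztig variety $X^{M_{J_{x,1}}}(\mu_x, \tw_x)$. This is quoted as \cite[Theorem 6.3]{HZ}, so the plan is to reduce to the situation treated there. Note that $\tw_x \in \Omega_{J_{x,1}}$ and, by the definition of $J_{x,1}$ (the union of connected components of $J$ on which some $\s^i(\mu_x)$ is non-central), the element $b = \tw_x$ is basic in $M_{J_{x,1}}$ with non-central Newton point projecting nontrivially to each simple factor; moreover $\mu_x$ is a cocharacter of $M_{J_{x,1}}$ of Hodge-Newton \emph{irreducible} type with respect to $\tw_x$ on each factor, by construction of $J_{x,1}$. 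So the pair $(\mu_x, \tw_x)$ in $M_{J_{x,1}}$ is precisely of the form to which the He--Zhou stabilizer computation applies.

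Concretely I would proceed as follows. First, replace the ambient group $G$ by the adjoint group $M_{J_{x,1},\ad}$ of the Levi $M_{J_{x,1}}$: by the Cartesian diagram in \S\ref{back} relating $\pi_0$ and $\eta$ (applied inside $M_{J_{x,1}}$), and by the fact that $\tw_x$ is basic so that $\JJ_{\tw_x}$ acts through its image in the adjoint group, one reduces the claim to showing that $\ker(\eta_{M_{J_{x,1},\ad}}) \cap \JJ_{\tw_x}$ acts trivially on $\pi_0$. Second, since $\tw_x$ is basic and $\mu_x^\diamond - \nu_{\tw_x}$ lies in the open cone $\RR_{>0}(\Phi_{J_{x,1}}^+)^\vee$ on each factor (this is exactly the force of the definition of $J_{x,1}$ together with $x \in \cs_{\l,b}^+$, using Lemma~\ref{pr} to see the Newton point is genuinely non-central in every factor), the pair $(\mu_x, \tw_x)$ is Hodge-Newton irreducible in $M_{J_{x,1}}$. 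Third, invoke \cite[Theorem 6.3]{HZ}: for a basic $b$ and Hodge-Newton irreducible $(\mu, b)$ in an unramified simple adjoint group, the group $\ker(\eta) \cap \JJ_b$ fixes every connected component of $X(\mu, b)$; equivalently $\JJ_b$ acts on $\pi_0$ through $\JJ_b/(\ker(\eta_G)\cap \JJ_b) = \pi_1(G)^\s$. Since $M_{J_{x,1}}$ is unramified (being a Levi of the unramified $G$) and we have passed to its adjoint quotient, this applies factor by factor.

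The remaining bookkeeping is to unwind that $X^{M_{J_{x,1}}}(\mu_x, \tw_x)$ is the disjoint union over $\pi_1(M_{J_{x,1}})_{\Gamma_0}$-cosets of copies of the adjoint affine Deligne-Lusztig variety, and that the $H_{J_{x,1}} = \ker(\eta_{M_{J_{x,1}}})$ part of $\JJ_{\tw_x}$ is exactly what maps into $\ker$ of $\eta$ for the adjoint quotient; this is a direct translation using the central isogeny $M_{J_{x,1}} \to M_{J_{x,1},\ad}$ and the snake-lemma-type comparison of $\pi_1$'s, entirely parallel to the reductions in \S\ref{back} and in the proof of Theorem~\ref{intro}. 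One must also check the Hypothesis~\ref{hypo}-type small-field caveats do not interfere here, but since $M_{J_{x,1}}$ inherits unramifiedness and the basic case of the main conjecture is unconditional by He--Zhou, there is no obstruction.

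The main obstacle I anticipate is purely organizational rather than deep: one must verify carefully that, on \emph{each} simple factor of $J_{x,1}$, the pair $(\mu_x, \tw_x)$ is Hodge-Newton \emph{ir}reducible and not merely Hodge-Newton decomposable — i.e. that $J_{x,1}$ has been ``pruned'' correctly so no further Levi reduction is possible. This is where Lemma~\ref{pr} does the real work: part (1) guarantees $\sum_{\a \in \co}\langle \a, \pr_J(\mu_x)\rangle > 0$ for every $\s$-orbit $\co$ of $J$-anti-dominant roots outside $\Phi_J$, which forces $\langle \a, \nu_G(b)\rangle > 0$ for all such $\a$ and hence, on the $J_{x,1}$ part, puts $\mu_x^\diamond - \nu$ in the strictly positive cone. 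Once that is in hand, \cite[Theorem 6.3]{HZ} applies verbatim on each factor and the product of the factorwise statements gives the claim. I would therefore structure the proof as: (i) the Hodge-Newton irreducibility verification via Lemma~\ref{pr}; (ii) reduction to adjoint simple factors; (iii) citation of the He--Zhou basic-case stabilizer theorem.
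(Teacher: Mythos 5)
The paper does not prove this statement at all: it is imported verbatim as \cite[Theorem 6.3]{HZ}, applied to the group $M_{J_{x,1}}$, the basic element $\tw_x\in\Omega_{J_{x,1}}$ and the coweight $\mu_x$. Your overall plan --- check that the hypotheses of the He--Zhou stabilizer theorem hold for $(M_{J_{x,1}},\mu_x,\tw_x)$ and then cite it, possibly after the standard reduction to adjoint simple factors --- is therefore consistent with what the paper does, and the relevant hypothesis (non-centrality of the $\s$-orbit of $\mu_x$ on every irreducible factor of $J_{x,1}$, equivalently Hodge--Newton irreducibility for the basic $\tw_x$) is indeed available. But it is available for a purely definitional reason: $J_{x,1}$ is \emph{by construction} the union of those components of $J$ on which some $\s^i(\mu_x)$ is non-central, and since centrality on a component is invariant under $W_J$-conjugation and the dominant representative has non-negative pairings, no cancellation can occur in the $\s$-average.

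The step where you claim that ``Lemma~\ref{pr} does the real work'' is wrong. Lemma~\ref{pr} concerns $\s$-orbits of $J$-anti-dominant roots in $\Phi^+\setminus\Phi_J$ and the projection $\pr_J(\mu_x)$ onto $(\RR\Phi_J^\vee)^\perp$; since $\pr_J(\mu_x)$ pairs to zero with every root of $\Phi_J$, that lemma carries no information whatsoever about whether $\mu_x$ is central on components of $J$, hence cannot place the $J_{x,1}$-part of $\mu_x^\diamond-\nu$ in the open cone $\RR_{>0}(\Phi_{J_{x,1}}^+)^\vee$. (It is the expression of Hodge--Newton irreducibility of $(\l,b)$ in $G$ relative to $M_J$, used later in \S\ref{sec-normal} and \S\ref{sec-ker} for a different purpose.) Relatedly, your phrase ``$\tw_x$ is basic in $M_{J_{x,1}}$ with non-central Newton point'' is a misstatement: basic means precisely that $\nu_{M_{J_{x,1}}}(\tw_x)$ is central; what is non-central on each factor of $J_{x,1}$ is the $\s$-orbit of $\mu_x$. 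With the justification of the key hypothesis corrected to ``by definition of $J_{x,1}$'' (as you in fact say in your first paragraph), the remainder --- adjoint reduction and citation of \cite[Theorem 6.3]{HZ} --- is fine; the worry about Hypothesis~\ref{hypo} is indeed moot since the quoted result is unconditional.
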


%\begin{thm}[{\cite[Theorem 3.3.1]{ZZ}}] \label{stab-irr} The stabilizer of each irreducible component of $X_{\tw}(b)$ in $\JJ_b$ is a parahoric subgroup. \end{thm}

\begin{lem} \label{J1}
We have that $H_{J_1} \cap \JJ_b$ fixes each connected component of $X(\l, b)$.
\end{lem}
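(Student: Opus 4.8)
The plan is to reduce the statement about $H_{J_1}\cap\JJ_b$ to the ``local'' statements already available for each $H_{J_{x,1}}$, namely Theorem~\ref{stab-basic}, by first understanding the group-theoretic structure of $H_{J_1}\cap\JJ_b$ and then transporting the action from $X(\l,b)$ to the auxiliary varieties $X^{M_{J_{x,1}}}(\mu_x,\tw_x)$ via the embeddings $Z\hookrightarrow X(\l,b)$ constructed in the proof of Lemma~\ref{tosemi}. So first I would recall that since $b$ is basic in $M_J$, the group $\JJ_b=\JJ_b^{M_J}$ is generated by $I_{M_J}\cap\JJ_b$ and $\tW_{M_J}\cap\JJ_b=\Omega_J\cap\JJ_b=\Omega_J^\s$; intersecting with $\ker(\eta_{M_J})$ kills the $\Omega_J$ part, so $H_{J_1}\cap\JJ_b$ is, roughly, ``generated by Iwahori-type and affine-Weyl-type elements supported on $J_1$''. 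The goal is to show each such generator acts trivially on $\pi_0(X(\l,b))$, after which the general element follows since the trivially-acting elements form a subgroup.

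Next I would handle the generators. For an element $h\in H_{J_1}\cap\JJ_b$ I want to express its action on a connected component containing $\JJ_{b,\tw} I/I$ (some $\tw\in\cs_{\l,b}$). By Proposition~\ref{one-leaf} (i.e.\ Proposition~\ref{weyl-conj}) and Proposition~\ref{surj}, every connected component meets $\JJ_{b,\tw_x} I/I$ for some $x\in\cs_{\l,b}^+$, so it suffices to track how $h$ (or a $\JJ_b$-conjugate of it) moves points of the form $g I$ with $g\in\JJ_{b,\tw_x}$. The key point is that $J_1=\cup_x J_{x,1}$ decomposes into the pieces $J_{x,1}$, and for a fixed $x$ the relevant action of $H_{J_{x,1}}\cap\JJ_{\tw_x}$ on $X^{M_{J_{x,1}}}(\mu_x,\tw_x)$ is trivial by Theorem~\ref{stab-basic}. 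I would then use the embedding
\[
Z\hookrightarrow X(\l,b),\qquad m I_{M_J}\mapsto g z m\,\o\,{z'}^{-1} I
\]
from the proof of Lemma~\ref{tosemi} (with $Z$ built from $\tw_x$) together with Corollary~\ref{pre-lin} to identify the $H_{J_{x,1}}$-action inside $X(\l,b)$ with the corresponding action on $X^{M_{J_{x,1}}}(\mu_x,\tw_x)$, whence triviality. The passage from $J_{x,1}$ to general $x$ and then to all of $J_1$ uses that commuting Levi factors act independently ($J_{x,1}$ commutes with $J_{x,0}$, as noted in \S\ref{sec-leaf}) and that $\Omega_J^\s$-conjugation permutes the pieces.

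The main obstacle I expect is bookkeeping the reduction: the statement of Theorem~\ref{stab-basic} is phrased per $x\in\cs_{\l,b}^+$ in terms of $X^{M_{J_{x,1}}}(\mu_x,\tw_x)$, whereas here one needs a \emph{uniform} statement for the fixed group $H_{J_1}\cap\JJ_b$ acting on the single variety $X(\l,b)$, and the $J_{x,1}$ vary with $x$. The crux will be to show that an element of $H_{J_1}$ can, after suitable $\JJ_b$-conjugation, be assumed to lie in $H_{J_{x,1}}\cap\JJ_{\tw_x}$ for whichever $x$ labels the component under consideration, using that $H_{J_1}$ is generated by its ``$J_{x,1}$-components'' and that these mutually commute; once that is set up, triviality on each component follows from Theorem~\ref{stab-basic} applied through the embedding, and triviality on all of $\pi_0(X(\l,b))$ follows because $\JJ_b$ acts transitively after Proposition~\ref{surj}. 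I would also need Lemma~\ref{pr} and Lemma~\ref{anti} at the technical level to guarantee that the admissibility conditions ($\preceq\l$) needed to stay inside $\Adm(\l)$ during these moves are satisfied, but these are the routine inputs rather than the conceptual difficulty.
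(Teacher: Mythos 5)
Your skeleton is the paper's: place a base point of $\JJ_{b,\tw_x}$ in the given component, transport Theorem \ref{stab-basic} through a translated embedding of $X^{M_{J_{x,1}}}(\mu_x,\tw_x)$ into $X(\l,b)$, and finish by a generation statement for $H_{J_1}\cap\JJ_b$. But the mechanism you propose for passing from one $J_{x,1}$ to all of $J_1$ has a genuine flaw. You quote Proposition \ref{surj} only in the weak form ``every component meets $\JJ_{b,\tw_x}I/I$ for \emph{some} $x$,'' and to compensate you propose to $\JJ_b$-conjugate an arbitrary $h\in H_{J_1}\cap\JJ_b$ into $H_{J_{x,1}}\cap\JJ_{\tw_x}$ for \emph{that particular} $x$, invoking that the pieces indexed by different $x$ ``mutually commute.'' Neither claim is available: the paper only records that $J_{x,1}$ commutes with $J_{x,0}$ for a fixed $x$; for distinct $x,x'$ the subsets $J_{x,1}$ and $J_{x',1}$ may intersect or be adjacent inside $J$, so the groups $H_{J_{x,1}}$ need not commute, and a product of elements coming from different $H_{J_{x,1}}$'s is in general not conjugate into any single one. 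As written, your ``crux'' step would fail.

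The repair is to use Proposition \ref{surj} at full strength, which removes the bookkeeping problem you anticipate: the projection $\JJ_{b,\tw_x}\to\pi_0(X(\l,b))$ is surjective for \emph{every} $x\in\cs_{\l,b}^+$, so a fixed component $C$ contains a point $gI$ with $g\in\JJ_{b,\tw_x}$ for each $x$ separately. For that $x$ one uses the direct embedding $X^{M_{J_{x,1}}}(\mu_x,\tw_x)\hookrightarrow X(\l,b)$, $hI_{M_{J_{x,1}}}\mapsto ghI$ (no need for the set $Z$ of Lemma \ref{tosemi} or Corollary \ref{pre-lin} here), and Theorem \ref{stab-basic} gives that $g(H_{J_{x,1}}\cap\JJ_{\tw_x})g^{-1}=H_{J_{x,1}}\cap\JJ_b$ fixes $C$. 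Since this holds for every $x$ and every $C$, and since the elements of $\JJ_b$ acting trivially on $\pi_0(X(\l,b))$ form a subgroup, the lemma follows from the single group-theoretic observation that $H_{J_1}\cap\JJ_b$ is generated by the subgroups $H_{J_{x,1}}\cap\JJ_b$, $x\in\cs_{\l,b}^+$; no commutation between the pieces and no conjugation of general elements is needed. (Lemmas \ref{pr} and \ref{anti} play no role in this lemma; they enter later, in the treatment of $J_0$.)
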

\begin{proof}
Let $C$ be a connected component of $X(\l, b)$. Let $x \in \cs_{\l, b}^+$. By Proposition \ref{surj}, there exists $g \in \JJ_{b, \tw_x} \subseteq M_J(\brF)$ such that $g I \in C$. Moreover, $g I$ also lies in the image of the embedding $$X^{M_{J_{x, 1}}}(\mu_x, \tw_x) \hookrightarrow X(\l, b), \ h I_{M_{J_{x, 1}}} \mapsto g h I.$$ Thus $g (H_{J_{x, 1}} \cap \JJ_{\tw_x}) g\i = H_{J_{x, 1}} \cap \JJ_b$ fixes $C$ by Theorem \ref{stab-basic}. So the statement follows by noticing that $H_{J_1} \cap \JJ_b$ is generated by $H_{J_{x, 1}} \cap \JJ_b$ for $x \in \cs_{\l, b}^+$.
\end{proof}

\subsection{}
Let $K \subseteq J_0$ be the union of some $\s$-orbit of connected components of $J_0$.
\begin{lem} \label{J01}
If $\mu_x + \a^\vee \preceq \l$ for some $x \in \cs_{\l, b}^+$ and $\a \in K$, then $H_K \cap \JJ_b$ fixes each connected component of $X(\l, b)$.
\end{lem}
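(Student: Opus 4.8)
The plan is to deduce the lemma from the already-established \emph{basic} case of Conjecture \ref{main} by embedding a suitable affine Deligne--Lusztig variety for $M_K$ into $X(\l,b)$. The structural input is that $K\subseteq J_0=\bigcap_{y\in\cs_{\l,b}^+}J_{y,0}\subseteq J_{x,0}$ for the $x$ in the statement, so $\<\a,\mu_x\>=0$ for $\a\in\Phi_K$; since $\tw_x=t^{\mu_x}w_x$ with $w_x\in W_{J_{x,1}}$ (which fixes $\Phi_K$ pointwise), the element $\tw_x$ centralises $M_K$. Hence $h\i\tw_x\s(h)=h\i\s(h)\,\tw_x$ for $h\in M_K(\brF)$, and therefore $H_K\cap\JJ_{\tw_x}=\ker(\eta_{M_K})\cap M_K(\brF)^{\s}=\ker(\eta_{M_K})\cap\JJ_1^{M_K}$.

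Let $\l'$ be the $M_K$-dominant $W_K$-conjugate of the coroot $\a^\vee$. I claim $(\l',1)$ is Hodge--Newton irreducible for $M_K$: one has $X^{M_K}(\l',1)\neq\emptyset$ because $\k_{M_K}(t^{\l'})=0=\k_{M_K}(1)$ and $0\le(\l')^\diamond$, and $(\l')^\diamond-\nu_{M_K}(1)=(\l')^\diamond\in\RR_{>0}(\Phi_K^+)^\vee$ because the $M_K$-dominant conjugate of $\a^\vee$ has full support in its connected component of $\Phi_K$ (the highest root of a given length in an irreducible root system has full support) while $\s$ permutes these components, so the $\s$-average is strictly positive on every simple coroot of $K$. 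Consequently the basic case of Conjecture \ref{main}, proved by He--Zhou \cite{HZ} and applied to $M_K^{\ad}$ via the Cartesian diagram \cite[Corollary 4.4]{HZ}, shows that $\eta_{M_K}$ induces a bijection on $\pi_0\bigl(X^{M_K}(\l',1)\bigr)$; in particular $\ker(\eta_{M_K})\cap\JJ_1^{M_K}$ fixes each of its connected components.

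Now I connect the points. For $g\in\JJ_{b,\tw_x}$ the map $hI_{M_K}\mapsto ghI$ defines an embedding $X^{M_K}(\l',1)\hookrightarrow X(\l,b)$: if $h\in X^{M_K}(\l',1)$ then $(gh)\i b\,\s(gh)=h\i\s(h)\,\tw_x\in\bigcup_{\d}I_{M_K}\d\tw_x I_{M_K}$ with $\d\le_{M_K}t^{v(\l')}$, $v\in W_K$ (using that $\tw_x$ centralises $I_{M_K}$), and since $\mu_x$ is $W_K$-fixed one gets $\d\tw_x\le t^{\,\overline{\l'+\mu_x}}$ by the elementary length/admissibility estimates of \S\ref{sec-surj} (cf.\ Lemma \ref{anti}); as $\overline{\l'+\mu_x}=\overline{\a^\vee+\mu_x}\le\l$ by hypothesis, this lies in $\Adm(\l)$, so the map lands in $X(\l,b)$, and it is an embedding exactly as in the proof of Lemma \ref{J1}. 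It carries $I_{M_K}$ to $gI$ and carries $hI_{M_K}$ to $ghI$ for $h\in H_K\cap\JJ_{\tw_x}=\ker(\eta_{M_K})\cap\JJ_1^{M_K}$; since $\eta_{M_K}(h)=0=\eta_{M_K}(1)$, these two points lie in the same connected component of $X^{M_K}(\l',1)$, whence $ghI\sim_{\l,b}gI$ for all $g\in\JJ_{b,\tw_x}$ and $h\in H_K\cap\JJ_{\tw_x}$. To conclude, let $C$ be a connected component and, by Proposition \ref{surj}, choose $g\in\JJ_{b,\tw_x}$ with $gI\in C$; writing $M_J$ as the almost-direct product of $M_K$ with the Levi generated by $J\setminus K$ and the connected centre, conjugation by $g$ restricts on $M_K$ to an inner automorphism, so (as in Lemma \ref{J1}) $g(H_K\cap\JJ_{\tw_x})g\i=H_K\cap\JJ_b$, and then for $j\in H_K\cap\JJ_b$ one has $j\cdot gI=ghI\sim_{\l,b}gI\in C$ with $h=g\i jg$, so $jC=C$.

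The step I expect to cost the most work is the Hodge--Newton irreducibility claim for $M_K$ --- in particular the case where $\s$ genuinely permutes several connected components of $K$, so that $\l'$ is supported on just one of them while its $\s$-average is not, together with the small-rank degeneracies --- and, secondarily, the bookkeeping that $\d\tw_x\in\Adm(\l)$, which amounts to rerunning inside $M_J$ the admissibility estimates already used in \S\ref{sec-surj}, as well as the precise form of the reduction $H_K\cap\JJ_b=g(H_K\cap\JJ_{\tw_x})g\i$ that uses the decomposition of $M_J$ as in Lemma \ref{J1}.
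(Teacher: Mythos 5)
Your strategy is genuinely different from the paper's: the paper never invokes He--Zhou for $M_K$, but instead exhibits two explicit $\PP^1$-families (the maps $\textsl{g}_{g, \a, \tw_x, d}$ and $\textsl{g}_{g, -\a-1, \tw_x, d}$ of \S\ref{sec-leaf}) showing that the two reflections $s = s_\a \cdots s_{\s^{d-1}(\a)}$ and $s' = s_{\a+1}\cdots s_{\s^{d-1}(\a)+1}$ lie in the stabilizer $Q$, whence $W_K^a \cap \JJ_{\tw_x} \subseteq Q$. Your reduction to the basic case for the Levi $M_K$ is an attractive alternative, and your Hodge--Newton irreducibility check for $(\l',1)$ (the step you flagged as riskiest) is in fact fine: the dominant $W_K$-conjugate of $\a^\vee$ has full support in its irreducible component, and $\s$ permutes the components of $K$ transitively because $K$ is a single $\s$-orbit of components of $J_0$. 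However, two other steps are genuinely problematic as written. First, $\tw_x$ does \emph{not} centralise $M_K$: $w_x \in W_{J_{x,1}}$ fixes $\Phi_K$ pointwise but acts nontrivially on $T \subseteq M_K$ (and on the root subgroups only up to units depending on the lift), so the identity $h\i \tw_x \s(h) = h\i \s(h)\tw_x$ already fails for $h \in T(\brF)$, and $H_K \cap \JJ_{\tw_x}$ is not $\ker(\eta_{M_K}) \cap M_K(\brF)^\s$ but the fixed points of the twisted Frobenius $\s' = \Int(\tw_x)\circ\s$. The repair is to run your argument for the auxiliary variety attached to $(M_K, \s')$ and $b = 1$ (this works: since $\mu_x \perp \Phi_K$ and $w_x$ fixes $\Phi_K$ pointwise, $\s'$ and $\s$ agree on the affine root system of $M_K$, so your irreducibility computation and the appeal to He--Zhou survive), but your write-up shows no awareness of the needed twist.

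Second, the assertion that the embedding lands in $X(\l,b)$, i.e. that $\d\tw_x \in \Adm(\l)$ for \emph{every} $\d \in \Adm_{M_K}(\l')$, is exactly where the hypothesis $\mu_x + \a^\vee \preceq \l$ must do its work, and it does not follow from ``the elementary estimates of \S\ref{sec-surj} (cf.\ Lemma \ref{anti})'': those lemmas only control products of $\tw_x$ with one or two specified affine reflections, not a full admissible set of the Levi translated by $\tw_x$. A correct argument needs the compatibility of admissible sets with Levi subgroups (cf.\ \cite{HH}) together with the observation that $\tw_x$ has length zero in $\tW_{M_{K \cup J_{x,1}}}$ and fixes $\tPhi_{M_K}$ pointwise, so that $\Adm_{M_K}(\l')\tw_x$ is contained in the $M_{K\cup J_{x,1}}$-admissible set of the dominant representative of $\mu_x + \a^\vee$, which is $\preceq \l$ by hypothesis; I believe the statement is true, but you have neither proved it nor located a correct source, and it is the crux of your proof. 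Finally, the closing identity $g(H_K \cap \JJ_{\tw_x})g\i = H_K \cap \JJ_b$ ``because conjugation by $g$ is inner on $M_K$'' is not justified as stated ($g \in M_J(\brF)$ need not normalize $M_K$); the paper's Lemma \ref{J1} makes a similarly terse identification, so this is a shared debt rather than a defect specific to your route, but it deserves an argument if your proof is to stand on its own.
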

\begin{proof}
As $\mu_x$ is central on $\Phi_K$, we can assume $\a$ is $K$-dominant and hence $\s^d(\a) = \a$. Let $C$ be a connected component of $X(\l, b)$. Then $g I \in C$ for some $g \in \JJ_{b, \tw_x}$. So the stabilizer of $C$ in $\JJ_b$ equals $g Q g\i$, where $Q \subseteq \JJ_{\tw_x}$ a standard parahoric subgroup containing $I_{M_J} \cap \JJ_{\tw_x}$. By Lemma \ref{semi} (4) it remains to show $W_K^a \cap \JJ_{\tw_x} \subseteq Q$. Let $\textsl{g} = \textsl{g}_{g, \a, \tw_x, d}$ and $\textsl{g}' = \textsl{g}_{g, -\a-1, \tw_x, d}$ (see \S\ref{sec-leaf}). By Lemma \ref{anti} and Lemma \ref{orth}, $$g\i b \s(g) \subseteq U_\a \tw_x \subseteq I \Adm(\l) I, \text{ and } {g'}\i b \s(g') \subseteq \tw_x U_{-\a-1} \subseteq I \Adm(\l) I,$$ which means $$g s I = \textsl{g}(\infty) \sim_{\l, b} \textsl{g}(0) = \textsl{g}'(0) \sim_{\l, b} \textsl{g}'(\infty) = g s'I,$$ where $s = s_\a \cdots s_{\s^{d-1}(\a)}, s' = s_{\a+1} \cdots s_{\s^{d-1}(\a) + 1} \in \JJ_{\tw_x}$. So we have $s, s' \in Q$, which means $W_K^a \cap \JJ_{\tw_x} \subseteq Q$ since $\a \in \Phi_K^+$ is $K$-dominant.
\end{proof}

The following technical lemma is proved in \S \ref{subsec-weak}
\begin{lem} \label{weak}
If $\mu_{x''} + \d^\vee \npreceq \l$ for any $x'' \in \cs_{\l, b}^+$ and $\d \in K$, then there exist $x \in \cs_{\l, b}^+$ and $\b \in \Phi^+ \setminus \Phi_J$ with $\b^\vee$  $J$-anti-dominant and $J$-minuscule such that

(1) $\mu_x + \b^\vee \preceq \l$, and $\b^\vee$ is non-central on $K$;

(2) $\tw_x \s^i(\b) = \s^i(\b)$ for $i \in \ZZ \setminus n\ZZ$;

(3) $\<w_x \s^n(\b), \mu_x\> \ge 1$;

(4) if $\s^n$ does not act trivially on $\Psi_\b \cap J_0$, then $\Psi = \Phi$, $\Psi_\b$ is of type $E_6$, $\Psi_\b \cap J_0 = \{\a_1, \a_6\}$, $\Psi_\b \cap J_1 = \{\a_2, \a_4\}$, $\b = \a_3$, $\mu_x |_{\Psi_\b} = \o_4^\vee - \o_3^\vee$, and $\mu_x |_{\Psi \setminus \Psi_\b} = 0$.

Here, $n \in \{d, 2d, 3d\}$ denotes the minimal integer such that $\b, \s^n(\b)$ are in the same connected component $\Psi_\b$ of $\Psi := \Phi \cap \ZZ(J \cup \co_\b)$, whose simple roots $\a_i$ and fundamental coweights $\o_i^\vee$ for $1 \le i \le 6$ are labeled as in \cite{Hum}.
\end{lem}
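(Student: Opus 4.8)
The plan is to exhibit the pair $(x,\b)$ by a case analysis on the irreducible factors of $\Phi$ and the position of $K$ inside $J$, taking Lemma \ref{pr} as the source of candidate roots. Since $K\subseteq J_0$, by definition every $\mu_x$ with $x\in\cs_{\l,b}^+$ is central on $\Phi_K$, so any root $\b$ with $\b^\vee$ non-central on $K$ necessarily lies outside $\Phi_J$; we therefore search among the $\s$-orbits $\co$ of $J$-anti-dominant roots in $\Phi^+\setminus\Phi_J$, each of which satisfies $\sum_{\a\in\co}\<\a,\pr_J(\mu_x)\>>0$ for every $x$ by Lemma \ref{pr}(1) and hence interacts nontrivially with $\mu_x$.

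First I would arrange the existence of $x\in\cs_{\l,b}^+$ and a $J$-anti-dominant, $J$-minuscule root $\b\in\Phi^+\setminus\Phi_J$ with $\b^\vee$ non-central on $K$ and $\mu_x+\b^\vee\preceq\l$. That \emph{some} candidate $\b$ is non-central on $K$ is forced by the structure: as $(\l,b)$ is Hodge-Newton irreducible, $\l^\diamond-\nu_G(b)\in\RR_{>0}(\Phi^+)^\vee$, so every simple coroot occurs with positive coefficient and $K$ cannot be decoupled from $\Phi^+\setminus\Phi_J$; concretely one selects an orbit $\co$ whose members involve a simple root adjacent to $K$ and lets $\b$ be the corresponding $J$-anti-dominant, $J$-minuscule representative. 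The constraint $\mu_x+\b^\vee\preceq\l$ is where the hypothesis on $K$ enters: after replacing $x$ by a $\preceq$-minimal element of $\cs_{\l,b}^+$ in its $W_0$-orbit and using $J$-minusculeness of $\b^\vee$ to bound the coefficients of $\overline{\mu_x+\b^\vee}$ along $J$, the inequality reduces to the simple coroots outside $J$, where the assumption $\mu_{x''}+\d^\vee\npreceq\l$ for all $x''$ and $\d\in K$ is precisely what excludes the competing ``move inside $K$'' (cf. Lemma \ref{J01}) and leaves room for the move by $\b^\vee$ reaching in from outside $J$.

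Granting such $(x,\b)$, conditions (2) and (3) follow by short arguments. For (2), after a saturation adjustment of $\b$ within its $W_0$-class as in Lemma \ref{saturate}, the intermediate $\s$-translates $\s^i(\b)$ with $i\in\ZZ\setminus n\ZZ$ become orthogonal to $\mu_x$ and fixed by $w_x$, which yields $\tw_x\s^i(\b)=\s^i(\b)$ as affine roots. For (3), $w_x$ acts on $\Psi_\b\cap J$ essentially as the longest element of that parabolic, so $w_x\s^n(\b)$ is the $(\Psi_\b\cap J)$-dominant representative, and Lemma \ref{pr}(2) --- applied to the $\s$-orbit of $\b$ together with a comparison of $\mu_x$ with its $J$-dominant conjugate --- gives $\<w_x\s^n(\b),\mu_x\>\ge1$. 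Condition (4) is then an explicit enumeration over the non-simply-laced and exceptional types: running through the possibilities for $(\Psi,\Psi_\b,J_0\cap\Psi_\b,J_1\cap\Psi_\b,\b)$ compatible with (1)--(3), the only configuration in which $\s^n$ fails to act trivially on $\Psi_\b\cap J_0$ is $\Psi=\Phi$ of type $E_6$ with $J_0\cap\Psi_\b=\{\a_1,\a_6\}$, $J_1\cap\Psi_\b=\{\a_2,\a_4\}$, $\b=\a_3$ and $\mu_x|_{\Psi_\b}=\o_4^\vee-\o_3^\vee$, which is exhibited directly.

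I expect the main obstacle to be the existence step in the second paragraph, namely simultaneously arranging $\b^\vee$ to be $J$-anti-dominant, $J$-minuscule, non-central on $K$, and $\mu_x+\b^\vee\preceq\l$: the admissible choice of $\b$ depends delicately on which connected components of $J$ lie in $J_0$ versus $J_1$ and on how the $\s$-orbit $\co_\b$ wraps around, and the $E_6$ exception in (4) shows that the conclusion cannot be made uniform.
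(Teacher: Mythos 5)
There is a genuine gap at exactly the step you yourself flag as the main obstacle: the existence of the pair $(x,\b)$ with $\mu_x+\b^\vee\preceq\l$ and $\b^\vee$ non-central on $K$. The paper's proof does not proceed by "selecting an orbit adjacent to $K$" and minimizing $x$; it first chooses $x\in\cs_{\l,b}^+$ with $\mu_x$ \emph{weakly dominant} (via \cite[Lemma 3.3]{N2}), then uses Hodge-Newton irreducibility to find $\a\in K$ with $\mu_x+\a^\vee\le\l$, while the hypothesis of the lemma says $\mu_x+\a^\vee\npreceq\l$. This tension between the orders $\le$ and $\preceq$ is resolved by the root-theoretic Lemma \ref{choice} (proved by induction along $\le$), which produces $\z\in\Phi^+$ with $\<\z,\mu_x+\a^\vee\>\le-2$ and either $\mu_x+\z^\vee\preceq\l$ or $\mu_x+\a^\vee+\z^\vee\le\l$; a further case analysis (delicate precisely in the non-simply-laced cases, using weak dominance repeatedly) then yields $\xi\in\Phi^+\setminus\Phi_J$ with $\<\a,\xi^\vee\>\le-1$ and $\mu_x+\xi^\vee\preceq\l$, whose $J$-anti-dominant $W_J$-conjugate is $\b$; $K$-minusculeness of $\b^\vee$ is again extracted from weak dominance, and \cite[Lemma 6.6]{N2} reduces to $\b^\vee$ being $J$-minuscule. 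Your proposal contains none of this mechanism: the negative hypothesis on $K$ does not by itself "leave room" for a move by some $\b^\vee$ from outside $\Phi_J$, and replacing $x$ by a $\preceq$-minimal element plus $J$-minusculeness gives no control on whether $\mu_x+\b^\vee\preceq\l$ for the particular $\b$ you chose by adjacency. Without an analogue of the weak-dominance choice of $x$ and of Lemma \ref{choice}, part (1) is not established, and everything downstream depends on it.

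The later steps are also not right as sketched. Part (2) is not obtained from Lemma \ref{saturate}: that lemma presupposes a relation $x\overset{(\g,r)}\rightarrowtail x'$ which you have not produced, and its conclusion concerns different indices; the paper proves (2) by contradiction, showing that an index $i\in\ZZ\setminus n\ZZ$ with $\<w_x\s^i(\b),\mu_x\>\ge1$ (or $\<\s^i(\b),\mu_x\>\le-1$, via Lemma \ref{pr}) would create a new element of $\cs_{\l,b}^+$ whose $\mu$ is non-central on $K_0\subseteq J_0$, which is impossible. For (3), the claim that "$w_x$ acts on $\Psi_\b\cap J$ essentially as the longest element" is unjustified and not used in the paper; instead the paper splits into the cases $\s^{2n}(\b)\neq\b$ (a $D_4$ configuration, where one may have to replace $\b$ or $x$) and $\s^{2n}(\b)=\b$, and combines (2) with Lemma \ref{pr} and, in the simply-laced case, the equality $\<\b,\mu_x\>=-1$ carried along from the construction of $\xi$. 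Your plan for (4) by enumeration is closer in spirit to what the paper does, but it too relies on data ($\<\b,\mu_x\>=-1$, the exact relation of $\b$ to $K_0$) that only come out of the existence argument you have not supplied.
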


\begin{lem} \label{contain}
Retain the situation of Lemma \ref{weak}. Let $\a \in \Phi_K^+$ such that $\<\a, \b^\vee\> = -1$. If $\a = \s^n(\a)$, then $U_\b \tw_x U_{\s^n(\b)}, U_\a s_\b \tw_x s_{\s^n(\b)} U_\a \subseteq I \Adm(\l) I$.
\end{lem}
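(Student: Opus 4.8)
The plan is to prove both inclusions by pushing the affine root subgroups to one side, reducing each to the statement that one or two explicit elements of $\tW$ lie in $\Adm(\l)$, and then to verify those memberships from Lemma \ref{weak} using Lemma \ref{anti} (and Lemma \ref{orth}). For the first inclusion, since $\tw_x \in \Omega_J$ we have $\tw_x U_{(\s^n(\b),0)}(z)\tw_x\i = U_{\tw_x(\s^n(\b),0)}(cz)$, where $\tw_x(\s^n(\b),0) = (w_x\s^n(\b),\, \<w_x\s^n(\b),\mu_x\>)$ has level $\ge 1$ by Lemma \ref{weak}(3); thus this affine root is positive and its subgroup lies in $I$, so $U_\b\tw_x U_{\s^n(\b)} \subseteq U_{(\b,0)}(\kk)\,I\,\tw_x$. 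As $-\b \in \Phi^-$ one has $U_{(\b,0)}(\kk) \subseteq I \cup Is_\b I$, and since $\ell(\tw_x)=0$ the Demazure product $s_\b * \tw_x$ equals $s_\b\tw_x$; hence $U_\b\tw_x U_{\s^n(\b)} \subseteq I\tw_x I \cup \bigcup_{u \le s_\b\tw_x} IuI$, and the inclusion follows once $\tw_x \in \Adm(\l)$ (known, as $\tw_x \in \Omega_{M_J}$ with $\mu_x \preceq \l$) and $s_\b\tw_x \in \Adm(\l)$. The latter is Lemma \ref{anti}(2) for $\tw_x = t^{\mu_x}w_x \in \Omega_J$ and the $J$-anti-dominant root $\b$, which needs $\mu_x - w_x(\b)^\vee \preceq \l$ — a condition I would extract from $\mu_x + \b^\vee \preceq \l$ (Lemma \ref{weak}(1)) using that $\b^\vee$ is $J$-minuscule and $w_x \in W_{J_{x,1}}$, after a brief inspection of the component $\Psi_\b$.

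For the second inclusion put $v = s_\b\tw_x s_{\s^n(\b)}$. Since $K \subseteq J_0$, $\mu_x$ is central on the component of $\a$, so $w_x\a = \a$ and $\<\a,\mu_x\> = 0$; hence $v\,U_{(\a,0)}(z)\,v\i = U_{v(\a,0)}(\cdot)$ with $v(\a,0) = (s_\b(\a + w_x\s^n(\b)),\, \<w_x\s^n(\b),\mu_x\>)$, again of level $\ge 1$ by Lemma \ref{weak}(3), so $vU_\a v\i \subseteq I$ and $U_\a v U_\a \subseteq U_{(\a,0)}(\kk)\,I\,v$. Taking $\a$ simple (the general $\a \in \Phi_K^+$ reduces to this by expanding $s_\a$ into simple reflections and tracking lengths), $U_{(\a,0)}(\kk) \subseteq I \cup Is_\a I$ gives $U_\a v U_\a \subseteq IvI \cup Is_\a v I$, and the inclusion reduces to $v, s_\a v \in \Adm(\l)$. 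For $v$ I would use that $\b, \s^n(\b)$ are $J$-anti-dominant: when $\s^n(\b) = \b$ this is Lemma \ref{anti}(3) (as $s_\b \in \tW^J$), and otherwise I would route through $\tw_x s_{\s^n(\b)} \in \Adm(\l)$ (Lemma \ref{anti}(1), via $\mu_x + \s^n(\b)^\vee \preceq \l$) followed by conjugation by $s_\b$; for $s_\a v$ one adjoins the simple reflection $s_\a$ and reapplies Lemma \ref{anti}, using $\<\a,\s^n(\b)^\vee\> = \<\s^n(\a),\s^n(\b)^\vee\> = -1$.

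The step I expect to be the main obstacle is verifying $v \in \Adm(\l)$ and $s_\a v \in \Adm(\l)$ when $\s^n(\b) \ne \b$: then $v$ is two-sidedly conjugated by the non-commuting reflections $s_\b, s_{\s^n(\b)}$, so there is no direct appeal to Lemma \ref{anti}(3) and one must control the relative position of $\b$ and $\s^n(\b)$ inside $\Psi_\b$. This is precisely what Lemma \ref{weak}(2)--(4) supplies; in particular part (4) keeps the casework finite, its only exceptional configuration being the $E_6$ one with $\b = \a_3$ and $\Psi_\b \cap J_0 = \{\a_1,\a_6\}$, where the root $\a \in K$ with $\<\a,\b^\vee\> = -1$ is $\a_1$ and $\s^n(\a_1) = \a_6 \ne \a_1$ — so the hypothesis $\a = \s^n(\a)$ excludes it, and in every remaining case $\s^n$ fixes $\Psi_\b \cap J_0$ pointwise and $(\Psi_\b;\, \b,\s^n(\b),\a,\mu_x|_{\Psi_\b})$ ranges over a short explicit list on which the $\preceq\l$-checks are immediate.
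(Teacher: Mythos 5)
Your reduction has the same skeleton as the paper's: use Lemma \ref{weak}~(3) to see that $\tw_x(\s^n(\b))$ (resp.\ $s_\b\tw_x s_{\s^n(\b)}(\a)$) is an affine root of level $\ge 1$, absorb the right-hand root subgroup into $I$, and reduce the two inclusions to the memberships $\tw_x,\ s_\b\tw_x\in\Adm(\l)$ and $s_\b\tw_x s_{\s^n(\b)},\ s_\a s_\b\tw_x s_{\s^n(\b)}\in\Adm(\l)$. But the way you certify these has real gaps. Your Demazure-product step rests on ``$\ell(\tw_x)=0$'', which is false: $\tw_x$ has length zero in $\tW_{M_J}$, not in $\tW$, so $s_\b * \tw_x=s_\b\tw_x$ is unjustified. (The containment $U_\b\tw_x\subseteq I\{\tw_x,s_\b\tw_x\}I$ is still true, but one gets it from the rank-one decomposition $u_\b(z)\in U_{-\b}\,n_\b\,U_{-\b}$ with $U_{-\b}\subseteq I$, not from lengths.) Similarly, the remark that one may ``take $\a$ simple by expanding $s_\a$'' is off target --- in the application $\a$ is a highest root of $\Phi_K^+$ --- though also unnecessary, since $U_\a\subseteq I\cup Is_\a I$ holds for every positive root. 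Finally, even for the first inclusion the key input $s_\b\tw_x\in\Adm(\l)$ is only promised ``after a brief inspection''.

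The serious gap is the one you flag yourself: admissibility of $v=s_\b\tw_x s_{\s^n(\b)}$ and $s_\a v$ when $\s^n(\b)\neq\b$. Your route invokes $\mu_x+\s^n(\b)^\vee\preceq\l$, which is not available (Lemma \ref{weak} gives only $\mu_x+\b^\vee\preceq\l$, and $\l$ is not assumed $\s$-stable), and ``conjugation by $s_\b$'' does not produce $v$ from $\tw_x s_{\s^n(\b)}$, since $v$ is multiplied by different reflections on the two sides; what remains is an unexecuted case analysis over $\Psi_\b$ that would have to carry the whole proof. The paper needs none of this and no casework: it first proves the one-sided memberships $s_\b\tw_x,\ s_{s_\a(\b)}\tw_x\in\Adm(\l)$, using Lemma \ref{weak}~(2), Lemma \ref{anti} (with $\tw_x\in\Omega_{J_1}$, $s_\b,s_{s_\a(\b)}\in W^{J_1}$) and the observation $\mu_x+s_\a(\b)^\vee=\mu_x+\b^\vee+\a^\vee\preceq\l$ --- this is where the hypotheses $\<\a,\b^\vee\>=-1$ and $\a\in\Phi_K^+$ enter --- and then descends in the Bruhat order: since $\tw_x\s^n(\b)\in\tPhi^+\setminus\Phi$, $\tw_x(\a)=\a$ and $s_\a s_\b(\a)\in\Phi^+$, one has $v\leq s_\b\tw_x$ and $s_\a v\leq s_\a s_\b\tw_x\leq s_\a s_\b s_\a\tw_x=s_{s_\a(\b)}\tw_x$, and $\Adm(\l)$ is closed under $\leq$. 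Your proposal never introduces the element $s_{s_\a(\b)}\tw_x$, which is exactly what makes the argument uniform, so as written the proof is not complete.
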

\begin{proof}
Note that $\mu_x + s_\a(\b)^\vee = \mu_x + \b^\vee + \a^\vee \preceq \l$, $s_\a, s_\b, s_{s_\a(\b)} \in W^{J_1}$, and $\tw_x \in \Omega_{J_1}$. By Lemma \ref{weak} (2) and Lemma \ref{anti}, $s_\b \tw_x, s_{s_\a(\b)}\tw_x \in \Adm(\l)$. As $\tw_x\s^n(\b) \in \tPhi^+ \setminus \Phi$, we have $$U_\b \tw_x U_{\s^n(\b)} \subseteq I U_\b \tw_x, \ U_\a s_\b\tw_x s_{\s^n(\b)} U_{\a} \subseteq I U_\a s_\b\tw_x s_{\s^n(\b)},$$ and it remains to show $s_\a s_\b\tw_x s_{\s^n(\b)}, s_\b\tw_x s_{\s^n(\b)} \in \Adm(\l)$. As $\tw_x\s^n(\b) \in \tPhi^+ \setminus \Phi$, $\tw_x(\a) = \a$ and $s_\a s_\b(\a) \in \Phi^+$, we have $s_\b \tw_x s_{\s^n(\b)} \leq s_\b \tw_x \in \Adm(\l)$ and $$s_\a s_\b \tw_x s_{\s^n(\b)} \leq s_\a s_\b \tw_x \leq s_\a s_\b s_\a \tw_x = s_{s_\a(\b)} \tw_x \in \Adm(\l).$$ The proof is finished.
\end{proof}

\begin{proof}[Proof of Proposition \ref{normal}]
Let $K$ be the union of some $\s$-orbit of connected components of $J_0$. By Lemma \ref{J1} and Lemma \ref{J01}, it remains to show $H_K \cap \JJ_b$ acts trivially on $\pi_0(X(\l, b))$. Let $x$, $\b$ and $n$ be as in Lemma \ref{weak}. Let $g \in \JJ_{b, \tw_x}$ and $I_{M_J} \cap \JJ_{\tw_x} \subseteq Q \subseteq \JJ_{\tw_x}$ be as in the proof of Lemma \ref{J01}. It suffices to show $W_K^a \cap \JJ_{\tw_x} \subseteq Q$.

\

Case(1): $\s^n$ acts trivially on $\Psi_\b \cap J_0$. Let $\a \in \Phi_K^+$ be a highest root such that $\<\a, \b^\vee\> = -1$. Then it suffices to show $s, s' \in Q$, where $s = s_\a \cdots s_{\s^{n-1}(\a)}, s' =  s_{\a+1} \cdots s_{\s^{n-1}(\a) + 1}  \in \JJ_{\tw_x}$.

Let $r = s_\b \cdots \s^{n-1}(s_\b)$. We claim that \begin{align*}\tag{a} g I \sim_{\l, b} g r I \sim_{\l, b} g r s I \sim_{\l, b} g s I, \text{ and hence } s \in Q \end{align*}

To show the first relation $\sim_{\l, b}$ in (a) we define $\textsl{g} = \textsl{g}_{g, \s^{n-1}(\b), \tw_x, n}$. By Lemma \ref{weak} (2) and Lemma \ref{contain} we have $$\textsl{g}\i b \s(\textsl{g}) \subseteq U_\b \tw_x U_{\s^n(\b)} \subseteq I \Adm(\l) I,$$ which means $g I = \textsl{g}(0) \sim_{\l, b} \textsl{g}(\infty) = g r I$ as desired. The last relation $\sim_{\l, b}$ in (a) follows the same way by replacing $g$, $\b$ with $g s$, $s_\a(\b)$ respectively.

To show the second relation $\sim_{\l, b}$ in (b) we define $\textsl{g}' = \textsl{g}_{g r, \s^{n-1}(\a), \tw_x, n}$. Notice that $r\i \tw_x \s(r) = s_\b \tw_x s_{\s^n(\b)}$. Then by Lemma \ref{contain} we have $${\textsl{g}'}\i b \s(\textsl{g}') \subseteq U_\a  s_\b \tw_x s_{\s^n(\b)} U_\a  \subseteq I \Adm(\l) I,$$ which means $g r I = \textsl{g}'(0) \sim_{\l, b} \textsl{g}'(\infty) = g r s I$. So (a) is proved.

By Lemma \ref{J1}, Lemma \ref{J01}, and (a) we have $(W_{J_1}^a W_{J_0}) \cap \JJ_{\tw_x} \subseteq Q$, and hence \begin{align*} \tag{b} H_x \cap \JJ_{\tw_x} \subseteq Q. \end{align*}

Let $x' = x + \b^\vee - \s^n(\b)^\vee \in \pi_1(M_J)$. If $\b \neq \s^n(\b)$, then $\b, \s^n(\b)$ are neighbors of $\Psi_\b \cap K$ on which $\s^n$ acts trivially, which means they are in distinct connected components of $\Psi_\b \setminus K$. Thus \begin{align*} \tag{c} \<w(\b), \s^n(\b)^\vee\> = 0 \text { for any } w \in W_{J_1} \text{ if } \b \neq \s^n(\b^\vee). \end{align*} By Lemma \ref{weak} (1) \& (3) and (c) we have $x' \in \cs_{\l, b}^+$. Moreover, $\mu_x + \b^\vee - w_x(\b^\vee), \mu_{x'}$ are conjugate by $W_{J_1}$ as they are conjugate by $W_J$ and $\mu_{x'}$ is central on $J_0$. Let $\g_1 = w_{J_1}(\b)$ and $\g_2 = w_{J_1}(s_\a(\b))$ which are $J_1$-dominant. By Lemma \ref{orth} (1) and that $\s^n$ acts trivially on $\Psi_\b \cap J_0$, $$\mu_x, \mu_x - \s^n(\g_i^\vee), \mu_x + w_x(\g_i^\vee), \mu_x - \s^n(\g_i^\vee) + w_x(\g_i^\vee) \preceq \l$$ are conjugate to $$\mu_{x'} - \g_i^\vee + w_{x'} \s^n(\g_i^\vee), \mu_{x'} - \g_i^\vee, \mu_{x'} + w_{x'} \s^n(\g_i^\vee), \mu_{x'} \preceq \l$$ under $W_{J_1}$ respectively.

Let $\t = \b^\vee + \cdots \s^{n-1}(\b)^\vee \in \pi_1(M_{J_1}) \cong \Omega_{J_1}$. Then $\tw_x = \t\i \tw_{x'} \s(\t)$ and hence $g \t\i \in \JJ_{b, \tw_{x'}}$. Define $\textsl{g}_i = \textsl{g}_{g\t\i, -\s^{n-1}(\g_i)-1, \tw_{x'}, n}$. As $J_0 \neq \emptyset$, $\g_i^\vee$ is strongly $J_1$-minuscule. Then it follows from Lemma \ref{weak} (2), Lemma \ref{line} and (c) that $${\textsl{g}_i}\i b \s(\textsl{g}_i) \subseteq U_{-\g_i-1} \tw_{x'} U_{-\s^n(\g_i)-1} \subseteq I \Adm(\l) I,$$ which means $g \t\i I = \textsl{g}_i(0) \sim_{\l, b} \textsl{g}_i(\infty) = g \t\i s_i I$, where $s_i = s_{\g_i+1} \cdots s_{\s^{n-1}(\g_i) + 1}$. As $\g_i^\vee$ is $J_1$-minuscule and $J_1$-dominant, we have $s_i = \t_i y_i\i$, where $\t_i \in \Omega_{J_1}$ and $y_i \in W_0$. Notice that $g \t\i \t_i \in \JJ_{b, \tw_x}$, $\t = \t_1$, and $\t\i \t_2 = s' s \in \JJ_{\tw_x}$. By Lemma \ref{tosemi} and Proposition \ref{weyl-conj}, there exist $h_i \in H_x \cap \JJ_{\tw_x}$ such that $$g \t\i s_i I = g \t\i \t_i y_i\i I \sim_{\l, b} g \t\i \t_i h_i I.$$ In particular, by (b) we have $g I \sim_{\l, b} g h_1 I \sim_{\l, b} g \t\i I \sim_{\l, b} g \t\i \t_2 h_2 I$, that is, $\t\i \t_2 h_2 \in Q$. It follows from (b) and (a) that $\t\i \t_2 = s' s  \in Q$ and $s' \in Q$ as desired.

\

Case(2): $\s^n$ acts nontrivially on $\Psi_\b \cap J_0$. By Lemma \ref{weak} (4), $\Psi = \Phi$ and $\mu_x |_{\Psi \setminus \Psi_\b} = 0$. So we can assume that $n= d =1$, $\s$ is of order $2$, and $\Phi$ is of type $E_6$. Then $w_x = s_{\a_4} s_{\a_2}$, and it suffices to show $s, s' \in Q$, where $s = s_{\a_1} s_{\a_6}$ and $s' =  s_{\a_1 + 1} s_{\a_6 + 1}$ are all the simple affine reflections of $W_J^a \cap \JJ_{\tw_x}$.

Let $\th_0 = \a_2 + \a_4 + \a_5 + \a_6$, $\th_1 = \a_2 + \a_4 + \a_5$, $\eta_i = (w_x\s)\i(\th_i) $ and $\vartheta_i = \eta_i + \th_i$. Define $\textsl{g}_i = \textsl{g}_{g, -\th_i-1, \tw_x, 2}$ for $g \in \JJ_{b, \tw_x}$. As $\mu + \a_3^\vee, \mu + \a_3^\vee + \a_1^\vee \preceq \l$, we have $\tw_x s_{\s(\th_i) + 1} \in \Adm(\l)$ by Lemma \ref{orth}. Then $$\textsl{g}_i\i b \s(\textsl{g}_i) \subseteq  I U_{-\vartheta_i-1} \tw_x U_{-\s(\th_i)-1} \subseteq I \tw_x U_{-\s(\th_i)-1} I \subseteq I \Adm(\l) I,$$ which means $$g s_{\vartheta_0 + 1} s_{\eta_0} I = \textsl{g}_0(\infty) \sim_{\l, b} \textsl{g}_0(0) = g I = \textsl{g}_1(0) \sim_{\l, b} \textsl{g}_1(\infty) = g s_{\vartheta_1 + 1} s_{\eta_1} I.$$ As $\vartheta_0^\vee$ is $J$-dominant and $J$-minuscule, $s_{\vartheta_0 + 1} s_{\eta_0} = \o y_0\i$, where $\o = \vartheta_0^\vee \in \Omega_J \cap \JJ_{\tw_x}$ and $y_0 \in W_0$. Then $s_{\tilde \vartheta_1} s_{\eta_1} = s s_{\tilde \vartheta_0} s_{\eta_0}  s = s\o y_1\i$ for some $y_1 \in W_0$. By Proposition \ref{tosemi} \& \ref{weyl-conj}, there exist $h_0, h_1 \in H_x \cap \JJ_{\tw_x}$ such that $g \o h_0 I \sim_{\l, b} g I \sim_{\l, b} g s \o h_1 I$, that is, $\o h_0, s \o h_1 \in Q$, and hence \begin{align*} \tag{d} s \o h_1 h_0\i \o\i \in Q. \end{align*} As $h_0 h_1\i \in H_x \cap \JJ_{\tw_x} \subseteq I ((W_{J_0} W_{J_1}^a) \cap \JJ_{\tw_x}) I = I \{1, s\} I$ and $\o s \o\i = s'$, by (d) we have $s \o h_0 h_1\i \o\i \in Q \cap (I \{s, s s'\} I)$, which means $s \in Q$. Hence $H_x \cap \JJ_{\tw_x} \subseteq Q$, $\o \in Q$ and $s' = \o s \o\i \in Q$ as desired.
\end{proof}

\begin{cor} \label{lin}
Let $x \in \cs_{\l, b}^+$, $g \in \JJ_{b, \tw_x}$, and $y \in \tW$ such that $g I \sim_{\l, b} g y\i I$. Then we have $g I \sim_{\l, b} g \o {z'}\i \sim_{\l, b} g \o I$, where $z' \in W_0^J$ and $\o \in \Omega_J$ such that $y \in z' \o\i W_J^a$.
\end{cor}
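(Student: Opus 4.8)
The plan is to obtain Corollary~\ref{lin} as a formal consequence of Corollary~\ref{pre-lin}, Lemma~\ref{tosemi} and Proposition~\ref{normal}, with no new geometric input. The first step is to observe that the hypothesis $gI \sim_{\l, b} g y\i I$ already forces $y \tw_x \s(y)\i \in \Adm(\l)$. Indeed, $g y\i I$ is connected to $gI$ inside $X(\l, b)$, hence is itself a point of $X(\l, b)$; therefore $(g y\i)\i b\, \s(g y\i) = y\, g\i b \s(g)\, \s(y)\i = y \tw_x \s(y)\i$ lies in $I \Adm(\l) I$. Since this element belongs to $\tW$ and the Iwahori double cosets $I w I$ ($w \in \tW$) are pairwise disjoint, it must lie in $\Adm(\l)$ itself. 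In particular the hypotheses of Corollary~\ref{pre-lin}, and of Lemma~\ref{tosemi} applied with $\tw = \tw_x$ and $z = 1$ (note $\tw_x \in \Adm(\l)$ since $gI \in X(\l, b)$, so $\tw_x \in \cs_{\l, b, x}$), are satisfied for this $y$, with the same $\o \in \Omega_J$ and $z' \in W_0^J$ determined by $y \in z' \o\i W_J^a$.

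Applying Corollary~\ref{pre-lin} and Lemma~\ref{tosemi} then produces elements $h_1, h_2 \in \ker(\eta_{M_J}) \cap \JJ_{\tw_x}$ with $g y\i I \sim_{\l, b} g h_1 \o I$ and $g y\i I \sim_{\l, b} g h_2 \o {z'}\i I$ for $g \in \JJ_{b, \tw_x}$. To finish, I would absorb the error terms $h_i$ into the $\JJ_b$-action. Recall $g \in \JJ_{b, \tw_x} \subseteq M_J(\brF)$, because $M_J = M_{\nu_G(b)}$, so that both the $\s$-centralizer $\JJ_{\tw_x}$ and the set of $\s$-conjugators from $b$ to $\tw_x$ lie in $M_J(\brF)$; since also $h_i \in \JJ_{\tw_x} \subseteq M_J(\brF)$, we get $g h_i g\i \in \JJ_b \subseteq M_J(\brF)$, and, $\eta_{M_J}$ being a homomorphism to an abelian group, $\eta_{M_J}(g h_i g\i) = \eta_{M_J}(h_i) = 0$. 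Thus $g h_i g\i \in \ker(\eta_{M_J}) \cap \JJ_b$, which by Proposition~\ref{normal} fixes every connected component of $X(\l, b)$. Since $\JJ_b$ acts on $X(\l, b)$ by left multiplication (permuting connected components), this gives $g h_1 \o I = (g h_1 g\i)(g \o I) \sim_{\l, b} g \o I$ and, likewise, $g h_2 \o {z'}\i I \sim_{\l, b} g \o {z'}\i I$. Chaining the relations yields $gI \sim_{\l, b} g y\i I \sim_{\l, b} g \o {z'}\i I$ and $gI \sim_{\l, b} g y\i I \sim_{\l, b} g \o I$, which is the assertion.

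The argument is essentially bookkeeping; the one point that needs care is the passage between the two ways of recording a $\sim_{\l, b}$-relation. Lemma~\ref{tosemi} and Corollary~\ref{pre-lin} present the target point as $g\cdot(\text{word})\cdot I$ with the ambiguity $h_i$ sitting \emph{on the left} of $\o$, whereas Proposition~\ref{normal} speaks of the \emph{left} $\JJ_b$-action on $\pi_0(X(\l, b))$; the conjugation $h_i \mapsto g h_i g\i$ reconciles the two, and this step relies precisely on $\JJ_{b, \tw_x} \subseteq M_J(\brF)$, which makes $\eta_{M_J}$ applicable and conjugation-invariant on the elements involved. I expect this — verifying that the cited statements apply verbatim and that the conjugation bookkeeping is clean — to be the main (and essentially only) obstacle.
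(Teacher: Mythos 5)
Your proposal is correct and follows essentially the same route as the paper, whose proof of Corollary~\ref{lin} is exactly the combination of Corollary~\ref{pre-lin} (resp.\ its source, Lemma~\ref{tosemi} with $z=1$, for the ${z'}$-term) with Proposition~\ref{normal}; you have merely made explicit the routine points the paper leaves implicit, namely that $g y\i I\in X(\l,b)$ forces $y\tw_x\s(y)\i\in\Adm(\l)$ by disjointness of Iwahori double cosets, and that the error terms $h_i$ are absorbed via $g h_i g\i\in\ker(\eta_{M_J})\cap\JJ_b$ (using $\JJ_{b,\tw_x}\subseteq M_J(\brF)$, which the paper also uses, e.g.\ in Lemma~\ref{J1}).
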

\begin{proof}
It follows from Corollary \ref{pre-lin} and Proposition \ref{normal}.
%By Proposition \ref{tosemi}, there exists $h \in \ker(\eta_{M_J}) \cap \JJ_{\tw_x}$ such that $g y\i I \sim_{\l, b} g h \o {z'}\i I \sim_{\l, b} g \o {z'}\i I$, where the last relation $\sim_{\l, b}$ follows from Proposition \ref{normal}. Let $x' \in \cs_{\l, b}^+$ such that $\tw_{x'} = \o\i \tw_x \s(\o)$.  Applying Proposition \ref{weyl-conj} and Proposition \ref{normal}, there exists $h' \in H_{x'} \cap \JJ_{\tw_{x'}}$ such that $g \o {z'}\i I \sim_{\l, b} g \o h' I \sim_{\l, b} g \o I$, where the last relation $\sim_{\l, b}$ again follows from Proposition \ref{normal}. This concludes the proof.
\end{proof}

\subsection{} \label{subsec-weak}
To prove Lemma \ref{weak}, we start with a general lemma on root systems.
\begin{lem} \label{choice}
Let $\mu \in Y$, $\l \in Y^+$ and $\a \in \Phi^+$ such that $\mu \preceq \l$, $\mu + \a^\vee \le \l$, and $\mu + \a^\vee \npreceq \l$. Then there exists $\b \in \Phi^+$ such that $\<\b, \mu + \a^\vee\> \le -2$, and either $\mu + \b^\vee \preceq \l$ or $\mu + \a^\vee + \b^\vee \le \l$.
\end{lem}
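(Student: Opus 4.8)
The plan is to translate the three hypotheses into the geometry of the convex hull $\mathrm{Conv}(W_0\l)$ of the Weyl orbit of $\l$. Recall that $\mu\preceq\l$ means the dominant $W_0$-conjugate $\bar\mu$ satisfies $\bar\mu\le\l$, equivalently $\mu\in\mathrm{Conv}(W_0\l)$. Put $\nu=\mu+\a^\vee$. Then the hypotheses read: $\mu\in\mathrm{Conv}(W_0\l)$, the point $\nu$ lies in the down-cone $\l-\RR_{\ge0}(\Phi^+)^\vee$, and $\nu\notin\mathrm{Conv}(W_0\l)$. In particular $\nu$ is not dominant, since otherwise $\bar\nu=\nu\le\l$, contradicting $\nu\npreceq\l$. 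Hence a facet of $\mathrm{Conv}(W_0\l)$ separates $\nu$ from $\mu$: there are a fundamental weight $\check\omega$ and an element $w\in W_0$ with
\[
\langle w\check\omega,\nu\rangle \;>\; \langle\check\omega,\l\rangle \;\ge\; \langle w\check\omega,\mu\rangle ,
\]
where the right inequality uses $\langle w\check\omega,\mu\rangle=\langle\check\omega,w\i\mu\rangle\le\langle\check\omega,\bar\mu\rangle\le\langle\check\omega,\l\rangle$, and where moreover $\langle\check\omega,\l\rangle\ge\langle\check\omega,\nu\rangle$ because $\nu\le\l$. I would fix such a pair $(\check\omega,w)$ with the overshoot $\langle w\check\omega,\nu\rangle-\langle\check\omega,\l\rangle$ (equivalently, $\ell(w)$) as small as possible.

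The first task is then to extract $\b$ and to prove the sharp bound $\langle\b,\nu\rangle\le-2$. Subtracting the outer inequalities in the display gives $\langle w\check\omega,\a^\vee\rangle=\langle w\check\omega,\nu-\mu\rangle>0$: the single coroot $\a^\vee$ already accounts for the whole violation, while $\mu$ is ``saturated'' against the wall $\{\langle w\check\omega,\cdot\rangle=\langle\check\omega,\l\rangle\}$. Expanding $\check\omega-w\check\omega$ as a non-negative integral combination of simple roots and combining with $\langle\check\omega,\nu\rangle\le\langle\check\omega,\l\rangle<\langle w\check\omega,\nu\rangle$ shows that some simple root is strictly negative on $\nu$; taking $\b$ to be a suitable positive root among those exposing the violated facet, I would argue that the minimality of the overshoot together with the saturation of $\mu$ forces $\nu$ to be \emph{doubly} negative on $\b$, i.e.\ $\langle\b,\nu\rangle\le-2$ rather than merely $\le-1$. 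I expect this coefficient bound to be the crux of the proof, and the place where a small amount of case analysis for the non-simply-laced root systems (where the normalization of coroots is less uniform) enters.

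With $\b$ fixed, set $k=-\langle\b,\nu\rangle\ge2$; then $\nu<\nu+\b^\vee\le\nu+k\b^\vee=s_\b\nu$ in the order $\le$. If $s_\b\nu\le\l$, transitivity yields $\mu+\a^\vee+\b^\vee=\nu+\b^\vee\le\l$, which is the second alternative. In the remaining case $s_\b\nu\not\le\l$ I would establish the first alternative $\mu+\b^\vee\preceq\l$, i.e.\ $\overline{\mu+\b^\vee}\le\l$, directly from $\bar\mu\le\l$ and the pairing identity $\langle\b,\mu\rangle=\langle\b,\nu\rangle-\langle\b,\a^\vee\rangle$: since $\langle\b,\nu\rangle\le-2$ and $\langle\b,\a^\vee\rangle\le2$, adding $\b^\vee$ moves $\mu$ toward the dominant chamber without pushing its dominant conjugate past the wall that already contains $\bar\mu$. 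Making this last step rigorous — showing that $s_\b\nu\not\le\l$ genuinely forces $\mu+\b^\vee\preceq\l$, rather than being a case one merely hopes to avoid — is the second delicate point: it amounts to bounding $\overline{\mu+\b^\vee}$ in terms of $\bar\mu$ using the two pairing values above, and together with the coefficient bound it is where I expect the real work of the proof to lie.
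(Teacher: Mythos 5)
Your convex-geometry setup is fine as far as it goes ($\mu \preceq \lambda$ iff $\mu \in \mathrm{Conv}(W_0\lambda)$, and $\nu := \mu+\alpha^\vee$ lies in $\lambda - \RR_{\ge 0}(\Phi^+)^\vee$ but outside the hull), but the two steps you yourself single out as the crux are exactly the ones that are missing, and the hints you offer for them are not arguments. (i) You never exhibit the root $\beta$, nor prove the bound $\langle\beta,\nu\rangle\le -2$: ``minimality of the overshoot'' together with $\langle w\omega,\mu\rangle\le\langle\omega,\lambda\rangle$ gives no control on the pairing of $\nu$ against any individual root, and there is no reason a root attached to the separating facet should pair $\le -2$ with $\nu$ --- a facet can be violated while every simple root pairs with $\nu$ only in $\{0,-1\}$, and the root the lemma ultimately requires is in general not simple (in the paper's argument it appears only as $s_\beta(\gamma)$ after reflecting the whole configuration). (ii) Even granting such a $\beta$, your remaining case $s_\beta\nu\not\le\lambda$ is settled only by the heuristic that adding $\beta^\vee$ ``moves $\mu$ toward the dominant chamber without pushing past the wall''; no inequality is derived, and the auxiliary bound $\langle\beta,\alpha^\vee\rangle\le 2$ fails in type $G_2$. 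What one can actually prove cheaply is: if $\langle\beta,\mu\rangle\le -1$ then $\mu+\beta^\vee$ is a convex combination of $\mu$ and $s_\beta\mu$, hence $\mu+\beta^\vee\preceq\lambda$; by $\langle\beta,\mu\rangle=\langle\beta,\nu\rangle-\langle\beta,\alpha^\vee\rangle$ this covers all $\beta$ with $\langle\beta,\alpha^\vee\rangle\ge -1$. The genuinely hard case is therefore $\langle\beta,\alpha^\vee\rangle\le -2$, where $\langle\beta,\mu\rangle$ may well be $\ge 0$, your heuristic for the first alternative has no support, and the correct conclusion is the \emph{second} alternative --- a case your dichotomy does not reach when $s_\beta\nu\not\le\lambda$.

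For comparison, the paper proves the lemma by induction on $\nu=\mu+\alpha^\vee$ along $\le$ (bounded above by $\lambda$, so the induction terminates): choose a \emph{simple} root $\beta$ with $\langle\beta,\nu\rangle\le -1$ (possible since $\nu$ is not dominant), so that $\nu+\beta^\vee\le\lambda$ by Gashi's result; if $\langle\beta,\nu\rangle\le -2$ one is done, and if $\langle\beta,\nu\rangle=-1$ one applies the induction hypothesis to the reflected pair $(s_\beta\mu,\, s_\beta\alpha)$ and transports the resulting root $\gamma$ back by $s_\beta$, the delicate subcase $\langle s_\beta\gamma,\alpha^\vee\rangle\le -2$ being handled by the explicit estimate $\mu+\alpha^\vee+s_\beta(\gamma^\vee)\le s_\beta(\mu+\alpha^\vee)+\gamma^\vee\le\lambda$. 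Unless you can supply actual proofs of (i) and (ii) --- and (ii) appears to be false as stated in the strongly negatively paired case --- your outline does not constitute a proof.
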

\begin{proof}
We argue by induction on $\mu + \a^\vee$ via the partial order $\le$. If $\mu + \a^\vee \in Y^+$, then $\mu + \a^\vee \preceq \l$, contradicting our assumption. So there exists $\b \in \SS_0$ such that $\<\b, \mu + \a^\vee\> \le -1$ and hence $\mu + \a^\vee + \b^\vee \le \l$ (by \cite[Proposition 2.2]{Ga}). If $\<\b, \mu + \a^\vee\> \le -2$, the statement follows. Assume $\<\b, \mu + \a^\vee\> = -1$. Then $\mu + \a^\vee < s_\b(\mu + \a^\vee) \npreceq \l$. If $\b = \a$, then $\<\a, \mu\> = -3$ and $\mu + \a^\vee \preceq \mu \preceq \l$, a contradiction. So $\b \neq \a$ and $s_\b(\a) \in \Phi^+$. By induction hypothesis, for the pair $(s_\b(\mu), s_\b(\a))$ there exists $\g \in \Phi^+$ such that $$\<\g, s_\b(\mu+\a^\vee)\> = \<s_\b(\g), \mu + \a^\vee\> \le -2,$$ (which means $\b \neq \g$ and $s_\b(\g) \in \Phi^+$), and either $s_\b(\mu) + \g^\vee \preceq \l$ or $s_\b(\mu + \a^\vee) + \g^\vee \le \l$. If the former case occurs, we have $\mu + s_\b(\g^\vee) \preceq \l$, and the statement follows. Otherwise, $\<s_\b(\g), \mu\> \ge 0$ and the latter case occurs. In particular, $\<s_\b(\g), \a^\vee\> \le -2$, and hence means $\g$ is a long root. So we have $$\mu + \a^\vee + s_\b(\g^\vee) \le \mu + \a^\vee + \g^\vee + \b^\vee = s_\b(\mu+ \a^\vee) + \g^\vee \le \l,$$ and the statement also follows.
\end{proof}

\begin{proof}[Proof of Lemma \ref{weak}]
By \cite[Lemma 3.3]{N2}, there exists $x \in \cs_{\l, b}^+$ such that $\mu_x$ is weakly dominant, that is, $\<\d, \mu_x\> \ge -1$ for $\d \in \Phi^+$. As $(\l, b)$ is Hodge-Newton irreducible, there exists $\a \in K$ such that $\mu_x + \a^\vee \le \l$. We show that \begin{align*} \tag{a} & \text{(a1) there exists } \xi \in \Phi^+ \setminus \Phi_J \text{ such that } \<\a, \xi^\vee\> \le -1, \mu + \xi^\vee \preceq \l; \\ & \text{(a2) if, moreover, $\Phi$ is simply-laced, then } \<\xi, \mu_x\> = -1 \text{ and } \b \in \Phi^+ \setminus \Phi_J. \end{align*} By assumption, $\mu_x + \a^\vee \npreceq \l$. By Lemma \ref{choice}, there exists $\z \in \Phi^+$ such that $\<\z, \mu_x + \a^\vee\> \le -2$, and either $\mu_x + \z^\vee \preceq \l$ or $\mu_x + \a^\vee + \z^\vee \le \l$. As $\mu_x$ is weakly dominant, we have either (i) $\<\z, \a^\vee\> \le \<\z, \mu_x\> = -1$ or (ii) $\<\z, \a^\vee\> \le -2$ and $\<\z, \mu_x\> = 0$ or (iii) $\<\z, \a^\vee\> = -3$ and $\<\z, \mu_x\> = 1$. Take $\xi = \z$ if (i) occurs. Assume (ii) or (iii) occurs. Then $\Phi$ is non-simply-laced and $\<\a, \z^\vee\> = -1$. If $\mu_x + \z^\vee \preceq \l$, take $\xi = \z$. Otherwise, $\mu_x + \z^\vee \le \l$ is not weakly dominant (by \cite[Proposition 2.2]{Ga}). So there exists $\g \in \Phi^+$ such that $\<\g, \mu_x + \z^\vee\> \le -2$, which means $\<\g, \z^\vee\> = \<\g, \mu_x\> = -1$ since $\mu_x$ is weakly dominant and $\z$ is a long root. Then $\g \in \Phi^+ \setminus \Phi_J$ and $\mu_x + \g^\vee \preceq \l$. Note that $\a$ is a short root and $\<\a, \mu_x\> = 0$. If $\<\a, \g^\vee\> = -1$, we take $\xi = \g$. If $\<\a, \g^\vee\> = 0$, then (ii) occurs (since if (iii) occurs, then $\g = - 3\a -2\z$, contradicting that $\<\g, \mu_x\> = -1$), which means $\mu_x + \g^\vee + \z^\vee \preceq \l$. So we take $\xi = s_\g(\z)$. If $\<\a, \g^\vee\> = 1$, we take $\xi = s_\a(\g)$. It remains to show $\xi \in \Phi^+ \setminus \Phi_J^+$. Otherwise, $\xi \in \Phi_K$ since $\<\a, \xi^\vee\> \neq 0$, contradicting our assumption that $\mu + \xi^\vee \npreceq \l$. So (a) is proved.

Let $\b$ be the $J$-anti-dominant conjugate of $\xi$ under $W_J$. By (a) we have \begin{align*}\text {(b) $\<\b, \mu_x\> = -1$ if $\Phi$ is simply-laced; (c) $\mu_x + \b^\vee \preceq \l$; (d) $\b^\vee$ is non-central on $K_0$,} \end{align*} where $K_0 \subseteq \Psi_\b$ is the connected component of $K$ containing $\a$. We show that \begin{align*} \tag{e} \text{ $\b^\vee$ is $K$-minuscule.} \end{align*} Otherwise, $\<\th, \b^\vee\> \le -2$ for some $\th \in \Phi_K^+$. Then $\mu_x + \b^\vee + \th^\vee \preceq \l$. If $\<\b, \mu_x\> \ge 0$, then $\<\b, \mu_x + \b^\vee + \th^\vee\> \ge 1$ and $\mu_x + \th^\vee \preceq \l$, contradicting our assumption. Otherwise, $\<\b, \mu_x\> = -1$ and $\<s_\b(\th), \mu_x\> = -\<\th, \b^\vee\> \<\b, \mu_x\> \le -2$, contradicting that $\mu_x$ is weakly dominant. So (e) follows.

Applying \cite[Lemma 6.6]{N2} we can assume furthermore that $\b^\vee$ is $J$-anti-dominant and $J$-minuscule. Hence (1) is proved.

If $\<w_x \s^i(\b), \mu_x\> \ge 1$ for some $i \in \ZZ \setminus n\ZZ$, then $\mu_1 := \mu_x + \b^\vee - w_x \s^i(\b)^\vee \preceq \l$, which means $x_1 := x + \b^\vee - \s^i(\b)^\vee \in \cs_{\l, b}^+$. By (e), $\mu_1$ is non-central on $K_0$. As $\mu_{x_1}, \mu_1$ are conjugate by $W_J$ (see Lemma \ref{orth}), $\mu_{x_1}$ is also non-central on $K_0$, contradicting that $K_0 \subseteq J_0$. So $\<w_x \s^i(\b), \mu_x\> \le 0$ for $i \in \ZZ \setminus n\ZZ$. If $\<\s^i(\b), \mu_x\> \le -1$ for some $i \in \ZZ \setminus n\ZZ$, by Lemma \ref{pr} there exists $j \in n\ZZ$ such that $\<w_x \s^j(\b), \mu_x\> \ge 1$. Then $\mu_2 := \mu_x - w_x\s^j(\b)^\vee + \s^i(\b)^\vee \preceq \l$ and hence $x_2 := x - \s^j(\b)^\vee + \s^i(\b)^\vee \in \cs_{\l, b}^+$, which is also impossible since $\mu_2$ is non-central on $\s^j(K_0)$. So $\<\s^i(\b), \mu_x\> = \<w_x\s^i(\b), \mu_x\> = 0$ for $i \in \ZZ \setminus n\ZZ$ and (2) is proved.

If $\s^{2n}(\b) \neq \b$, then $\Phi = \Psi$ and $\Psi_\b$ is of type $D_4$, whose simple roots are $\b, \s^n(\b), \s^{2n}(\b), \a$ with $\s^n(\a) = \a$. Moreover, $J = J_0 = \co_\a$. By (2), we have $\mu_x |_{\Psi \setminus \Psi_\b} = 0$. Hence $\sum_{i = 0}^n \<\s^i(\b), \mu_x\> \ge 1$ by Lemma \ref{pr}. If $\<\s^n(\b), \mu_x\> \ge 1$, then (3) follows. If $\<\s^n(\b), \mu_x\> \le -1$, it follows by replacing $\b$ with $\s^n(\b)$. If $\<\s^n(\b), \mu_x\> = 0$, it follows by replacing $x$ with $x - \s^{2n}(\b)^\vee + \s^n(\b)^\vee \in \cs_{\l, b}^+$.

Now we assume $\s^{2n}(\b) = \b$. By (2) and Lemma \ref{pr}, \begin{align*} \tag{f} \<\b + \s^n(\b), \pr_J(\mu_x)\> = \<\b + \s^n(\b), \pr_{J_1}(\mu_x)\> > 0. \end{align*} So (3) follows if $\b = \s^n(\b)$. Assume $\b \neq \s^n(\b)$. Then $\Phi$ is simply-laced, and hence $\<\b, \mu_x\> = -1$ by (b). Moreover, $\co_\b \cup J$ is a set of simple roots of $\Psi$ by \cite[Proposition 4.2.11]{CKV}. As $\b$ is a neighbor of $K_0$ in $\Psi_\b$ and $\<\b, \mu_x\> = -1$, one checks (on the type of $\Psi_\b$) that $\<\b, \pr_{J_1}(\mu_x)\> < 0$. By (f) we have $\<w_x \s^n(\b), \mu_x\> \ge 1$ and (3) follows.

Assume $\s^n$ does not act trivially on $\Psi_\b \cap J_0$. Then $\Phi$ is simply-laced and $\<\b, \mu_x\> = -1$. We may assume $\s^n$ does not fix each point of $K_0$. Let $\a \in K_0$ such that $\<\b, \a^\vee\> = -1$. If $\s^n(\b) = \b$, then one checks directly (on the type of $\Psi_\b$ and using the assumption on $K_0$) that $\<\b, \pr_J(\mu_x)\> < 0$, which contradicts (f). So $\b \neq \s^n(\b) \in \Psi_\b$. Let $x_3 = x + \b^\vee - \s^n(\b)^\vee \in \pi_1(M_J)$. If $\b, \s^n(\b)$ are in distinct connected components of $\co_\b \cup J \setminus \{\a, \s^n(\a)\} \supseteq \co_\b \cup J_1$, then $x_3 \in \cs_{\l, b}^+$ by (2) that $\<w_x \s^n(\b), \mu_x\> \ge 1$. As $\<\a, \mu_{x_3}\> = 0$, we deduce that $\a = \s^n(\a)$ is the common neighbor of $\b, \s^n(\b)$ in $\Psi_\b$, which implies that $\s^n$ fixes each point of $K_0$, contradicting our assumption. So $\b, \s^n(\b)$ are connected in $\co_\b \cup J \setminus \{\a, \s^n(\a)\}$. Then $\a \neq \s^n(\a)$, and it follows from (f) that either $\<w_x\s^n(\b), \mu_x\> \ge 2$ or the case in (4) occurs. The former case does not occur since $x_3 \in \cs_{\l, b}^+$ but $\mu_{x_3}$ is non-central on $K_0$. So (4) follows.
\end{proof}

\section{Proof of Proposition \ref{ker-G}} \label{sec-ker}
In this section, we assume that $(\l, b)$ is Hodge-Newton irreducible. Let $\co$ be a $\s$-orbit of $\Phi^+$. We set $$\o_\co = \sum_{\a \in \co} \a^\vee \in \pi_1(M_J)^\s \cong \Omega_J^\s \subseteq \Omega_J \cap \JJ_b.$$ Let $\Psi = \Phi \cap \ZZ(\co \cup J)$. We say $\co$ is of type I (resp. type II, resp. type III) if $|\co|$ equals $n$ (resp. $2n$, resp. $3n$). Here $n \in \{d, 2d, 3d\}$ is the minimal positive integer such that $\a, \s^n(\a)$ are in the same connected component of $\Psi$ for some/any $\a \in \co$. If $\co$ is of type II or III, then $n = d$, $\Phi$ is simply-laced, and $\co \cap J$ is a set of simple roots for $\Psi$. In this case, for $\a \in \co$ we denote by $\vartheta_\a \in \Phi^+$ the sum of simple roots in the (unique) minimal $\s^n$-stable connected subset of $\co \cup J$ which contains $\a$, see \cite[\S 4.7]{CKV}.

For $x \in \cs_{\l, b}^+$ we define $$C_{\l, b, x} = \{\a \in \Phi^+ \setminus \Phi_J; \mu_x + \a^\vee \preceq \l, \a^\vee \text{ is $J$-anti-dominant and strongly $J$-minuscule} \}.$$

\subsection{} Fix a $\s$-orbit $\co$ of roots in $\Phi^+ \setminus \Phi_J$ which are $J$-anti-dominant and $J$-minuscule.
\begin{lem} \label{link}
Assume $x \overset {(\g, r)} \to x'$ with $x' = x - \g^\vee + \s^r(\g)^\vee \in \cs_{\l, b}^+$ for some $\g \in \co$ and $1 \le r \le n$. Let $\o = \g^\vee + \cdots + \s^{r-1}(\g)^\vee \in \pi_1(M_J) \cong \Omega_J$. Then for $g \in \JJ_{b, \tw_x}$ we have $$g I \sim_{\l, b} g y\i I \sim_{\l, b} g \o I \text{ for some } y \in W_0^J \o\i W_J^a \text{ if } U_{-w_J(\g)-1} \tw_x U_{-w_J \s^r(\g)-1} \subseteq I \Adm(\l) I.$$ In particular, if $\co = \co_\a$ for some $\a \in C_{\l, b, x}$, the above inclusion holds if (1) $1 \le r \le n-1$; or (2) $x = x'$; or (3) $\mu_x + \vartheta_\g^\vee \npreceq \l$ when $\co$ is of type II and $r=n$.
\end{lem}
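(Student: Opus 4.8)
The plan is to adapt the proof of Proposition~\ref{surj}, and then to pass from the Weyl-group part of the endpoint to $\o$ by means of Corollary~\ref{lin}.

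\emph{Construction of the path.} Since $\g\in\co$ is $J$-anti-dominant, $w_J(\g)^\vee=w_J(\g^\vee)$ is $J$-dominant and, being a positive root, satisfies $w_J(\g)+1\in\tPhi^+$; the same holds for all $\s$-translates, as $J$ (hence $w_J$) is $\s$-stable. Put $\tilde\g=w_J(\g)+1\in\tPhi^+$ and, for $g\in\JJ_{b,\tw_x}$, set $\textsl{g}=\textsl{g}_{g,\,-\s^{r-1}(\tilde\g),\,\tw_x,\,r}\colon\PP^1\to G(\brF)/I$, so that $\textsl{g}(0)=gI$. After the reduction to the saturated situation $x\overset{(\g,r)}\rightarrowtail x'$ performed exactly as in Proposition~\ref{surj}, Lemma~\ref{saturate} shows that the factors of $\textsl{g}(z)$ involve the roots $w_J(\g),w_J\s(\g),\dots,w_J\s^{r-1}(\g)$, which are mutually orthogonal since $1\le r\le n$ and $n$ is by definition the least period after which $\g$ re-enters its own connected component of $\Psi$. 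Hence, as in Proposition~\ref{surj}, the limits of the factors of $\textsl{g}(z)$ decouple and $\textsl{g}(\infty)=g\,y\i I$ with $y\i=s_{\tilde\g}s_{\s(\tilde\g)}\cdots s_{\s^{r-1}(\tilde\g)}$. By \cite[Lemma~1.3]{CN1} this factors as $y\i=\o\,z\i$ with $z\in W_0^J$ and $\o$ the length-zero representative in $\Omega_J$ of the class $w_J(\g^\vee)+\cdots+w_J\s^{r-1}(\g^\vee)=\g^\vee+\cdots+\s^{r-1}(\g^\vee)$ of $\pi_1(M_J)\cong\Omega_J$; thus $y=z\o\i\in W_0^J\o\i W_J^a$.

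\emph{The containment and the first assertion.} Using $g\i b\s(g)=\tw_x$, the same computation as in the proof of Proposition~\ref{surj} shows that $\textsl{g}(z)\i\,b\,\s(\textsl{g}(z))=I\,P(z)\i\,\tw_x\,\s(P(z))\,I$, where $P(z)$ is the product of affine root subgroups defining $\textsl{g}(z)$, and that the telescoping of the inner factors—made possible by the orthogonality above—collapses this set to $I\,U_{-w_J(\g)-1}\,\tw_x\,U_{-w_J\s^r(\g)-1}\,I$. Under the standing hypothesis this lies in $I\Adm(\l)I$, so the image of $\textsl{g}$ lies in $X(\l,b)$ and $gI=\textsl{g}(0)\sim_{\l,b}\textsl{g}(\infty)=g\,y\i I$. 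Corollary~\ref{lin}, applied to this $y$, then gives $gI\sim_{\l,b}g\,y\i I=g\o z\i I\sim_{\l,b}g\o I$, which is the first claim. (If $x\overset{(\g,r)}\to x'$ is not saturated, one first factors it through an intermediate class of $\cs_{\l,b}^+$ as in Proposition~\ref{surj} and runs the argument by induction on $r$, composing the resulting paths; this is consistent with the additivity of $\o$ in $\pi_1(M_J)$.)

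\emph{The three sufficient conditions.} Suppose now $\co=\co_\a$ with $\a\in C_{\l,b,x}$, so that $w_J(\g)^\vee$ is $J$-dominant and strongly $J$-minuscule and $\mu_x+w_J(\g)^\vee\preceq\l$. We apply Lemma~\ref{line} with $K=J$, $\tw=\tw_x=t^{\mu_x}w_x\in\Omega_{M_J}$ and coroot $w_J(\g)^\vee$; the inequalities required in \S\ref{pre-K} follow because $x\overset{(\g,r)}\to x'$ together with Lemma~\ref{orth}(1) identifies $\mu_x-w_J(\g)^\vee$, $\mu_x+w_x(w_J\s^r(\g))^\vee$ and $\mu_x-w_J(\g)^\vee+w_x(w_J\s^r(\g))^\vee$ with the $W_J$-conjugates of $\mu_{x-\g^\vee}$, $\mu_{x+\s^r(\g^\vee)}$ and $\mu_{x'}$, all $\preceq\l$. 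It then suffices to rule out the exceptional case~(*) of Lemma~\ref{line}. In case~(1), $1\le r\le n-1$ forces $\g$ and $\s^r(\g)$, hence also $\g$ and $w_x\s^r(\g)$, into distinct connected components of $\Psi$, giving $\<\g,w_x\s^r(\g)^\vee\>=0\neq-1$, so~(*) fails. In case~(2), $x=x'$ forces $\s^r(\g)=\g$, so the element $\tw'$ occurring in~(*) equals $\tw_x$, contradicting $\tw\neq\tw'$. In case~(3), with $\co$ of type~II, $r=n$, and $\mu_x+\vartheta_\g^\vee\npreceq\l$, the conclusion of~(*) forces $\mu_x\pm(\g+w_x\s^n(\g))^\vee\preceq\l$, which is $W_J$-conjugate to $\mu_x+\vartheta_\g^\vee$ by the description of $\vartheta_\g$ as the sum of simple roots in the minimal $\s^n$-stable connected subset of $\co\cup J$ containing $\g$; this is a contradiction. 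In each case Lemma~\ref{line} gives $U_{-w_J(\g)-1}\tw_x U_{-w_J\s^r(\g)-1}\subseteq I\Adm(\l)I$, completing the proof.

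\emph{The main difficulty.} The chief obstacle is the containment step in the second paragraph: one must run the telescoping computation of Proposition~\ref{surj} carefully for the present chain—keeping track of the orthogonal affine root subgroups and performing the reduction to the saturated case—so that $\textsl{g}(z)\i b\s(\textsl{g}(z))$ lands in the single set $I\,U_{-w_J(\g)-1}\tw_x U_{-w_J\s^r(\g)-1}\,I$ governed by the hypothesis, and not in a larger union of double cosets. A secondary technical point is the combinatorial verification in case~(3) that $\mu_x+\vartheta_\g^\vee\npreceq\l$ is precisely the condition excluding~(*), which relies on the type~II description of $\vartheta_\g$.
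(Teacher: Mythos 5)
Your argument is essentially the paper's own proof: the same curve $\textsl{g}_{g,\,-(w_J\s^{r-1}(\g)+1),\,\tw_x,\,r}$ after reducing to the saturated case, the same containment of $\textsl{g}\i b\s(\textsl{g})$ in $I\,U_{-w_J(\g)-1}\tw_x U_{-w_J\s^r(\g)-1}\,I\subseteq I\Adm(\l)I$, the factorization of $\textsl{g}(\infty)$ via \cite[Lemma 1.3]{CN1} and the passage to $g\o I$ by Corollary \ref{lin}, and for the ``in particular'' part the same application of Lemma \ref{line} with exclusion of its exceptional case (*) in each of the three situations. The one imprecision is in case (3): you assert that $\mu_x+(w_J(\g)+w_xw_J\s^n(\g))^\vee$ is $W_J$-conjugate to $\mu_x+\vartheta_\g^\vee$, which is stronger than needed and not justified as stated (the sum is in general not $J$-minuscule, so the standard conjugacy argument does not apply); the paper instead only deduces $\mu_x+\vartheta_\g^\vee\preceq\mu_x+(w_J(\g)+w_xw_J\s^r(\g))^\vee$ from the fact that $\vartheta_\g^\vee$ is $J$-anti-dominant and $J$-minuscule, and this weaker comparison already yields the contradiction with $\mu_x+\vartheta_\g^\vee\npreceq\l$.
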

\begin{proof}
Let $\tilde \th = w_J\s^{r-1}(\g) + 1 \in \tPhi^+$. Suppose \begin{align*} \tag{a} U_{-\s^{1-r}(\tilde \th)} \tw_x U_{-\s(\tilde \th)} = U_{-w_J(\g)-1} \tw_x U_{-w_J \s^r(\g)-1} \subseteq I \Adm(\l) I.\end{align*} By \cite[Lemma 6.5]{N2} we can assume that \begin{align*} \tag{b} x \overset {(\g, r)} \rightarrowtail x', \text{ and hence }  (\tw_x\s)^i(\tilde \th) = \s^i(\tilde \th) \text{ for } 1-r \le i \le 0. \end{align*} Define $\textsl{g} = \textsl{g}_{g, -\tilde \th, \tw_x, r}$ for $g \in \JJ_{b, \tw_x}$. By (a) and (b) we have $$\textsl{g}\i b \s(\textsl{g}) \subseteq U_{-\s^{1-r}(\tilde \th)} \tw_x U_{-\s(\tilde \th)} \subseteq I \Adm(\l) I,$$ which means $$g I = \textsl{g}(0) \sim_{\l, b} \textsl{g}(\infty) = g s_{\tilde \th} \cdots s_{\s^{r-1}(\tilde \th)} I = g \o u\i I$$ for some $u \in W_0$ as desired. The relation $g I \sim_{\l, b} g \o I$ follows from Corollary \ref{lin}.

If $\co$ is of type II and $r=n$, then $\vartheta_\g^\vee$ is $J$-anti-dominant and $J$-minuscule, which means $\mu_x + \vartheta_\g^\vee$ is $J$-minuscule and hence $\mu_x + \vartheta_\g^\vee \preceq \mu_x + (w_J(\g) + w_x w_J \s^r(\g))^\vee$. Thus the `` Moreover" part follows from Lemma \ref{line} (*) by noticing that $\<w_J(\g), w_x w_J \s^r(\g^\vee)\> = 0$ if $1 \le r \le d-1$.
\end{proof}

Let $\ca_{\l, b}$ be the group of elements $\o \in \pi_1(M_J)^\s \cong \Omega_J^\s$ which fix some/any connected component of $X(\l, b)$.
\begin{lem} \label{type-I}
Suppose $\co = \co_\xi$ for some $\xi \in C_{\l, b, x}$. If $\co$ is of type I, then there exist $\g \in \co$, $1 \le r \le n$, and $x' \in \cs_{\l, b}^+$ such that $x \overset {(\g, r)} \to x'$. Moreover, $\o_\co \in \ca_{\l, b}$.
\end{lem}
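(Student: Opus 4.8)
The plan is to handle the two assertions separately. For the first assertion, I want to find $\g \in \co$ and $1 \le r \le n$ with $x \overset{(\g, r)}{\to} x'$ and $x' \in \cs_{\l, b}^+$. Since $\co$ is of type I, the roots $\s^i(\xi)$ for consecutive $i$ lie in the same connected component of $\Psi$, so I can connect $\xi$ to $\s^n(\xi)$ through a chain inside $\co \cup J$. Concretely, I would use Lemma~\ref{pr}: because $\co$ is a $\s$-orbit of $J$-anti-dominant roots in $\Phi^+ \setminus \Phi_J$ and $(\l, b)$ is Hodge-Newton irreducible, we have $\sum_{\a \in \co} \<\a, \pr_J(\mu_x)\> > 0$, so there exists some $\s^r(\g) \in \co$ (with $\g$ a suitable $\s$-translate of $\xi$) for which $\<w_x \s^r(\g), \mu_x\> \ge 1$, hence $\mu_x + \b^\vee - w_x\s^r(\g)^\vee \preceq \l$ where $\b = \xi^\vee$; combined with $\mu_x + \xi^\vee \preceq \l$ (which holds since $\xi \in C_{\l, b, x}$) this gives $\mu_{x - \g^\vee} = \mu_x + \g^\vee \preceq \l$ and $\mu_{x + \s^r(\g)^\vee} \preceq \l$ after relabelling, i.e.\ $x \overset{(\g, r)}{\to} x'$ with $x' = x - \g^\vee + \s^r(\g)^\vee$. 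I should check that $x'$ indeed lies in $\cs_{\l, b}^+$: $\k_{M_J}$ is unchanged since we added $\s^r(\g)^\vee - \g^\vee \in \ZZ\Phi^\vee$ whose image in $\pi_1(M_J)$ is $\s$-fixed after summing, and the defining inequality $\mu_{x'} \preceq \l$ holds since $\mu_{x'}$ is $W_J$-conjugate to $\mu_x + \g^\vee - w_x\s^r(\g)^\vee \preceq \l$ by Lemma~\ref{orth}(1).

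For the ``Moreover'' part, I want to show $\o_\co \in \ca_{\l, b}$, i.e.\ $\o_\co$ fixes each connected component of $X(\l, b)$. The idea is to telescope: writing $\o_\co = \sum_{i=0}^{n-1}\s^i(\g)^\vee$, I factor the action of $\o_\co$ into $n$ steps, each of the form $x_{i} \overset{(\s^i(\g), 1)}{\to} x_{i+1}$, and at each step invoke Lemma~\ref{link}(1) (the case $1 \le r \le n-1$, which in fact should read $r = 1$ here) to get $g I \sim_{\l, b} g y_i^{-1} I \sim_{\l, b} g \s^i(\g)^\vee I$ inside $X(\l, b)$. Chaining these, for any $g \in \JJ_{b, \tw_x}$ with $gI$ in a given connected component $C$, we get $gI \sim_{\l,b} g\,\o_\co\, I$; but $\o_\co \in \Omega_J^\s \subseteq \JJ_b$, so left-multiplication by $j := g\o_\co g^{-1} \in \JJ_b$ carries $C$ to the component of $g\o_\co I$, which by the chain equals $C$. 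Since $\JJ_b$ acts transitively on $\pi_0(X(\l,b))$ modulo $\ker \eta_G$... actually more carefully: by Proposition~\ref{surj} every component meets $\JJ_{b, \tw_x} I/I$, so it suffices to check the fixing property for one such point, and then equivariance of the $\JJ_b$-action (since $\o_\co$ is central-ish, $g\o_\co g^{-1}$ need not be $\o_\co$, but the relation $gI \sim g\o_\co I$ already says the $\o_\co$-translate of the component of $gI$ is the component of $gI$) gives the claim for all components.

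The main obstacle I anticipate is verifying the hypothesis $U_{-w_J(\g)-1}\tw_x U_{-w_J\s^r(\g)-1} \subseteq I\Adm(\l) I$ needed to apply Lemma~\ref{link}. For $1 \le r \le n-1$ Lemma~\ref{link} explicitly grants this, but one must ensure the intermediate configurations genuinely give single-step moves $x_i \overset{(\s^i(\g),1)}{\to} x_{i+1}$ with all $\mu$'s staying $\preceq \l$ — this requires that $\co$ being of type I forces the relevant $\s$-translates to be saturated in the sense of $x \overset{(\g,r)}{\rightarrowtail} x'$, or that one can reduce to that case via Lemma~\cite[6.5]{N2} as in the proof of Lemma~\ref{link}. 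A secondary subtlety is the case $n = d$ versus $n > d$: when $\co$ has type I with $n = 2d$ or $3d$ (so $\Phi$ need not be simply-laced), one must make sure the chain of single-step moves still lands inside $C_{\l,b,\cdot}$ at each stage and that strong $J$-minusculity is preserved; I expect this follows from Lemma~\ref{saturate} controlling $\tw_x\s^i(\d)$ for the intermediate translates, exactly as in the proof of Proposition~\ref{surj}.
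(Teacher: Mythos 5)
Your treatment of the first assertion is essentially on the right track (it is close to the paper's: use Lemma \ref{pr} to find an element of $\co$ pairing positively against $\mu_x$ via $w_J$, and combine with $\mu_x + \xi^\vee \preceq \l$ from $\xi \in C_{\l,b,x}$ to produce one move), though you should note that Lemma \ref{pr} only guarantees $\<w_J(\b),\mu_x\> \ge 1$ for \emph{some} $\b \in \co$; if that $\b$ is $\xi$ itself, the only move you get is the full-length one $x \overset{(\xi,n)}{\to} x$ with $r = n$ and $x' = x$, and one must also check $\mu_{x'} \preceq \l$ (in the paper this comes from the orthogonality $\<w_J\s^r(\a),\a^\vee\> = 0$ when $1 \le r \le n-1$), not just the two conditions defining the arrow.

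The ``Moreover'' part has a genuine gap. You propose to telescope $\o_\co$ into $n$ single steps $x_i \overset{(\s^i(\g),1)}{\to} x_{i+1}$ and apply Lemma \ref{link}(1) at each step, but nothing guarantees these single-step moves exist: each one requires $\mu_{x_i - \s^i(\g)^\vee} \preceq \l$ and $\mu_{x_i + \s^{i+1}(\g)^\vee} \preceq \l$, and the intermediate classes $x_i$ must lie in $\cs_{\l,b}^+$ (i.e.\ $\mu_{x_i} \preceq \l$); in general none of this holds, and your closing paragraph concedes the key inclusion $U_{-w_J(\g)-1}\tw_x U_{-w_J\s^r(\g)-1} \subseteq I\Adm(\l)I$ is unverified. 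The paper avoids this entirely: in the favorable case ($\<w_J\s^r(\a),\mu_x\> \ge 1$ for some $1 \le r \le n-1$) it uses exactly \emph{two} moves, of lengths $n-r$ and $r$, both $\le n-1$, so Lemma \ref{link}(1) applies to each, and chaining via Corollary \ref{lin} gives $gI \sim_{\l,b} g\o_\co I$; in the unfavorable case ($\<w_J\s^i(\a),\mu_x\> \le 0$ for all $1 \le i \le n-1$), Lemma \ref{pr} forces $\<w_J(\a),\mu_x\> \ge 1$, the only available move is $x \overset{(\a,n)}{\to} x$, and one must invoke Lemma \ref{link}(2) -- the case $x = x'$ -- which your argument never uses and which is precisely what makes the $r = n$ move connectable. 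Without either handling $r = n$ via the $x = x'$ case or proving your single-step decomposition exists, the second assertion is not established.
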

\begin{proof}
Note that $\mu + \a^\vee \preceq \l$. If $\<w_J \s^r(\a), \mu_x\> \ge 1$ for some $1 \le r \le n-1$, then $\<w_J \s^r(\a), \a^\vee\> = 0$, which means $x \overset {(\s^r(\a), n-r)} \to x' \overset {(\a, r)} \to x$ with $x' = x -\s^r(\a^\vee) + \a^\vee \in \cs_{\l, b}^+$. Otherwise, $\<w_J\s^i(\a), \mu_x\> \le 0$ for $1 \le i \le n-1$, which means $\<w_J(\a), \mu_x\> \ge 1$ by Lemma \ref{pr}. So $x \overset {(\a, n)} \to x$ and the first statement follows. As $\co$ is of type I, the second statement follows from Corollary \ref{lin} and Lemma \ref{link} (1) (resp. Lemma \ref{link} (2)) if $r \neq n$ (resp. $r = n$).
\end{proof}

\begin{lem} \label{type-II-1}
Suppose $\co$ is of type II. Assume $\mu_{x''} + \vartheta_\b^\vee \npreceq \l$ for any $x'' \in \cs_{\l, b}^+$ and $\b \in \co$. If there exist $\g \in \co$, $n+1 \le r \le 2n-1$, and $x' \in \cs_{\l, b}^+$ such that $x \overset {(\g, r)} \rightarrowtail x'$, then

(1) $\<\s^i(\g), \mu_x\> = 0$, $w_x \s^i(\g) = \s^i(\g)$ for $1 \le i \neq r-n \le r-1$;

(2) $w_x \s^{r-n}(\g) = \s^{r-n}(\vartheta_\g - \s^n(\g))$ and $\<w_x \s^{r-n}(\g), \mu_x\> = 1$;

(3) $\<w_x(\vartheta_\g - \s^n(\g)), \mu_x\> \ge 1$.

Moreover, $g I \sim_{\l, b} g \o I$ for $g \in \JJ_{b, \tw_x}$, where $\o = \g^\vee + \cdots \s^{r-1}(\g^\vee) \in \pi_1(M_J) \cong \Omega_J$.
\end{lem}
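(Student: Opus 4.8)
The plan is to first extract the combinatorial identities (1)--(3) from the saturation hypothesis $x\overset{(\g, r)}{\rightarrowtail}x'$, and then to feed them into an adaptation of the argument proving Lemma~\ref{link}. Throughout I use that $\co$ is of type~II, so that $n=d$, $\Phi$ is simply-laced, $\co\cap J$ is a set of simple roots of $\Psi=\Phi\cap\ZZ(\co\cup J)$, and $\s^n$ restricts to an involution of the connected component $\Psi_\g$ of $\Psi$ containing $\g$ and $\s^n(\g)$; the subset $S\subseteq\co\cup J$ with $\vartheta_\g=\sum_{\a\in S}\a$ is the path-closure of $\{\g,\s^n(\g)\}$ in $\Psi_\g$, with $\g$ and $\s^n(\g)$ its two endpoints, so in particular $\vartheta_\g-\s^n(\g)\in\Phi^+$.

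To prove (1)--(3) I identify a root $\d\in\Phi$ with the affine root $\d+0$, so that $\tw_x(\d)=w_x(\d)+\<w_x(\d),\mu_x\>$ and $\tw_x(\d)=\d$ means exactly $w_x(\d)=\d$ and $\<\d,\mu_x\>=0$. Lemma~\ref{saturate} then gives $w_x\s^i(\g)=\s^i(\g)$ and $\<\s^i(\g),\mu_x\>=0$ for every $1\le i\le r-1$ with $i\notin n\ZZ$ and $i\not\equiv r\pmod n$, i.e. for all such $i$ except possibly $i=n$ and $i=r-n$. For $i=n$: the two ways of splitting the move through $i=n$ are excluded by $\rightarrowtail$; combining this with $\mu_{x-\g^\vee},\mu_{x+\s^r(\g^\vee)}\preceq\l$, the weak-dominance bookkeeping of Lemma~\ref{pr}, and the standing hypothesis $\mu_{x''}+\vartheta_\b^\vee\npreceq\l$ to rule out the only alternative, one obtains $w_x\s^n(\g)=\s^n(\g)$ and $\<\s^n(\g),\mu_x\>=0$, which completes (1). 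For $i=r-n$: the corresponding splitting is again forbidden, but now $w_x$ need not fix $\s^{r-n}(\g)$; using (1) to identify $\tw_{x'}$ with the conjugate of $\tw_x$ by $\g^\vee+\cdots+\s^{r-1}(\g^\vee)$, the requirements that $\mu_{x'}$ be $J$-minuscule and that $x'\in\cs_{\l,b}^+$, inspected against the short list of components $\Psi_\g$ that can carry a type-II orbit (again invoking $\mu_{x''}+\vartheta_\b^\vee\npreceq\l$ to exclude $w_x\s^{r-n}(\g)=\s^{r-n}(\g)$ and every other candidate), force $w_x\s^{r-n}(\g)=\s^{r-n}(\vartheta_\g-\s^n(\g))$ with $\<w_x\s^{r-n}(\g),\mu_x\>=1$, giving (2). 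Part (3) then follows from Lemma~\ref{pr} applied to the $\s$-orbit of the $J$-anti-dominant $W_J$-conjugate of $\vartheta_\g-\s^n(\g)$, together with (2).

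For the ``Moreover'' part I put $\tilde\th=w_J\s^{r-1}(\g)+1\in\tPhi^+$ (legitimate since $J$, hence $w_J$, is $\s$-stable) and $\textsl{g}=\textsl{g}_{g,-\tilde\th,\tw_x,r}$ for $g\in\JJ_{b,\tw_x}$, and run the computation from the proof of Lemma~\ref{link} while keeping track of the single index $i=r-n$ at which $\tw_x$ twists by (2). This yields $\textsl{g}\i b\s(\textsl{g})\subseteq U_{-w_J(\g)-1}\,\tw_x\,U_{-w_J\s^r(\g)-1}$, and the one potentially dangerous reflection, contributed by $i=r-n$, is absorbed into $I\Adm(\l)I$ by Lemma~\ref{orth} and Lemma~\ref{anti}, using (2) and $\mu_{x-\g^\vee},\mu_{x+\s^r(\g^\vee)}\preceq\l$. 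Hence $gI=\textsl{g}(0)\sim_{\l,b}\textsl{g}(\infty)=g\,s_{\tilde\th}\cdots s_{\s^{r-1}(\tilde\th)}I$; writing $s_{\tilde\th}\cdots s_{\s^{r-1}(\tilde\th)}=\o u\i$ with $u\in W_0$ and $\o=\g^\vee+\cdots+\s^{r-1}(\g^\vee)\in\Omega_J$ by \cite[Lemma 1.3]{CN1}, Corollary~\ref{lin} gives $gI\sim_{\l,b}g\o I$, as required.

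The main obstacle will be part~(2): ruling out every candidate for $w_x\s^{r-n}(\g)$ except $\s^{r-n}(\vartheta_\g-\s^n(\g))$ requires a genuine case inspection of the type-II configurations $(\Psi_\g,\co\cap J,\mu_x)$, and it is precisely here that the hypothesis $\mu_{x''}+\vartheta_\b^\vee\npreceq\l$ is indispensable; a secondary point is verifying that the admissibility estimate used in the ``Moreover'' part, carried out in Lemma~\ref{link} only for $r\le n$, still closes in the range $n+1\le r\le 2n-1$ once (1) and (2) are available.
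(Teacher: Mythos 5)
The main problem is in your ``Moreover'' part: the containment you claim, $\textsl{g}\i b\s(\textsl{g})\subseteq U_{-w_J(\g)-1}\,\tw_x\,U_{-w_J\s^r(\g)-1}$, is exactly the untwisted formula from Lemma~\ref{link}, and it fails here precisely because of the twist at $i=r-n$ recorded in (2). In the telescoping computation the leftmost factor is the root subgroup of $-(\tw_x\s)^{1-r}(\tilde\th)$, and by (1)--(2) one has $(\tw_x\s)^{i}(\tilde\th)=w_J\s^{i+r-1}(\vartheta_\g-\s^n(\g))$ (up to the affine part) for $1-r\le i\le -n$, not $\s^{i}(\tilde\th)$; moreover the factors attached to indices $i\le -n$ and $i\ge 1-n$ do not commute (their roots sum to conjugates of $\vartheta_\g$), so commutator terms appear. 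The paper's proof therefore lands in $I\,U_{-w_J(\tilde\vartheta_\g)}\,\tw_x\,U_{-\s(\tilde\th)}\,I$ and absorbs the left factor using (1) together with (3), i.e.\ $\<w_x(\vartheta_\g),\mu_x\>\ge 1$; note that $U_{-w_J(\tilde\vartheta_\g)}$ cannot be handled by admissibility of $s\tw_x$ or $\tw_x s$, since under the standing hypothesis $\mu_{x''}+\vartheta_\b^\vee\npreceq\l$ the reflection in $\vartheta_\g$ takes you out of $\Adm(\l)$. Your proposal never uses (3) in this step (you invoke only Lemma~\ref{orth}, Lemma~\ref{anti}, (2) and $\mu_{x-\g^\vee},\mu_{x+\s^r(\g^\vee)}\preceq\l$), so the one genuinely dangerous contribution is not dealt with. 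For the same reason your limit formula $\textsl{g}(\infty)=g\,s_{\tilde\th}\cdots s_{\s^{r-1}(\tilde\th)}\,I$ is not correct: by Lemma~\ref{limit} the reflections occurring are those of the roots $(\tw_x\s)^{i}(\tilde\th)$, which for $i\le -n$ are conjugates of $\vartheta_\g-\s^n(\g)$, not of $\g$; one still gets $g\o u\i I$ with $\o=\g^\vee+\cdots+\s^{r-1}(\g^\vee)$, but only after tracking these twisted roots.

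A secondary point: for (1)--(3) the paper does not redo any case analysis; it simply rewrites the move as $x'=x+\s^r(\g^\vee)-\s^{-r}(\s^r(\g^\vee))$ and quotes \cite[Lemma 8.2]{N2} with $\s\i$ in place of $\s$. Your sketch for the exceptional indices $i=n$ and $i=r-n$ (``weak-dominance bookkeeping'', ``inspection against the short list of components $\Psi_\g$'') gestures at the right kind of argument but is not carried out, so as written it is an assertion rather than a proof; if you do not want to cite \cite{N2}, those two indices need a genuine argument of the type given there.
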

\begin{proof}
Write $x' = x + \s^r(\g^\vee) - \s^{-r}(\s^r(\g^\vee))$. Then (1), (2) and (3) follow from \cite[Lemma 8.2]{N2} by using $\s\i$ instead of $\s$. Let $\tilde \th = w_J\s^{r-1}(\g) + 1 \in \tPhi^+$ and $\tilde \vartheta_\g = \vartheta_\g + 1 \in \tPhi^+$. By (1) and (2) we have $(\tw_x \s)^i(\tilde \th) = \s^i(\tilde \th) = w_J \s^{i+r-1}(\g) + 1$ for $1-n \le i \le 0$, and $$(\tw_x \s)^i(\tilde \th) = \s^{i+n-1} w_x\i w_J \s^{r-n}(\g) = \s^{i+n-1} w_J w_x \s^{r-n}(\g) = w_J \s^{i+r-1}(\vartheta_\g - \s^n(\g))$$ for $1-r \le i \le -n$. Define $\textsl{g} = \textsl{g}_{g, -\tilde\th, \tw_x, r}$ for $g \in \JJ_{b, \tw_x}$. Then we have $$\textsl{g}\i b \s(\textsl{g}) \subseteq I U_{-w_J(\tilde \vartheta_\g)} \tw_x U_{-\s(\tilde \th)} I \subseteq I \tw_x U_{-\s(\tilde \th)} I \subseteq I \Adm(\l) I,$$ where the second inclusion follows from (1) and (3) that $\<w_x(\vartheta_\g), \mu_x\> \ge 1$. Thus $$g I = \textsl{g}(0) \sim_{\l, b} \textsl{g}(\infty) = g s_{(\tw_x\s)^{1-r}(\tilde\th)} \cdots s_{(\tw_x\s)\i(\tilde\th)} s_{\tilde \th} I = g \o u\i I,$$ where $u \in W_0$ and $\o = \g^\vee + \cdots + \s^{r-1}(\g^\vee) \in \pi_1(M_J) \cong \Omega_J$. By Corollary \ref{lin} we have $g I \sim_{\l, b} g \o I$ as desired.
\end{proof}

\begin{lem} \label{type-II-2}
Suppose $\co = \co_\xi$ for some $\xi \in C_{\l, b, x}$ and $\co$ is of type II. Assume $\mu_{x''} + \vartheta_\b^\vee \npreceq \l$ for any $x'' \in \cs_{\l, b}^+$ and $\b \in \co$. If there do not exist $\g \in \co$, $1 \le r \le 2n-1$, and $x' \in \cs_{\l, b}^+$ such that $x \overset {(\g, r)} \rightarrowtail x'$. Then there exists $\a \in \co$ such that

(1) $\<\s^i(\a), \mu_x\> = 0$, $w_x \s^i(\a) = \s^i(\a)$ for $1 \le i \neq n \le 2n-1$;

(2) $w_x \s^n(\a) = \vartheta_\a - \a$ and $\<w_J \s^n(\a), \mu_x\> = 1$;

(3) $\<w_x(\vartheta_\a), \mu_x + \a^\vee\> \ge 1$;

(4) $\<w_x(\vartheta_\a), \mu_x\> \ge 1$.

As a consequence, $\o_\co \in \ca_{\l, b}$.
\end{lem}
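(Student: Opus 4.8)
The plan is to treat the two assertions separately: first establish the structural facts (1)--(4) for an appropriate $\a\in\co$ by root-system combinatorics, and then deduce $\o_\co\in\ca_{\l,b}$ from them by a curve argument modeled on Lemmas \ref{link} and \ref{type-II-1}.

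For (1)--(4) I would start from the given $\xi\in C_{\l,b,x}$, so that $\mu_x+\xi^\vee\preceq\l$ with $\xi^\vee$ strongly $J$-minuscule and $J$-anti-dominant, and use the hypothesis that no move $x\overset{(\g,r)}{\rightarrowtail}x'$ exists for $\g\in\co$ and $1\le r\le 2n-1$. Since every $\overset{(\g,r)}{\to}$-relation refines into a chain of $\rightarrowtail$-moves with smaller shifts (the mechanism used in the proof of Lemma \ref{link} via \cite[Lemma 6.5]{N2}), the hypothesis in fact forbids all $\overset{(\g,r)}{\to}$-moves into $\cs_{\l,b}^+$ with $\g\in\co$ and $1\le r\le 2n-1$. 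Testing this obstruction against the cocharacters $\mu_x\pm\s^i(\xi^\vee)$, and feeding in Lemma \ref{pr} (which gives $\sum_{\b\in\co}\langle\b,\pr_J(\mu_x)\rangle>0$, hence $\langle w_J(\b),\mu_x\rangle\ge 1$ for some $\b\in\co$), forces, after replacing $\xi$ by the distinguished $\a\in\co$ that the analysis produces, exactly the rigid picture recorded in (1)--(4): $w_x$ fixes $\s^i(\a)$ for $i\not\equiv 0,n$, it sends $\s^n(\a)$ to $\vartheta_\a-\a$, and the pairings of $\mu_x$ and $\mu_x+\a^\vee$ with $w_x(\vartheta_\a)$ and with $w_J\s^n(\a)$ are as stated. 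This is the boundary case ($r=2n$) of the configuration isolated in \cite{N2}, i.e.\ the same type of case-by-case root-system argument that underlies the structural parts of Lemmas \ref{type-II-1} and \ref{weak} (run with $\s$ replaced by $\s^{-1}$), so I would carry out that argument; the subtle point is the degenerate configuration of $E_6$-type, where $\s^n$ acts nontrivially on $\Psi_\a\cap J_0$, analogous to Lemma \ref{weak}(4).

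For the consequence, fix $g\in\JJ_{b,\tw_x}$ with $\a\in\co$ as in (1)--(4). Note that $\vartheta_\a$ is $\s^n$-fixed and $\vartheta_\a^\vee\equiv\a^\vee+\s^n(\a)^\vee\pmod{\ZZ\Phi_J^\vee}$, so the partial sums along $\co$ collapse to $\sum_{i=0}^{2n-1}\s^i(\a^\vee)=\o_\co$ in $\pi_1(M_J)$. I would form the curve $\textsl{g}=\textsl{g}_{g,-\tilde\th,\tw_x,2n}$ with $\tilde\th=w_J\s^{2n-1}(\a)+1\in\tPhi^+$; by (1) and (2) the affine roots $(\tw_x\s)^i(\tilde\th)$ for $1-2n\le i\le 0$ equal $\s^i(\tilde\th)$ off the single index carrying the ``$\vartheta_\a$-defect'', where they are controlled by $\vartheta_\a$. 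Exactly as in the proof of Lemma \ref{type-II-1}, this gives
\[\textsl{g}\i b\s(\textsl{g})\subseteq I\,U_{-w_J(\vartheta_\a)-1}\,\tw_x\,U_{-\s(\tilde\th)}\,I\subseteq I\,\tw_x\,U_{-\s(\tilde\th)}\,I\subseteq I\Adm(\l)I,\]
the middle inclusion absorbing $U_{-w_J(\vartheta_\a)-1}$ into $I\tw_x$ by (4) (that $\langle w_x(\vartheta_\a),\mu_x\rangle\ge 1$) together with Lemma \ref{orth} and Lemma \ref{anti}, and the hypotheses of Lemma \ref{limit} holding since the $2n$ roots in question are linearly independent. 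Evaluating $\textsl{g}$ at $0$ and $\infty$ and rewriting the resulting product of simple affine reflections via \cite[Lemma 1.3]{CN1} yields $gI\sim_{\l,b}g\,\o_\co u\i\,I$ for some $u\in W_0$, and Corollary \ref{lin} removes the $u\i$, so $gI\sim_{\l,b}g\o_\co I$. Since $g\o_\co I=(g\o_\co g\i)\cdot gI$ with $g\o_\co g\i\in\JJ_b$ and $\eta_{M_J}(g\o_\co g\i)=\eta_{M_J}(\o_\co)$, Proposition \ref{normal} lets me replace $g\o_\co g\i$ by $\o_\co$ itself; thus $\o_\co$ fixes the connected component of $gI$, and by transitivity of the $\JJ_b$-action on $\pi_0(X(\l,b))$ (Proposition \ref{surj}) together with Proposition \ref{normal} once more, $\o_\co$ fixes every component, i.e.\ $\o_\co\in\ca_{\l,b}$. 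The main obstacle is the structural step: distilling the non-existence hypothesis into the precise configuration (1)--(4), where the type-by-type analysis and in particular the $E_6$-type degeneracy require genuine care; once (1)--(4) are available, the curve computation is a routine variant of Lemma \ref{type-II-1}.
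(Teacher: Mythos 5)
Your structural step (1)--(4) is only sketched, but that is the lesser issue: the paper simply imports (1)--(3) from \cite[Lemma 8.3 \& Lemma 8.4]{N2} and gets (4) from (1) and Lemma \ref{pr}, so a combinatorial rederivation is plausible even if you have not carried it out. The genuine gap is in your deduction of $\o_\co \in \ca_{\l, b}$. You take the naive one-parameter curve $\textsl{g}_{g, -\tilde\th, \tw_x, 2n}$ with $\tilde\th = w_J\s^{2n-1}(\a)+1$ and claim the containment $\textsl{g}\i b\s(\textsl{g}) \subseteq I U_{-w_J(\vartheta_\a)-1}\tw_x U_{-\s(\tilde\th)} I$ follows ``exactly as in the proof of Lemma \ref{type-II-1}''. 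It does not: Lemma \ref{type-II-1} only covers $r \le 2n-1$, and the present lemma is precisely the boundary configuration where that telescoping breaks. By (1)--(2) one has $(\tw_x\s)^{-n}(\tilde\th) = w_J\s\i(\vartheta_\a - \a)$, a \emph{finite} root sitting in the interior of the twist orbit; the corresponding factor ${}^{(\tw_x\s)^{-n}}U_{-\tilde\th}(z)$ does not commute with $U_{-\tilde\th}(z)$, their commutator being a $U_{-w_J\s\i(\vartheta_\a)-1}$--term with argument of order $z^{1+q^{-n}}$, so the product of $2n$ conjugated factors neither telescopes to the element you wrote nor has the limit at $\infty$ that you assert (the reflections in the finite roots beyond the defect carry no translation part, so the endpoint is not visibly $g\o_\co u\i I$). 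This is exactly why the paper abandons the simple curve here and builds a two-parameter curve of length $n$, interleaving ${}^{(\tw_x\s)^i}U_{-\tilde\th}(z)$ with compensating factors ${}^{(\tw_x\s)^i}U_{-\tilde\vartheta}(c z^{1+q^{-n}})$, $1-n \le i \le 0$, with $c \in \co_\brF^\times$ tuned so that ${}^{(\tw_x \s)^{-n}} U_{-\tilde \th}(z) U_{-\tilde \th}(z) U_{-\tilde \vartheta}(c z^{1 + q^{-n}}) = U_{-\tilde \th}(z) {}^{(\tw_x \s)^{-n}} U_{-\tilde \th}(z)$; its limit at $\infty$ is $g (s_{\tilde \vartheta} s_{\th'}) \cdots \s^{1-n}(s_{\tilde \vartheta} s_{\th'}) I = g\o_\co u\i I$. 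Some such new construction is the essential content of the ``consequence'', and your proposal is missing it.

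A secondary inaccuracy: even granting your containment, absorbing $U_{-w_J(\vartheta_\a)-1}$ into $I\tw_x$ is not ``by (4) together with Lemma \ref{orth} and Lemma \ref{anti}''. One needs both ${}^{\tw_x\i}U_{-w_J(\vartheta_\a)-1} \subseteq I$ and control of the commutator with $U_{-\s(\tilde\th)}$, and for the latter the paper uses the dichotomy extracted from (3) \emph{and} (4), namely that either $\<w_x(\vartheta_\a), \mu_x\> \ge 2$ or $\<w_x(\vartheta_\a), \a^\vee\> \ge 0$; condition (4) alone does not suffice. Your final bookkeeping (passing from $gI \sim_{\l,b} g\o_\co I$ to $\o_\co \in \ca_{\l,b}$ via $g\o_\co g\i \in \JJ_b$ and Proposition \ref{normal}) is fine, if more elaborate than needed given that $\ca_{\l,b}$ is defined by ``some/any'' component.
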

\begin{proof}
The statements (1), (2) and (3) follow from \cite[Lemma 8.3 \& Lemma 8.4]{N2}. Note that $\vartheta_\a$ is $J$-anti-dominant. So (4) follows from (1) and Lemma \ref{pr}. By (3) and (4) we have \begin{align*} \tag{a} \<w_x(\vartheta_\a), \mu_x\> \ge 1, \text{ and either } \<w_x(\vartheta_\a), \mu_x\> \ge 2  \text{ or } \<w_x(\vartheta_\a), \a^\vee\> \ge 0. \end{align*} Let $g \in \JJ_{b, \tw_x}$ and $\tilde \th = w_J\s\i(\a) + 1 \in \tPhi^+$, and $\tilde \vartheta = w_J \s\i(\vartheta_\a) + 1 \in \tPhi^+$. By (1) and (2) we have $(\tw_x\s)^{1-n}(\tilde \vartheta) = \s^{1-n}(\tilde \vartheta) = w_J(\vartheta_\a)$ and $$(\tw_x\s)^{-n}(\tilde \th) = \s\i w_x\i w_J \s^{-n}(\a) = \s\i w_J w_x \s^{-n}(\a) = w_J \s\i(\vartheta_\a - \a).$$

Define $\textsl{g}: \PP^1 \to G(\brF) / I$ by $$\textsl{g}(z) = g U_{-\tilde \th}(z) \cdots {}^{(\tw_x \s)^{1-n}} U_{-\tilde \th}(z)  U_{-\tilde \vartheta}(c z^{1 + q^{-n}}) \cdots {}^{(\tw_x \s)^{1-n}} U_{-\tilde \vartheta}(c z^{1 + q^{-n}}) I,$$ where $c \in \co_\brF^\times$ (as $\Phi$ is simply-laced) such that $${}^{(\tw_x \s)^{-n}} U_{-\tilde \th}(z) U_{-\tilde \th}(z) U_{-\tilde \vartheta}(c z^{1 + q^{-n}}) = U_{-\tilde \th}(z) {}^{(\tw_x \s)^{-n}} U_{-\tilde \th}(z).$$

Then by (1) we compute that $$\textsl{g}\i b \s(\textsl{g}) =  U_{-w_J(\tilde \vartheta_\a)} \tw_x U_{-\s(\tilde \th)} I \subseteq I \tw_x U_{-\s(\tilde \th)} I \subseteq I \Adm(\l)\s I,$$ where the first inclusion follows by (a) that ${}^{\tw_x\i} U_{-w_J(\tilde \vartheta_\a)}, [{}^{\tw_x\i} U_{-w_J(\tilde \vartheta_\a)}, U_{-\s(\tilde \th)}] \subseteq I$. Thus $$g I = \textsl{g}(0) \sim_{\l, b} \textsl{g}(\infty) = g (s_{\tilde \vartheta} s_{\th'}) \cdots \s^{1-n}( s_{\tilde \vartheta} s_{\th'}) I = g \o_\co u\i I,$$ where $\th' = (\tw_x\s)^{1-n}(\tilde \vartheta) \in \Phi$ and $u \in W_0$. By Corollary \ref{lin} we have $g I \sim_{\l, b} g \o_\co I$ and $\o_\co \in \ca_{\l, b}$ as desired.
\end{proof}

\subsection{} Now we have the following result.
\begin{prop} \label{generator}
Let $\co$ be the $\s$-orbit of some element in $\cup_{x \in \cs_{\l, b}^+} C_{\l, b, x}$. Then $\o_\co \in \ca_{\l, b}$.
\end{prop}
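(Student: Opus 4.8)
The plan is to argue by the type of the $\s$-orbit $\co$, reducing each case to the lemmas already established in \S\ref{sec-ker}. Write $\co = \co_\xi$ with $\xi \in C_{\l, b, x}$ for some $x \in \cs_{\l, b}^+$. If $\co$ is of type I, then $\o_\co \in \ca_{\l, b}$ is exactly Lemma \ref{type-I}. If $\co$ is of type III, then $\s$ acts through a triality (so $\Phi$ has a factor of type $D_4$ and $3 \mid d$), and I would postpone this case to \S\ref{sec-order-3}, where the order-three phenomena are treated separately. So the real content is the type II case, where $\Phi$ is simply-laced, $n = d$, and $\co \cup J$ is a set of simple roots of $\Psi$.

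For type II I would split according to whether $\mu_{x''} + \vartheta_\b^\vee \preceq \l$ for some $x'' \in \cs_{\l, b}^+$ and $\b \in \co$. If so, then by the structure of $\vartheta_\b$ recorded in \cite[\S 4.7]{CKV} and \cite{N2}, $\vartheta_\b^\vee$ is $J$-anti-dominant and, since $\Phi$ is simply-laced, strongly $J$-minuscule, so $\vartheta_\b \in C_{\l, b, x''}$; the $\s$-orbit $\co_{\vartheta_\b}$ has exactly $n$ elements — a shorter period would place $\b$ and some $\s^{n'}(\b)$, $n' < n$, in one connected component of $\Psi$, against the minimality of $n$ in the definition of type II — and it is of type I. Hence Lemma \ref{type-I} gives $\o_{\co_{\vartheta_\b}} \in \ca_{\l, b}$. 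Since $\Phi$ is simply-laced, $\vartheta_\b^\vee$ is the sum of the simple coroots of $\Psi$ in its support, whose only members outside $J$ are $\b$ and $\s^n(\b)$, so $\vartheta_\b^\vee \equiv \b^\vee + \s^n(\b)^\vee \pmod{\ZZ\Phi_J^\vee}$; summing over the $\s$-orbit then gives $\o_{\co_{\vartheta_\b}} = \o_\co$ in $\pi_1(M_J)$, and we are done.

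In the remaining sub-case $\mu_{x''} + \vartheta_\b^\vee \npreceq \l$ for every $x'' \in \cs_{\l, b}^+$ and $\b \in \co$ — precisely the standing hypothesis of Lemmas \ref{type-II-1} and \ref{type-II-2}, which also makes part (3) of Lemma \ref{link} available when $r = n$. I would then iterate the following step, starting from $x$. If there is no move $x \overset{(\g, r)}{\rightarrowtail} x'$ with $\g \in \co$, $1 \le r \le 2n-1$ and $x' \in \cs_{\l, b}^+$, then Lemma \ref{type-II-2} applies and yields $\o_\co \in \ca_{\l, b}$. Otherwise, fixing such a move, by Lemma \ref{link} (for $1 \le r \le n$, its part (3) covering $r = n$ via the standing hypothesis) or by Lemma \ref{type-II-1} (for $n + 1 \le r \le 2n - 1$) one gets, for $g \in \JJ_{b, \tw_x}$, a relation $gI \sim_{\l, b} g\o I$ with $\o = \g^\vee + \cdots + \s^{r-1}(\g^\vee) \in \Omega_J$ and $g\o \in \JJ_{b, \tw_{x'}}$, and $\co = \co_{\xi'}$ for a suitable $\xi' \in C_{\l, b, x'}$, so the step can be repeated at $x'$. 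As $\cs_{\l, b}^+$ is finite, this process either stops at a vertex to which Lemma \ref{type-II-2} applies, or revisits a vertex, producing a closed loop $x = x_0 \overset{(\g_0, r_0)}{\rightarrowtail} x_1 \overset{(\g_1, r_1)}{\rightarrowtail} \cdots \overset{(\g_{k-1}, r_{k-1})}{\rightarrowtail} x_k = x_0$ with $gI \sim_{\l, b} g\Omega_{\mathrm{tot}} I$, where $\Omega_{\mathrm{tot}} = \o_0 \cdots \o_{k-1} \in \Omega_J$. Its image $\sum_i \o_i$ in the abelian group $\pi_1(M_J)$ is $\s$-fixed, since $\sum_i(\o_i - \s(\o_i)) = \sum_i(\g_i^\vee - \s^{r_i}(\g_i^\vee)) = -(x_k - x_0) = 0$; and, arranging the moves so that the loop winds exactly once around the cyclic $\s$-orbit $\co$, this image equals $\o_\co$. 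Hence $gI \sim_{\l, b} g\o_\co I$ for all $g \in \JJ_{b, \tw_{x_0}}$, and $\o_\co \in \ca_{\l, b}$ by Proposition \ref{surj}.

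The main obstacle is the loop in this last sub-case: one has to set up the moves so that they close up after exactly one traversal of $\co$ — that is, control the winding number so that $\Omega_{\mathrm{tot}}$ is genuinely $\o_\co$ rather than a partial sum or a proper multiple — and one has to check that the hypotheses of Lemmas \ref{link}, \ref{type-II-1} and \ref{type-II-2} propagate along the path, in particular that each new vertex $x'$ again carries an element of $C_{\l, b, x'}$ with $\s$-orbit $\co$. By contrast, the type I case, the deferral of type III to \S\ref{sec-order-3}, and the reduction of the first type II sub-case to type I are all routine.
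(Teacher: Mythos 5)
Your treatment of type I (Lemma \ref{type-I}), the deferral of type III to \S\ref{sec-order-3}, and the reduction of the sub-case $\mu_{x''}+\vartheta_\b^\vee\preceq\l$ to a type I orbit (including the identity $\o_{\co_{\vartheta_\b}}=\o_\co$ in $\pi_1(M_J)$) all agree with the paper. The gap is exactly where you place it yourself: in the remaining type II sub-case you iterate moves $x\overset{(\g,r)}{\rightarrowtail}x'$ and, when the walk revisits a vertex, you need the resulting closed loop to wind exactly once around $\co$, i.e.\ $\Omega_{\mathrm{tot}}=\o_\co$ rather than a partial sum or a proper multiple $k\o_\co$. You do not establish this, and it is not cosmetic: since $\ca_{\l,b}$ is merely a subgroup of $\pi_1(M_J)^\s$, knowing $k\o_\co\in\ca_{\l,b}$ for an uncontrolled $k$ says nothing about $\o_\co$ itself; likewise the propagation of the hypotheses of Lemma \ref{link} to each new vertex of the path is left unchecked. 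As written, the crucial case is therefore not proved.

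The paper dissolves the difficulty with one observation you are missing: no long walk is needed, because a single move already closes into a loop of length two with winding number one. If Lemma \ref{type-II-2} does not apply, there exist $\g\in\co$, $1\le r\le 2n-1$ and $x'\in\cs_{\l,b}^+$ with $x\overset{(\g,r)}{\to}x'$; since $\s^{2n}(\g)=\g$ and the defining conditions of the relation are symmetric, the reverse move $x'\overset{(\s^r(\g),\,2n-r)}{\to}x$ holds automatically. Applying Lemma \ref{link} (1) \& (3) when the length is at most $n$, and Lemma \ref{type-II-1} when it lies between $n+1$ and $2n-1$ --- once to the move at $x$ and once to its reverse at $x'$ --- gives $gI\sim_{\l,b}g\o I\sim_{\l,b}g\o\o' I$ with $\o=\g^\vee+\cdots+\s^{r-1}(\g^\vee)$ and $\o'=\s^r(\g^\vee)+\cdots+\s^{2n-1}(\g^\vee)$, so $\o\o'=\o_\co$ on the nose and $\o_\co\in\ca_{\l,b}$. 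Note also that Lemma \ref{type-II-1} imposes no condition of the form $\co=\co_\a$ with $\a\in C_{\l,b,x'}$, so the propagation issue you worry about does not arise in this two-step argument. Replacing your iteration by this single back-and-forth step turns your outline into the paper's proof.
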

\begin{proof}
If $\co$ is of type I, the statement follows from Lemma \ref{type-I}. If $\mu_{x''} + \vartheta_\b^\vee \preceq \l$ for some $x'' \in \cs_{\l, b}^+$ and $\b \in \co$, then we also have $\o_\co = \o_{\co_{\vartheta_\b}} \in \ca_{\l, b}$ since  $\co_{\vartheta_\b}$ is of type I. Assume $\mu_{x''} + \vartheta_\b^\vee \npreceq \l$ for any $x'' \in \cs_{\l, b}^+$ and $\b \in \co$. If $\co$ is of type III, the statement is proved in \S \ref{subsec-III}. Suppose $\co$ is of type II. By Lemma \ref{type-II-2} we can assume that there exist $\g \in \co$, $1 \le r \le 2n-1$, and $x' \in \cs_{\l, b}^+$ such that $x \overset {(\g, r)} \to x'$, and hence $x' \overset {(\s^r(\g), 2n-r)} \to x$. If $n+1 \le r \le 2n-1$ (resp. $1 \le r \le n$), we have $g I \sim_{\l, b} g \o I$ by Lemma \ref{type-II-1} (resp. by Lemma \ref{link} (1) \& (3)), where $\o = \g^\vee + \cdots \s^{r-1}(\g^\vee) \in \pi_1(M_J) \cong \Omega_J$. Similarly, we have $g \o \sim_{\l, b} g \o \o' I = g \o_\co I$, where $\o' = \s^r(\g^\vee) + \cdots + \s^{2n-1}(\g^\vee) \in \pi_1(M_J) \cong \Omega_J$. So $g I \sim_{\l, b} g \o_\co I$ and $\o_\co \in \ca_{\l, b}$ as desired.
\end{proof}

\begin{proof}[Proof of Proposition \ref{ker-G}]
First note that $(\ZZ\Phi^\vee / \ZZ\Phi_J^\vee)^\s$ is panned by $\o_\co$, where $\co$ ranges over $\s$-orbits of $\SS_0$. Let $J \subseteq \SS_0' \subseteq \SS_0$ be such that $\o_\co \in \ca_{\l, b}$ for each $\s$-orbit of $\SS_0'$. It suffices to show $\SS_0' = \SS_0$. Assume otherwise. Following the proof of \cite[Proposition 4.3]{N2}, we can assume that $\Phi$ is simply-laced, and there exist $\a = \s^d(\a) \in \SS_0 \setminus \SS_0'$, $\vartheta = \s^d(\vartheta) \in \Phi^+$ such that $\vartheta^\vee - \a^\vee \in \ZZ \Phi_{\SS_0'}^\vee$ and either (b1) $\vartheta \in \cup_{x \in \cs_{\l, b}^+} C_{\l, b, x}$, or (b2) $x \overset {(\b, d)} \to x'$ and $x \overset {(\vartheta + \b, d)} \to x'$ for some $x \in \cs_{\l, b}^+$ and $\b \in \Phi_{\SS_0'} \setminus \Phi_J$ such that $x' = x - \b^\vee + \s^d(\b^\vee) \in \cs_{\l, b}^+$ and $\vartheta + \b \in \Phi^+$.

Note that $|\co_\a| = |\co_{\vartheta}| = d$ and $\o_{\co_\a}\i \o_{\co_\vartheta} \in (\ZZ \Phi_{\SS_0'}^\vee / \ZZ\Phi_J^\vee)^\s \subseteq \ca_{\l, b}$. If (b1) occurs, then $\o_{\co_\vartheta} \in \ca_{\l, b}$ by Proposition \ref{generator}. Hence $\o_\co \in \ca_{\l, b}$ and $\a \in \SS_0'$, which is a contradiction. Suppose (b2) occurs. Let $\o = \b^\vee + \cdots + \s^{d-1}(\b^\vee) \in \pi_1(M_J) \cong \Omega_J$. Then $\o \o_{\co'} = (\b + \vartheta)^\vee + \cdots + \s^{d-1}((\b+\vartheta)^\vee) \in \pi_1(M_J) \cong \Omega_J$. We claim that \begin{align*} \tag{a} g \o I \sim_{\l, b} g I \sim_{\l, b} g \o \o_{\co_\vartheta} I \text{ for } g \in \JJ_{b, \tw_x}. \end{align*} Given (a) we have $g \o I \sim_{\l, b} g \o \o_{\co_\vartheta} I$, and hence $\o_{\co_\vartheta} \in \ca_{\l, b}$, which is again a contradiction. Thus $\SS_0' = \SS_0$ as desired.

It remains to show (a). By symmetry, it suffices to show $g I \sim_{\l, b} g \o I$. By switching $x$ with $x'$ we can assume $\b \in \Phi^+ \setminus \Phi_J$ and $\b$ is $J$-anti-dominant and $J$-minuscule (see \cite[Lemma 6.6]{N2}). In particular, $\s^d(\b) \in C_{\l, b, x}$. If $\co_\b$ is of type I, it follows from Lemma \ref{link}. If $\co_\b$ is of type III, it follows from Lemma \ref{small}. If $\co_\b$ is of type II, by Lemma \ref{line} and Lemma \ref{link} we have $$\text{either } g I \sim_{\l, b} g \o I \text{ or } g\o \sim_{\l, b} g \o \o' I = g \o_{\co_\b} I \text{ for } g \in \JJ_{b, \tw_x},$$ where $ \o' = \s^d(\b^\vee) + \cdots + \s^{2d-1}(\b^\vee) \in \pi_1(M_J) \cong \Omega_J$. Note that $g I \sim_{\l, b} g \o_{\co_\b} I$ by Proposition \ref{generator}. So we always have $g I \sim_{\l, b} g \o I$ as desired. So (a) is proved.
\end{proof}

\section{The case that $\s$ has order $3d$} \label{sec-order-3}
We assume that $\s$ has order $3d$. Then some/any connected component of $\SS_0$ is of type $D_4$.

\subsection{} \label{sec-case}
Let $\a, \b \in \SS_0$ such that $\<\a, \b^\vee\> = -1$ and $\b = \s^d(\b)$. Then the subset $\{\a, \s^d(\a), \s^{2d}(\a), \b\}$ is a connected component of $\SS_0$. Assume $J = \co_\b$.

Let $x, x' \in \cs_{\l, b}^+$ such that $x \overset {(\a, r)} \to x'$ for some $J$-anti-dominant root $\a \in \Phi^+ \setminus \Phi_J$ and $1 \le r \le 3d-1$. Let $\o = \g^\vee + \cdots + \s^{r-1}(\g)^\vee \in \pi_1(M_J) \cong \Omega_J$.
\begin{lem} \label{small}
If $1 \le r \le d$, then $g I \sim_{\l, b} g y\i I$ for $g \in \JJ_{b, \tw_x}$ and some $y \in W_0^J \o\i W_J^a$.
\end{lem}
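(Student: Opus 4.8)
The statement concerns a $\s$-orbit $\co_\b$ of type III (so $\Phi$, hence each irreducible factor, is of type $D_4$ and $\s$ has order $3d$), and we want to connect $gI$ with $gy\i I$ for $y$ lying in the appropriate coset $W_0^J\o\i W_J^a$, where $\o=\g^\vee+\cdots+\s^{r-1}(\g)^\vee$, in the regime $1\le r\le d$. I would treat this exactly as in the type I and type II cases handled by Lemma \ref{link}: realise the connection by an explicit $\PP^1$-family $\textsl{g}_{g,\,-\tilde\th,\,\tw_x,\,r}$ with $\tilde\th=w_J\s^{r-1}(\g)+1\in\tPhi^+$, and show that $\textsl{g}(0)=gI$ while $\textsl{g}(\infty)=g\o u\i I$ for some $u\in W_0$, so that $y=u\o\i$ lies in $W_0^JW_J^a\o\i$ (equivalently, $y\i\in W_0^J\o\i W_J^a$).

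\textbf{Step 1 (reduce to a saturated chain).} Using \cite[Lemma 6.5]{N2} I would first replace the arrow $x\overset{(\g,r)}\to x'$ by a saturated one $x\overset{(\g,r)}\rightarrowtail x'$, which guarantees $(\tw_x\s)^i(\tilde\th)=\s^i(\tilde\th)$ for $1-r\le i\le 0$; this is the key flatness input that makes the affine root subgroups along the family behave linearly. For $1\le r\le d$ this is a clean situation: since $n=d$ for type III and $r\le d$, the relevant roots $\g,\s(\g),\dots,\s^{r-1}(\g)$ lie in distinct connected components of $\Phi$ translated by powers of $\s$, so there is no interaction between them and between $\g$ and $\s^r(\g)$, i.e. $\<w_J(\g),w_x w_J\s^r(\g^\vee)\>=0$. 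This is the type-III analogue of the observation invoked at the end of the proof of Lemma \ref{link}.

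\textbf{Step 2 (verify the admissibility inclusion).} I must check $U_{-w_J(\g)-1}\,\tw_x\,U_{-w_J\s^r(\g)-1}\subseteq I\Adm(\l)I$. Since $1\le r\le d-1$ gives $U_{-w_J\s^r(\g)-1}$ already fitting into $I$ after conjugation (the pattern in Lemma \ref{orth}(2)), and the boundary case $r=d$ is handled by invoking Lemma \ref{line}: in type $D_4$ the sub-root-system $\Psi=\Phi\cap\ZZ(\g+w_x w_J\s^r(\g))$ is of type $A_1\times A_1$ or $A_1$ (the $B_2$/$G_2$ obstructions in the proof of Lemma \ref{line} cannot occur since $\Phi$ is simply-laced), so the exceptional case $(*)$ of Lemma \ref{line} is excluded and $U_{-w_J(\g)-1}\,\tw_x\,U_{-w_J\s^r(\g)-1}\subseteq I\Adm(\l)I$ directly. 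With this inclusion in hand, $\textsl{g}\i b\s(\textsl{g})\subseteq U_{-\s^{1-r}(\tilde\th)}\,\tw_x\,U_{-\s(\tilde\th)}\subseteq I\Adm(\l)I$ by Step 1, and the family lands in $X(\l,b)$, giving $gI=\textsl{g}(0)\sim_{\l,b}\textsl{g}(\infty)=g\,s_{\tilde\th}\cdots s_{\s^{r-1}(\tilde\th)}\,I=g\o u\i I$ as in Lemma \ref{link}. Setting $y=u\o\i$ finishes the proof.

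\textbf{Main obstacle.} The genuinely type-III specific content — and the place I expect the real work to be — is making sure that for all of $1\le r\le d$ (including $r=d$) the product $s_{\tilde\th}\cdots s_{\s^{r-1}(\tilde\th)}$ really does decompose as $\o u\i$ with $\o=\g^\vee+\cdots+\s^{r-1}(\g^\vee)\in\Omega_J$ and $u\in W_0$, i.e. that the affine reflections stack up into the expected length-additive product in the Iwahori-Weyl group of $M_J$; this is the analogue of \cite[Lemma 1.3]{CN1} and needs the $D_4$ combinatorics of $\vartheta_\g$ and the $\s$-action on the $D_4$ diagram to cooperate. Everything else is a transcription of the type I/II arguments in Lemma \ref{link} and Lemma \ref{type-II-1}, using that $\Phi$ is simply-laced so that the structure constants $c_i$ are units and the linear-independence hypotheses of Lemma \ref{limit} and Corollary \ref{dominant}(5) are automatic. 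I would not reprove Corollary \ref{lin}; once $gI\sim_{\l,b}gy\i I$ is established it upgrades to $gI\sim_{\l,b}g\o{z'}\i\sim_{\l,b}g\o I$ for free, though the statement as phrased only asks for the first relation.
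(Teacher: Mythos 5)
There is a genuine gap, and it sits exactly where the real content of this lemma lies. In Step 2 you dismiss the exceptional case $(*)$ of Lemma \ref{line} on the grounds that ``the $B_2$/$G_2$ obstructions cannot occur since $\Phi$ is simply-laced''. But the $B_2$/$G_2$ exclusion is only claim (a) inside the proof of Lemma \ref{line}, used to establish the trichotomy; the exceptional case $(*)$ itself is an $A_2$-configuration, namely $\<\g,\mu_x\>=-\<w_x\s^r(\g),\mu_x\>=1$ together with $\<\g, w_x\s^r(\g^\vee)\>=-1$, and this is perfectly possible in simply-laced systems (it is precisely why type II orbits need a separate treatment in \S\ref{sec-ker}). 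Your Step 1 claim that ``there is no interaction between $\g$ and $\s^r(\g)$, i.e.\ $\<w_J(\g), w_x w_J\s^r(\g^\vee)\>=0$'' also fails at the boundary $r=d$: by definition of type III, $\s^d(\a)$ lies in the \emph{same} $D_4$-component as $\a$, and for instance with $w_x=s_\b$ and $\<\b,\mu_x\>=1$ one has $w_x\s^d(\a+\b)=\s^d(\a)$ and $\<\a+\b,\s^d(\a)^\vee\>=-1$, so $(*)$ genuinely threatens. Hence your argument establishes the admissibility inclusion only for $1\le r\le d-1$ and leaves the case $r=d$ open.

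The paper's proof is devoted to exactly this case: assuming $U_{-(\a+\b)-1}\tw_x U_{-\s^r(\a+\b)-1}\nsubseteq I\Adm(\l)I$, Lemma \ref{line} forces $r=d$ and $(*)$, whose conclusion gives $\mu_x\pm\d^\vee\preceq\l$ with $\d=\a+\b+\s^d(\a)$; since $\d$ is central for $J=\co_\b$, Lemma \ref{orth}(2) then yields
$U_{-(\a+\b)-1}\tw_x U_{-\s^d(\a+\b)-1}\subseteq I U_{-(\d+1)}\tw_x I\subseteq I\{s_{\d+1}\tw_x,\tw_x\}I\subseteq I\Adm(\l)I$,
a contradiction. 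This $D_4$-specific step (the central root $\d$ and the extra admissibility coming from the conclusion of $(*)$, not simply-lacedness) is what your proposal is missing. By contrast, the issue you flag as the ``main obstacle'' --- writing $s_{\tilde\th}\cdots s_{\s^{r-1}(\tilde\th)}=\o u\i$ --- is routine and handled exactly as in Lemma \ref{link}, since the roots $w_J\s^i(\g)$ for $0\le i\le r-1$ lie in distinct components and the $\Omega_J$-component of the product is $\o$ modulo $\ZZ\Phi_J^\vee$.
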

\begin{proof}
As in the proof Lemma \ref{link}, we can assume $x \overset {(\a, r)} \rightarrowtail x'$, and it suffices to show $$U_{-(\a+\b)-1} \tw_x U_{-\s^r(\a+\b)-1} \subseteq I \Adm(\l) I.$$ Assume otherwise. Then $r = d$. Moreover, by Lemma \ref{line} (*) we have $\<\a+\b, w_x\s^d(\a+\b)^\vee\> = -1$ (which implies $\<\b, \mu_x\> = 1$ and $w_x\s^d(\a+\b) = s_\b(\s^d(\a)+\b) = \s^d(\a)$) and  $$\<\b, \mu_x\> = \<\a+\b, \mu_x\> = -\<\s^d(\a), \mu_x\> = 1, \text{ and } \mu_x \pm \d^\vee \preceq \l,$$ where $\d = \a + \b + \s^d(\a)$. As $\d$ is central for $J = \co_\b$, by Lemma \ref{orth} (2) we have $$U_{-(\a+\b)-1} \tw_x U_{-\s^r(\a+\b)-1} \subseteq I U_{-(\d + 1)} \tw_x I \subseteq I \{s_{\d+1}\tw_x, \tw_x\} I \subseteq I \Adm(\l) I,$$ which is a contradiction.
\end{proof}

\begin{lem} \label{large}
Suppose $2d \le r \le 3d-1$ and the following conditions hold:

(1) $\<\a, \mu_x\> \ge 1$;

(2) if $r = 2d$, then $\<\s^d(\a), \mu_x\> = 0$;

(3) if $2d+1 \le r \le 3d-1$, then $\<\s^r(\b), \mu_x\> = 1$, $\<\b, \mu_x\> = 0$, and $\<\s^i(\a), \mu_x\> = 0$ for $i \in \{r-d, r-2d, d, 2d\}$;

(4) $\tw_x \s^i(\a) = \s^i(\a)$ for $1 \le i \le r-1$ with $i \notin \{r-d, r-2d, d, 2d\}$.

Then we have $g I \sim_{\l, b} g y\i I$ for $g \in \JJ_{b, \tw_x}$ and some $y \in W_0^J \o\i W_J^a$.
\end{lem}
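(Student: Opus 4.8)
The plan is to mimic the argument of Lemma \ref{small}, but now with a more elaborate product of affine root subgroups whose limit realises the desired element $\o u\i$. As before, by \cite[Lemma 6.5]{N2} (or the device used repeatedly above) we may assume $x \overset{(\a,r)}\rightarrowtail x'$, so that the ``saturation'' conclusions of Lemma \ref{saturate} apply along the chain $(\tw_x\s)^i(\a)$; combined with hypothesis (4) this pins down exactly which of the intermediate roots $(\tw_x\s)^i(\a)$ for $1 \le i \le r-1$ equal the ``naive'' translate $\s^i(\a)$ and which ones get twisted by $w_x$. The key point is to identify the correct affine root $\tilde\th$ (morally $w_J\s^{r-1}(\a) + 1$, possibly corrected by $\b$ or $\vartheta$-type roots as in Lemma \ref{type-II-1} and Lemma \ref{type-II-2}) so that the resulting map $\textsl{g}=\textsl{g}_{g,-\tilde\th,\tw_x,r}$ (or a suitable variant involving a second family of root subgroups $U_{-\tilde\vartheta}$, with a corrective constant $c \in \co_\brF^\times$ exactly as in the proof of Lemma \ref{type-II-2}) satisfies $\textsl{g}\i b\s(\textsl{g}) \subseteq I \tw_x U_{-\s(\tilde\th)} I \subseteq I\Adm(\l)I$.

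First I would determine, case by case on $r$, the behaviour of the orbit $\{(\tw_x\s)^i(\a)\}_{1-r\le i\le 0}$. Hypotheses (1)--(4) are precisely the list of positivity/centrality conditions one needs to guarantee: (i) the relevant roots $\a^{-i}$ stay in $\Phi^+$ for $1-r\le i\le 0$, so $U_{-\tilde\th}$ makes sense and the linear-independence hypothesis of Lemma \ref{limit} holds; and (ii) the terminal conjugate $w_x\s^r(\dots)$ pairs with $\mu_x$ non-negatively, so that $U_{-w_J(\tilde\vartheta_\a)}$ (or the analogous correction) can be absorbed into $I$ on the left after conjugating by $\tw_x\i$ — this is the role of conditions (1)--(3), mirroring the inequality $\<w_x(\vartheta_\a),\mu_x\>\ge 1$ in Lemma \ref{type-II-1}(3) and Lemma \ref{type-II-2}(4). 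The membership $\tw_x U_{-\s(\tilde\th)} I \subseteq I\Adm(\l)I$ then reduces, via the standard Iwahori-commutation and $\{\tw_x,\tw_x s_{\s(\tilde\th)}\}$ dichotomy, to checking two elements of $\tW$ lie in $\Adm(\l)$, which follows from Lemma \ref{orth} together with the admissibility bounds $\mu_x,\mu_x+\a^\vee,\ldots\preceq\l$ encoded in $x,x'\in\cs_{\l,b}^+$ and $x\overset{(\a,r)}\to x'$.

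Once the containment $\textsl{g}\i b\s(\textsl{g})\subseteq I\Adm(\l)I$ is in hand, Hypothesis \ref{hypo} (which holds here since $\s$ has order $3d$ forces a $D_4$-factor, hence $G$ is not split) and Lemma \ref{limit} give $g I = \textsl{g}(0) \sim_{\l,b} \textsl{g}(\infty)$, and a direct computation identifies $\textsl{g}(\infty)$ with $g\, s_{(\tw_x\s)^{1-r}(\tilde\th)}\cdots s_{\tilde\th}\, I = g\,\o u\i I$ for some $u \in W_0$, exactly as in Lemma \ref{type-II-1} and Lemma \ref{type-II-2}; here the appearance of $\o = \a^\vee + \cdots + \s^{r-1}(\a^\vee)$ comes from telescoping the reflections $s_{\s^i(\tilde\th)} = s_{\s^i(w_J\s^{r-1}(\a))+1}$ using \cite[Lemma 1.3]{CN1}. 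Thus $y := u\o\i \in W_0^J\o\i W_J^a$ works. The main obstacle I anticipate is the bookkeeping in the three sub-cases $r = 2d$, $r = 2d+1,\dots,3d-1$, and the extremal values: one must verify that the twisted terms $w_x\s^{r-2d}(\a)$, $w_x\s^{r-d}(\a)$ (governed by (2),(3)) assemble into honest affine roots $\vartheta$-type corrections lying in $\Phi$, that the corrective constant $c$ exists (this uses $\Phi$ simply-laced on the $D_4$-factor, which is automatic), and that no unexpected Iwahori double coset appears — in other words, reproducing faithfully the $D_4$-specific combinatorics of \cite[\S 8]{N2} with $\s$ in place of $\s\i$ where needed. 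I would organise the write-up so that the $r\le 2d-1$ range is dispatched by Lemma \ref{small} and Lemma \ref{link}, leaving only $2d\le r\le 3d-1$, and within that range treat $r=2d$ separately from $r\ge 2d+1$.
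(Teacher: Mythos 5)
Your plan is essentially the paper's proof: the paper takes exactly the curve $\textsl{g}=\textsl{g}_{g,-\tilde\th,\tw_x,r}$ with $\tilde\th=\s^{r-1}(\a+\b)+1=w_J\s^{r-1}(\a)+1$, uses (1)--(3) to absorb the correction root subgroup arising at the twisted indices into $I$ (namely $U_{-(\a+\b+\s^d(\a))-1}$ or $U_{-(\a+\b)-1}$ when $r=2d$, and $U_{-\vartheta-1}$ with $\vartheta=\a+\s^d(\a)+\s^{2d}(\a)+2\b$ when $2d+1\le r\le 3d-1$), concludes $\textsl{g}\i b\s(\textsl{g})\subseteq I\tw_x U_{-\s^r(\a+\b)-1}I\subseteq I\Adm(\l)I$ via Lemma \ref{orth}, and reads off $\textsl{g}(\infty)=gsI$ with $s$ an explicit product of reflections lying in $\o u\i W_J^a$-form. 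The only caveats are that your hedges are unnecessary -- no second family $U_{-\tilde\vartheta}$ with a constant $c$ as in Lemma \ref{type-II-2} is needed, and no reduction to $x\overset{(\a,r)}\rightarrowtail x'$ is required since hypothesis (4) already supplies what Lemma \ref{saturate} would give -- while the ``bookkeeping'' you defer (identifying these specific correction roots and checking $\<\a+\b,\mu_x\>\ge\<\b,\mu_x\>+1$, resp.\ $\<\vartheta,\mu_x\>\ge 1$) is precisely the short two-case computation that constitutes the paper's proof.
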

\begin{proof}
Let $\tilde \th = \s^{r-1}(\a + \b) + 1 \in \tPhi^+$. Define $\textsl{g} = \textsl{g}_{g, -\tilde \th, \tw_x, r}$ for $g \in \JJ_{b, \tw_x}$.

Case(1): $r = 2d$. By (2) and (4) we have \begin{align*} \textsl{g}\i b \s(\textsl{g}) \subseteq \begin{cases} I U_{-(\a + \b + \s^d(\a))-1} \tw_x U_{-\s^r(\a+\b)-1} I, & \text{ if } \<\b, \mu_x\> = 1; \\ I U_{-(\a + \b)-1} \tw_x U_{-\s^r(\a+\b)-1} I, & \text{ if } \<\b, \mu_x\> = 0; \end{cases} \end{align*} By (1) and (2), $\<\a + \b, \mu_x\> = \<\a + \b + \s^d(\a), \mu_x\> \ge \<\b, \mu_x\> + 1$, which means $$\textsl{g}\i b \s(\textsl{g}) \subseteq \tw_x U_{-\s^r(\a + \b) - 1} I \subseteq I \Adm(\l) I.$$ So $g = \textsl{g}(0) \sim_{\l, b} \textsl{g}(\infty) = g s I$, where $s = \prod_{i=0}^{d-1} s_{\s^i(\a + \b + \s^d(\a)) +1} \prod_{i=0}^{d-1} s_{\s^i(\a)}$ if $\<\b, \mu_x\> = 1$, and $s = \prod_{i=0}^{2d-1} s_{\s^i(\a + \b)+1}$ if $\<\b, \mu_x\> = 0$.

Case(2): $2d+1 \le r \le 3d-1$. Let $\vartheta = \a + \s^d(\a) + \s^{2d}(\a) + 2\b$. By (3) and (4),\begin{align*} \textsl{g}\i b \s(\textsl{g}) \subseteq I U_{-\vartheta-1} \tw_x U_{-\s^r(\a+\b)-1} I \subseteq I \tw_x U_{-\s^r(\a+\b)-1} I \subseteq I \Adm(\l) I, \end{align*} which means $g I = \textsl{g}(0) \sim_{\l, b} \textsl{g}(\infty) = g s I$, where $$s = \prod_{i=0}^{r-1} s_{\s^i(\vartheta)+1} s_{\s^i(\a+\b)} s_{\s^{i+d}(\a)} \prod_{i=r}^{d-1} s_{\s^i(\a + \b + \s^d(\a)) +1} s_{\s^i(\a + \b)}.$$ The proof is finished.
\end{proof}

The following two lemmas follow from the same construction in Lemma \ref{large}.
\begin{lem} \label{good-case}
Assume $d+1 \le r \le 2d-1$ and the following conditions hold:

(1) $\<\b, \mu_x\> = 0$ and $\<\s^r(\b), \mu_x\> \in \{0, 1\}$;

(2) $\<\s^d(\a), \mu_x\> = \<\s^{r-d}(\a), \mu_x\> = 0$, and $\<\a, \mu_x\> \ge 1$;

(3) $\tw_x\s^i(\a) = \s^i(\a)$ for $1 \le i \le r-1$ with $i \notin \{r-d, d\}$.

Then we have $g I \sim_{\l, b} g y\i I$ for $g \in \JJ_{b, \tw_x}$ and some $y \in W_0^J \o W_J^a$.
\end{lem}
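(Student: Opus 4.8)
The plan is to repeat, almost verbatim, the construction used in the proof of Lemma~\ref{large}, changing only which affine roots enter the word; this is precisely what is meant by the remark that the following two lemmas follow from the same construction. First I would set $\tilde\th = \s^{r-1}(\a+\b)+1 \in \tPhi^+$ (here $\a+\b \in \Phi^+$ because $\<\a,\b^\vee\> = -1$, and $\s$ preserves $\Phi^+$), and form the morphism $\textsl{g} = \textsl{g}_{g,\,-\tilde\th,\,\tw_x,\,r}\colon \PP^1 \to G(\brF)/I$ for $g \in \JJ_{b,\tw_x}$, exactly as in Lemma~\ref{large}. Then $\textsl{g}(0) = gI$, and the task is to show $\textsl{g}$ takes values in $X(\l,b)$, so that $gI = \textsl{g}(0) \sim_{\l,b} \textsl{g}(\infty)$, and then to read off $\textsl{g}(\infty)$.

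The core of the argument is to trace the chain of affine roots $(\tw_x\s)^i(\tilde\th)$ for $1-r \le i \le 0$. Since $\b = \s^d(\b) \in \Phi_J$ is $J$-central and $\tw_x = t^{\mu_x}w_x \in \Omega_J$ (so $w_x \in W_J$), hypothesis~(3) says that $\tw_x\s$ acts ``diagonally'' along this chain away from the two defect indices $i = r-d$ and $i = d$, while hypotheses~(1) and~(2) control the relevant levels: $\<\a,\mu_x\> \ge 1$, $\<\s^d(\a),\mu_x\> = \<\s^{r-d}(\a),\mu_x\> = \<\b,\mu_x\> = 0$, and $\<\s^r(\b),\mu_x\> \in \{0,1\}$. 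Passing through the two defect indices just as in Lemma~\ref{large} collapses the top of the word into a single affine root subgroup $U_{-\tilde\vartheta}$, $\tilde\vartheta = w_J(\vartheta)+1 \in \tPhi^+$, where $\vartheta \in \Phi^+$ is the $J$-anti-dominant root obtained by adding up the $\s$-translates of $\a$ and $\b$ swept out along the chain; it has the shape $\a + \s^d(\a) + c\,\b$ with $c \in \{1,2\}$ according to the value of $\<\s^r(\b),\mu_x\>$, in direct analogy with the root $\vartheta$ in Lemma~\ref{large}. Granting this, I would conclude
\[
\textsl{g}\i b\,\s(\textsl{g}) \ \subseteq\ I\, U_{-\tilde\vartheta}\,\tw_x\, U_{-\s^r(\tilde\th)}\, I \ \subseteq\ I\, \tw_x\, U_{-\s^r(\tilde\th)}\, I \ \subseteq\ I\,\Adm(\l)\, I ,
\]
where the first inclusion uses hypotheses~(1) and~(2) --- they force $\<w_x(\vartheta),\mu_x\> \ge 1$, hence ${}^{\tw_x\i}U_{-\tilde\vartheta}$ and its commutator with $U_{-\s^r(\tilde\th)}$ lie in $I$ --- and the last inclusion uses $\tw_x\s^r(\a+\b) \in \tPhi^+\setminus\Phi$, so that $\tw_x U_{-\s^r(\tilde\th)} I \subseteq I\{\tw_x\} I$. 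Thus $\textsl{g}$ factors through $X(\l,b)$ and $gI = \textsl{g}(0) \sim_{\l,b} \textsl{g}(\infty)$. Computing $\textsl{g}(\infty) = g\, s_{(\tw_x\s)^{1-r}(\tilde\th)}\cdots s_{\tilde\th}\, I$ and rewriting this word of affine reflections modulo $W_J^a$ as in the proof of Lemma~\ref{large} (via \cite[Lemma 1.3]{CN1}) then identifies $\textsl{g}(\infty)$ with $g\, y\i I$ for a suitable $y$ in the coset $W_0^J\,\o\, W_J^a$, where $\o = \a^\vee + \cdots + \s^{r-1}(\a^\vee) \in \Omega_J$. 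Hence $gI \sim_{\l,b} g\, y\i I$, as required.

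The step I expect to be the main obstacle is the combinatorial bookkeeping at the two defect indices: checking that the accumulated root $\vartheta$ is exactly the claimed positive root of the ambient $D_4$-component, and that $\<w_x(\vartheta),\mu_x\> \ge 1$ in each of the sub-cases $\<\s^r(\b),\mu_x\> = 0$ and $\<\s^r(\b),\mu_x\> = 1$. This is a finite, explicit computation inside a single $D_4$-component with $\s$ of order $3d$, with a handful of sub-cases but no conceptual novelty beyond what already appears in the proof of Lemma~\ref{large}; no new idea is needed, which is exactly why this lemma is grouped with Lemma~\ref{large} under the same construction.
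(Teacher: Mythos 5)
Your overall plan is exactly what the paper intends (the paper's ``proof'' of this lemma is literally the remark that it follows from the construction of Lemma~\ref{large}): set $\tilde\th=\s^{r-1}(\a+\b)+1$, run $\textsl{g}_{g,-\tilde\th,\tw_x,r}$, show its values stay in $X(\l,b)$, and read off $\textsl{g}(\infty)$ via Corollary~\ref{lin}-type bookkeeping. However, the part you defer as ``combinatorial bookkeeping'' is precisely where your sketch goes wrong, and one absorption step is unjustified as stated. First, the chain $(\tw_x\s)^i(\tilde\th)$ does not have two defect indices and does not accumulate a root of the shape $\a+\s^d(\a)+c\b$ with $c\in\{1,2\}$: note $\a+\s^d(\a)+2\b$ is not even a root of $D_4$. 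Under (1)--(3) one has $w_x=1$ on every component except possibly the one containing $\s^r(\b)$, where $w_x=s_{\s^r(\b)}$ exactly when $\<\s^r(\b),\mu_x\>=1$. Hence if $\<\s^r(\b),\mu_x\>=0$ the chain is untouched, the left factor is $U_{-(\a+\b)-1}$, and it is absorbed because $\<\a+\b,\mu_x\>=\<\a,\mu_x\>\ge 1$; while if $\<\s^r(\b),\mu_x\>=1$ the single defect occurs when the chain passes through the component of $\s^r(\b)$ (i.e.\ at $\s^{r-d}(\a)+\s^r(\b)+1$), the level drops to $0$ there, the left factor is $U_{-\a}\subseteq I$, and the root $\a+\s^d(\a)+\b$ enters only through a commutator term $U_{-(\a+\s^d(\a)+\b)-1}$ between the level-$0$ and level-$1$ factors, which is again absorbed past $\tw_x$ using $\<\a+\s^d(\a)+\b,\mu_x\>=\<\a,\mu_x\>\ge1$ (this is the analogue of the extra care taken in Lemma~\ref{type-II-2}). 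Nothing special happens at $i=d$, since $\<\b,\mu_x\>=\<\s^d(\a),\mu_x\>=0$ and $w_x$ is trivial on that component.

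Second, your justification of the last inclusion is not valid: you claim $\tw_x U_{-\s^r(\a+\b)-1}I\subseteq I\{\tw_x\}I$ because ``$\tw_x\s^r(\a+\b)\in\tPhi^+\setminus\Phi$'', but the hypotheses (1)--(3) say nothing about $\<\s^r(\a),\mu_x\>$, so this positivity cannot be asserted. The correct argument, as in Lemma~\ref{large}, is $\tw_x U_{-\s^r(\a+\b)-1}\subseteq I\{\tw_x,\tw_x s_{\s^r(\a+\b)+1}\}I$ together with $\tw_x s_{\s^r(\a+\b)+1}\in\Adm(\l)$, which comes from Lemma~\ref{orth}~(2) (equivalently Lemma~\ref{anti}) and the standing hypothesis of \S\ref{sec-case} that $x\overset{(\a,r)}\to x'$, i.e.\ $\mu_{x+\s^r(\a^\vee)}\preceq\l$; this admissibility input is essential and is missing from your sketch. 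With the chain recomputed as above and this absorption argument supplied, the coroot count in $\textsl{g}(\infty)$ does come out to $\a^\vee+\cdots+\s^{r-1}(\a^\vee)$ modulo $\ZZ\Phi_J^\vee$, so the method does yield the stated conclusion, but as written your proposal has a genuine gap in exactly the steps it leaves unchecked.
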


\begin{lem}[{\cite[Lemma 8.6]{N2}}] \label{full}
Suppose $\<\b, \mu_x\> = 1$, $\<\s^d(\a), \mu_x\> = \<\s^{2d}(\a), \mu_x\> = 0$, $\<\a, \mu_x\> \ge -1$, and $\tw_x \s^i(\a) = \a$ for $i \in \ZZ \setminus d\ZZ$. Then $g I \sim_{\l, b} g y\i I$ for $g \in \JJ_{b, \tw_x}$ and some $y \in W_0 \o_{\co_\a}\i W_J^a$. Here $\o_{\co_\a} = \a^\vee + \cdots \s^{3d-1}(\a^\vee) \in \pi_1(M_J) \cong \Omega_J$.
\end{lem}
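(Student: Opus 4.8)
The plan is to imitate the construction in the proof of Lemma~\ref{large}, Case~(2), but running the loop once all the way around the $\s$-orbit $\co_\a=\{\s^i(\a);\,0\le i\le 3d-1\}$, which has exactly $3d$ elements because $\s^d$ induces the triality of the $D_4$-component containing $\a$: it fixes the center node $\b$ and cyclically permutes the three leaves $\a,\s^d(\a),\s^{2d}(\a)$. So the relevant period is $r=3d$, and the attached translation part is precisely $\sum_{i=0}^{3d-1}\s^i(\a^\vee)=\o_{\co_\a}$; this is exactly the case excluded from Lemma~\ref{large}, where $r\le 3d-1$.

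Fix $g\in\JJ_{b,\tw_x}$, put $\tilde\th=\s^{3d-1}(\a+\b)+1\in\tPhi^+$, and let $\vartheta=\a+\s^d(\a)+\s^{2d}(\a)+2\b$ be the highest root of that $D_4$-component (so $\s^d(\vartheta)=\vartheta$). I would form the morphism $\textsl{g}=\textsl{g}_{g,-\tilde\th,\tw_x,3d}:\PP^1\to G(\brF)/I$ of \S\ref{sec-leaf} (inserting, if needed, an auxiliary $U_{-\vartheta-1}$-correction factor with a suitable constant $c\in\co_\brF^\times$, exactly as in the proof of Lemma~\ref{type-II-2}, to absorb the commutator produced at the triality steps when $\<\a,\mu_x\>=-1$). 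Using the saturation hypothesis $\tw_x\s^i(\a)=\a$ for $i\notin d\ZZ$ together with $\<\b,\mu_x\>=1$ and $\<\s^d(\a),\mu_x\>=\<\s^{2d}(\a),\mu_x\>=0$, one computes the affine roots $(\tw_x\s)^i(\tilde\th)$ for $1-3d\le i\le 0$: they agree with $\s^i(\tilde\th)$ away from the two triality steps, where the other two leaves get picked up, so that the innermost factor involves $-\vartheta-1$. As in Lemma~\ref{large}, Case~(2), this gives
\begin{equation*}
\textsl{g}\i\,b\,\s(\textsl{g})\subseteq I\,U_{-\vartheta-1}\,\tw_x\,U_{-\s(\tilde\th)}\,I\subseteq I\,\tw_x\,U_{-\s(\tilde\th)}\,I\subseteq I\,\Adm(\l)\,I,
\end{equation*}
where the middle inclusion uses Lemma~\ref{orth}, Lemma~\ref{anti} and the inequality $\<\vartheta,\mu_x\>=\<\a,\mu_x\>+2\ge 1$ --- which is precisely where the hypothesis $\<\a,\mu_x\>\ge -1$ enters --- to place ${}^{\tw_x\i}U_{-\vartheta-1}$ and its commutator with $U_{-\s(\tilde\th)}$ inside $I$. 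Hence $\textsl{g}$ factors through $X(\l,b)$, and since $\PP^1$ is connected, $gI=\textsl{g}(0)\sim_{\l,b}\textsl{g}(\infty)$.

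It then remains to compute $\textsl{g}(\infty)$. As $z\to\infty$, all root factors carrying a negative exponent tend to $1$ while the remaining factors converge to the corresponding affine reflections, so $\textsl{g}(\infty)=g\,s\,I$ with $s$ an explicit product of affine reflections (one per index $i$, together with the triality corrections). Following the triality action on the $D_4$-component, the translation part of $s$ is $-\sum_{i=0}^{3d-1}\s^i(\a^\vee)=-\o_{\co_\a}$ --- going once around the orbit returns $\mu_x$ to itself --- so, setting $y=s\i$, we obtain $y\in W_0\,\o_{\co_\a}\i\,W_J^a$ and $gI\sim_{\l,b}g y\i I$, as claimed.

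The step I expect to be the main obstacle is the combinatorial bookkeeping in the middle two steps: checking that, after the $U_{-\vartheta-1}$-correction, every commutator occurring in $\textsl{g}(z)\i\,b\,\s(\textsl{g}(z))$ genuinely lands in $I$ for all admissible values of $\<\a,\mu_x\>$, and verifying that the triality of the $D_4$-component is tracked carefully enough to conclude that the translation part of $\textsl{g}(\infty)$ is exactly $-\o_{\co_\a}$ rather than some other orbit-sum. Both are finite checks on the root system $D_4$, carried out by the same computation as in Lemma~\ref{large}.
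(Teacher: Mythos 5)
Your proposal is essentially the paper's own argument: the paper dispatches Lemma \ref{full} with the single remark that it ``follows from the same construction in Lemma \ref{large}'', and your write-up is exactly that construction run over the full orbit $r=3d$, with the Lemma \ref{type-II-2}-style correction factor and the observation that $\<\a,\mu_x\>\ge -1$ yields $\<\vartheta,\mu_x\>=\<\a,\mu_x\>+2\ge 1$, which is what makes the middle inclusion into $I\,\tw_x\,U_{-\s(\tilde\th)}\,I$ work. Only watch the sign bookkeeping at the end: as in Lemmas \ref{link} and \ref{large}, $\textsl{g}(\infty)=g\,\o_{\co_\a}u\i I$ with $u\in W_0$ (translation part $+\o_{\co_\a}$, not $-\o_{\co_\a}$), and then $y=u\,\o_{\co_\a}\i\in W_0\o_{\co_\a}\i W_J^a$ as required.
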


\begin{lem} \label{central}
Let $x_1, x_2 \in \cs_{\l, b}^+$, $\d = \a + \b + \s^{2d}(\a)$ and $1 \le k \le 3d-1$ such that $x_1 \overset {(\d, k)} \to x_2$. Then we have $g I \sim_{\l, b} g y\i I$ for $g \in \JJ_{b, \tw_{x_1}}$ and some $y \in W_0^J \o\i W_J^a$. Here $\o = \d^\vee + \cdots \s^{k-1}(\d^\vee) \in \pi_1(M_J) \cong \Omega_J$.
\end{lem}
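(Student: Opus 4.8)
The approach is to mimic the proofs of Lemmas \ref{small}, \ref{large} and \ref{link}, making essential use of the fact that $\d = \a + \b + \s^{2d}(\a)$ is $J$-central: since $J = \co_\b$ and $\d$ lies in the $D_4$-factor $\{\a, \s^d(\a), \s^{2d}(\a), \b\}$, one has $\langle \b', \d^\vee \rangle = 0$ for all $\b' \in \Phi_J$. First I would reduce, via \cite[Lemma 6.5]{N2} and exactly as in the proof of Lemma \ref{link}, to the saturated case $x_1 \overset{(\d, k)}{\rightarrowtail} x_2$, so that Lemma \ref{saturate} is available. The key structural observation is that, $\Phi$ being simply-laced with $\s$ acting by triality on each $D_4$-factor, every root $\s^i(\d)$ ($i \in \ZZ$) is again $J$-central, the three roots $\d, \s^d(\d), \s^{2d}(\d)$ are pairwise orthogonal inside their common $D_4$-factor, and consequently the whole family $\{\s^i(\d)\}_{i \in \ZZ}$ is pairwise orthogonal and fixed pointwise by $W_J$; in particular $w_{x_1}$ and $w_{x_2}$ fix each $\s^i(\d)$.

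Next I would put $\tilde \d = \s^{k-1}(\d) + 1 \in \tPhi^+$ and, for $g \in \JJ_{b, \tw_{x_1}}$, consider $\textsl{g} = \textsl{g}_{g, -\tilde \d, \tw_{x_1}, k} \colon \PP^1 \to G(\brF)/I$. Since $w_{x_1}$ fixes every $\s^j(\d)$, the successive factors of $\textsl{g}(z)$ are root-group elements attached to the pairwise orthogonal roots $-\d, -\s(\d), \dots, -\s^{k-1}(\d)$, so they commute; thus $\textsl{g}(z)$ is a product of $k$ commuting one-parameter subgroups with $\textsl{g}(0) = gI$. Telescoping these commuting factors gives $\textsl{g}\i b \s(\textsl{g}) \subseteq I U_{-\d - c} \tw_{x_1} U_{-\s^k(\d) - 1} I$ for an integer $c$ that depends only on the pairings $\langle \s^j(\d), \mu_{x_1} \rangle$, $1 \le j \le k-1$. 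The right factor always behaves: since $x_1 \overset{(\d, k)}{\to} x_2$ and $x_1, x_2 \in \cs_{\l, b}^+$, the $J$-centrality of $\d$ forces $\mu_{x_1}$, $\mu_{x_1} - \d^\vee$, $\mu_{x_1} + \s^k(\d^\vee)$ and $\mu_{x_1} - \d^\vee + \s^k(\d^\vee)$ all to be $\preceq \l$, whence $\tw_{x_1} U_{-\s^k(\d) - 1} \subseteq I \{\tw_{x_1}, \tw_{x_1} s_{\s^k(\d) + 1}\} I \subseteq I \Adm(\l) I$ by Lemma \ref{orth} (2).

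It then remains to control the left factor $U_{-\d - c}$. If $c \le 0$ it lies in $I$ and nothing more is needed; otherwise I would push it past $\tw_{x_1}$ using Lemma \ref{anti} together with Lemma \ref{orth} (2), and, when that is not enough, interchange the roles of $x_1$ and $x_2$ (using that $x_1 \overset{(\d, k)}{\to} x_2$ is equivalent to $x_2 \overset{(-\d, k)}{\to} x_1$, that $-\d$ is again $J$-central, and that $\langle \s^j(\d), \mu_{x_2} \rangle = \langle \s^j(\d), \mu_{x_1} \rangle$ for $1 \le j \le k-1$ by the orthogonality relations) or pass to an auxiliary curve built from the adjacent $\d$-type root $\s^d(\d)$ or from the highest root $\a + \s^d(\a) + \s^{2d}(\a) + 2\b$ of the $D_4$-factor, in the spirit of the two-parameter curves of Lemmas \ref{good-case}, \ref{full} and \ref{type-II-2}. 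In each case one arrives at $gI = \textsl{g}(0) \sim_{\l, b} \textsl{g}(\infty) = g\, s_{\d + 1} s_{\s(\d) + 1} \cdots s_{\s^{k-1}(\d) + 1}\, I$; as $s_{\s^j(\d) + 1} = t^{\s^j(\d^\vee)} s_{\s^j(\d)}$ and the $\s^j(\d)$ are orthogonal, this element equals $\o u\i$ for some $u \in W_0$ with $\o = \d^\vee + \cdots + \s^{k-1}(\d^\vee) \in \pi_1(M_J) \cong \Omega_J$, and rewriting it as $\o z\i$ with $z \in W_0^J$ by \cite[Lemma 1.3]{CN1} yields $gI \sim_{\l, b} g y\i I$ with $y \in W_0^J \o\i W_J^a$, as required.

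The step I expect to be the main obstacle is precisely the control of the left factor. The $J$-centrality of $\d$ makes the finite parts telescope cleanly and reduces every $\Adm(\l)$-membership question to Lemma \ref{orth} (2), but it does not force the level shift $c$ (nor the intermediate partial sums of the pairings $\langle \s^j(\d), \mu_{x_1} \rangle$ that decide which affine roots in $\textsl{g}(z)$ lie in $\tPhi^-$) to have the right sign; so one is forced to run a careful sign analysis of these pairings, matched against the few inequalities recorded by $x_1 \overset{(\d, k)}{\to} x_2$, and, in the borderline configurations, to replace $\textsl{g}$ by a more elaborate curve, exactly as was done for the non-central roots in Lemmas \ref{large}, \ref{good-case}, \ref{full} and \ref{type-II-2}.
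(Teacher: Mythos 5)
Your proposal is not complete: the step you yourself single out as the main obstacle -- controlling the level shift $c$ in the left factor $U_{-\d-c}$, i.e.\ verifying the decisive membership in $I\Adm(\l)I$ -- is exactly the content of the lemma, and you leave it at the level of ``a careful sign analysis'' plus possible ad hoc replacement curves, without carrying any of it out. This is a genuine gap, and it stems from missing the one observation that makes the statement a formal consequence of results already proved: the facts you do record (every $\s^i(\d)$ is $J$-central, hence fixed by $W_J$, and the roots $\s^i(\d)$ are pairwise orthogonal) say precisely that the connected component of $\Psi=\Phi\cap\ZZ(\co_\d\cup J)$ containing $\d$ is the single $A_1$ $\{\pm\d\}$, so that $n=3d$ and $\co_\d$ is of \emph{type I}. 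Since $1\le k\le 3d-1=n-1$, the situation is then covered verbatim by the type-I machinery: this is how the paper argues (it reduces to Lemma \ref{type-I}, equivalently to Lemma \ref{link} with $r\le n-1$), and no new curve or sign analysis is needed.

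Concretely, the worry about $c$ is resolved inside that machinery rather than by hand: after the saturation reduction via \cite[Lemma 6.5]{N2} (which you do invoke), claim (b) in the proof of Lemma \ref{link} gives $(\tw_{x_1}\s)^i(\tilde\th)=\s^i(\tilde\th)$ on the whole range $1-k\le i\le 0$, so the left factor is exactly $U_{-\d-1}$ and the required inclusion $U_{-\d-1}\tw_{x_1}U_{-\s^k(\d)-1}\subseteq I\Adm(\l)I$ follows from Lemma \ref{line}; the only possible failure there is the exceptional configuration (*), which requires $\<\d, w_{x_1}\s^k(\d)^\vee\>=-1$ and is ruled out by the very orthogonality $\<\d,\s^k(\d)^\vee\>=0$ (and $w_{x_1}\in W_J$ fixing $\s^k(\d)$) that you established. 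So the ingredients in your write-up are correct, but the proof is not closed; citing the type-I classification and Lemma \ref{link}/\ref{type-I} (as the paper does) both fills the gap and replaces the proposed case-by-case analysis.
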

\begin{proof}
It follows from Lemma \ref{type-I} by noticing that $\co_\d$ is of type I.
\end{proof}

\begin{lem} \label{bad-case}
Assume $d+1 \le r \le 2d-1$ and the following conditions hold:

(1) $\<\b, \mu_x\> = 1$ and $\<\s^r(\b), \mu_x\> = 0$;

(2) $\<\s^d(\a), \mu_x\> = -1$, $\<\s^{r-d}(\a), \mu_x\> = 0$, $\<\a, \mu_x\> \le 0$, and $\<\s^r(\a), \mu_x\> \le -1$;

(3) $\tw_x\s^i(\a) = \s^i(\a)$ for $1 \le i \le r-1$ with $i \notin \{r-d, d\}$

Then we have $\JJ_{b, \tw_x} \sim_{\l, b} \JJ_{b, \tw_{x'}}$.
\end{lem}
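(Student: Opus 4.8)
\emph{Proof plan.} The configuration (1)--(3) is ``bad'' precisely because none of Lemmas \ref{small}, \ref{large}, \ref{good-case}, \ref{full}, \ref{central} applies to the pair $(x,\tw_x)$ directly; the plan is therefore to factor the single move $x\overset{(\a,r)}{\to}x'$ into a chain $x=x_0,x_1,\dots,x_m=x'$ in $\cs_{\l,b}^+$ of shorter moves, each of a type already treated, and to glue the resulting relations. The gluing is the standard mechanism of this paper: whenever one of the cited lemmas produces, for $g\in\JJ_{b,\tw_{x_i}}$, a relation $gI\sim_{\l,b}gy\i I$ with $y\in W_0^J\o\i W_J^a$ and $\o\in\Omega_J\cong\pi_1(M_J)$ the sum of the relevant coroots, Corollary \ref{lin} together with Proposition \ref{weyl-conj} upgrades it to $\JJ_{b,\tw_{x_i}}\sim_{\l,b}\JJ_{b,\tw_{x_{i+1}}}$ with $x_{i+1}=x_i+\o$; concatenating over the chain gives the claim.

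Write $r=d+s$ with $1\le s\le d-1$. The natural candidate chain has length two and runs through $x_1:=x+\s^s(\a)^\vee-\a^\vee$: since $\s^d(\s^s(\a))=\s^r(\a)$, one has $x\overset{(\a,s)}{\to}x_1\overset{(\s^s(\a),d)}{\to}x'$, both moves of length $\le d$ and by $J$-anti-dominant $J$-minuscule roots, hence governed by Lemma \ref{small} (whose proof already absorbs the exceptional length-$d$ sub-case internally). To run this I must check that $x_1\in\cs_{\l,b}^+$ and that the two admissibility inputs $\mu_{x+\s^s(\a)^\vee}\preceq\l$ and $\mu_{x'+\s^s(\a)^\vee}\preceq\l$ hold; the remaining inputs collapse to $\mu_{x-\a^\vee}\preceq\l$, which is part of the hypothesis $x\overset{(\a,r)}{\to}x'$, because $x_1-\s^s(\a)^\vee=x-\a^\vee$ and $x_1+\s^r(\a)^\vee=x'+\s^s(\a)^\vee$. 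The $\k_{M_J}$-part of $x_1\in\cs_{\l,b}^+$ is automatic since $x_1-x=(\s^r-\s^s)(\a)^\vee\in(\s-1)Y$. For the two $\preceq\l$ statements the hypotheses enter through Lemma \ref{orth}(1): the equalities $\<\s^s(\a),\mu_x\>=\<\s^{r-d}(\a),\mu_x\>=0$ and $\<\s^s(\b),\mu_x\>=\<\s^r(\b),\mu_x\>=0$ pin down the restriction of $\mu_x$ to the $D_4$-factor through which $\s^s(\a)$ passes, and combined with condition (3) and the description of $\mu_{x'}$ as a $W_J$-conjugate of $\mu_x-w_x(\a^\vee)+w_x\s^r(\a)^\vee$ they reduce both claims to $W_0$-conjugacies between $J$-minuscule weights already known to lie $\preceq\l$.

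The step I expect to be the main obstacle is exactly this last point: because $\<\s^d(\a),\mu_x\>=-1$ and $\<\s^r(\a),\mu_x\>\le-1$ are \emph{negative}, the usual monotonicity arguments for $\preceq\l$ break down, and in some sub-cases the two-step chain above cannot be realized by valid moves. To handle those residual sub-cases I will reroute through the $J$-central root $\d=\a+\b+\s^{2d}(\a)$: its orbit $\co_\d$ is of type I (with $n=3d$), so Lemma \ref{central} (equivalently Lemma \ref{type-I}) connects the cosets $\JJ_{b,\tw}$ along any move by $\d$, and the congruences $\d^\vee\equiv\a^\vee+\s^{2d}(\a)^\vee$, $\s^d(\d)^\vee\equiv\a^\vee+\s^d(\a)^\vee$, $\s^{2d}(\d)^\vee\equiv\s^d(\a)^\vee+\s^{2d}(\a)^\vee$ in $\pi_1(M_J)$ let one write $\s^r(\a)^\vee-\a^\vee$ as a telescoping combination of $\d$-moves (type I, handled) and leg-moves of length $\le d$ (handled by Lemma \ref{small}), with the intermediate elements chosen so that each step meets its admissibility condition. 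Either way the concatenated relations yield $\JJ_{b,\tw_x}\sim_{\l,b}\JJ_{b,\tw_{x'}}$, as desired.
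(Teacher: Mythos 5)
Your toolkit is the right one in spirit (reroute through the type-I orbit of $\d = \a + \b + \s^{2d}(\a)$ via Lemma \ref{central}, plus short moves via Lemma \ref{small}), but the proposal has a genuine gap, and it is exactly at the point you flag as "the main obstacle." First, the primary two-step chain $x \overset{(\a, r-d)}{\to} x_1 \overset{(\s^{r-d}(\a), d)}{\to} x'$ is precisely one of the factorizations whose failure defines the relation $\rightarrowtail$; in the situations where this lemma is invoked (Propositions \ref{surj} and \ref{generator} for order $3d$, after reducing via Proposition \ref{conneted} and \cite{N2}), the hypotheses (1)--(3) are extracted from the assumption that no such factorization through an intermediate point of $\cs_{\l,b}^+$ exists, so $\mu_{x+\s^{r-d}(\a)^\vee} \preceq \l$ is not available and the "residual sub-cases" are in fact the whole lemma. (Incidentally, $x_1 - x = (\s^{r-d}-1)(\a^\vee)$, not $(\s^r-\s^{r-d})(\a^\vee)$, though the $\k_{M_J}$-point survives either way.)

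Second, the rerouting step carries all the content and is left unproved: you assert the telescoping $\d$-moves and leg-moves can be arranged "with the intermediate elements chosen so that each step meets its admissibility condition," but the existence of such intermediates is exactly what has to be established. The paper does this by a delicate case analysis: first whether $\mu_x - \d^\vee \preceq \l$ (if not, one records $\<\s^{2d}(\a),\mu_x\> \le -1$), then whether $\<\s^{r+d}(\a),\mu_x\> \ge 1$, and in the remaining case it uses Lemma \ref{pr} --- i.e.\ Hodge--Newton irreducibility --- to produce an index $k$ with $\<\s^k(\a),\mu_x\> \ge 1$, around which admissible chains through points such as $x - \d^\vee + \s^r(\d^\vee)$ and $x' + \s^{r-d}(\a)^\vee - \s^k(\a)^\vee$ are built. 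Your sketch never appeals to Lemma \ref{pr} or any other consequence of HN-irreducibility, and without such an input there is no reason the required intermediate points lie in $\cs_{\l,b}^+$ or satisfy the needed $\preceq\l$ conditions. Moreover, your rerouting uses only $\d$-moves and moves of length $\le d$, whereas several branches of the actual argument unavoidably pass through moves of length between $d+1$ and $2d-1$ that are handled by Lemma \ref{good-case} (e.g.\ when $\<\s^r(\a),\mu_x\> = -1$, so the relevant pairing becomes $0$ after the first move); you give no argument that these can be bypassed. As it stands the proposal is a plan that defers the combinatorial core of the proof rather than a proof.
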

\begin{proof}
Let $\d = \a + \b + \s^{2d}(\a)$. Assume $\mu_x - \d^\vee \preceq \l$. By (2) we have $$x \overset {(\d, r)} \to x'' := x - \d^\vee + \s^r(\d^\vee) \overset {(\s^{r-d}(\a), 3d-r)} \to x'.$$ So $\JJ_{b, \tw_x} \sim_{\l, b} \JJ_{b, \tw_{x''}}$ by Lemma \ref{central}. It suffices to show $\JJ_{b, \tw_{x''}} \sim_{\l, b} \JJ_{b, \tw_{x'}}$. If $\<\s^r(\s), \mu_{x''}\> \le -1$, then $$x'' \overset {(\s^{r-d}, d)} \to  x'' - \s^{r-d}(\a^\vee) + \s^r(\a^\vee) \overset {(\s^r(\a), 2d-r)} \to x',$$ and the statement follows from Lemma \ref{small} that $\JJ_{b, \tw_{x''}} \sim_{\l, b} \JJ_{b, \tw_{x'}}$. Otherwise, by (2) we have $\<\s^r(\a), \mu_x\> = -1$, that is, $\<\s^r(\s), \mu_{x''}\> = 0$. the statement follows from Lemma \ref{good-case} that $\JJ_{b, \tw_{x''}} \sim_{\l, b} \JJ_{b, \tw_{x'}}$. Let $l = \min\{r+1 \le i \le 2d-1; \<\s^i(\a), \mu_x\> \neq 0\}$. If $\<\s^l(\a), \mu_x\> \ge 1$, then $$x'' \overset{(\s^l(\a), 2d-1)} \to x'' - \s^l(\a^\vee) + \s^{2d}(\a^\vee) \overset{(\s^{r-d}(\a), l+d-r)} \to x',$$ and the statement follows from Lemma \ref{small} \& \ref{good-case}. If $\<\s^l(\a), \mu_x\> \le -1$, then $$x'' \overset{(\s^{r-d}(\a), k+d-r)} \to x'' - \s^{r-d}(\a^\vee) + \s^l(\a^\vee) \overset{(\s^l(\a), 2d-l)} \to x',$$ and the statement also follows from Lemma \ref{small} \& \ref{good-case}.

Now we assume $\mu_x - \d^\vee \npreceq \l$, which means (as $\mu_x - \a^\vee - \b^\vee = \mu_{x - \a^\vee} \preceq \l$) that \begin{align*} \tag{a} \<\s^{2d}(\a), \mu_x\> \le -1. \end{align*} If $\<\s^{r+d}(\a), \mu_x\> \ge 1$, then we have $$x \overset {(\s^{r-d}(\d), d)} \to x - \s^{r+d}(\a)^\vee + \s^r(\a)^\vee  \overset {(\s^{r+d}(\a), 2d-r)} \leftarrow x',$$ and the statement follows from Lemma \ref{central} and Lemma \ref{small}. So we assume \begin{align*} \tag{b} \<\s^{r+d}(\a), \mu_x\> \le 0.  \end{align*}

By (a), (b), (1), and (2), we have $$\sum_{i \in \{r-d, r, r+d, 0, d, 2d\}} \<\s^i(\a), \pr_J(\mu_x)\> < 0.$$ By Lemma \ref{pr}, there exists $r+1 \le k \le 3d-1$ with $k \notin \{2d, r+d\}$ such that \begin{align*} \tag{c} k = \min\{r+1 \le i \le 3d-1; \<\s^i(\a), \mu_x\> \ge 1\}. \end{align*}

Suppose $\<\s^j(\a), \mu_x\> \le -1$ for some $r+1 \le j \le 3d-1$ with $j \notin \{2d, k+d, k-d, r+d\}$. Let \begin{align*} z = x - \s^{k_1}(\d)^\vee + \s^{j_1}(\d)^\vee, \ z' = x' - \s^{k_1}(\d)^\vee + \s^{j_1}(\d)^\vee \in \cs_{\l, b}^+, \end{align*} where $k_1 = k + d$ if $k > 2d$ and $k_1 = k$ otherwise, and $j_1$ is defined in the same way. By Lemma \ref{central}, we have $\JJ_{b, \tw_x} \sim_{\l, b} \JJ_{b, \tw_z}$ and $\JJ_{b, \tw_{x'}} \sim_{\l, b} \JJ_{b, \tw_{z'}}$. Moreover, there exist $z_1, z_2 \in \cs_{\l, b}^+$ such that \begin{align*} & z \overset {(\a, k-2d)} \to z_1 \overset {(\s^{k-2d}(\a), 2d+r-k)} \to z'  \text{ if } r+d+1 \le k \le 3d-1; \\ & z \overset {(\a, k-d)} \to z_1 \overset {(\s^{k-2d}(\a), d+r-k)} \to z',  \text{ if } r+1 \le k \le 2d-1; \\ & z \overset {(\a, k-2d)} \to z_1 \overset {(\s^{k-d}(\a), d+r-k)} \to z_2 \overset {(\s^{k-2d}(\a), d)} \to z',  \text{ if } 2d+1 \le k \le r+d-1. \end{align*} By Lemma \ref{small}, $\JJ_{b, \tw_z} \sim_{\l, b} \JJ_{b, \tw_{z'}}$ and the statement follows. So we can assume \begin{align*} \tag{d} \<\s^i(\a), \mu_x\> = 0 \text{ for } 1 \le i \le k-1 \text{ with } i \notin \{r-d, r, r+d, d, 2d\}. \end{align*} As $\<\s^{r-d}(\a), \mu_{x'}\> = -1$, we have $y :=  x' + \s^{r-d}(\a)^\vee - \s^k(\a)^\vee \in \cs_{\l, b}^+$.

Case(1): $r+1 \le k \le 2d-1$. Then $$x \overset{(\s^k(\d), r-k)} \to x - \s^k(\d)^\vee + \s^r(\d)^\vee \overset {(\a, k-d)} \to y \overset {(\s^{r-d}(\a), k-r+d)} \to x'.$$ By Lemma \ref{small}, it suffices to show $\JJ_{b, \tw_y} \sim_{\l, b} \JJ_{b, \tw_{x'}}$. If $\<\s^r(\a), \mu_x\> \le -2$, that is, $\<\s^r(\a), \mu_y\> \le -1$, it follows from that $$y \overset {(\s^{r-d}(\a), d)} \to x' + \s^r(\a)^\vee - \s^k(\a)^\vee \overset {(\s^r(\a), k-r)} \to x'.$$ Otherwise, we have $\<\s^r(\a), \mu_x\> = -1$ by (2), that is, $\<\s^r(\a), \mu_y\> = 0$. Then the statement follows from Lemma \ref{good-case}.

Case(2): $2d+1 \le k \le 3d-1$. Then we have $$x \overset{(\s^{k+d}(\d), r-k-d)} \to x - \s^{k+d}(\d)^\vee + \s^r(\d)^\vee \overset {(\a, k-2d)} \to y \overset {(\s^{r-d}(\a), k-r+d)} \to x'.$$ Again, it suffices to show $\JJ_{b, \tw_y} \sim_{\l, b} \JJ_{b, \tw_{x'}}$. If $k \le r+d-1$, it follows similarly as in Case(1). Otherwise, it follows from that \begin{align*} & y \overset {(\s^{k-d}(\a), r+2d-k)} \to y - \s^{k-d}(\a)^\vee + \s^{r+d}(\a)^\vee \overset {(\s^{k-2d}(\a), d)} \to y - \s^{k-2d}(\a)^\vee + \s^{r+d}(\a)^\vee \\ &\ \overset{(\s^{r-d}(\a), k-r-d)} \to y - \s^{r-d}(\a)^\vee + \s^{r+d}(\a)^\vee  \overset {(\s^{r+d}(\a), k-r-d)} \to x', \end{align*} where the first arrow follows from (b) that $\<\s^{r+d}(\a), \mu_y\> = \<\s^{r+d}(\a), \mu_x\> - 1 \le -1$.

%Notice that $\<\s^i(\a), \mu_y\> \le 0$ for $i \in \{r, r+d, 2d\}$. If $\<\s^i(\a), \mu_y\> \le 0$ for $i \in \{r, r+d, 2d\}$, it follows from Lemma \ref{large}. If $\<\s^{2d}(\a), \mu_x\> \le -1$, then it follow from that $$y \overset {(\s^{k-d}(\a), 3d-k)} \to x_5 \overset {(\s^{k-2d}(\a), d)} \to x_6 \overset {(\s^{r-d}(\a), k-r-d)} \to x_7 \overset {(\s^{2d}(\a), k-2d)} \to x'.$$
\end{proof}

\subsection{} \label{subsec-III}
Now we finish the proofs for the case that $\s$ has order $3d$.

\begin{proof}[Proof of Proposition \ref{surj}] Let $x, x' \in \cs_{\l, b}^+$. To show $\JJ_{b, \tw_x} \sim_{\l, b} \JJ_{b, \tw_{x'}}$, by Proposition \ref{conneted} we can assume $x \overset {(\g, r)} \rightarrowtail x'$ for some $1 \le r \le 2d-1$ and $\g \in \Phi^+ \setminus \Phi_J$ with $\g^\vee$ is $J$-anti-dominant and $J$-minuscule. In particular, $\s^r(\g) \in C_{\l, b, x}$. If $\co_\g$ is of type I, the statement follows from Lemma \ref{link} and Corollary \ref{pre-lin}. Otherwise, we can assume $J = \co_\b$ and $\g = \a$ as in \S \ref{sec-case}. If $1 \le r \le  d$, the statement follows from Lemma \ref{small} and Corollary \ref{pre-lin}. Otherwise, by the proof of \cite[Proposition 6.8]{N2}, either Lemma \ref{good-case} or Lemma \ref{bad-case} applies. So the statement also follows.
\end{proof}

\begin{proof} [Proof of Proposition \ref{generator}]
As $\co$ is of type III, we can assume $\co = \co_\a$ and $J = \co_\b$, where $\a, \b$ are as in \S \ref{sec-case}. Again we can assume that $\mu_{x''} + \vartheta_\g^\vee \npreceq \l$ for any $x'' \in \cs_{\l, b}^+$ and $\g \in \co$. If there do not exist $\g \in \co$, $1 \le r \le 3d-1$, and $x' \in \cs_{\l, b}^+$ such that $x \overset {(\g, r)} \rightarrowtail x'$, by \cite[Lemma 8.6]{N2} the statement follows from Lemma \ref{large} and Corollary \ref{lin}. Assume otherwise. Then there exists $x_i \in \cs_{\l, b}^+$, $\g_i \in \co$ and $1 \le r_i \le 3d-1$ for $1 \le i \le m$ such that $\o_\co = \sum_{i=1}^m \sum_{j=0}^{r_i-1} \s^j(\g_i)^\vee \in \pi_1(M_J)$ and $$x = x_0 \overset {(\g_1, r_1)} \to x_1 \overset {(\g_2, r_1)} \to \cdots \overset {(\g_m, r_m)} \to x_m = x.$$ if $d+1 \le r_i \le 2d-1$, then either Lemma \ref{good-case} or Lemma \ref{bad-case} occurs. If for each $1 \le i \le m$ we have either $r_i \le d$ or $2d \le r_i \le 3d-1$ or Lemma \ref{good-case} (for $(x, x', \a, r) = (x_{i-1}, x_i, \g_i, r_i)$) occurs, it follows that $\o_\co \in \ca_{\l, b}$ by Lemma \ref{small}, \ref{large}, Lemma \ref{good-case} and Corollary \ref{lin}. Otherwise, by the proof of \cite[Proposition 6.8]{N2}, there exists $1 \le i \le m$ such that the situation of Lemma \ref{bad-case} occurs (for $(x, x', \a, r) = (x_{i-1}, x_i, \g_i, r_i)$).

Let $x, x', \a, r$ be as in Lemma \ref{bad-case}. If $\<\s^{r+d}(\a), \mu_x\> \le 0$, then $\<\s^r(\vartheta_\a), \mu_x\> \le -1$, which contradicts our assumption. So we have $\<\s^{r+d}(\a), \mu_x\> \ge 1$, and hence $$x \overset {(\s^{d+r}(\a), 3d-r)} \to y := x - \s^{r+d}(\a)^\vee + \s^d(\a)^\vee \overset {(\s^{d}(\a), r)} \to x.$$ Then it suffices to show that \begin{align*} \tag{a} g_2 I \sim_{\l, b} g_2 \o_2 I &\text{ for } g_2 \in \JJ_{b, \tw_y}; \\ \tag{b} g_1 I \sim_{\l, b} g_1 \o_1 I &\text{ for } g_1 \in \JJ_{b, \tw_x}, \end{align*} where $\o_1 = \s^{r+d}(\a)^\vee + \cdots + \s^{4d-1}(\a)^\vee, \o_2 = \s^d(\a)^\vee + \cdots + \s^{r+d-1}(\a)^\vee \in \pi_1(M_J) \cong \Omega_J$.

First we show (a). Note that $\<\s^r(\a), \mu_y\> = \<\s^r(\a), \mu_x\> \le -1$. We have $$y \overset {(\s^d(\a), r-d)} \to y - \s^d(\a)^\vee + \s^r(\a)^\vee \overset {(\s^r(\a), d)} \to x,$$ and (a) follows from Lemma \ref{small} and Corollary \ref{lin}.

Now we show (b). If $\<\a, \mu_x\> \le -1$, the statement follows from that $$x \overset {(\s^{r+d}(\a), 2d-r)} \to x - \s^{r+d}(\a) + \a^\vee \overset {(\a, d)} \to y.$$ So we can assume $\<\a, \mu_x\> = 0$. If $\<\s^i(\a), \mu_x\> = 0$ for $r+d+1 \le i \le 3d-1$, it follows from Lemma \ref{good-case}. Otherwise, let $k = \max\{r+d+1 \le i \le 3d-1; \<\s^i(\a), \mu_x\> \neq 0\}$. If $\<\s^k(\a), \mu_x\> = -1$, then $\<\s^{k-d}(\a), \mu_x\> \ge 1$ since $\<\s^k(\vartheta_\a), \mu_x\> \ge 0$, which means $x \overset {(\s^{k-d}(\d), 2d)} \to x_1 := x + \s^k(\a)^\vee - \s^{k-d}(\a)^\vee$ and $y \overset {(\s^{k-d}(\d), 2d)} \to y_1 := y + \s^k(\a)^\vee - \s^{k-d}(\a)^\vee$. By Lemma \ref{type-I}, we have $$g_1 \sim_{\l, b} g_1 \o' I \text{ for } g_1 \in \JJ_{b, \tw_x}, \ g_2 \sim_{\l, b} g_2 \o' I \text{ for } g_2 \in \JJ_{b, \tw_y},$$ where $\o' = \s^{k-d}(\d^\vee) + \cdots + \s^{k+d}(\d^\vee) \in \pi_1(M_J) \cong \Omega_J$. So we can replace the pair $(x, y)$ with $(x_1, y_1)$ so that $\<\s^k(\a), \mu_x\> \ge 1$. Then $$x \overset {(\s^k(\a), 4d-k)} \to x - \s^k(\a)^\vee + \s^d(\a)^\vee \overset {(\s^{r+d}(\a), k-r-d)} \to y,$$ and (b) follows from Lemma \ref{good-case}, Lemma \ref{small} and Corollary \ref{lin}.
\end{proof}

\begin{appendix}

\section{Distinct elements in $\Adm(\l)$} \label{sec-dist}
In this Appendix, we study the distinct elements introduced in \cite{CN1}.

\subsection{}
First we recall the following lemmas.
\begin{lem} \label{commute}
Let $s, s' \in \SS^a$ and $\tw \in \tW$ such that $\ell(s \tw)=\ell(\tw s')$ and $\ell(s \tw s')=\ell(\tw)$. Then $\tw = s \tw s'$.
\end{lem}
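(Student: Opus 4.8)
The plan is to argue by length comparison, distinguishing whether multiplication by $s$ on the left lengthens or shortens $\tw$. Suppose first that $\ell(s\tw) = \ell(\tw) + 1$. Then the hypothesis $\ell(s\tw) = \ell(\tw s')$ forces $\ell(\tw s') = \ell(\tw) + 1$ as well, so $s'$ also lengthens $\tw$ on the right. But the third hypothesis says $\ell(s\tw s') = \ell(\tw)$, which is strictly less than $\ell(s\tw) = \ell(\tw)+1$; hence right multiplication by $s'$ shortens $s\tw$, i.e.\ $\ell((s\tw)s') = \ell(s\tw) - 1$. Now I would invoke the standard exchange-type fact in a Coxeter group (applied to the Coxeter system $(W^a, \SS^a)$ extended by $\Omega$): if $s\tw < s\tw s'$ is false while $\tw < \tw s'$ and $\ell((s\tw)s') = \ell(s\tw)-1 = \ell(\tw)$, then $s\tw s' $ and $\tw$ must coincide — more precisely, from $\ell(\tw s') = \ell(\tw)+1$ and $\ell(s(\tw s')) = \ell(\tw s') - 1$ together with $\ell(s\tw) = \ell(\tw)+1$, the only possibility compatible with the deletion condition is $s(\tw s') = \tw$, that is, $s\tw s' = \tw$.

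The symmetric case $\ell(s\tw) = \ell(\tw) - 1$ is handled identically with the roles of ``lengthen'' and ``shorten'' reversed: then $\ell(\tw s') = \ell(\tw)-1$ too, and $\ell(s\tw s') = \ell(\tw) = \ell(s\tw)+1$, so $s'$ lengthens $s\tw$ on the right; applying the same Coxeter-group fact to $s\tw$ in place of $\tw$ gives $s(s\tw)s' = s\tw$, hence $\tw s' = s\tw$, which rearranges to $s\tw s' = \tw$ after multiplying by $s$ on the left (using $s^2 = 1$). In both cases we conclude $\tw = s\tw s'$ as claimed.

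The cleanest way to package the ``deletion'' input is the following elementary observation, which I would state and use: for $s \in \SS^a$, $x \in \tW$, and $s' \in \SS^a$ with $\ell(xs') = \ell(x)+1$, one has $\ell(sxs') \in \{\ell(sx)-1, \ell(sx), \ell(sx)+1\}$, and $\ell(sxs') = \ell(sx) - 1$ holds only when $sxs' = x$ or $xs'$ differs from a reduced expression situation that forces $sx < x$; spelled out via the subword property of the Bruhat order this pins down $sxs' = x$ exactly. Concretely: $sxs' \le sx$ and $sxs' \le xs'$, and since $\ell(sxs') = \ell(x)$ while $\ell(sx), \ell(xs') \in \{\ell(x)\pm 1\}$, comparing lengths forces $sxs'$ to equal $sx$ if $\ell(sx) = \ell(x)$, and to lie strictly below both $sx$ and $xs'$ otherwise — in the latter situation $sxs' \le sx$ with $\ell(sxs') = \ell(sx)-1$ means $sxs' = s(sx) = x$ or $sxs' = (sx)$ with the reflection removed, and the hypothesis $\ell(xs') = \ell(x) \mp 1$ matching $\ell(sx)$ selects $sxs' = x$. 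I expect this bookkeeping with the Bruhat order and length parities to be the only genuinely delicate point; everything else is a mechanical case split, and the extension group $\Omega$ plays no role since it acts by length-preserving automorphisms and all the relations above are internal to the affine Weyl part.
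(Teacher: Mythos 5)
The paper records Lemma \ref{commute} as a standard Coxeter-theoretic fact without proof, so the only comparison available is with the standard argument. Your scaffolding is the right one: the reduction of the case $\ell(s\tw)=\ell(\tw)-1$ to the case $\ell(s\tw)=\ell(\tw)+1$ by applying the statement to $s\tw$ is correct, and passing from $\tW=W^a\rtimes\Omega$ to the Coxeter group $W^a$ is harmless (with the small caveat that if $\tw=w\o$ you must replace $s'$ by $\o s'\o\i\in\SS^a$, not keep $s'$). The problem is the central step. In the normalized case $\ell(s\tw)=\ell(\tw s')=\ell(\tw)+1$ and $\ell(s\tw s')=\ell(\tw)$, the ``standard exchange-type fact'' you invoke is precisely the assertion to be proved, and your attempt to justify it does not go through: from $s\tw s'\le s\tw$ and $s\tw s'\le \tw s'$ with $\ell(s\tw s')=\ell(\tw)$ you only learn that $s\tw s'$ is a common lower cover of $s\tw$ and $\tw s'$ in the Bruhat order, and two elements of length $\ell(\tw)+1$ can share lower covers other than $\tw$. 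In particular the sentence claiming that $sxs'\le sx$ with $\ell(sxs')=\ell(sx)-1$ forces $sxs'=x$ ``or the reflection removed'' is not a correct dichotomy, and the concluding ``selects $sxs'=x$'' is exactly the point left unproved. So the proposal is circular at the one step that carries the content.

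The gap is short to fill, and this is where the hypothesis $\ell(s\tw)=\ell(\tw)+1$ is actually used. After absorbing the $\Omega$-part, pick a reduced word $\tw=s_1\cdots s_k$; since $\ell(\tw s')=k+1$, the word $s_1\cdots s_k s'$ is reduced for $\tw s'$. Because $\ell\bigl(s\,(\tw s')\bigr)=k<\ell(\tw s')$, the exchange condition says $s\tw s'$ is obtained from this word by deleting one letter: either the deleted letter is some $s_i$, in which case multiplying on the right by $s'$ gives $\ell(s\tw)\le k-1$, contradicting $\ell(s\tw)=k+1$; or the deleted letter is the final $s'$, i.e.\ $s\tw s'=\tw$. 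Alternatively, a root-theoretic argument works directly in $\tW$ and needs neither the case split nor the $\Omega$-reduction: let $\tilde\a,\tilde\b\in\tPhi^+$ be the simple affine roots of $s,s'$. If $\ell(s\tw)=\ell(\tw)+1$ then $\tw\i(\tilde\a)\in\tPhi^+$, while $\ell\bigl(s\,(\tw s')\bigr)<\ell(\tw s')$ gives $s'\bigl(\tw\i(\tilde\a)\bigr)\in\tPhi^-$; since the unique positive affine root made negative by $s'$ is $\tilde\b$, we get $\tw\i(\tilde\a)=\tilde\b$ (in the other case one gets $\tw\i(\tilde\a)=-\tilde\b$), hence $\tw\i s\tw=s_{\tilde\b}=s'$ and $s\tw s'=\tw$. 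Either of these replaces the hand-waving and completes your proof.
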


\begin{lem} [{\cite[Lemma 1.8 \& 1.9]{CN1}, \cite[Lemma 4.5]{Haines}}] \label{R1}
Let $s \in \SS^a$ and $\tw \in \Adm(\l)$ with $\l \in Y$ such that $\tw < s \tw$. Then we have

(1) $\tw s \in \Adm(\l)$ if $\tw s < s \tw s$;

(2) $\tw s = s \tw$ if $\tw s \notin \Adm(\l)$;

(3) $s \tw s \in \Adm(\l)$ if $\ell(s \tw s) = \ell(\tw)$.
\end{lem}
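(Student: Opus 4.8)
Since this lemma is quoted from \cite{CN1} and \cite{Haines}, one route is simply to cite those; but here is the plan for a self-contained argument.

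I would first set up the bookkeeping: $\Adm(\l)=\bigcup_{w\in W_0}\{x\in\tW; x\le t^{w\l}\}$ is a union of lower Bruhat intervals, hence a lower set for $\le$; and I would record the lifting property of the Bruhat order, namely that if $x\le y$ and $s\in\SS^a$ with $sy<y$, then $sx\le y$, and moreover $sx\le sy$ when $sx<x$. With these two tools the ``descent'' cases are immediate: in (1) and (2), if $\tw s<\tw$ then $\tw s\le\tw\le t^{w\l}$ for any $w$ with $\tw\le t^{w\l}$, so $\tw s\in\Adm(\l)$ and there is nothing to prove. Thus I may assume $\tw<\tw s$ throughout. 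If, in addition, $\ell(s\tw s)=\ell(\tw)$, then $\ell(s\tw)=\ell(\tw s)=\ell(\tw)+1$ and Lemma \ref{commute} forces $\tw=s\tw s$, hence $\tw s=s\tw$: this proves (2) outright, and it reduces the hypothesis ``$\tw s<s\tw s$'' of (1) to the complementary case $\ell(s\tw s)=\ell(\tw)+2$. For (3), if $\tw s>\tw$ the same application of Lemma \ref{commute} gives $s\tw s=\tw\in\Adm(\l)$; and if $\tw s<\tw$ one invokes the length-preserving stability of $\Adm(\l)$ under $x\mapsto sxs$, which is exactly \cite[Lemma 4.5]{Haines} (provable from the convexity/alcove-walk description of $\Adm(\l)$), so $s\tw s\in\Adm(\l)$.

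This leaves the single genuine case, shared by (1) and the contrapositive of (2): $\tw\in\Adm(\l)$ with $\tw<s\tw$, $\tw<\tw s$ and $\tw s<s\tw s$, and I want $\tw s\in\Adm(\l)$. The plan is to lift on the right: I would produce $\mu$ in the $W_0$-orbit of $\l$ with $\tw\le t^\mu$ and with $s$ a right descent of $t^\mu$, i.e. $t^\mu s<t^\mu$; then the lifting property applied to $\tw\le t^\mu$ gives $\tw s\le t^\mu$, hence $\tw s\in\Adm(\l)$. All the real content sits in the selection of $\mu$: one has to understand how left and right multiplication by a simple affine reflection permutes the ``top'' translation elements $t^{w\l}$ of $\Adm(\l)$ and how this interacts with the Bruhat order, which is precisely what \cite[Lemma 1.8 \& 1.9]{CN1} works out.

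The main obstacle, therefore, is this last point --- that when $s$ ``acts upward'' on $\tw$ from both sides there is still some $W_0$-conjugate $t^{w\l}\ge\tw$ on which $s$ is a right descent; once that combinatorial fact about translation elements is granted, everything else is the lower-set property, the lifting property, and Lemma \ref{commute}. I would also note that one cannot cheaply bootstrap (1) and (2) from (3) alone: (3) only moves length by $0$ while (1) requires controlling a length jump of $2$, so the right-lift argument with its selection lemma seems unavoidable.
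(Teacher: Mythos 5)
Your preliminary reductions are fine: the lower-set property of $\Adm(\l)$, the lifting property, and Lemma \ref{commute} correctly dispose of the descent cases and of the case $\ell(s\tw s)=\ell(\tw)$, and deferring part (3) (when $\tw s<\tw$) to \cite[Lemma 4.5]{Haines} matches what the paper itself does, since the paper offers nothing beyond the citations. The gap is exactly the step you yourself call the main obstacle. The claim that whenever ``$s$ acts upward on $\tw$ from both sides'' there is some $\mu\in W_0(\l)$ with $\tw\le t^{\mu}$ and $t^{\mu}s<t^{\mu}$ is not proved, and it cannot be outsourced to \cite[Lemma 1.8 \& 1.9]{CN1}: those lemmas are, as the citation in the statement indicates, parts (1) and (2) themselves, not a selection statement about translations, so as a self-contained argument this is circular. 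Worse, as you phrase it the claim is false. Take $\Phi$ of type $A_1\times A_1$ with simple roots $\a,\b$ (this occurs in the paper's setting, e.g.\ for $\Res_{E/F}{\rm PGL}_2$), $\l=\b^\vee$, $\tw=s_\b$ and $s=s_\a$. Then $\tw\in\Adm(\l)$, $s\tw>\tw$ and $\tw s>\tw$, but the maximal elements of $\Adm(\l)$ are $t^{\pm\b^\vee}$ and $s_\a$ is a right ascent of both because $\<\a,\b^\vee\>=0$; indeed $\tw s=s_\b s_\a\notin\Adm(\l)$. So any correct version of your selection claim must use the hypothesis $\tw s<s\tw s$ in an essential way; yet your mechanism for the genuine case (a right descent of a dominating translation plus right lifting) never uses that hypothesis, and no proof of (1) can avoid it, since (1) is false without it (same example). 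Even granting all three hypotheses, nothing in your sketch indicates why such a $\mu$ should exist, and it need not be the right statement: when every $\mu\in W_0(\l)$ with $t^{\mu}\ge\tw$ satisfies $\<\a,\mu\>=0$ (writing $s=s_{\tta}$, $\tta=\a+k$), $s$ is a two-sided ascent of every dominating translation, and the argument below still yields $\tw s\le t^{\mu}$ for such $\mu$.

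What actually closes the genuine case is a two-step lifting using $\tw s<s\tw s$ together with the fact that $st^{\mu}s=t^{s_\a(\mu)}$ is again the translation attached to an element of $W_0(\l)$, hence lies in $\Adm(\l)$ and has the same length as $t^{\mu}$. Choose any $\mu$ with $\tw\le t^{\mu}$. If $t^{\mu}s<t^{\mu}$, lift on the right as you propose. If $t^{\mu}s>t^{\mu}$, then right lifting (both sides ascend) gives $\tw s\le t^{\mu}s$; since $\ell(st^{\mu}s)=\ell(t^{\mu})<\ell(t^{\mu}s)$, left multiplication by $s$ decreases $t^{\mu}s$, and the left lifting property applied with $s(\tw s)>\tw s$ --- this is precisely where the hypothesis of (1) enters --- yields $\tw s\le st^{\mu}s=t^{s_\a(\mu)}\in\Adm(\l)$. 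This proves (1); (2) then follows from your reduction via Lemma \ref{commute}, and the remaining case of (3) is handled by the same two moves in the opposite order (lift on the left to $st^{\mu}$, then descend on the right to $st^{\mu}s$), so even the appeal to Haines is avoidable. The idea you are missing is this conjugation step: one does not look for a dominating translation admitting $s$ as a right descent; one passes through $t^{\mu}s$ and comes back down to the conjugated translation $t^{s_\a(\mu)}$, which may well be $t^{\mu}$ itself when $\<\a,\mu\>=0$.
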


\begin{lem} \label{R4}
Let $\tw \notin \Adm(\l)$ and $s \in \SS^a$ such that $\tw s > \tw$. Then $s \tw s \notin \Adm(\l)$.
\end{lem}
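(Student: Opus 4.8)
The statement to prove is Lemma~\ref{R4}: if $\tw \notin \Adm(\l)$ and $s \in \SS^a$ satisfies $\tw s > \tw$, then $s \tw s \notin \Adm(\l)$. The plan is to argue by contradiction, assuming $s \tw s \in \Adm(\l)$, and to run a short case analysis on the relative positions of $\tw$, $s\tw$, $\tw s$, $s\tw s$ in the Bruhat order, using the exchange-condition dichotomy for multiplication by a simple reflection. Recall that for any $u \in \tW$ and $s \in \SS^a$ we have either $su > u$ or $su < u$, and similarly on the right; moreover $\ell(sus) \in \{\ell(u) - 2, \ell(u), \ell(u) + 2\}$ and the admissible set $\Adm(\l)$ is closed under the Bruhat order (if $v \le v'$ and $v' \in \Adm(\l)$ then $v \in \Adm(\l)$).

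\textbf{Main case split.} First I would dispose of the easy sub-case where $s\tw s \le \tw$: then $\tw \in \Adm(\l)$ by closure of $\Adm(\l)$ under $\le$, contradicting the hypothesis. So we may assume $s \tw s \not\le \tw$. Combined with $\tw s > \tw$, the interesting configuration is when $s(\tw s) < \tw s$, i.e. $s \tw s < \tw s$; in that case $\ell(s\tw s) \le \ell(\tw s) - 1 = \ell(\tw)$, so $\ell(s\tw s) \in \{\ell(\tw) - 2, \ell(\tw)\}$ (it cannot equal $\ell(\tw)+2$). If instead $s\tw s > \tw s > \tw$, then $\ell(s\tw s) = \ell(\tw) + 2$; but then applying Lemma~\ref{R1}(1) in contrapositive form — or rather, observe that since $s \tw s \in \Adm(\l)$ and $\tw s \le s \tw s$ we get $\tw s \in \Adm(\l)$; now $\tw < \tw s \in \Adm(\l)$ need not directly give $\tw \in \Adm(\l)$, so here I would instead use Lemma~\ref{R1}(2): apply it with the element $\tw s$ (which is $< s(\tw s)$ since $s\tw s > \tw s$) to conclude, if $(\tw s)s = \tw \notin \Adm(\l)$, that $\tw = s(\tw s) = s \tw s$. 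But $s\tw s \neq \tw$ since their lengths differ by $2$, a contradiction. This handles the $\ell(s\tw s) = \ell(\tw)+2$ branch.

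\textbf{The remaining branch.} It remains to treat $s \tw s < \tw s$ with $s\tw s \in \Adm(\l)$. If $\ell(s\tw s) = \ell(\tw)$, then by Lemma~\ref{commute} applied to $s, s, \tw s$ (noting $\ell(s \cdot \tw s) $ versus $\ell(\tw s \cdot s) = \ell(\tw)$ and $\ell(s \tw s \cdot s) = \ell(\tw s)$ — one checks the length hypotheses match after reindexing) one would get $\tw s = s \tw s s = s \tw$, hence $s\tw s = \tw$, so $\tw \in \Adm(\l)$, contradiction. If $\ell(s\tw s) = \ell(\tw) - 2$, then $s\tw < \tw$ as well (both one-step descents), and $s\tw s \le s \tw \le \tw$ gives, via $s\tw \le \tw$... — here one needs $s\tw s \in \Adm(\l)$ and wants to climb up; instead use that $s\tw s \le \tw s$ and also there is a chain $s\tw s \le \tw$? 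Not automatic. The cleanest route in this final branch is: from $\tw s > \tw$ and $s\tw < \tw$ we have $\ell(s\tw s) = \ell(s(\tw s)) $ and $s(\tw s) < \tw s$ means $s\tw s \le \tw s$; apply Lemma~\ref{R1}(2) to the element $v = s\tw s$, which satisfies $v < sv = \tw s$... then $vs = s\tw \notin \Adm(\l)$? We don't know that yet. I expect the genuinely delicate point — and the main obstacle — to be exactly this: ruling out the length-dropping configuration $\ell(s\tw s) = \ell(\tw)-2$ while $s\tw s \in \Adm(\l)$, which requires combining Lemma~\ref{R1} with a careful choice of which translate to feed into it so that the "$\notin \Adm(\l)$" hypothesis is available; the likely resolution is to observe $s\tw \le \tw$ together with $s\tw s \le s\tw$ forces $s\tw \notin \Adm(\l)$ would already finish, but $s\tw \le \tw$ and $\tw \notin \Adm(\l)$ does not give that — so one instead shows $\tw \le$ something admissible. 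A promising fix: since $s\tw s \in \Adm(\l)$ and $\tw s > \tw$, consider $w_0' := s\tw s$; then $s w_0' s = \tw$ up to length bookkeeping, and applying Lemma~\ref{R1}(3) (the equal-length conjugation statement) or iterating the reflection order relation $\to_s$ lets one move within $\Adm(\l)$ from $s\tw s$ toward $\tw$, yielding $\tw \in \Adm(\l)$, the desired contradiction. I would write out this last step carefully, as it is where all the hypotheses must be used simultaneously.
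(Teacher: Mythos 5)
There is a genuine gap, and it sits exactly where the work has to happen. After assuming $s\tw s \in \Adm(\l)$, the only nontrivial configuration is $s\tw s < \tw s$, and there $\ell(s\tw s) = \ell(\tw s) - 1 = \ell(\tw)$ automatically, since left multiplication by a simple reflection changes length by exactly one; so your branch ``$\ell(s\tw s) = \ell(\tw) - 2$'' is vacuous --- yet it is precisely the branch you single out as the main obstacle and leave unfinished. Meanwhile, in the genuine branch $\ell(s\tw s) = \ell(\tw)$, your appeal to Lemma~\ref{commute} is invalid: that lemma requires $\ell(s\tw) = \ell(\tw s)$, i.e.\ $s\tw > \tw$, which is not among your hypotheses and can fail, and when $s\tw < \tw$ the conclusion $\tw = s\tw s$ is false in general (in type $A_2$ take $\tw = s_1 s_2$, $s = s_1$: then $\tw s > \tw$, $\ell(s\tw s) = \ell(\tw)$, but $s\tw s = s_2 s_1 \neq \tw$). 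So neither branch of your ``remaining'' case is actually established. The missing idea --- which is the paper's entire proof --- is to apply Lemma~\ref{R1}(3) to the element $v = s\tw s \in \Adm(\l)$: it satisfies $v < s v = \tw s$ and $\ell(s v s) = \ell(\tw) = \ell(v)$, hence $s v s = \tw \in \Adm(\l)$, contradicting $\tw \notin \Adm(\l)$.

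Two smaller slips in the same direction: your remark that ``$\tw < \tw s \in \Adm(\l)$ need not directly give $\tw \in \Adm(\l)$'' contradicts the downward closure of $\Adm(\l)$ under the Bruhat order that you yourself stated --- it gives it at once, which disposes of the case $s\tw s > \tw s$ with no need for Lemma~\ref{R1}(2) (your detour via Lemma~\ref{R1}(2) does reach a contradiction, but only after using downward closure anyway to get $\tw s \in \Adm(\l)$). And in the ``easy sub-case $s\tw s \le \tw$'' you invoke closure in the wrong (upward) direction; that case can only occur with $s\tw s = \tw$, where the contradiction is immediate, so the conclusion survives but not for the reason given.
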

\begin{proof}Assume $s \tw s \in \Adm(\l)$, then $s \tw s < \tw s$ and hence $\ell(s \tw s) = \ell(\tw)$. By Lemma \ref{R1} (3), we have $\tw \in \Adm(\l)$, contradicting the assumption that $\tw \notin \Adm(\l)$.
\end{proof}

\subsection{} Fix $\l \in Y^+$. Let $R \subseteq \SS_0$ and $\tw \in \Adm(\l)$. We say $\tw$ is left $R$-distinct (resp. right $R$-distinct) if $s \tw \notin \Adm(\l)$ (resp. $\tw s \notin \Adm(\l)$) for all $s \in R$. Let $w_R$ denote the longest element of $W_R$.

For a reflection $s \in W_0$ we denote by $\a_s \in \Phi^+$ the corresponding simple root.
\begin{lem} \label{R-dist}
Let $R = \{s, s'\} \subseteq \SS_0$. Let $\tw \in \Adm(\l)$ be right $R$-distinct. Let $u, u' \in W_R$ with $\ell(u') \le \ell(u)$. Then $u' \tw u\i \in\Adm(\l)$ if and only if $u = u'$. As a consequence, $w_R \tw w_R \in \Adm(\l)$ is left $R$-distinct.
\end{lem}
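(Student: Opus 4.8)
The plan is to first deduce the ``consequence'' from the displayed equivalence, and then to prove the equivalence itself by an induction on $\ell(u)+\ell(u')$ made finite by the hypothesis $|R|=2$. For the consequence, taking $u=u'=w_R$ in the equivalence gives $w_R\tw w_R\in\Adm(\l)$; and for left $R$-distinctness I fix $s\in R$, note that $s^\ast:=w_Rsw_R$ again lies in $R$ (conjugation by the involution $w_R$ permutes $R$ when $|R|\le 2$), and rewrite $sw_R\tw w_R=(w_Rs^\ast)\tw w_R^{-1}$. Since $s^\ast$ is a right descent of $w_R$ one has $\ell(w_Rs^\ast)=\ell(w_R)-1<\ell(w_R)$ and $w_Rs^\ast\ne w_R$, so the equivalence gives $sw_R\tw w_R\notin\Adm(\l)$, i.e.\ $w_R\tw w_R$ is left $R$-distinct.

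For the equivalence I will first record two facts. Since $\Adm(\l)$ is closed downward in the Bruhat order and $\tw s\notin\Adm(\l)$ for $s\in R$, necessarily $\tw<\tw s$ for all $s\in R$; hence $\tw\in\tW^R$ and $\ell(\tw b)=\ell(\tw)+\ell(b)$ for every $b\in W_R$. Iterating downward closure along a reduced word of $b$ then yields $\tw b\notin\Adm(\l)$ for all $b\in W_R\smallsetminus\{1\}$, which already handles the equivalence when $u'=1$ (keeping $\ell(u')\le\ell(u)$ in mind). Moreover, since $|R|=2$, $W_R$ is either $(\ZZ/2)^2$ (when $s,s'$ commute) or $S_3$ (when $\{s,s'\}$ spans a subsystem of type $A_2$).

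The workhorse is the move: for $\tv\in\tW$ and $s\in\SS^a$ with $\tv s>\tv$ and $\ell(s\tv s)=\ell(\tv)$ one has $\tv\in\Adm(\l)\iff s\tv s\in\Adm(\l)$ — the forward implication (in contrapositive form) is Lemma~\ref{R4}, while the reverse follows by splitting on the sign of $\ell(s\tv)-\ell(\tv)$ and invoking Lemma~\ref{commute} (which forces $s\tv s=\tv$ when $s\tv>\tv$) or Lemma~\ref{R1}(1) applied to $s\tv$. For the induction on $\ell(u)+\ell(u')$, if $u$ and $u'$ have a common left descent $r\in R$ I write $u=ru_1$, $u'=ru_1'$ (length additive), so that $u'\tw u^{-1}=rXr$ with $X=u_1'\tw u_1^{-1}$ and $\ell(u_1)+\ell(u_1')<\ell(u)+\ell(u')$; verifying the hypotheses of the move for $(X,r)$ — or treating the (bounded) ways they may fail — reduces the claim for $(u,u')$ to that for $(u_1,u_1')$, noting that $u_1=u_1'\iff u=u'$. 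When $u$ and $u'$ have no common left descent, the rank $\le 2$ of $W_R$ leaves only a short list of pairs $(u,u')$ to inspect by hand: when $u=u'$ one checks that $u'\tw u^{-1}$ is a genuine conjugate of $\tw$ and lies in $\Adm(\l)$ by repeated use of the move, and when $u\ne u'$ one exhibits some $\tw b$ with $b\in W_R\smallsetminus\{1\}$ below $u'\tw u^{-1}$ in the Bruhat order, whence $u'\tw u^{-1}\notin\Adm(\l)$ by downward closure.

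I expect the main obstacle to be the type-$A_2$ case. There conjugation by a single simple reflection is length-neutral on the elements that arise, so the equality $\ell(s\tv s)=\ell(\tv)$ needed to run the move has to be checked by going through the elements of $S_3$ one by one, and it is precisely in the configurations where both $\tv<s\tv$ and $\tv<\tv s$ are possible that Lemma~\ref{commute} is indispensable for identifying $s\tv s$. By contrast the commuting case decouples into two independent rank-one computations and is routine.
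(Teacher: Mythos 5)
Your reduction steps are fine (the passage to $\tw\in\tW^R$, the case $u'=1$, the deduction of the ``consequence'' from the equivalence, and your auxiliary ``move'' is indeed a correct combination of Lemma~\ref{commute}, Lemma~\ref{R1}(1) and Lemma~\ref{R4}, up to having the two directions' labels swapped), but the heart of the lemma is left unproved, and the tactic you propose for it cannot work. The genuinely hard cases of the ``only if'' direction are precisely those where left multiplication drops length, e.g.\ $u'=s$, $u=s'$ with $s\tw s'<\tw s'$, or $u'=ss'$, $u=s's$ with $ss'\tw ss'<s'\tw ss'$ (these drops are forced once one assumes $u'\tw u\i\in\Adm(\l)$, since otherwise downward closure immediately gives $\tw s'\in\Adm(\l)$ etc.). In such a configuration $\ell(u'\tw u\i)$ can be as small as $\ell(\tw)$, while $\ell(\tw b)=\ell(\tw)+\ell(b)>\ell(\tw)$ for every $b\in W_R\setminus\{1\}$; since the Bruhat order refines the length grading, no element $\tw b$ with $b\neq 1$ can lie below $u'\tw u\i$, so ``exhibit some $\tw b$ below it and invoke downward closure'' is not available exactly where it is needed. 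The paper's proof handles these cases by a different mechanism: assuming $u'\tw u\i\in\Adm(\l)$, it uses Lemma~\ref{R4} to kill a further right multiple, then Lemma~\ref{R1}(2) (together with Lemma~\ref{commute}) to extract an explicit affine-root identity such as $\tw(\a_s+\a_{s'})=-\a_s$ or $\tw(\a_{s'})=\a_s+\a_{s'}$, which contradicts $\tw\in\tW^R$ or reduces to a previously excluded case. Nothing in your outline produces such identities, and the clause ``or treating the (bounded) ways they may fail'' in your induction step is a placeholder for exactly this missing content.

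A second, smaller gap: for $u=u'$ you claim admissibility ``by repeated use of the move'', but the move needs $\ell(s\tv s)=\ell(\tv)$, which can fail (the conjugation step may raise length by $2$). The paper's argument for this direction is not self-contained either: when the length rises it applies Lemma~\ref{R1}(1)\,\&\,(3) to produce $u_1\tw u\i\in\Adm(\l)$ with $u_1\neq u$ and then contradicts the already-established ``only if'' part. This interleaving of the two directions is an essential feature of the proof that your plan does not reproduce.
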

\begin{proof}
First we notice that $\tw \in \tW^R$, see \S \ref{setup}. Without loss of generality, we can assume $s \neq s'$ and $s s' s = s' s s'$.

First we show the ``only if'' part. By symmetry it suffices to consider the following cases.

Suppose $s \tw s' \in \Adm(\l)$. Then $s \tw s' < \tw s'$ and $s \tw s' s \notin \Adm(\l)$ (see Lemma \ref{R4}). By Lemma \ref{R1} we have $s \tw s' (\a_s) =\a_s$, that is, $\tw(\a_s + \a_{s'}) = -\a_s$. This is impossible since $\tw \in \tW^R$.

Suppose $s \tw s s' \in \Adm(\l)$. Then $s \tw s s' < s \tw s'$ (as $s \tw s' \notin \Adm(\l)$), that is, $s \tw s' (s'(\a_s)) = s \tw (\a_s) \in \tPhi^+$. Since $\tw(\a_s) \in \tPhi^-$ (as $\tw \in \tW^R$), we have $\tw(\a_s) = \a_s$. This means $s \tw s s' = \tw s' \notin \Adm(\l)$, a contradiction. Notice that $s \tw s' s \notin \Adm(\l)$ by Lemma \ref{R4}.

Suppose $s s' \tw s s' \in \Adm(\l)$. Then $s s' \tw s s' < s' \tw s s'$. If $s' \tw s s' <  s' \tw s s' s$, then $s s' \tw s s' s \notin \Adm(\l)$ by Lemma \ref{R4}. Otherwise, by Lemma \ref{commute} we have $s' \tw s s' s = \tw s s'$ (since $\tw s s' < \tw s s' s$) and hence $s s' \tw s s' s = s \tw s s' \notin \Adm(\l)$. So we always have $s s' \tw s s' s \notin \Adm(\l)$. By Lemma \ref{R1} we have $s s' \tw s s'(\a_s) = \a_s$, that is, $\tw(\a_{s'})= -(\a_s+\a_{s'})$, which is impossible as $\tw \in \tW^R$.

Suppose $s s' \tw s s' s \in \Adm(\l)$. Then $s s' \tw s s' s < s' \tw s s' s$. Since $s s' \tw s s' \notin \Adm(\l)$, by Lemma \ref{R1} we have $s s' \tw s s' s (\a_s) = \a_s$, that is, $\tw(\a_{s'}) = \a_s + \a_{s'}$. This means $s s' \tw s s' s = s' \tw s s' \in \Adm(\l)$, a contradiction.

Now we show the ``if'' part, that is, $u \tw u\i \in \Adm(\l)$ for $u \in W_R$. We argue by induction on the length $u$. If $u = 1$, the statement is true. Let $u = s u_1 > u_1$ with $u_1 \in W_R$ and $s \in R$. We assume $u_1 \tw u_1\i \in \Adm(\l)$ by induction hypothesis. It remains to show that $u \tw u\i \in \Adm(\l)$. Otherwise, we have $\ell(u \tw u\i)= \ell(u_1 \tw u_1\i) + 2$ and $u_1 \tw u\i \in \Adm(\l)$ by Lemma \ref{R1} (1) \& (3), which contradicts the ``only if'' part.
\end{proof}

\begin{lem} \label{LR}
Let $\tw \in \Adm(\l)$ and $s \in \SS_0$ such that $s \tw s \in \Adm(\l)$ and $s \tw \notin \Adm(\l)$. Let $\a\in \Phi^+ \setminus \{\a_s\}$ such that $\tw s_\a \in \Adm(\l)$. Then $s \tw s_\a s \in \Adm(\l)$
\end{lem}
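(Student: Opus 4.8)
The plan is to read off length inequalities from the downward-closure of $\Adm(\l)$, rewrite the target element so that Lemmas \ref{R1}, \ref{R4} and \ref{commute} apply, and finish with a short case analysis.

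First, since $\tw\in\Adm(\l)$ while $s\tw\notin\Adm(\l)$ and $\Adm(\l)$ is closed under $\le$, we must have $\tw<s\tw$; likewise $s\tw s\in\Adm(\l)$ and $s\tw\notin\Adm(\l)$ force $s\tw s<s\tw$, so $\ell(s\tw s)=\ell(s\tw)-1=\ell(\tw)$. Applying Lemma \ref{commute} to $s\tw$ then gives a dichotomy: either $s\tw=\tw s$ (equivalently $s\tw s=\tw$), or $\tw s<\tw$. Since $\a\in\Phi^+\setminus\{\a_s\}$, the root $\b:=s(\a)$ again lies in $\Phi^+\setminus\{\a_s\}$, and one has
\[
s\tw s_\a s=(s\tw s)\,s_\b=s\,(\tw s_\a)\,s,
\]
so it suffices to show that conjugating the admissible element $\tw s_\a$ by $s$ — equivalently, right-multiplying $s\tw s\in\Adm(\l)$ by $s_\b$ — stays in $\Adm(\l)$.

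I would then split on whether conjugation by $s$ lowers or raises the length. If $(s\tw s)s_\b<s\tw s$, then $s\tw s_\a s\le s\tw s\in\Adm(\l)$ and we are done by downward-closure. In the opposite case $(s\tw s)s_\b>s\tw s$, the idea is to propagate admissibility along the chain $\tw s_\a\rightsquigarrow s\tw s_\a\rightsquigarrow s\tw s_\a s$ using Lemma \ref{R1}(1),(3) and Lemma \ref{R4}, treating the two branches $s\tw=\tw s$ and $\tw s<\tw$ separately; the hypothesis $\a\neq\a_s$ (hence $\b\neq\a_s$) keeps the reflections $s_\a,s_\b$ from interacting with $s$ in the commuting branch. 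The hypothesis $s\tw\notin\Adm(\l)$ enters through the consequence that, since $\tw<s\tw$, every $w'\in W_0$ with $\tw\le t^{w'\l}$ satisfies $s\,t^{w'\l}>t^{w'\l}$ (otherwise $s\tw\le t^{w'\l}$), i.e. $\langle\a_s,w'\l\rangle<0$; this is exactly what lets one lift admissibility of $\tw s_\a$ up past $s$ on the left, after which admissibility of $s\tw s$ pushes $s_\b$ through on the right.

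The main obstacle is the length-increasing case: there is no formal reason an $s$-conjugate of an admissible element should be admissible, so one must genuinely use both that $s\tw s$ is admissible (the ``floor'' for the lift) and that $s\tw$ is not (constraining the dominant translations dominating $\tw$), while carefully tracking which simple affine reflection is in play so that Lemmas \ref{R1} and \ref{R4} apply. I expect the work to be this bookkeeping across the two branches rather than any single new idea.
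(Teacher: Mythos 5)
Your reduction is sound as far as it goes: $\tw<s\tw$, $s\tw s<s\tw$, the rewriting $s\tw s_\a s=(s\tw s)s_{s(\a)}$ with $s(\a)\in\Phi^+\setminus\{\a_s\}$, and the disposal of the case $(s\tw s)s_{s(\a)}<s\tw s$ by downward closure are all correct. But the only case where something has to be proved is left as a plan, and the mechanism you sketch for it does not work. The constraint you extract from $s\tw\notin\Adm(\l)$ concerns the translations $t^{w'\l}$ lying \emph{above} $\tw$; admissibility of $\tw s_\a$ is witnessed by translations $t^{w''\l}$ that need not dominate $\tw$, so the asserted ``lift of admissibility of $\tw s_\a$ past $s$'' (i.e.\ $s\tw s_\a\in\Adm(\l)$) does not follow from it (and the sign in your translation of the condition should be $\<\a_s,w'\l\>\ge 0$, not $<0$). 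Even granting that step, the second half, ``admissibility of $s\tw s$ pushes $s_{s(\a)}$ through on the right,'' is exactly the kind of claim (admissibility preserved under multiplication by a reflection) that is false in general and that the lemma asks you to prove; nothing in your outline supplies it. The dichotomy from Lemma \ref{commute} ($s\tw=\tw s$ or $\tw s<\tw$) is never put to concrete use. So there is a genuine gap at the crucial case.

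For comparison, the paper argues by contradiction, and the contradiction hypothesis is precisely what makes the intermediate admissibility available. Assume $s\tw s_\a s\notin\Adm(\l)$. Then $s\tw s_\a\in\Adm(\l)$: if $s\tw s_\a<\tw s_\a$ this is downward closure; otherwise, with $u:=\tw s_\a\in\Adm(\l)$ and $u<su$, Lemma \ref{R1}(3) forces $\ell(sus)=\ell(u)+2$ (since $sus\notin\Adm(\l)$), Lemma \ref{R1}(1) then gives $us\in\Adm(\l)$, and Lemma \ref{R1}(3) applied to $us$ (note $s(us)s=su$ and $\ell(su)=\ell(us)$) gives $su=s\tw s_\a\in\Adm(\l)$. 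Now comparability and closure against $s\tw\notin\Adm(\l)$ force $s\tw s_\a<s\tw$, which in the paper's conventions reads $s\tw(\a)\in\tPhi^+$; while $s\tw s\in\Adm(\l)$ together with $(s\tw s)s_{s(\a)}=s\tw s_\a s\notin\Adm(\l)$ and $s(\a)\in\Phi^+$ forces $s\tw s<s\tw s_\a s$, i.e.\ $s\tw(\a)\in\tPhi^-$ --- a contradiction. Your hard case is exactly the configuration shown here to be impossible, and the admissibility of $s\tw s_\a$ --- the step you could not supply --- is extracted from the hypothesis $s\tw s_\a s\notin\Adm(\l)$ itself via Lemma \ref{R1}; a direct, non-contradictory propagation along $\tw s_\a$, $s\tw s_\a$, $s\tw s_\a s$ is not available, which is why your plan stalls where it does.
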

\begin{proof}
Suppose $s \tw s_\a s \notin \Adm(\l)$, then $s \tw s_\a \in \Adm(\l)$ by Lemma \ref{R1}. As $s \tw \notin \Adm(\l)$, we have $s \tw(\a) \in \tPhi^+$. On the other hand, as $s(\a) \in \Phi^+$, $s \tw s_\a s \notin \Adm(\l)$ and $s \tw s \in \Adm(\l)$, we have $s \tw(\a) \in \tPhi^-$, which is a contradiction.
\end{proof}

\begin{cor} \label{conj}
Let $R = \{s, s'\} \subseteq \SS_0$. Let $\tw \in \Adm(\l)$ be left $R$-distinct. Let $\a \in \Phi^+ \setminus \Phi_R$ such that $\tw s_\a \in \Adm(\l)$. Then $u \tw s_\a u\i \in \Adm(\l)$ for $u \in W_R$.
\end{cor}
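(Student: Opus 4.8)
The plan is to argue by induction on $\ell(u)$ for $u\in W_R$, using Lemma \ref{LR} for the inductive step and supplying its hypotheses from the left--right mirror of Lemma \ref{R-dist}. The first observation is the identity $u\tw s_\a u\i=(u\tw u\i)\,s_{u(\a)}$, so everything takes place among the $W_R$-conjugates of $\tw$, twisted by reflections in the roots $u(\a)$. Since $R\subseteq\SS_0$ and $\a\in\Phi^+\setminus\Phi_R$, the parabolic subgroup $W_R$ permutes $\Phi^+\setminus\Phi_R$ (a simple root of $\Phi$ outside $R$ on which $\a$ has a positive coefficient keeps that positive coefficient under any element of $W_R$); hence $u(\a)\in\Phi^+\setminus\Phi_R$ for all $u\in W_R$, and in particular $u(\a)\neq\a_s$ for every $s\in R$ --- exactly the root condition needed to invoke Lemma \ref{LR}.

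Next I would read off from Lemma \ref{R-dist}, applied to $\tw\i$ (which is right $R$-distinct in $\Adm(-w_0\l)=\Adm(\l)\i$) and then inverted, the following: for $v,w\in W_R$ with $\ell(w)\le\ell(v)$ one has $v\tw w\i\in\Adm(\l)$ if and only if $v=w$. Two special cases are all I need: first, $u\tw u\i\in\Adm(\l)$ for every $u\in W_R$; second, if $s\in R$ with $su>u$, then $(su)\tw u\i\notin\Adm(\l)$ (apply the statement with $v=su$, $w=u$, for which $\ell(w)\le\ell(v)$ and $v\neq w$).

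Then I would carry out the induction on $\ell(u)$. The base case $u=1$ is the hypothesis $\tw s_\a\in\Adm(\l)$. When $\ell(u)\ge 1$, write $u=su_1$ with $s\in R$ and $\ell(u_1)=\ell(u)-1$, and set $\tw'=u_1\tw u_1\i$ and $\g=u_1(\a)\in\Phi^+\setminus\Phi_R$, so that $u_1\tw s_\a u_1\i=\tw' s_\g$. The induction hypothesis gives $\tw' s_\g\in\Adm(\l)$; the two cases above give $\tw'\in\Adm(\l)$, $s\tw's=(su_1)\tw(su_1)\i\in\Adm(\l)$, and, since $su_1>u_1$, $s\tw'=(su_1)\tw u_1\i\notin\Adm(\l)$. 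As $\g\neq\a_s$, Lemma \ref{LR} applies to the triple $(\tw',s,\g)$ and yields $s\tw's_\g s\in\Adm(\l)$; finally $s\tw's_\g s=su_1\tw s_\a u_1\i s=u\tw s_\a u\i$ (using $s\i=s$ and $(su_1)\i=u_1\i s$), which closes the induction.

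All the individual steps are short once this scaffolding is in place; the points that need care are orienting the length inequality in the mirror of Lemma \ref{R-dist} so that both the ``$u\tw u\i\in\Adm(\l)$'' case and the ``$s\tw'\notin\Adm(\l)$'' case fall out of it, and the bookkeeping identities relating $s\tw's$, $s\tw'$ and $s\tw's_\g s$ to $W_R$-conjugates of $\tw$ and of $\tw s_\a$. I expect the \emph{only} genuine pitfall to be getting that inequality direction right; conceptually there is nothing deeper here than matching the output of Lemma \ref{LR} to what Lemma \ref{R-dist} already provides.
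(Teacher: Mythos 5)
Your proposal is correct and follows essentially the same route as the paper: induction on $\ell(u)$, with the inductive step supplied by Lemma \ref{R-dist} (giving $u_1\tw u_1\i,\ su_1\tw u_1\i s \in \Adm(\l)$ and $su_1\tw u_1\i \notin \Adm(\l)$) and Lemma \ref{LR} applied with the root $u_1(\a) \neq \a_s$. The only difference is that you spell out what the paper leaves implicit — the left/right mirror of Lemma \ref{R-dist} via $\Adm(\l)\i = \Adm(-w_0\l)$ and the fact that $W_R$ preserves $\Phi^+\setminus\Phi_R$ — and both of these are handled correctly.
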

\begin{proof}
We argue by induction on $\ell(u)$,. If $u = 1$, the statement follows by assumption. Supposing it is true for $u_1$, that is, $ u_1 \tw u_1\i s_{u_1(\a)} = u_1 \tw s_\a u_1\i \in \Adm(\l)$, we show it is also true for $u = s u_1 > u_1$ with $s \in R$. By Lemma \ref{R-dist} we have $u_1 \tw u_1\i, s u_1 \tw u_1\i s \in \Adm(\l)$ and $s u_1 \tw u_1\i \notin \Adm(\l)$. Moreover, we have $u_1(\a) \neq \a_s$ since $\a \in \Phi^+ \setminus \Phi_R$. Thus $u \tw s_\a u\i = s u_1 \tw u_1\i s_{u_1(\a)} s \in \Adm(\l)$ by Lemma \ref{LR}.
\end{proof}

\end{appendix}

\end{document}